\documentclass[11pt]{article}
\usepackage[T1]{fontenc}
\usepackage{lmodern}
\usepackage[margin=1in]{geometry}
\usepackage{amsmath,amssymb,amsthm,mathtools,mathrsfs}
\usepackage{microtype,booktabs,needspace,graphicx}
\usepackage{cite,xcolor}
\definecolor{refblue}{RGB}{38,96,146}
\usepackage[colorlinks=true,allcolors=refblue]{hyperref}
\usepackage{bookmark,xurl}
\newtheorem{theorem}{Theorem}
\newtheorem{conjecture}{Conjecture}
\newtheorem{lemma}{Lemma}[section]
\newtheorem{proposition}[lemma]{Proposition}
\newtheorem{corollary}{Corollary}
\theoremstyle{definition}
\theoremstyle{remark}
\newtheorem{example}[lemma]{Example}
\numberwithin{equation}{section}
\newcommand{\E}{\mathbb E}\newcommand{\PP}{\mathbb P}
\newcommand{\R}{\mathbb R}
\newcommand{\T}{\mathbb T^2}\newcommand{\dd}{\,\mathrm d}
\newcommand{\Id}{\operatorname{Id}}\newcommand{\Dir}{\operatorname{Dir}}
\newcommand{\Bin}{\operatorname{Bin}}\newcommand{\Pois}{\operatorname{Pois}}

\newcommand{\diver}{\operatorname{div}}
\newcommand{\dist}{\operatorname{dist}}\newcommand{\diag}{\operatorname{Diag}}
\newcommand{\OT}{\operatorname{OT}}
\newcommand{\HS}{\mathrm{HS}}\newcommand{\1}{\mathbf1}
\newcommand{\cL}{\mathcal L}
\newcommand{\cB}{\mathcal B}

\newcommand{\Z}{\mathcal Z}

\allowdisplaybreaks[2]
\title{Random matching between distinct distributions in two dimensions}
\author{Yunzong Xu\thanks{University of Illinois Urbana--Champaign. Email: \href{mailto:xyz@illinois.edu}{\texttt{xyz@illinois.edu}}.}}
\date{}
\hypersetup{pdftitle={Random matching between distinct distributions in two dimensions},pdfauthor={Yunzong Xu}}

\begin{document}
\maketitle
\begin{abstract}
Given two independent $n$-point samples drawn from possibly distinct
probability distributions on a compact 2-dimensional surface, we minimize
the total squared distance over all perfect bipartite matchings. Under
regularity assumptions, the expected matching cost minus $n$ times the
squared 2-Wasserstein distance between the sampling distributions grows
logarithmically with $n$. We determine this difference up to an additive
$o(1)$ error, including the dependence of the logarithmic coefficient and
constant term on geometry and sampling distributions, and establish the
existence of a universal constant shared by these formulas.

Our analysis expresses the expected matching cost in terms of the cost
of transporting a heat-smoothed sample back to its original points. The
reductions giving this relation hold in every dimension and for general
costs under suitable assumptions, with consequences beyond the setting
studied here. In two dimensions, the relation is exact up to an additive
$o(1)$ error, so determining the matching cost amounts to evaluating the
smoothing cost with the same accuracy. We obtain this evaluation by
subdividing the domain and exploiting cancellations between the sample
and its own smoothing.
\end{abstract}

\section{Introduction}\label{sec:introduction}
Draw $n$ points independently from each of two probability distributions
$\mu$ and $\nu$ on a 2-dimensional surface. We call the points sampled
from $\mu$ the sources and those sampled from $\nu$ the targets. The
\emph{bipartite matching problem} pairs each source with a distinct target
to minimize the total cost. Both distance and squared distance are widely
studied as matching costs. Unless otherwise specified, we use squared
distance as the cost (quadratic cost) and write $M_n(\mu,\nu)$ for the
minimum total cost.

The corresponding \emph{population problem} minimizes the average squared
distance for transporting one unit of mass from $\mu$ to $\nu$. Its value
is $W_2^2(\mu,\nu)$, the square of the 2-Wasserstein distance. The
baseline for $n$ pairs is therefore $nW_2^2(\mu,\nu)$. The
\emph{excess matching cost}
$\E M_n(\mu,\nu)-nW_2^2(\mu,\nu)$ measures the additional expected cost
caused by sampling. When $\mu=\nu$, the population cost vanishes and this
is the entire expected matching cost. The goal of this paper is to
determine the excess matching cost up to an error that tends to zero as
$n\to\infty$.

Random matching has been studied through probability and empirical
process theory~\cite{AKT,BobkovLedoux}, and through optimal
transport~\cite{AST,Villani}. The per-pair matching cost $M_n/n$ equals
the squared 2-Wasserstein distance between the two empirical distributions,
each placing mass $1/n$ at every point of its sample.
Thus $M_n/n$ estimates $W_2^2(\mu,\nu)$, and the excess matching cost is
$n$ times its bias.

\subsection{The statistical-physics prediction}\label{subsec:physics}
The statistical-physics approach of Caracciolo, Lucibello, Parisi, and
Sicuro~\cite{CLPS}, developed further by Caracciolo and Sicuro~\cite{CS},
approximates random matching by an electrostatic problem. Source and
target points play the roles of charges of opposite sign. After smoothing
these point sets, each with total mass one, their density difference
becomes the right-hand side of a Poisson equation. The
gradient of its solution describes movement from regions with more
source mass to regions with more target mass. For uniform sampling on an
area-one space, multiplying the integral of this squared movement by $n$
gives the predicted total matching cost.

These statistical-physics predictions concern samples drawn from a common
distribution. A closely related problem transports a continuous
distribution to $n$ independent target points drawn from that same
distribution, each receiving mass $1/n$. This
\emph{semidiscrete} problem has also been extensively
studied~\cite{AST,AG}. Let $\mu_\Omega$ be uniform probability measure on an
area-one geometry $\Omega$, and let $Y_1,\ldots,Y_n$ be drawn independently
from $\mu_\Omega$. We write
\[
 C_n^\Omega=\E M_n(\mu_\Omega,\mu_\Omega),\qquad
 S_n^\Omega=n\E W_2^2\!\left(
       \mu_\Omega,\frac1n\sum_{i=1}^n\delta_{Y_i}\right)
\]
for the \emph{bipartite mean cost} and the \emph{semidiscrete mean cost},
respectively. The factor $n$ makes the two expected costs comparable.

The Poisson equation approach predicts
$C_n^\Omega\sim(\log n)/(2\pi)$.\footnote{For comparison, with unsquared distance
cost, Ajtai, Koml\'os, and Tusn\'ady~\cite{AKT}
established the $\sqrt{n\log n}$ order for the expected minimum total cost
between two independent $n$-point samples drawn uniformly from the unit square.}
Ambrosio, Stra, and
Trevisan~\cite{AST} proved this leading law. Ambrosio and
Glaudo~\cite{AG} sharpened the estimates and extended the bipartite law to
closed surfaces; Ambrosio, Goldman, and Trevisan~\cite{Domain} established
the leading law for general 2-dimensional domains and densities. For
uniform sampling on spaces of the same area, the leading coefficient is
independent of geometry. The effect of geometry on the expected cost
therefore lies in the lower-order terms.

Benedetto, Caglioti, Caracciolo, D'Achille, Sicuro, and
Sportiello~\cite{BCCDS} predict that this geometric contribution is
determined by the Laplace spectrum. Let
$(\lambda_j^\Omega)_{j\ge1}$ be the positive eigenvalues of
$-\Delta_\Omega$. Write
\begin{equation}\label{eq:spectral-finite-part}
 F_\Omega(t)=\sum_{j\ge1}\frac{e^{-t\lambda_j^\Omega}}{\lambda_j^\Omega},
 \qquad
 h_\Omega=\lim_{t\downarrow0}\left\{
       F_\Omega(t)-\frac1{4\pi}\log(1/t)\right\}.
\end{equation}
The \emph{spectral finite part} $h_\Omega$ is the constant left after
subtracting the logarithmic divergence of $F_\Omega(t)$.\footnote{The spectral constant used in~\cite{BCCDS} differs from
$h_\Omega$ by a constant independent of geometry. This shift is absorbed
into the universal remainder and does not affect comparisons between
geometries.}
On the square, the Laplacian has Neumann boundary conditions.

After subtracting the logarithmic terms and the appropriate multiples of
$h_\Omega$, the conjecture
in~\cite[Conjecture~(47)--(48) and Remark~1]{BCCDS} leaves remainders that
are independent of geometry and may grow with $n$; the reported numerical
results suggest finite limits. Constant limits were predicted for the
torus in~\cite{CLPS} and the square in~\cite{CS}. We summarize these
predictions below for the geometries considered here.

\begin{conjecture}[Spectral universality {\cite{CLPS,CS,BCCDS}}]\label{conj:physics}
For uniform sampling on a compact connected surface $\Omega$ of unit area, allowing a smooth boundary or convex corners, the expected costs $C_n^\Omega,S_n^\Omega$ satisfy the following assertions.
\begin{enumerate}
\item \emph{Dependence on geometry.} There are functions $r,r_{\mathrm{sd}}=o(\log n)$, independent of $\Omega$, such that, for each fixed $\Omega$,
\begin{align*}
 C_n^\Omega&=\frac{\log n}{2\pi}+r(n)+2h_\Omega+o(1),\\
 S_n^\Omega&=\frac{\log n}{4\pi}+r_{\mathrm{sd}}(n)+h_\Omega+o(1).
\end{align*}
\item \emph{Constant limits.} These functions converge to finite constants:
\[
 r(n)\longrightarrow\beta,
 \qquad r_{\mathrm{sd}}(n)\longrightarrow\beta_{\mathrm{sd}}.
\]
\end{enumerate}
\end{conjecture}

In the first assertion, the common remainder cancels when two geometries
are compared: $C_n^\Omega-C_n^{\Omega'}$ tends to
$2(h_\Omega-h_{\Omega'})$, even if $r(n)$ grows. The second assertion says
that the common remainder itself has a finite limit.

\subsection{Main results}\label{subsec:main}
We prove the mean-cost formula predicted in Conjecture~\ref{conj:physics}
and extend it to distinct source and target distributions. The population
optimal transport map
determines a new metric governing the excess matching cost. Its area can
differ from that of the original surface, changing even the logarithmic
coefficient.

We first take $\Omega$ to be the closure of a bounded convex domain in
$\R^2$, with Euclidean distance, or a flat torus $\R^2/\Gamma$, with
geodesic distance, where $\Gamma$ is a full-rank lattice. We allow arbitrary
area $|\Omega|$. The unit square $Q=[0,1]^2$ and unit flat torus
$\T=\R^2/\mathbb Z^2$ are particular examples.
Let $\mu=p\,dx$ and $\nu=q\,dx$ be the sampling distributions on $\Omega$.
Let $T$ be the population optimal transport map from $\mu$ to $\nu$,
and write $H=DT$ for its Jacobian matrix. When $T$ is smooth and $H$ is positive definite,
this symmetric matrix defines a metric. Its area
measure, total area, and sampling density are
\[
 dA_H=\sqrt{\det H}\,dx,\qquad
 A_H=\int_{\Omega}dA_H,\qquad
 \rho_H=\frac{p}{\sqrt{\det H}},
\]
so that $\mu=\rho_H\,dA_H$. Write $h_H$ for the spectral finite part
for this metric and density, defined by \eqref{eq:h-general} for
$(\Omega,H)$ with sampling law $\mu$.

\begin{theorem}[Mean-cost formula for matching between distinct distributions]\label{thm:source-target}
Let $\Omega$ be a flat torus or the closure of a bounded convex domain
in $\R^2$ with piecewise smooth boundary and corner angles in $(0,\pi)$.
Let $\mu=p\,dx$ and $\nu=q\,dx$ have strictly positive probability
densities, and suppose their population optimal transport map $T$
is a diffeomorphism. Assume $p,q,T$ are smooth up to the boundary
when it is present, and $H=DT$ is uniformly positive definite.
Then, for a universal constant $\beta\in\R$,
\begin{equation*}
 \begin{split}
 \E M_n(\mu,\nu)-nW_2^2(\mu,\nu)
       ={}&\frac{A_H}{2\pi}\log n+A_H\beta
       +2h_H+\frac1{2\pi}\int_{\Omega}\log\rho_H\,dA_H\\
       &+O_{\Omega,p,q}(n^{-1/180}\log(en)).
 \end{split}
\end{equation*}
\end{theorem}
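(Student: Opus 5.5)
The plan follows the reduction outlined in the abstract. Write $\mu_n=\frac1n\sum_i\delta_{X_i}$ and $\nu_n=\frac1n\sum_j\delta_{Y_j}$. We first linearize around the population map $T=\nabla\varphi$. By Kantorovich duality with the population potentials $(\varphi,\varphi^*)$, we have $M_n\ge n\int\varphi\,d\mu_n+n\int\varphi^*\,d\nu_n$ up to the quadratic normalization. The expectation of the right-hand side is exactly $nW_2^2(\mu,\nu)$, so the excess cost is the expected gap in this duality.

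To second order, this gap equals the semidiscrete-type transport cost in the metric defined by $H$. Concretely, we pull the targets back by $T^{-1}$; the points $T^{-1}(Y_j)$ are i.i.d.\ with law $\mu$. The matching cost $|x-T(z)|^2$, minus its affine part coming from the potentials, is then approximately $\langle H(x-z),x-z\rangle$ for nearby $x,z$. Errors from the cubic Taylor terms are controlled by moment bounds on the displacement. Here we use that typical displacements are of order $\sqrt{\log n/n}$, which follows from the known leading-order results (Ambrosio--Glaudo and Ambrosio--Goldman--Trevisan) applied in the $H$-metric. This reduces the problem to a bipartite matching between two independent $\mu$-samples with a variable quadratic cost given by the metric $H$, with area form $dA_H$ and density $\rho_H$.

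Next, following the abstract, we express this same-law bipartite cost through the heat-smoothing cost. Let $P_t$ be the heat semigroup of the weighted Laplacian for $(\Omega,H,\mu)$. With $t=n^{-1+\epsilon}$ one expects
\[
C_n\approx 2\,n\,\E\,\cL(\mu_n,P_t\mu_n)+\text{(explicit remainder)},
\]
where $\cL$ denotes the transport cost and the factor $2$ reflects the two independent samples. The upper bound comes from concatenating the transports $\mu_n\to P_t\mu_n$, $P_t\mu_n\to P_t\nu_n$ and $P_t\nu_n\to\nu_n$. The middle leg has negligible cost by the linearized $H^{-1}$ estimate once $t\gg 1/n$. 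Cross terms cancel in expectation by independence. The lower bound comes from a dual certificate built from the Poisson potential of $P_t(\mu_n-\nu_n)$.

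Finally, we evaluate $n\,\E\,\cL(\mu_n,P_t\mu_n)$ to $o(1)$ accuracy. The linearized value is $n\,\E\|P_t(\mu_n-\mu)\|_{\dot H^{-1}(\mu)}^2$, which equals $\int_{2t}^\infty(\text{heat trace}-1)\,ds$. This integral produces $F_H(2t)$, hence the terms $\frac{A_H}{4\pi}\log(1/t)+h_H$. The choice of $t$ contributes $\frac{A_H}{4\pi}\log n$ together with the local density term $\frac1{4\pi}\int\log\rho_H\,dA_H$. This last term arises because the effective local scale is set by $n\rho_H$ points per unit $H$-area.

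The nonlinear correction at scale $t$ is handled by subdividing $\Omega$ into cells of side $\ell=n^{-\gamma}$. On each cell, after freezing $H$ and $\rho_H$, the local smoothing cost matches that of uniform sampling on a flat torus at density $n\rho_H$, up to boundary errors. By scaling, the flat model contributes $A_{\mathrm{cell}}\cdot(\text{universal function of } n\rho_H\ell^2)$. The consistency of these contributions across different geometries defines the universal constant $\beta$, which must exist as a limit. Boundary and corner cells are treated with Neumann reflection; the convexity assumption keeps their contribution of order $o(1)$.

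The main obstacle is this last step: proving that the nonlinear part of the smoothing cost is local to $o(1)$ accuracy. This requires exploiting the cancellation between $\mu_n$ and $P_t\mu_n$ inside each cell, so that interactions between cells vanish. I would try to carry this out via a stability estimate for the Monge--Amp\`ere linearization with fluctuation bounds of order $n^{-c}$. Balancing the exponents of $t$ and $\ell$ against these error terms yields a polynomial rate, which I expect to come out as $n^{-1/180}$.
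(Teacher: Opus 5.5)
Your first step is the paper's distinct-to-same reduction: subtract the population dual potentials, pull the targets back by $T^{-1}$, and replace the gap $k$ by $d_H^2$. The cubic error, however, needs the \emph{longest} displacement along both optimal plans, not typical displacements from leading-order laws. The paper gets this by passing to semidiscrete transport, where cyclical monotonicity bounds the longest displacement by $C(\text{cost})^{1/4}$, together with a fourth-moment bound.

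The genuine gap is your second step. Concatenating $\mu_n\to P_t\mu_n\to P_t\nu_n\to\nu_n$ bounds $W_2$ by a sum, not $W_2^2$. The cross terms, such as $2W_2(\mu_n,P_t\mu_n)W_2(P_t\mu_n,P_t\nu_n)$, are nonnegative. Independence kills cross terms only for inner products in a linearized Hilbert expansion, and these legs share $\mu_n$ anyway. The middle leg is not negligible either: $n\E W_2^2(P_t\mu_n,P_t\nu_n)\approx 2F_H(2t)\approx\frac{A_H}{2\pi}\log(1/t)$ is most of the answer at $t=n^{-1+\epsilon}$. Meanwhile $n\E W_2^2(\mu_n,P_t\mu_n)\asymp\epsilon\log n$, so the cross terms inflate even the leading coefficient. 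Your Poisson-potential certificate gives only about $2\{2F_H(t)-F_H(2t)\}=\frac{A_H}{2\pi}\log(2/t)+2h_H+o(1)$. That is short by order $\log(nt)$, so the two bounds do not meet within $o(1)$.

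Two ideas are missing here. The first is the exact averaging identity $\E M_n=n\E W_2^2(\mu,\sum_iW_i\delta_{Y_i})$ with $W\sim\Dir(1,\ldots,1)$. It replaces your two-sample heuristic by transport from the continuous $\mu$ to a single atomic measure. The factor $2$ then enters only through $\chi_n=2n/(n+1)$ in the spectral term, and the Dirichlet smoothing cost has its own constant, so your literal ``$2n\E W_2^2(\mu_n,P_t\mu_n)$'' is unjustified. The second is the source-change comparison. Flowing source points along $-\nabla f_t$ changes the cost by $2f(y)-2f(z)+|\nabla f(z)|^2$ to leading order. This separates into source and target terms, so it is independent of the coupling. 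Hence $W_2^2(\mu,\nu_n)-W_2^2(P_t\nu_n,\nu_n)=2\nu_n(f_t)-E_{f_t}+O\bigl(\|\nabla^2f_t\|_\infty\{W_2^2(\mu,\nu_n)+E_{f_t}\}\bigr)$: the nonlinear parts of the two costs cancel in the difference.

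Your evaluation of the smoothing cost also misplaces the spectral term. The identity $n\E\|P_t(\mu_n-\mu)\|_{\dot H^{-1}}^2=F_H(2t)$ linearizes $W_2^2(\mu,P_t\mu_n)$, not $W_2^2(\mu_n,P_t\mu_n)$. The latter has an atomic endpoint and no finite $\dot H^{-1}$ linearization; indeed the smoothing cost grows with $t$ while $F_H(2t)$ decreases. Correctly, the smoothing cost is $\frac{A_H}{2\pi}\log(nt)+A_H\kappa_\star+\frac1{2\pi}\int\log\rho_H\,dA_H+o(1)$ and contains no $h_H$. All of $2h_H$ comes from the explicit term $\chi_n\{2F_H(t)-F_H(2t)\}$. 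This matters, because subdivision can only evaluate a sum of local contributions.

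Finally, ``consistency across geometries defines $\beta$, which must exist as a limit'' is the crux, not a formality. The paper needs the following:
\begin{itemize}
\item subdivision of the smoothing cost with $o(1)$ error, where $n\E\|\nu_n-P_t\nu_n\|_{-2,\mu}^2\le C\sqrt t$ makes the optimal potential $o(1/n)$ in squared $L^2$ and so pays for the cutoff repair near cell walls;
\item the change-of-time formula, to move each rescaled cell to the canonical time;
\item Poissonization, giving $|b(4N)-b(N)|+|b(9N)-b(N)|\le CN^{-1/120}\log(eN)$;
\item the two-scale lemma, since a single ratio permits log-periodic remainders such as $\sin(2\pi\log N/\log4)$, which the irrationality of $\log4/\log9$ excludes;
\item monotonicity of $d_n(t)/n$, to de-Poissonize.
\end{itemize}
Without these steps your outline does not produce a convergent constant term.
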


On flat tori, smooth positive densities imply the required regularity of $T$.
The term $A_H\beta$ is the area multiplied by the same universal constant
that appears for matching between samples drawn uniformly from the square and torus
(Corollary~\ref{cor:flat-mean}). We call $2h_H$ the \emph{spectral
correction} and the integral term the \emph{density correction}.
The spectral correction depends on both $H$ and $\rho_H$.

\begin{corollary}[The logarithmic coefficient]\label{cor:effective-area}
Under the assumptions of Theorem~\ref{thm:source-target},
$0<A_H\le|\Omega|$, where $|\Omega|$ is the original area, with
equality if and only if $\mu=\nu$.
\end{corollary}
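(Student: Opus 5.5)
We need $0<A_H\le|\Omega|$, with equality iff $\mu=\nu$. The plan is to apply AM--GM pointwise to the eigenvalues of $H$ and then use an integration-by-parts identity to compute $\int_\Omega \operatorname{tr}H\,dx$.

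\emph{Positivity.} Since $H$ is uniformly positive definite and $\Omega$ has positive area, $\sqrt{\det H}$ is bounded below by a positive constant. Hence $A_H>0$.

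\emph{Pointwise bound.} By Brenier's theorem, $T=\nabla\varphi$ with $\varphi$ convex, so $H=D^2\varphi$ is symmetric. Let its eigenvalues be $\lambda_1,\lambda_2>0$. AM--GM gives
\[
\sqrt{\det H}=\sqrt{\lambda_1\lambda_2}\le\tfrac12(\lambda_1+\lambda_2)=\tfrac12\operatorname{tr}H=\tfrac12\diver T ,
\]
with equality exactly where $\lambda_1=\lambda_2$, i.e.\ where $H$ is a scalar multiple of $\Id$. It therefore suffices to show $\int_\Omega \diver T\,dx\le 2|\Omega|$.

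\emph{Integrating the trace.} I would write $T(x)=x+\nabla\psi(x)$ with $\psi=\varphi-|x|^2/2$, so that $\diver T=2+\Delta\psi$.
\begin{itemize}
\item \emph{Torus.} Here $\psi$ is periodic: $T$ is a lift of a map homotopic to the identity (optimal maps on flat tori have the form $x+\nabla\psi$). Hence $\int\Delta\psi=0$ and $\int\diver T=2|\Omega|$.
\item \emph{Convex domain.} By the divergence theorem, $\int_\Omega\diver T\,dx=\int_{\partial\Omega}T\cdot n\,ds$. Since $T$ is a diffeomorphism of $\Omega$ onto itself, $T(x)\in\Omega$ for boundary $x$. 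Convexity of $\Omega$ gives $T(x)\cdot n(x)\le h_\Omega(n(x))=x\cdot n(x)$, where $h_\Omega$ is the support function. Therefore $\int T\cdot n\le\int_{\partial\Omega}x\cdot n\,ds=2|\Omega|$. The corners form a finite set and do not affect the boundary integral.
\end{itemize}
Combining these with the pointwise bound yields $A_H\le|\Omega|$.

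\emph{Equality case.} This is the delicate step. If $\mu=\nu$, then $T=\mathrm{id}$, $H=\Id$ and $A_H=|\Omega|$. Conversely, suppose $A_H=|\Omega|$. Then equality must hold pointwise in AM--GM, since both sides are continuous. So $D^2\varphi=f\,\Id$ for a continuous $f>0$.

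On a connected open set in $\R^2$, a Hessian proportional to the identity forces $f$ to be constant, because $\partial_i(f\delta_{jk})$ must be symmetric in $i,j$. Hence $T(x)=cx+b$.
\begin{itemize}
\item \emph{Torus.} $T-x$ must be periodic, which forces $c=1$, so $T$ is a translation. The quadratic cost is translation invariant on the torus; the translation is optimal only if it is (up to relabeling) the identity. More simply, $\nabla\psi=b$ with $\psi$ periodic forces $b=0$. Thus $T=\mathrm{id}$.
\item \emph{Convex domain.} $T$ maps $\Omega$ onto itself (boundary to boundary, being a diffeomorphism), and the same holds for the homothety $x\mapsto cx+b$. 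Area preservation gives $c^2|\Omega|=|\Omega|$, so $c=1$, and a translation preserving a bounded set is trivial, so $b=0$. (Equality in the boundary inequality also forces $T\cdot n=x\cdot n$.)
\end{itemize}
In both cases $T=\mathrm{id}$, so $\nu=T_\#\mu=\mu$.
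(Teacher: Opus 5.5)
Your proof is correct, but it takes a different route from the paper's.

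The paper bounds $A_H$ through the Jacobian rather than the trace. Because $T$ is a diffeomorphism of $\Omega$ onto itself, $\int_\Omega\det H\,dx=|\Omega|$. Cauchy--Schwarz then gives $A_H\le(|\Omega|\int_\Omega\det H\,dx)^{1/2}=|\Omega|$, with equality only when $\det H\equiv1$. Since $\det H\equiv1$ does not by itself give $H=I$, the paper finishes with a transport argument. In that case $T$ pushes the uniform law on $\Omega$ to itself, and the same quadratic dual potentials certify that $T$ is optimal between these uniform marginals. Uniqueness of the optimal map, together with the zero-cost identity coupling, then forces $T=\Id$.

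You instead combine the pointwise AM--GM bound $\sqrt{\det H}\le\frac12\operatorname{tr}H$ with a bound on $\int_\Omega\diver T\,dx$. On the torus this comes from periodicity of $T-\Id$; on a convex domain it comes from the support-function inequality $T(x)\cdot n(x)\le x\cdot n(x)$. Your equality condition $H=fI$ is likewise not enough on its own. You close it with the rigidity fact that a Hessian proportional to the identity in dimension two has constant factor, followed by periodicity, or by area preservation and boundedness.

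The trade-off is as follows. For the inequality, the paper's version needs neither convexity of $\Omega$ nor any boundary analysis, and its equality case only reuses uniqueness of Brenier maps. Your version makes the equality case a self-contained differential computation and avoids re-invoking optimality for a second pair of marginals. The price is that you use convexity of $\Omega$ (available here), the gradient structure of $T$, and $C^3$ regularity of the potential.

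In the torus step, the sentence about a translation being optimal only up to relabeling is vague and unnecessary. Keep the clean argument instead: $\nabla\psi=b$ with $\psi$ periodic forces $b=0$.
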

Distinct distributions therefore reduce the logarithmic coefficient of
the excess matching cost, after subtracting the population transport
cost $nW_2^2(\mu,\nu)$.

Under the corresponding population transport assumptions,
Theorem~\ref{thm:source-target} extends to compact surfaces with smooth
boundary, convex corners, or no boundary.
Corollary~\ref{cor:surface-mean} gives the mean-cost formula for samples
from a common distribution on these surfaces.
The distinct-to-same-distribution reduction (Theorem~\ref{thm:distinct-to-same})
then transfers this formula to the metric induced by population optimal
transport. Both results also give semidiscrete counterparts.
Here the continuous source distribution $\mu$ is transported to the
empirical target distribution $\nu_n=n^{-1}\sum_j\delta_{Y_j}$, where
$Y_1,\ldots,Y_n$ are drawn independently from $\nu$. The excess
semidiscrete mean cost is
\[
 n\{\E W_2^2(\mu,\nu_n)-W_2^2(\mu,\nu)\}.
\]
It satisfies the corresponding bipartite formula with $\beta$ replaced by
$\beta_{\mathrm{sd}}$ and with the logarithmic term, spectral correction,
and density correction halved.

For a fixed pair of distributions, the population optimal transport map
and its geometric and spectral quantities are computed once. These inputs
enter our formula for the expected excess cost and do not
depend on the sample size or realization. The universal constant $\beta$
can be estimated once from uniform sampling on the unit square and then
used for any pair of sampling distributions satisfying the assumptions
above, including nonuniform ones.

\subsection{Relation to prior work}\label{subsec:prior}
The statistical optimal transport literature studies the bias and
fluctuations of empirical transport costs. For a common smooth positive density
on the square or a flat torus, the semidiscrete mean cost is
$O(\log n)$~\cite{Domain}. Combined with the stability bounds of
Manole, Balakrishnan, Niles-Weed, and
Wasserman~\cite{MBNW}, this gives an $O(\log n)$
bound on the excess bipartite matching cost on the square or a flat torus,
under the regularity assumptions of Theorem~\ref{thm:source-target}.
Theorem~\ref{thm:source-target} sharpens this bound to a precise asymptotic formula,
identifying both the logarithmic coefficient and the constant term with
an $o(1)$ error. Equivalently, it determines the bias of
$W_2^2(\widehat\mu_n,\widehat\nu_n)$ as an estimator of
$W_2^2(\mu,\nu)$ up to an $o(1/n)$ error, where
$\widehat\mu_n,\widehat\nu_n$ are the two empirical distributions.\footnote{For this
estimator,~\cite{MBNW} establishes an $n^{-1/2}$-scale central limit
theorem for distinct distributions in dimension two under regularity
assumptions. Under squared Euclidean cost,~\cite{MNW} gives an
$O(n^{-1/2})$ expected absolute-error bound for arbitrary compactly
supported distributions in $\R^2$.}

For samples from a common distribution, our mean-cost formula proves both
assertions of Conjecture~\ref{conj:physics} with a polynomial convergence
rate. In particular, it proves the flat-torus constant-limit prediction
of~\cite{CLPS}, recalled as an open problem
in~\cite{AST}. For semidiscrete transport on
the flat torus, Goldman, Huesmann, and Otto~\cite{GHO} bounded the
remainder after subtracting the logarithmic term by $O(\log\log n)$;
we prove that this remainder converges and identify its geometric part.
Our semidiscrete-to-smoothing reduction expresses the semidiscrete mean cost
as $n$ times the expected smoothing cost studied in~\cite{AG}, plus an explicit
spectral term and an $o(1)$ error. Subdivision then evaluates the smoothing cost through
its constant term.

When two unit-area geometries are compared, the universal constant
cancels. For the square and torus, this confirms the prediction
in~\cite{BCCDS}:
\[
 C_n^Q-C_n^{\T}=\frac13+\frac{\log2}{\pi}
                    +o(1).
\]
Thus the expected-cost difference is explicit even though $\beta$
remains unevaluated.

Theorem~\ref{thm:distinct-to-same} extends the formula to general
costs satisfying its population regularity and nondegeneracy assumptions
(Corollary~\ref{cor:general-cost-mean}).
For example, with cost $|x-y|^r$, every $r>1$ gives logarithmic excess
cost when one smooth distribution is a fixed nonzero translate of the
other (Example~\ref{ex:power-translation}). Armegioiu, Goldman, Grotto,
and Trevisan~\cite{AGGT} study leading asymptotics for power costs with
a common sampling distribution. For unsquared distance cost,
distinct distributions can
even change the order of the excess cost. Proposition~\ref{prop:linear-product}
gives an example of two distinct smooth positive densities on the square
with excess cost at most $O(\log n)$, below the classical $\sqrt{n\log n}$
order for two samples drawn uniformly from the square~\cite{AKT}.
An analogue of our formula for unsquared distance cost will require new ideas
to explain how distinct sampling distributions determine the leading
order of the excess cost.

\subsection{Key ideas: smoothing cost as a proxy for matching cost}\label{subsec:ideas}
Our key observation is that transporting a heat-smoothed sample back to its
original points gives a different optimal transport problem that is easier
to analyze, yet whose expected cost determines the expected matching cost
to additive $o(1)$ accuracy.
Here \emph{heat smoothing} replaces a sample of point masses by a smooth
density, making the Poisson equation approach of
Section~\ref{subsec:physics} applicable~\cite{AST,AG}.
The \emph{heat time} $t$ controls the spreading distance, of order $\sqrt t$.
The minimum cost of transporting this density back to the original points
is the \emph{smoothing cost}. Prior work~\cite{AST,AG} already studies this
cost, choosing heat times that make it negligible in the leading-order
analysis.
Our approach reveals a different possibility: in two dimensions, the
smoothing cost can contribute at leading order while still determining
the expected total matching cost to additive $o(1)$ accuracy after adding
an explicit spectral term.
We can therefore use stronger smoothing to obtain accurate Poisson
equation estimates, while retaining the nonlinear transport contribution
and evaluating it through the smoothing cost alone.

This would be of limited use if the smoothing cost were just as difficult
to analyze as the original matching cost. We find, however, that the sample
and its own smoothing share local excesses of mass on scales much larger
than the smoothing radius. Their cancellation makes \emph{subdivision}
particularly effective: we divide the domain into smaller regions and
compare the expected total transport costs on the whole domain and within
those regions, with additive $o(1)$ error. Together with comparisons across
heat times, these relations determine the mean-cost formula without
explicitly identifying the optimal allocations. The contributions below
explain these reductions and comparisons and their extension to distinct
sampling distributions.

\paragraph{The bipartite-to-semidiscrete reduction.}
Our first contribution is a general averaging identity
(Theorem~\ref{thm:averaging}) that puts bipartite matching and semidiscrete
transport on the same footing: the bipartite mean cost equals $n$
times the expected minimum cost of transporting one unit of mass from
the source distribution to the target points, with random Dirichlet
shares of mass assigned to those targets. The identity holds in every
dimension and for general integrable costs.

For quadratic cost, let $Y_1,\ldots,Y_n$ be independent targets with
law $\nu$, and let $W$ be independent of the targets and uniform on the
simplex $\{w_i\ge0:\sum_iw_i=1\}$, the $\Dir(1,\ldots,1)$ law.
The averaging identity gives
\[
 \E M_n(\mu,\nu)
 =n\E W_2^2\!\left(\mu,\sum_iW_i\delta_{Y_i}\right).
\]
We call the right-hand side a \emph{weighted semidiscrete mean cost}.
Equal target masses
$W_i=1/n$ give the usual (unweighted) semidiscrete mean cost. Our subsequent
2-dimensional analysis therefore treats a continuous source distribution and either
Dirichlet or equal target masses.

\paragraph{The semidiscrete-to-smoothing reduction.}
Our second contribution gives a precise relation between the (weighted) semidiscrete
mean cost and the expected smoothing cost, with an explicit spectral
term and a controlled error (Theorem~\ref{thm:smoothing-reduction}). This reduction
holds in every dimension on manifolds without boundary for regular costs
with a positive quadratic part as specified in~\eqref{eq:smoothing-cost-assumptions},
with boundary extensions in Section~\ref{subsec:surface-reduction}.
To explain its use in two dimensions, take
$\mu=\nu=\mu_\Omega$ on the unit square or unit torus and use the
Dirichlet target measure $\nu_n=\sum_iW_i\delta_{Y_i}$ above.
Let $P_t$ denote heat smoothing with heat time $t$.
We therefore evaluate
$n\E W_{2,\Omega}^2(P_t\nu_n,\nu_n)$.

The classical approach uses the solution to a Poisson equation, called
the \emph{Poisson potential}, to approximate
$W_{2,\Omega}^2(\mu_\Omega,P_t\nu_n)$, the transport cost between
two continuous distributions~\cite{AST,AG}.
We use the same Poisson potential to evaluate the \emph{source-change effect}
$W_{2,\Omega}^2(\mu_\Omega,\nu_n)-W_{2,\Omega}^2(P_t\nu_n,\nu_n)$:
although both costs involve nonlinear transport to the unsmoothed target
$\nu_n$, we show that their nonlinear contributions cancel in the
difference to the required accuracy. Taking expectations gives
\[
 C_n^\Omega=n\E W_{2,\Omega}^2(P_t\nu_n,\nu_n)
            +2\{2F_\Omega(t)-F_\Omega(2t)\}+o(1),
 \qquad t\to0,\quad \frac{nt}{(\log n)^3}\to\infty.
\]
The explicit spectral formula~\eqref{eq:spectral-finite-part} gives
$2\{2F_\Omega(t)-F_\Omega(2t)\}
=(2\pi)^{-1}\log(2/t)+2h_\Omega+o(1)$.

\paragraph{Subdivision of the smoothing cost.}
Our third contribution evaluates the smoothing cost to additive $o(1)$
accuracy in two dimensions (Theorem~\ref{thm:smoothing-cost}).
To compare different sample sizes, we partition the domain into smaller
squares, called \emph{cells}, and smooth and transport within each cell
separately. We prove that this restriction changes the expected total
cost by only $o(1)$ at a suitable heat time
(Lemma~\ref{g:lem:grid}).

Goldman and Trevisan~\cite{GT} and Ambrosio, Goldman, and
Trevisan~\cite{Domain} use subdivision to evaluate expected matching
costs and obtain their leading-order asymptotics. We apply subdivision
directly to the smoothing cost, exploiting the dependence between $\nu_n$
and its \emph{own} smoothing $P_t\nu_n$: their shared excesses of mass over
$\mu_\Omega$ largely cancel in $\nu_n-P_t\nu_n$, allowing us to bound
the additional cost of confining transport to cells by $o(1)$.

Rescaling a cell gives the same smoothing problem with a smaller sample
size and a different heat time. The
change-of-time formula~\eqref{f:eq:time-change}, obtained by subtracting
the semidiscrete-to-smoothing reduction at two times, removes the latter
change. The resulting relation involves only sample size and yields
\[
 n\E W_{2,\Omega}^2(P_t\nu_n,\nu_n)
 =\frac{\log(nt)}{2\pi}+\kappa_\star+o(1)
\]
throughout the range above, with the same constant on the square and
torus. This also characterizes the effect of heat
smoothing in Wasserstein distance independently of random matching.
Equal target masses admit the same analysis. Shrinking cells extend
this evaluation to smooth positive densities on surfaces, allowing
smooth boundaries and convex corners (Theorem~\ref{thm:surface-smoothing}).
The semidiscrete-to-smoothing reduction then gives the mean-cost
formulas, with $\beta=\kappa_\star+(\log2)/(2\pi)$ in the Dirichlet case,
and proves both assertions of Conjecture~\ref{conj:physics}.

\paragraph{The distinct-to-same-distribution reduction.}
Our fourth contribution reduces matching between distinct distributions
in two dimensions to a same-distribution problem in an induced metric, after subtracting
the population cost, with additive $o(1)$ error
(Theorem~\ref{thm:distinct-to-same}).
Suppose the population optimal transport map $T$ from $\mu$ to $\nu$
is smooth and invertible. Pulling each target back to $Z_j=T^{-1}Y_j$
gives two independent samples from $\mu$. Writing $d(x,y)$ for distance
on the surface, the population dual potentials $a,b$ satisfy
$a(x)+b(y)\le d(x,y)^2$, with equality at $y=T(x)$. Subtracting these
potentials from every matching removes $nW_2^2(\mu,\nu)$ exactly in
expectation. The remaining cost of matching $x$ to the pulled-back target
$z$ is the nonnegative \emph{population dual gap}
\[
 k(x,z)=d(x,Tz)^2-a(x)-b(Tz).
\]
Under the assumptions of Theorem~\ref{thm:distinct-to-same}, this cost
vanishes only at $x=z$ and its quadratic part defines a metric $H$.
In Theorem~\ref{thm:source-target}, $H=DT$.
Replacing $k$ by squared distance in this metric changes the expected
total cost by $o(1)$, completing the reduction. Applying the
same-distribution formula of Corollary~\ref{cor:surface-mean} in this
induced metric gives
Corollary~\ref{cor:general-cost-mean}.

\Needspace{16\baselineskip}
\subsection{Resolution of the Barthe--Bordenave conjecture for quadratic cost}\label{subsec:highdim}
The averaging identity (Theorem~\ref{thm:averaging}) holds in every
dimension and for general integrable costs. The following application illustrates its usefulness
independently of the 2-dimensional mean-cost formula.

For quadratic cost in dimensions $d\ge5$, Barthe and Bordenave~\cite{BB}
obtained upper and lower bounds on the leading-order matching cost for
samples from a common distribution. These bounds involve two constants, defined by matching in a
cube and by a relaxation that permits transport to its boundary. They
conjectured that these constants coincide, giving a single leading-order
law for general densities. Our averaging identity, combined with the
semidiscrete boundary comparison of Huesmann, Goldman, and
Trevisan~\cite[Theorem~4.3]{GHT}, proves this equality. Consequently,
\[
 n^{2/d-1}\min_{\sigma\in S_n}\sum_i|X_i-Y_{\sigma(i)}|^2
 \longrightarrow\beta_d\int_{\R^d}f^{1-2/d}\,dx
\]
almost surely and in every finite $L^q$, for two independent $n$-point samples with any
compactly supported probability density $f$. The constant $\beta_d>0$
depends only on dimension. The proof is given in Appendix~\ref{app:highdim}.

\subsection{Organization and notation}
Section~\ref{sec:averaging} proves the bipartite-to-semidiscrete reduction
for integrable costs and arbitrary source distributions.
Section~\ref{sec:smoothing-reduction} establishes the
semidiscrete-to-smoothing reduction for regular costs in every dimension,
then specializes it to two-dimensional surfaces, allowing boundaries
and convex corners. Section~\ref{sec:smoothing-evaluation} evaluates
the smoothing cost by subdivision in these settings, for equal or
Dirichlet masses. Section~\ref{sec:source-target} derives the same-distribution
mean-cost formulas and proves the distinct-to-same-distribution reduction.
Its example for unsquared distance cost shows
why that case requires a different argument.

For an integrable real-valued function $f$, we write $\nu(f)=\int f\,d\nu$.
We write $\operatorname{Id}$ for the identity map (acting on points) and $I$
for the identity matrix or operator (acting on vectors or functions).
In local coordinates $x=(x_1,\ldots,x_d)$, we write
$DT=(\partial T_i/\partial x_j)_{i,j}$ for the Jacobian of a map $T$
and $D^2f=(\partial^2f/\partial x_i\partial x_j)_{i,j}$ for the Hessian
matrix of $f$. A subscript specifies the argument
being differentiated: $D_x^2f(x,y)$ differentiates in $x$, holding $y$ fixed.
On a Riemannian manifold $(M,g)$, we write $\nabla f$, $\nabla^2f$, and
$\Delta f=\operatorname{div}(\nabla f)$ for the gradient, Riemannian Hessian, and
Laplacian defined by $g$.
In limits as $n\to\infty$, the geometry and sampling distributions are fixed.
For real sequences $f(n),g(n)$ with $g(n)>0$, $f=O(g)$ means
$|f(n)|\le Cg(n)$ for all $n\ge1$, with $C$ independent of $n$.
For positive $f,g$, we write $f\asymp g$ if $cg(n)\le f(n)\le Cg(n)$
for all $n\ge1$ and constants $0<c\le C$ independent of $n$.
Subscripts on $O$ specify allowed dependence: for example, in
$O_{\Omega,p,q}(g)$, $C$ may depend on $\Omega,p,q$.
We write $f=o(g)$, equivalently $f\ll g$ or $g\gg f$, if
$f(n)/g(n)\to0$ as $n\to\infty$, and $f\sim g$ if $f(n)/g(n)\to1$.

\section{Bipartite-to-semidiscrete reduction}\label{sec:averaging}
This section concerns the problem of matching $n$ independent source points with law $\mu$ to $n$ fixed targets.
We prove that their expected optimal matching cost equals $n$ times the
expected minimum cost of allocating $\mu$ among those targets with
Dirichlet masses. The argument allows arbitrary source distributions
and integrable costs. We first define the allocation cost for prescribed
target masses.

Label the targets $1,\ldots,n$ and fix measurable cost functions
$c_1,\ldots,c_n:\mathcal X\to\R$, where $c_j(x)$ is the cost of sending
source $x$ to target $j$. Let $\mu$ be a probability measure on $\mathcal X$ with
$\E_\mu\max_j|c_j|<\infty$. For an $n$-dimensional target mass vector
$w=(w_1,\ldots,w_n)$ with $w_j\ge0$ and $\sum_jw_j=1$, define
\begin{equation}\label{eq:allocation-cost}
 \mathsf T(\mu,w)=
 \inf_{\substack{\alpha_j\ge0,\ \sum_j\alpha_j=1\ \mu\text{-a.e.}\\
                  \mu(\alpha_j)=w_j}}
       \int\sum_{j=1}^n c_j(x)\alpha_j(x)\,d\mu(x).
\end{equation}
Here $\alpha_j(x)$ is the fraction of source mass at $x$ sent to label $j$,
and $\mu(\alpha_j)=\int\alpha_j\,d\mu$ is the total mass that label receives.
Allowing fractions makes the definition valid even when $\mu$ has atoms.
For fixed targets $y_1,\ldots,y_n$ and quadratic costs
$c_j(x)=d(x,y_j)^2$, $\mathsf T(\mu,w)$ is the squared Wasserstein transport
cost from $\mu$ to the weighted target measure $\sum_jw_j\delta_{y_j}$.
Both measures have total mass one, so we compare $n\mathsf T$ with the
total cost of $n$ matched pairs.

We choose the mass vector uniformly from the simplex
$\{w_j\ge0:\sum_jw_j=1\}$. This is the Dirichlet distribution with all
parameters equal to one, denoted by $\Dir(1,\ldots,1)$.

\begin{theorem}[Exact averaging identity]\label{thm:averaging}
Let $\mu$ be a probability measure and $c_1,\ldots,c_n\in L^1(\mu)$.
Let $X_1,\ldots,X_n$ be independent with law $\mu$, and let
$W\sim\Dir(1,\ldots,1)$. Then
\begin{equation}\label{eq:averaging}
 \E\min_{\sigma\in S_n}\sum_i c_{\sigma(i)}(X_i)
       =n\E_W\mathsf T(\mu,W).
\end{equation}
In particular, for fixed targets $y_1,\ldots,y_n$ and integrable quadratic costs,
\[
 \E_X\min_{\sigma\in S_n}\sum_i d(X_i,y_{\sigma(i)})^2
 =n\E_W W_2^2\!\left(\mu,\sum_jW_j\delta_{y_j}\right).
\]
\end{theorem}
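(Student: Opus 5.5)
The plan is to reduce both sides of \eqref{eq:averaging} to a statement about an atomless source on $[0,1]$. Then I would prove the identity in a finite, combinatorial form and pass to the limit. The case $n=2$ guides the strategy and can be checked directly. With $f=c_1-c_2$ and $F^{-1}$ the quantile function of $f$ under $\mu$, minimizing over $\alpha_1$ gives $\mathsf T(\mu,(w,1-w))=\mu(c_2)+\int_0^wF^{-1}(s)\,ds$. Averaging over $w$ uniform on $[0,1]$ gives $\mu(c_2)+\int_0^1(1-s)F^{-1}(s)\,ds$. The left side is $2\mu(c_2)+\E\min(f(X_1),f(X_2))=2\mu(c_2)+\int_0^12(1-s)F^{-1}(s)\,ds$, so the identity holds for $n=2$. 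The mechanism is that the minimum of $n$ independent uniforms has the same law as a single Dirichlet coordinate. The general proof should make this mechanism multidimensional.

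\emph{Step 1 (lifting).} Replace $\mathcal X$ by $\mathcal X\times[0,1]$ and $\mu$ by $\mu\otimes\mathrm{Leb}$, with costs $c_j(x,u)=c_j(x)$. Both sides are unchanged: the matching cost ignores $u$. For $\mathsf T$, fractional allocations $\alpha_j(x)$ correspond exactly to deterministic allocations of the lifted measure, by splitting the fibre $\{x\}\times[0,1]$ into intervals of lengths $\alpha_j(x)$. The infimum in \eqref{eq:allocation-cost} is then attained by a partition into cells $\Omega_j$ with $\mu(\Omega_j)=w_j$. Through a measurable isomorphism we may take $\mu$ to be Lebesgue measure on $[0,1]$.

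\emph{Step 2 (finite version).} Discretize $[0,1]$ into $N$ equal intervals and replace each $c_j$ by its average on each interval. This gives a source uniform on $N$ atoms $\{1,\dots,N\}$ with cost matrix $c_j(a)$. For the source side, draw $X_1,\dots,X_n$ uniformly from the atoms. For the target side, take integer capacities $k=(k_1,\dots,k_N)$ that form a uniformly random composition of $N$ into $n$ nonnegative parts (Bose--Einstein statistics). Then $k/N\to\Dir(1,\dots,1)$ in law, and $\mathsf T(\mu_N,k/N)$ is an integral transportation problem, by total unimodularity. The discrete identity I would aim for has the form
\[
\E_X\min_\sigma\sum_i c_{\sigma(i)}(X_i)=n\,\E_k\,\mathsf T(\mu_N,k/N)+o_N(1).
\]
I would try to prove it by a sequential insertion argument. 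Add the $n$ sources one at a time and the $N$ atoms one at a time. Use the Pólya-urn description of a uniform composition: it is the law of occupation counts when each new atom joins label $j$ with probability proportional to one plus the current count. I would then compare marginal cost increments via the dual potentials. By the envelope theorem, $\partial_{w_j}\mathsf T(\mu,w)$ is the $j$-th Kantorovich potential $\varphi_j(w)$. On the matching side, exchangeability of the $X_i$ reduces the left side to $n$ times the expected cost paid by a tagged source. Integrating $w\mapsto\mathsf T(\mu,w)$ against the uniform density on the simplex, with the order-statistics identity from the $n=2$ computation, should turn $n\E_W\mathsf T$ into the expected tagged-source cost.

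\emph{Step 3 (limits).} $\mathsf T(\mu,\cdot)$ is convex and Lipschitz on the simplex, with constant bounded by $2\E_\mu\max_j|c_j|$. The matching cost is controlled by the same integrable envelope. So the discretization and the $N\to\infty$ limit pass by dominated convergence and $L^1$ approximation of the $c_j$ by functions that are piecewise constant on dyadic intervals. The quadratic-cost corollary follows by taking $c_j=d(\cdot,y_j)^2$.

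I expect the main obstacle to be the core combinatorial identity in Step~2 for general $n$. In particular, I need to show that the uniform measure on the simplex is exactly the right mixing law in higher dimensions, and not just for $n=2$. If the insertion argument resists, my fallback is a direct computation. For fixed $w$, write the semidiscrete optimum via the dual $\sup_\varphi\{\sum_jw_j\varphi_j+\mu(\min_j(c_j-\varphi_j))\}$. Integrate by parts in $w$ against the flat density on the simplex, using that $\sum_jw_j\partial_j\mathsf T-\mathsf T=-\mu(\min_j(c_j-\varphi_j))$. Then match the resulting integral with the assignment dual of the $n$-sample problem, which has the same Laguerre-cell structure.
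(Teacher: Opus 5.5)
Your $n=2$ check is correct (it is the uniform-rank argument the paper also uses), and Steps~1 and~3 are routine approximation. There is, however, a genuine gap: the identity you aim for in Step~2 is the entire content of the theorem, and none of the ingredients you list produces it. Neither a P\'olya urn on the capacities of a discretized source nor the envelope formula $\partial_{w_j}\mathsf T=\varphi_j$ ties the uniform law on the simplex to the $n$-sample assignment problem once $n\ge3$. The order-statistics identity behind your $n=2$ computation has no stated multidimensional analogue. Your fallback via integration by parts on the simplex stops exactly where it would have to be matched with the assignment dual.

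Your finite identity is also not exact at fixed $N$. Take $n=N=2$, and let $f=c_1-c_2$ take values $f_a<f_b$ on the two atoms. After removing the common term $c_2(a)+c_2(b)$, the matching side equals $\tfrac34f_a+\tfrac14f_b$, while $n$ times the composition average equals $\tfrac23f_a+\tfrac13f_b$. So there is no exact combinatorial identity to find at that level. Proving it with $o_N(1)$ error is no easier than the theorem itself.

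The missing idea is to couple the weights to the sample instead of integrating against a fixed density. Condition on $X_1,\ldots,X_{n-1}$, and let $D_r$ be the minimum cost of matching them to all targets except $r$. A last source at $x$ is then optimally matched to $j(x)=\operatorname{argmin}_r\{c_r(x)+D_r\}$. The resulting Laguerre cells, with masses $A_r$, solve the allocation problem for $w=A$, because the leave-one-out costs $D_r$ act as dual potentials: for every feasible $\alpha$,
\[
 \int c_{j(x)}(x)\,d\mu(x)=\int\min_r\{c_r+D_r\}\,d\mu-\sum_rD_rA_r\le\int\sum_rc_r\alpha_r\,d\mu .
\]
Exchangeability then gives $\E\min_\sigma\sum_ic_{\sigma(i)}(X_i)=n\E\,\mathsf T(\mu,A)$ exactly.

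What remains is the surprising fact that $A\sim\Dir(1,\ldots,1)$ for every cost and every source law. The paper proves this by dropping $k$ further independent sources into the cells, so that their counts are multinomial given $A$. It then shows that each composition of $k$ into $n$ parts arises from exactly one $(n-1)$-element base among the $n-1+k$ sources. The counts are therefore uniform over compositions, which yields the Dirichlet mixed moments. Your Bose--Einstein intuition does appear, but as the law of these counts, not of the capacities of a discretized source.

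Two smaller points:
\begin{itemize}
\item $\mathsf T(\mu,\cdot)$ need not be Lipschitz for merely integrable costs. For $n=2$ its derivative in $w$ is the quantile $F^{-1}(w)$, which is unbounded near the endpoints. Uniform integrability still gives the continuity you need.
\item The isomorphism with Lebesgue measure needs a standard Borel space. You can arrange this by passing to the law of $(c_1(X),\ldots,c_n(X))$ on $\R^n$.
\end{itemize}
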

The costs $(c_1(X_i),\ldots,c_n(X_i))$ for one source may be dependent;
atoms and ties are allowed. When the targets are random, sample the
Dirichlet weights independently and then average over the target sites.
This gives the bipartite mean cost.

\subsection{Proof of the averaging identity}
Fix $n-1$ source points. For each possible location $x$ of the final source,
record the target matched to $x$ in an optimal matching. These choices
define a rule for allocating $\mu$ among the targets. We show that this
rule solves the semidiscrete transport problem with the resulting target
masses, relate its expected cost to the expected total matching cost,
and identify the Dirichlet law of the masses.

\begin{proof}[Proof of Theorem~\ref{thm:averaging}]
The case $n=1$ is immediate. For $n\ge2$, first assume that each difference
$c_j(X)-c_k(X)$, for $X\sim \mu$ and $j\ne k$, has an atomless law.
We remove this assumption by a small random perturbation at the end.

Take $X_1,\ldots,X_{n-1}$ as a base. For each target $j$, let $D_j$
be the minimum cost of assigning the base sources to all targets except
$j$. If the added source $x$ uses target $j$, the full assignment has
minimum cost $c_j(x)+D_j$. Its optimal target is therefore given by the
map $j:\mathcal X\to\{1,\ldots,n\}$,
\begin{equation}\label{eq:insertion}
 j(x)=\operatorname*{argmin}_{1\le r\le n}\{c_r(x)+D_r\}.
\end{equation}
The minimizer is unique $\mu$-almost everywhere, almost surely over the base.
The regions $\{x:j(x)=j\}$ are generalized \emph{Laguerre cells}~\cite{KMT};
write $A_j=\mu\{x:j(x)=j\}$ for their masses and $A=(A_1,\ldots,A_n)$.

We now show that the rule $x\mapsto j(x)$ achieves the minimum transport
cost $\mathsf T(\mu,A)$. For any feasible allocation
$(\alpha_1,\ldots,\alpha_n)$ in~\eqref{eq:allocation-cost} with $w=A$,
\[
\begin{aligned}
 \int c_{j(x)}(x)\,d\mu(x)
 &=\int\min_j\{c_j(x)+D_j\}\,d\mu(x)-\sum_jD_jA_j\\
 &\le\int\sum_j[c_j(x)+D_j]\alpha_j(x)\,d\mu(x)-\sum_jD_jA_j\\
 &=\int\sum_jc_j(x)\alpha_j(x)\,d\mu(x).
\end{aligned}
\]
The cell allocation $\alpha_j(x)=\mathbf1_{\{j(x)=j\}}$ is itself feasible,
so $\int c_{j(x)}(x)\,d\mu(x)=\mathsf T(\mu,A)$.
For the independent final source $X_n$, this integral is its expected
matched-pair cost conditional on the base. Since the sources are i.i.d.,
their joint law is unchanged by relabeling them (a property called \emph{exchangeability}).
Each source therefore has the same expected matched-pair cost, giving
\begin{equation}\label{eq:averaging-before-law}
 \E\min_{\sigma\in S_n}\sum_i c_{\sigma(i)}(X_i)
 =n\E[c_{j(X_n)}(X_n)]
 =n\E\mathsf T(\mu,A).
\end{equation}

It remains to identify the law of $A$. Surprisingly, under the
atomless-difference assumption, this law is always $\Dir(1,\ldots,1)$,
regardless of the cost functions and source distribution.
For two targets, it follows from
the uniform rank of a random observation. Put $H(x)=c_1(x)-c_2(x)$ and
let $X$ be the one base source.
Write $F_H$ for the distribution function of $H(X)$.
The first target is chosen when $H(x)\le H(X)$, so
$A_1=F_H(H(X))$ is uniform on $[0,1]$ and $A_2=1-A_1$.
Thus $A\sim\Dir(1,1)$.

For general $n$, take any integer $k\ge1$ and sample additional sources
$X'_1,\ldots,X'_k$ independently from $\mu$, independently of the base.
Each added source chooses its target using $j(\cdot)$, without competing
with other added sources (so several may choose the same target).
Let $K_j$ count the added sources assigned to target $j$.
Conditional on the base, each added source independently chooses target
$j$ with probability $A_j$, so
\[
 (K_1,\ldots,K_n)\mid(X_1,\ldots,X_{n-1})
 \sim\operatorname{Multinomial}(k;A_1,\ldots,A_n).
\]
On the other hand,
Lemma~\ref{lem:counts} in Appendix~\ref{app:averaging-proof} shows that,
among the $n-1+k$ sources, each
nonnegative count vector with total $k$ occurs for exactly one choice of
the base subset. By exchangeability, we may choose the base uniformly
among these subsets without changing the count distribution.
The bijection therefore makes all count vectors equally likely.
For $r_1+\cdots+r_n=k$,
\[
 \frac{k!}{\prod_jr_j!}\E\prod_jA_j^{r_j}
       =\binom{n+k-1}{n-1}^{-1},
 \qquad
 \E\prod_jA_j^{r_j}
       =\frac{(n-1)!\prod_jr_j!}{(n+k-1)!}.
\]
These are the mixed moments of $\Dir(1,\ldots,1)$.
Since probability laws on the compact simplex are determined by their
mixed moments, $A\sim\Dir(1,\ldots,1)$.

Substituting this law into~\eqref{eq:averaging-before-law}
proves~\eqref{eq:averaging} under the atomless-difference assumption.

To allow atoms and ties, augment each source independently by a vector
$Z$ uniform on $[0,1]^n$ and use costs
$c_j^\epsilon(x,z)=c_j(x)+\epsilon z_j$. Their pairwise differences are
atomless. The assignment cost increases by at most $n\epsilon$. Write
$\mathsf T_\epsilon(w)$ for the allocation cost with the augmented source distribution
and perturbed costs. Then $0\le\mathsf T_\epsilon(w)-\mathsf T(\mu,w)\le\epsilon$
for every $w$ (use an original allocation independent of $z$ for the upper
bound, and average an augmented allocation over $z$ for the lower bound).
Apply the identity
to the perturbed costs and let $\epsilon\downarrow0$.
\end{proof}

\section{Semidiscrete-to-smoothing reduction}\label{sec:smoothing-reduction}\label{sec:interpretation}
The averaging identity in Section~\ref{sec:averaging} puts bipartite
matching and semidiscrete transport on the same footing. We therefore
treat weighted and unweighted semidiscrete mean costs together, writing
$\nu_n=\sum_iw_i\delta_{Y_i}$ with either Dirichlet masses
$w\sim\Dir(1,\ldots,1)$ independent of the positions or equal masses
$w_i=1/n$. This section reduces the evaluation of either mean cost to the
evaluation of the smoothing cost $W_2^2(P_t\nu_n,\nu_n)$, where $P_t$
denotes heat smoothing at time $t$.

We first consider quadratic cost, a uniform source distribution
$\mu=\mu_\Omega$ on the square or torus, and independent target
positions $Y_i\sim\mu$. Write $P_t\nu_n=q_t\mu$,
so $q_t$ is the smoothed density relative to $\mu$. Let the
\emph{Poisson potential} $f_t$ solve the \emph{Poisson equation}
\[
 -\Delta f_t=q_t-1,\qquad \mu(f_t)=0,
\]
with Neumann or periodic boundary conditions, respectively. The
right-hand side $q_t-1$ is the difference between the smoothed and uniform
densities. The classical approach uses this potential to approximate
transport between the two continuous distributions~\cite{AST,AG}:
\[
 W_2^2(\mu,P_t\nu_n)
 \approx \int|\nabla f_t|^2\,d\mu.
\]
To reach the semidiscrete cost $W_2^2(\mu,\nu_n)$, this approach then
uses the triangle inequality. Together with Cauchy--Schwarz, it gives
\[
 n\E W_2^2(\mu,\nu_n)
 \le n\E W_2^2(\mu,P_t\nu_n)+n\E W_2^2(P_t\nu_n,\nu_n)
       +2n\sqrt{\E W_2^2(\mu,P_t\nu_n)\,
                 \E W_2^2(P_t\nu_n,\nu_n)}.
\]
The cross term restricts the choice of heat time. When
$n\E W_2^2(\mu,P_t\nu_n)\asymp\log n$, recovering the leading logarithmic
coefficient from this bound requires
$n\E W_2^2(P_t\nu_n,\nu_n)=o(\log n)$.
Obtaining the additive $o(1)$ accuracy needed for
Conjecture~\ref{conj:physics} requires the stronger bound
$n\E W_2^2(P_t\nu_n,\nu_n)=o(1/\log n)$.
This condition is possible only if $t=o(1/n)$
(Appendix~\ref{app:smoothing-small-time}), below the range where
the classical estimates justify the preceding Poisson equation
approximation~\cite[Remark~6.1]{AG}.

We instead use the same Poisson potential to evaluate the
\emph{source-change effect}, the difference between the costs of transport
from $\mu$ and from $P_t\nu_n$ to the common discrete target $\nu_n$:
\[
 W_2^2(\mu,\nu_n)-W_2^2(P_t\nu_n,\nu_n)
 \approx 2\nu_n(f_t)-\int|\nabla f_t|^2\,d\mu.
\]
This approximation has expected absolute error $o(1/n)$ throughout
the range $0<t\le1/2$, $nt/(\log n)^3\to\infty$.
This range allows large heat times such as $t=n^{-5/6}$ used for
subdivision in Section~\ref{sec:smoothing-evaluation}, at which
$n\E W_2^2(P_t\nu_n,\nu_n)\asymp\log n$ is of the \emph{same order}
as $n\E W_2^2(\mu,\nu_n)$.\footnote{At
this heat time, the bound from the triangle inequality has an inflated leading logarithmic
coefficient even if every term in that bound is evaluated exactly.}
The expectation of the right-hand side is explicitly computable from
the spectrum. Thus evaluating $n\E W_2^2(\mu,\nu_n)$ reduces to
evaluating $n\E W_2^2(P_t\nu_n,\nu_n)$ with the same additive $o(1)$
accuracy.

\subsection{Reduction for general costs and dimensions}\label{subsec:general-smoothing}
We now establish a semidiscrete-to-smoothing reduction in every dimension
for costs with a positive quadratic part. This quadratic part and the
sampling distribution determine the heat smoothing and Poisson equation.
Let $(M,g)$ be a compact, connected, smooth Riemannian manifold of dimension
$d$ without boundary, and let $\mu=\rho\,d\mathrm{vol}_g$ be a probability measure,
with $\rho>0$ smooth and $d\mathrm{vol}_g$ the Riemannian volume measure.
Suppose $k:M\times M\to[0,\infty)$ is continuous, uniformly Lipschitz in
its first variable, smooth near the diagonal, and satisfies
\begin{equation}\label{eq:smoothing-cost-assumptions}
 k(y,y)=0,\qquad k(x,y)>0\ (x\ne y),\qquad
 D_xk(y,y)=0,\qquad \tfrac12D_x^2k(y,y)=g_y.
\end{equation}
Here $g_y(u,v)$ is the inner product of tangent vectors $u,v$ at $y$;
for the Euclidean metric it is $u\cdot v$. The last condition gives
$k(x,y)=d_g(x,y)^2+O(d_g(x,y)^3)$ near the diagonal.
Squared geodesic distance satisfies these assumptions; symmetry of $k$
is not required.

Write $\OT_k(\eta,\zeta)=\inf_{\pi\in\Pi(\eta,\zeta)}\int k\,d\pi$ and
let $\nu_n=\sum_iw_i\delta_{Y_i}$, with independent $Y_i\sim\mu$
and equal or Dirichlet masses as above.
To preserve $\mu$, use heat smoothing $P_t=e^{-t\cL}$ with weighted
Laplacian $\cL=-\rho^{-1}\diver_g(\rho\nabla_g)$.
Write $P_t\nu_n=q_t\mu$.
The Poisson potential $f_t$ now solves
\[
 \cL f_t=q_t-1,\qquad \mu(f_t)=0.
\]
For uniform sampling, $\cL=-\Delta_g$, recovering the Poisson equation
used above. For nonuniform sampling, the factor $\rho$ in $\cL$ accounts
for the sampling density.

Let $\lambda_j>0$ be the positive eigenvalues of $\cL$ in $L^2(\mu)$, and put
\[
 F(t)=\sum_{j\ge1}\frac{e^{-t\lambda_j}}{\lambda_j},\qquad
 \chi_n=n\E\sum_iw_i^2=
 \begin{cases}1,&\text{equal masses},\\2n/(n+1),&\text{Dirichlet masses}.
 \end{cases}
\]

\begin{theorem}[Semidiscrete-to-smoothing reduction]\label{thm:smoothing-reduction}
Under the assumptions of Section~\ref{subsec:general-smoothing}, there is a constant $C>0$ such that,
for every $n\ge2$, if $0<t\le1/2$ and
$nt^{d/2}\ge C\log n$, then
\begin{equation}\label{eq:general-smoothing-reduction}
 \begin{split}
 n\E\bigl[\OT_k(\mu,\nu_n)-\OT_k(P_t\nu_n,\nu_n)\bigr]
 &=\chi_n\{2F(t)-F(2t)\}+e_{n,k}(t),\\
 |e_{n,k}(t)|&\le C R_d(n)\sqrt{\frac{\log n}{nt^{d/2}}}+Cn^{-10}.
 \end{split}
\end{equation}
Here $R_1(n)=1$, $R_2(n)=\log(en)$, and $R_d(n)=n^{1-2/d}$ for $d\ge3$.
The constant $C$ depends only on the fixed geometry, density, and cost.
In particular, $e_{n,k}(t)=o(1)$ whenever
$nt^{d/2}/[R_d(n)^2\log n]\to\infty$ and $t\le1/2$.
\end{theorem}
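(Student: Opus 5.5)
We will prove the reduction by sandwiching $\OT_k(\mu,\nu_n)-\OT_k(P_t\nu_n,\nu_n)$ between two quantities built from the discrete target's dual potential and the Poisson potential $f_t$. The nonlinear transport to $\nu_n$ appears on both sides of the difference, so it should cancel; what remains is the first-order change in the source measure.

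\emph{Upper bound.} Let $\psi$ be an optimal Kantorovich potential for transport from $P_t\nu_n$ to $\nu_n$, so that $\OT_k(P_t\nu_n,\nu_n)=\int\psi\,dP_t\nu_n+\sum_iw_i\psi^c(Y_i)$ with $\psi(x)=\min_i\{k(x,Y_i)-\psi^c(Y_i)\}$. The same pair is admissible for source $\mu$, and this admissibility gives
\[
\OT_k(\mu,\nu_n)\ge \OT_k(P_t\nu_n,\nu_n)+\int\psi\,d(\mu-P_t\nu_n).
\]
Symmetrically, a potential $\phi$ optimal for $(\mu,\nu_n)$ gives
\[
\OT_k(\mu,\nu_n)-\OT_k(P_t\nu_n,\nu_n)\le \int\phi\,d(\mu-P_t\nu_n)=\int \phi\,(1-q_t)\,d\mu=-\int\phi\,\cL f_t\,d\mu .
\]
The difference is thus squeezed between $\int\psi\,\cL f_t\,d\mu$ and $\int\phi\,\cL f_t\,d\mu$, both with a minus sign. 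Integrating by parts turns these into $-\int \langle\nabla\phi,\nabla f_t\rangle\,d\mu$ and the analogous expression for $\psi$.

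\emph{Identifying the gradients.} We then show that on most of the mass, $\nabla\phi(x)$ and $\nabla\psi(x)$ approximate $D_xk(x,Y_{i(x)})\approx -2\exp_x^{-1}(Y_{i(x)})$. Each should be close to a fixed reference field, namely $2\nabla f_t$ plus a ``local displacement'' to the assigned atom, which is common to both. The local part pairs with $\nabla f_t$ to give $2\nu_n(f_t)$-type terms. Concretely, we aim to show
\[
\int\phi\,d(\mu-P_t\nu_n)=2\nu_n(f_t)-\int|\nabla f_t|^2\,d\mu+\text{error},
\]
and the same for $\psi$, with error controlled by the expected size of $|\nabla f_t - \text{(displacement-regularized field)}|$. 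The $\psi$ side is easier, because $P_t\nu_n$ is close to the atoms. For the $\phi$ side we would use the known $L^\infty$ displacement bound $\max|x-T(x)|\lesssim(\log n/n)^{1/d}$ with high probability, together with the identity $\int(\phi(x)-\phi(y))$ along transport. The $n^{-10}$ term covers the complementary bad event, where we use only the Lipschitz bound on $k$.

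\emph{Expectation computation.} By the spectral expansion, $f_t=\sum_j\lambda_j^{-1}e^{-t\lambda_j}\langle\nu_n-\mu,e_j\rangle e_j$. Therefore $\E\,\nu_n(f_t)=\sum_j\lambda_j^{-1}e^{-t\lambda_j}\E[\nu_n(e_j)^2]=\frac{\chi_n}{n}F(t)$, and $\E\int|\nabla f_t|^2d\mu=\frac{\chi_n}{n}F(2t)$. Together these give $\chi_n\{2F(t)-F(2t)\}$ after multiplying by $n$. This step is routine, using $\E[\nu_n(e_j)\nu_n(e_l)]=\delta_{jl}\,\E\sum_iw_i^2$ for mean-zero orthonormal eigenfunctions.

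\emph{Main obstacle.} The crux is the error estimate. We must show that the second-order remainder is $O(R_d(n)\sqrt{\log n/(nt^{d/2})})/n$; this is the remainder from replacing the potential difference by the linearization in $f_t$, including the cubic part of $k$. We would bound it by Cauchy--Schwarz. One factor is a transport-cost-sized quantity, $\E\,\OT\lesssim R_d(n)/n$. The other is a uniform bound $\|\nabla f_t\|_\infty^2\lesssim \log n/(nt^{d/2})$, obtained from heat-kernel concentration via Bernstein plus a union bound over a net, using $nt^{d/2}\ge C\log n$. Making the sandwich tight enough that only this product appears is the delicate point, and the cancellation between the $\phi$ and $\psi$ sides is what must be exploited there.
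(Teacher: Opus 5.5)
Your duality sandwich cannot reach the required precision. Its two sides differ by a second-order term that is as large as the answer. Write $\gamma=\|\nabla^2f_t\|_\infty$, $E=\int|\nabla f_t|^2\,d\mu$, $L=\OT_k(\mu,\nu_n)$, $D=\OT_k(P_t\nu_n,\nu_n)$. Let $T,T_t$ be the optimal maps from $\mu$ and from $P_t\nu_n=q_t\mu$ to $\nu_n$. Since $\nabla\phi(x)=\nabla_xk(x,Tx)\approx-2\exp_x^{-1}(Tx)$, and the Taylor remainder of $f_t$ over a segment of length $r$ is $O(\gamma r^2)$,
\[
 \int\phi\,d(\mu-P_t\nu_n)=-\int\langle\nabla\phi,\nabla f_t\rangle\,d\mu
 =2\int\{f_t(Tx)-f_t(x)\}\,d\mu(x)+O(\gamma L)=2\nu_n(f_t)+O(\gamma L),
\]
using $T_\#\mu=\nu_n$ and $\mu(f_t)=0$. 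On the $\psi$ side, the same expansion is integrated against $\mu$, but $T_t$ pushes forward $q_t\mu$. Writing $\mu=q_t\mu-(q_t-1)\mu$ and using $\int f_tq_t\,d\mu=E$ gives
\[
 \int\psi\,d(\mu-P_t\nu_n)=2\nu_n(f_t)-2E+O(\gamma(D+E)).
\]
So your bounds only place $L-D$ in an interval of width $2E$, whose midpoint is the true value $2\nu_n(f_t)-E$. Moreover $n\E E=\chi_nF(2t)$ is of the same order as the answer in every dimension (of order $\log(1/t)$ when $d=2$). Your claim that both endpoint integrals equal $2\nu_n(f_t)-E$ up to small error is therefore false. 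No cancellation between the $\phi$ and $\psi$ sides can repair an upper and a lower bound that miss in opposite directions. The missing $-E$ is the curvature term of the convex function $s\mapsto\OT_k((1-s)\mu+sP_t\nu_n,\nu_n)$, which subgradients at the two endpoints cannot see.

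What is needed is a genuinely second-order comparison. The paper (Lemma~\ref{s:lem:split}) moves each source point along the flow $S_s$ of $-\nabla f_t$; the map $(z,y)\mapsto(S_1z,y)$ bijects couplings with $\nu_n$. The pointwise change is
\[
 k(S_1z,y)-k(z,y)=2f_t(y)-2f_t(z)+|\nabla f_t(z)|^2+O(\gamma\{d_g(z,y)^2+|\nabla f_t(z)|^2\}).
\]
Its main part does not depend on the coupling, and its $|\nabla f_t|^2$ term supplies the missing $-E$. A negative-Sobolev transport bound then absorbs the small mismatch between the flowed density and $q_t$.

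Alternatively, your duality idea can be rescued by integrating the first variation along $\mu_s=(1-s)\mu+sP_t\nu_n$. Convexity gives $L-D=\int_0^1\int\phi_s\,d(\mu-P_t\nu_n)\,ds$, with $\phi_s$ optimal for $(\mu_s,\nu_n)$. The computation above gives $2\nu_n(f_t)-2sE+O(\gamma(L+E))$ for each $s$, which integrates to the right value.

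In both routes the remainder is linear in the Hessian, $O(\gamma(L+E))$, not $\|\nabla f_t\|_\infty^2$ times a cost. It is $\gamma$ whose supremum concentrates at scale $\sqrt{\log n/(nt^{d/2})}$: the kernel representing $\nabla^2f_t$ as a linear functional of $\nu_n$ has sup and $L^2(\mu)$ bounds $Ct^{-d/2}$. Combined with $n\E(L+E)\le CR_d(n)$, this gives the stated error. The $L^\infty$ displacement bound is not needed.
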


\begin{proof}
We first estimate the source-change effect for a fixed realization of $\nu_n$.
Put $E_{f_t}=\int|\nabla_g f_t|^2\,d\mu$.
Lemma~\ref{s:lem:split} compares corresponding transport plans from $\mu$
and from $q_t\mu$ to the same $\nu_n$. The leading change in cost,
$2\nu_n(f_t)-E_{f_t}$, is independent of the chosen plan, so taking
infima gives the following bound, with error controlled by the Hessian:
\[
 \bigl|\OT_k(\mu,\nu_n)-\OT_k(P_t\nu_n,\nu_n)
             -2\nu_n(f_t)+E_{f_t}\bigr|
 \le C\|\nabla_g^2f_t\|_\infty
             \{\OT_k(\mu,\nu_n)+E_{f_t}\}
\]
when the Hessian norm is sufficiently small.

We next compute the expected leading term. For an orthonormal mean-zero
eigenbasis $\psi_j$ of $\cL$,
\[
 f_t=\sum_{j\ge1}\frac{e^{-t\lambda_j}\nu_n(\psi_j)}{\lambda_j}\psi_j,
 \qquad
 n\E[\nu_n(\psi_j)\nu_n(\psi_\ell)]=\chi_n\mathbf1_{j=\ell}.
\]
Consequently,
$n\E\nu_n(f_t)=\chi_nF(t)$ and
$n\E E_{f_t}=\chi_nF(2t)$, which give the spectral term
$\chi_n\{2F(t)-F(2t)\}$.

To control the remainder, Appendix~\ref{app:general-heat} bounds
$\|\nabla_g^2f_t\|_\infty$ by $C\sqrt{\log n/(nt^{d/2})}$
outside an event whose contribution to $n$ times the expected absolute
error is at most $Cn^{-10}$. The condition on $t$ makes this Hessian
bound small enough for the source comparison. Combining it with
$n\E[\OT_k(\mu,\nu_n)+E_{f_t}]\le CR_d(n)$, also proved there,
gives~\eqref{eq:general-smoothing-reduction}.
\end{proof}

\subsection{Reduction for quadratic cost on surfaces}\label{subsec:surface-reduction}
We specialize Theorem~\ref{thm:smoothing-reduction} to quadratic cost
in two dimensions and extend it to surfaces with boundaries and
convex corners.
Let $(M,g)$ be a compact connected Riemannian surface, with no
boundary, a smooth boundary, or finitely many smooth boundary arcs
meeting at angles in $(0,\pi)$. These vertices are \emph{convex corners}.
Let $\mu=\rho\,dA$ be a probability measure, where $dA$ is area measure,
$A=\int_MdA$, and $g,\rho$ are smooth up to the boundary with $\rho>0$.
Write $d_M$ for distance along the surface. Use $\cL,P_t,\nu_n,\chi_n$
as in Section~\ref{subsec:general-smoothing}, with Neumann conditions
on any boundary, and write
$F_{M,\mu}(t)=\sum_{j\ge1}e^{-t\lambda_j}/\lambda_j$ for the spectral
sum of $\cL$.

\begin{proposition}[Semidiscrete-to-smoothing reduction on surfaces]\label{prop:surface-reduction}
Under the assumptions of Section~\ref{subsec:surface-reduction}, there is a constant $C>0$ such that,
for either equal or Dirichlet target masses, $n\ge2$, $0<t\le1/2$,
and $nt\ge C\log n$,
\begin{equation}\label{eq:surface-decomposition}
 \begin{aligned}
 n\E\bigl[W_{2,M}^2(\mu,\nu_n)-W_{2,M}^2(P_t\nu_n,\nu_n)\bigr]
 &=\chi_n\{2F_{M,\mu}(t)-F_{M,\mu}(2t)\}\\
 &\quad+O_{M,g,\rho}\!\left(\sqrt{\frac{(\log n)^3}{nt}}
                      +n^{-1/180}\log(en)\right).
 \end{aligned}
\end{equation}
The constant $C$ depends only on $M,g,\rho$. For uniform sampling on
the square or torus, the term $n^{-1/180}\log(en)$ can be omitted.
In particular, the error is $o(1)$ when $nt/(\log n)^3\to\infty$.
\end{proposition}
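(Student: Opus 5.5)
The plan is to rerun the proof of Theorem~\ref{thm:smoothing-reduction} with $d=2$ and $k=d_M^2$, and to treat the two new features separately: the boundary with Neumann conditions and convex corners, and the weaker regularity of squared geodesic distance on a surface with boundary.

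\emph{Deterministic comparison.} For a fixed realization, compare plans from $\mu$ and from $q_t\mu$ to $\nu_n$ through the map $x\mapsto\exp_x(\nabla f_t)$, as in Lemma~\ref{s:lem:split}. This gives
\[
 \bigl|W_{2,M}^2(\mu,\nu_n)-W_{2,M}^2(P_t\nu_n,\nu_n)-2\nu_n(f_t)+E_{f_t}\bigr|
 \le C\|\nabla^2f_t\|_\infty\{W_{2,M}^2(\mu,\nu_n)+E_{f_t}\}.
\]
On the square and torus, $d_M^2$ is the flat squared distance. The Neumann condition makes $\nabla f_t$ tangent to the boundary, and convexity keeps the straight-line flow inside $\Omega$, so this expansion is exact up to the Hessian term.

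On a general surface, squared geodesic distance is only semiconcave near the boundary and corners; it is not smooth there. I would handle this with a boundary layer of width $\delta=n^{-a}$. Inside the layer I would use the crude bound $|d_M(x,y)^2-d_M(x',y)^2|\le C|x-x'|$. The contribution is then at most a layer-mass factor times $\|\nabla f_t\|_\infty$, which produces a polynomial error. Optimizing $a$ together with the heat-time constraints should produce the stated $n^{-1/180}\log(en)$ term. Since that term is not needed in the flat cases, this also explains why it can be dropped there.

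\emph{Expectations.} The spectral computation is unchanged. Expanding in the Neumann eigenbasis of $\cL$ gives $n\E\nu_n(f_t)=\chi_nF_{M,\mu}(t)$ and $n\E E_{f_t}=\chi_nF_{M,\mu}(2t)$.

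\emph{Remainder.} Two estimates are needed. The first is $n\E[W_2^2(\mu,\nu_n)+E_{f_t}]\le C\log(en)$. This follows from $F_{M,\mu}(2t)\le C\log(1/t)$ together with known semidiscrete bounds~\cite{Domain,AG}, or from the averaging identity combined with bipartite bounds. The second is a high-probability Hessian bound
\[
 \|\nabla^2f_t\|_\infty\le C\sqrt{\log n/(nt)}.
\]
To prove it, write $\nabla^2f_t=\int_t^\infty\nabla^2P_s(\nu_n-\mu)\,ds$ and apply Bernstein's inequality pointwise on a net. This requires Gaussian bounds on the second derivatives of the Neumann heat kernel, of the form $|\nabla^2_x p_s(x,y)|\le Cs^{-2}e^{-c\,d(x,y)^2/s}$, uniformly up to the boundary. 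Multiplying the two estimates gives the error $\sqrt{\log n/(nt)}\,\log(en)$, which is at most the stated $\sqrt{(\log n)^3/(nt)}$.

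\emph{Main obstacle.} I expect the hardest step to be the uniform second-derivative heat-kernel estimates at convex corners. Near a corner of angle $\theta$, eigenfunctions behave like $r^{\pi/\theta}$, and $\pi/\theta>1$ only just secures bounded Hessians. My first attempt would be to reflect across the boundary arcs to obtain a local cone model. I would then use separation of variables in the cone to get $C^{2}$ bounds on $\nabla^2f_t$. If those bounds fail, I would fall back on confining the Hessian estimate to the interior and absorbing the corner neighbourhood into the boundary layer above, which again costs only a polynomial error.
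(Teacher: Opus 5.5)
Your overall skeleton (source comparison, spectral expectation, Hessian concentration multiplied by a rough $O(\log n)$ cost bound) is the paper's, and the flat cases go through as you describe. The general-surface case has a genuine gap at the corners, and it comes from a false claim: $\pi/\omega>1$ does \emph{not} give bounded Hessians.

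A Neumann solution near a vertex of angle $\omega$ can contain the mode $r^{\pi/\omega}\cos(\pi\theta/\omega)$. Its gradient is $O(r^{\pi/\omega-1})$, which is bounded since $\omega<\pi$. Its Hessian, however, is of order $r^{\pi/\omega-2}$, which is unbounded for every $\omega\in(\pi/2,\pi)$. Three consequences follow:
\begin{itemize}
\item The cone/separation-of-variables route to uniform second-derivative Neumann heat-kernel bounds must fail at obtuse corners.
\item $\|\nabla^2f_t\|_\infty$ is in general infinite, so Lemma~\ref{s:lem:split} cannot be applied to $f_t$ at all.
\item Your fallback (absorbing corner neighbourhoods into a layer with a crude cost bound) does not repair this. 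The global Hessian bound is used for more than the cost expansion. It also makes the flow of $-\nabla f$ a bijection of $M$; near an obtuse vertex $\nabla f_t$ is only H\"older, and that flow can fail to be unique or injective. And it identifies the flowed source with $q_t\mu$ up to a small negative-norm error, through $\widehat q=1+\cL f+\diver_\mu R$ with $|R|\le C\gamma|\nabla f|$.
\end{itemize}

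The missing idea is to modify the potential rather than the estimate. Put $e_R=\sum_v(1-\xi_{v,R})\{f_t(v)-f_t\}$ and $f_R=f_t+e_R-\mu(e_R)$, where the cutoff of Lemma~\ref{s:lem:cornercut} satisfies $\partial_\nu\xi_{v,R}=0$. Then $f_R$ is Neumann and constant near each vertex. Its Hessian kernel only needs the bounds of Lemma~\ref{b:lem:heat} at distance at least $R$ from the vertices. This gives sup and variance parameters at most $C(t^{-1}+R^{-4})$, hence $\|\nabla^2f_R\|_\infty\le C\sqrt{(t^{-1}+R^{-4})\log n/n}$ with high probability.

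Now apply Lemma~\ref{s:lem:split} to the source $q_R\mu$, with $q_R=1+\cL f_R$, and pay for the modification twice:
\begin{itemize}
\item By covariance and $\|G_t\|_\infty\le C\log(e/t)$, the quantity $n\E[2\nu_n(f)-E_f]$ changes by $O(R^2\log(en))$.
\item By Lemma~\ref{c:lem:energy} and $n\E\int|\nabla e_R|^2\,d\mu\le CR^2\log(en)$, replacing $P_t\nu_n$ by $q_R\mu$ changes $n\E W_{2,M}^2(\cdot,\nu_n)$ by $O(R\log(en))$.
\end{itemize}
Taking $R=r_0n^{-1/100}$ gives the stated bound. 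The $n^{-1/180}$ is just a weaker common rate, not the result of optimizing a layer width.

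Separately, your boundary layer for the regularity of $d_M^2$ is unnecessary. The map $\exp_x(\nabla f_t(x))$ can leave $M$ at a curved face even though $\nabla f_t$ is tangent there. The flow $S_s$ of $-\nabla f$ instead preserves $M$ and each face. Comparing the energies of a path $\alpha$ from $y$ to $z$ with those of $\beta(s)=S_s\alpha(s)$ gives the expansion of $d_M(S_1z,y)^2-d_M(z,y)^2$ without any smoothness of $d_M^2$ and without cut-locus restrictions.

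If you did keep a layer, your bound $C|x-x'|$ would cost about $n\delta\|\nabla f_t\|_\infty$, which is $\delta\sqrt n$ up to logarithms. That forces $\delta\lesssim n^{-1/2}$, below the transport scale. The bound $d(x,x')\{2d(x,y)+d(x,x')\}$, combined with Cauchy--Schwarz against the plan, is what would make a polynomial layer affordable.
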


The proposition determines the (weighted) semidiscrete mean cost from
$n\E W_{2,M}^2(P_t\nu_n,\nu_n)$ and the explicitly computable spectral
term, with additive $o(1)$ error in the stated regime.
Its proof is given in Appendix~\ref{app:surface-reduction}.
For the mean-cost formula in Corollary~\ref{cor:surface-mean}, we
separate the spectral term into its logarithmic and constant parts:
\begin{equation}\label{eq:h-general}
 \begin{aligned}
 F_{M,\mu}(t)&=\frac A{4\pi}\log(1/t)+h_{M,\mu}+O(\sqrt t),\\
 2F_{M,\mu}(t)-F_{M,\mu}(2t)
 &=\frac A{4\pi}\log(2/t)+h_{M,\mu}+O(\sqrt t),
 \qquad t\downarrow0.
 \end{aligned}
\end{equation}
The first expansion defines the \emph{spectral finite part}
$h_{M,\mu}$, a constant determined by the geometry $(M,g)$ and sampling
distribution $\mu$. We write $h_M$ under uniform sampling.
Appendix~\ref{app:surface-trace} proves these expansions.

\subsection{The change-of-time formula}\label{subsec:poisson-smoothing}\label{subsec:flat-heat}
For Section~\ref{sec:smoothing-evaluation}, we need to compare expected
smoothing costs at different heat times to additive $o(1)$ accuracy.
Take quadratic cost and uniform $\mu=\mu_\Omega$ on $\Omega\in\{Q,\T\}$.
Subtracting~\eqref{eq:surface-decomposition} at two times and
using~\eqref{eq:h-general} with $A=1$ cancels both
$n\E W_{2,\Omega}^2(\mu,\nu_n)$ and $h_\Omega$.
For $0<t_1,t_2\le1/2$ with $nt_i\ge C\log n$ $(i=1,2)$, this gives the
\emph{change-of-time formula}:
\begin{equation}\label{f:eq:time-change}
 \begin{aligned}
 n\E\bigl[W_{2,\Omega}^2(P_{t_2}\nu_n,\nu_n)
           -W_{2,\Omega}^2(P_{t_1}\nu_n,\nu_n)\bigr]
 &=\frac{\chi_n}{4\pi}\log(t_2/t_1)\\
 &\quad+O\!\left(\sqrt{\frac{(\log n)^3}{nt_1}}
             +\sqrt{\frac{(\log n)^3}{nt_2}}
             +\sqrt{t_1}+\sqrt{t_2}\right).
 \end{aligned}
\end{equation}
Its error is $o(1)$ when $t_i\to0$ and $nt_i/(\log n)^3\to\infty$ $(i=1,2)$.

\section{Evaluating the smoothing cost by subdivision}\label{sec:smoothing-evaluation}\label{sec:flat}
Sections~\ref{sec:averaging} and~\ref{sec:smoothing-reduction} show that
the bipartite and semidiscrete mean costs can be determined to additive
$o(1)$ accuracy by evaluating the expected smoothing cost.
The change-of-time formula in Section~\ref{subsec:flat-heat} lets us
choose one convenient heat time for this evaluation. In this section,
we evaluate the smoothing cost by subdivision, comparing transport on the
whole domain with transport within smaller cells. We first determine its
logarithmic term and a limiting constant shared by the square and torus,
then extend the formula to surfaces and nonuniform densities.

We begin with uniform sampling on the unit square or unit flat torus,
$\mu=\mu_\Omega$, $\Omega\in\{Q,\T\}$, and equal or Dirichlet masses,
as in Section~\ref{subsec:flat-heat}. To compare sample sizes and heat times, write
\[
 d_n^\Omega(t)=n\E W_{2,\Omega}^2(P_t\nu_n,\nu_n),
 \qquad d_0^\Omega(t)=0.
\]

\begin{theorem}[The smoothing cost on the square and torus]\label{thm:smoothing-cost}
For uniform sampling on $\Omega\in\{Q,\T\}$ with Dirichlet target
masses, there is a universal constant $\kappa_\star\in\R$ such that
\[
 d_n^\Omega(t)=\frac{\log(nt)}{2\pi}+\kappa_\star+o(1)
\]
whenever $t=t(n)\to0$ and $nt/(\log n)^3\to\infty$.
More precisely,
\begin{equation}\label{g:eq:smoothing-asymptotic}
 \left|d_n^\Omega(t)-\frac{\log(nt)}{2\pi}-\kappa_\star\right|
 \le C\left\{n^{-1/120}\log(en)
       +\sqrt{\frac{(\log(en))^3}{nt}}+\sqrt t\right\}.
\end{equation}
The bound holds for $n\ge2$, $0<t\le1/2$, and $nt\ge C\log(en)$,
with a universal constant $C$.
For equal target masses, replace $1/(2\pi)$ by $1/(4\pi)$,
with a possibly different constant $\kappa_\star$.
\end{theorem}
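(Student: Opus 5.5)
The plan is to combine the change-of-time formula~\eqref{f:eq:time-change} with a subdivision (renormalization) relation in the sample size. Then we show that the quantity $a_n:=d_n^\Omega(t_n)-\frac{\log(nt_n)}{2\pi}$ at a reference heat time $t_n$ is a Cauchy sequence with a summable modulus.

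The first step is to fix a reference time, say $t_n=n^{-5/6}$. By~\eqref{f:eq:time-change}, with $\chi_n=2n/(n+1)$, for any admissible $t$ we have $d_n^\Omega(t)=d_n^\Omega(t_n)+\frac{1}{2\pi}\log(t/t_n)+O(\sqrt{(\log n)^3/(nt)}+\sqrt t+n^{-1/12}(\log n)^{3/2})+O(\log n/n)$. This already produces the last two error terms in~\eqref{g:eq:smoothing-asymptotic}. It is therefore enough to prove $d_n^\Omega(t_n)=\frac{\log(nt_n)}{2\pi}+\kappa_\star+O(n^{-1/120}\log(en))$, with the same $\kappa_\star$ for $Q$ and $\T$.

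The second step is subdivision. Partition $\Omega$ into $m^2$ cells, which are squares of side $1/m$, with $m=n^{\gamma}$ for a small $\gamma$. Inside each cell we smooth with the Neumann heat semigroup of that cell and transport within the cell. Lemma~\ref{g:lem:grid} says that restricting both smoothing and transport to cells changes the expected total cost by $o(1)$, and I expect it to give a polynomial rate. Since the target masses are Dirichlet, the number $N_k$ of points in cell $k$ and the cell's total mass can be handled through the Gamma representation of Dirichlet weights. Conditionally on $N_k=N$, the cell's normalized measure is again a Dirichlet-weighted uniform sample, now on a square of side $1/m$. Rescaling the cell to $Q$ multiplies squared distances by $1/m^2$ and heat time by $m^2$. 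The cell contribution is then (cell mass)$\cdot m^{-2}\cdot d_N^Q(m^2t_n)/N$ in normalized form. Averaging over $N\sim\Bin(n,m^{-2})$, concentrated at $n/m^2$, gives
\[
 d_n^\Omega(t_n)=\E\, d^{Q}_{N}(m^2t_n)+o(1).
\]
The Dirichlet fluctuations of the cell masses add an explicit correction, which should be handled exactly by the Gamma representation. We then apply~\eqref{f:eq:time-change} again on $Q$ to move from time $m^2t_n$ to $t_N=N^{-5/6}$. The smoothness of $\log$ controls the randomness of $N$ around $n/m^2$. The result is $a_n^\Omega=a_{n'}^Q+O(n^{-c})$ with $n'\approx n/m^2=n^{1-2\gamma}$. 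This holds for both $\Omega=Q$ and $\Omega=\T$, so the torus reduces to the square at each step, which yields a shared constant.

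The third step is to iterate. Taking $\Omega=Q$, the map $n\mapsto n^{1-2\gamma}$ combined with errors $O(n^{-c})$ gives $|a_n^Q-a_{n'}^Q|\le Cn^{-c}$. We also need the relation for nearby $n$, to fill in all integers and not just one orbit. For that, use cells with different $m$, or combine several $m$ so that every large $n$ is compared to a smaller range. This makes the sequence Cauchy with a geometric tail, and we define $\kappa_\star=\lim a_n^Q$. Then $|a_n^Q-\kappa_\star|\le Cn^{-c}$, and the torus inherits the same limit through step two. For equal masses the same argument goes through with $\chi_n=1$, giving coefficient $1/(4\pi)$. Here the cell masses are $N_k/n$ rather than Dirichlet, so the constant may differ.

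The main obstacle is step two, namely proving Lemma~\ref{g:lem:grid} with a polynomial rate. We must show that confining smoothing and transport to cells costs $o(1)$ in total even though $d_n\asymp\log n$. My first attempt would be to bound the difference between global and cellwise smoothed measures: $P_t\nu_n$ minus the cellwise Neumann smoothing is supported in $O(\sqrt{t\log n})$-neighborhoods of cell boundaries. Its mass imbalance per cell is a mean-zero fluctuation of size about $\sqrt{(\text{boundary strip mass})/n}$. Because $\nu_n$ and $P_t\nu_n$ share local excesses, the net imbalance per cell is small. Redistributing it within a strip via a Poisson/Dacorogna–Moser estimate should cost $O(m\sqrt{t}\cdot\text{polylog})$ in total, which is a negative power of $n$ when $t=n^{-5/6}$ and $m=n^{\gamma}$ with $\gamma$ small.
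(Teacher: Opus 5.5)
Your first step, reducing to the reference time $t_n=n^{-5/6}$ via \eqref{f:eq:time-change}, matches the paper. The gap is in passing from subdivision to a limit.

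Subdivision into $m^2$ cells, combined with the change-of-time formula, gives only $a_n^\Omega=\E a_K^Q+O(n^{-c})$ with $K\sim\Bin(n,m^{-2})$. This is an average over a window of width about $\sqrt n/m$. Smoothness of $\log$ controls the explicit term $\frac{\chi}{4\pi}\log(m^2t_n/t_K)$, but not the unknown $a_K$. Replacing $\E a_K^Q$ by $a_{n'}^Q$ therefore presupposes regularity of $a$ in $n$ that you have not established.

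More seriously, even a deterministic relation $a_n=a_{n'}+O(n^{-c})$ does not make the sequence Cauchy. Summable errors give a limit along each orbit of $n\mapsto n^{1-2\gamma}$. But any $a_n=f(\log\log n)$ with $f$ periodic of period $\log\frac1{1-2\gamma}$ satisfies the relation exactly. For fixed $m$, so does $a_n=\sin(2\pi\log n/\log m^2)$. Your remark about combining several $m$ is exactly where the argument lives, and it is not carried out. It also needs a continuous object to work with, which an integer sequence averaged over binomial windows does not directly provide.

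The paper supplies three ingredients your plan lacks.
\begin{itemize}
\item \emph{Poissonization.} It keeps $m\in\{2,3\}$ fixed. With $J\sim\Pois(N)$ and $\bar u(N)=\E u_J$, thinning gives $\E u_K=\bar u(N/m^2)$ exactly. So $b(N)=\bar u(N)-\frac{\chi}{24\pi}\log N$ is continuous and satisfies the deterministic bound $|b(4N)-b(N)|+|b(9N)-b(N)|\le CN^{-1/120}\log(eN)$.
\item \emph{Two-scale convergence.} Lemma~\ref{lem:two-scale} shows that $L(N)=\lim_k b(4^kN)$ is continuous and invariant under $N\mapsto4N$ and $N\mapsto9N$. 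It is therefore constant, because $\log4/\log9$ is irrational.
\item \emph{De-Poissonization.} The monotonicity \eqref{g:eq:smoothing-monotonicity}, $d_{n+1}^Q(t)/(n+1)\le d_n^Q(t)/n$, follows from joint convexity after deleting a target. It sandwiches $u_n$ between Poisson averages at $N_\pm=n\pm n^{2/3}$ and returns to fixed $n$.
\end{itemize}
(Incidentally, no Dirichlet correction arises in the cell step. The cell mass is independent of the normalized cell problem and has conditional mean $k/n$, so the identity is exact.)

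Separately, Lemma~\ref{g:lem:grid} is proved only for $m\in\{1,2,3\}$, and your sketch for it addresses the easy change. Replacing global by cellwise smoothing is handled by coupling Gaussian increments, at cost $Cmnt^{3/2}$. The hard part is different: even with cell masses matched exactly, the optimal transport from the cellwise-smoothed measure to $\nu_n$ crosses cell boundaries. The paper reroutes it with a cutoff whose error, after integration by parts, is controlled by $\|\phi\|_{L^2}$ for the optimal potential $\phi$. It then proves $n\E\|\phi\|_{L^2}^2=o(1)$ from Lemma~\ref{lem:potential-l2} and the cancellation bound \eqref{eq:localization-negative-two}. A net-imbalance or Dacorogna--Moser estimate does not control this rerouting.
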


For equal masses, Ambrosio and Glaudo~\cite[Theorem~5.2]{AG} proved the
upper bound $d_n^\Omega(t)=O(\log(nt))$ for $nt\ge C\log n$.
Theorem~\ref{thm:smoothing-cost} identifies the logarithmic coefficient
and proves convergence of the remainder to a constant shared by the
square and torus, for each mass law. This gives precise asymptotics for
heat smoothing in Wasserstein distance independently of matching.

\subsection{Subdivision with additive \texorpdfstring{$o(1)$}{o(1)} error}\label{subsec:subdivision}
Divide the domain into an $m\times m$ grid of square cells of side length
$1/m$. We compare the original problem with one in which both smoothing
and transport take place separately in each cell. Reflection at the cell
boundary keeps the smoothed mass inside, so the resulting density and the
sampled points in that cell have equal total mass.

For $m=2$, consider a quarter containing $K\sim\Bin(n,1/4)$ sampled
points (Figure~\ref{fig:subdivision}). Given $K=k>0$, normalize its mass
to one and double distances to obtain the original $k$-point model on
$Q$ at heat time $4t$, with expected smoothing cost $d_k^Q(4t)/k$.
Scaling back multiplies this cost by $1/4$ and by the quarter's mass.
That mass is independent of the normalized cell problem and has
conditional mean $k/n$, so the expected transport cost within the
quarter is $d_k^Q(4t)/(4n)$. Averaging over $K$, summing over the four
quarters, and multiplying by $n$ gives $\E d_K^Q(4t)$, to be compared
with $d_n^\Omega(t)$.
Appendix~\ref{app:flat-local} verifies the same calculation for general $m$.
Earlier subdivision arguments~\cite{GT,Domain} address expected matching
costs directly to obtain leading-order asymptotics. Here we apply
subdivision to $d_n^\Omega(t)$ at $t_n=n^{-5/6}$, obtaining additive $o(1)$ error.

\begin{lemma}[Subdivision estimate]\label{g:lem:grid}
Let $t_n=n^{-5/6}$ and $K\sim\Bin(n,m^{-2})$, where $m\in\{1,2,3\}$.
For either equal or Dirichlet target masses, every $n\ge2$,
and $\Omega\in\{Q,\T\}$,
\begin{equation}\label{g:eq:subdivision}
 \left|d_n^\Omega(t_n)-\E d_K^Q(m^2t_n)\right|
 \le Cn^{-1/120}\log n,
\end{equation}
\end{lemma}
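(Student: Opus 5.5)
We first check that $\E d_K^Q(m^2t_n)$ equals $n$ times the expected cost of the cell-confined problem. For each of the $m^2$ cells $Q_c$ we smooth the restricted measure $\nu_n|_{Q_c}$ by the Neumann heat semigroup of $Q_c$ at time $t_n$ and transport it back to $\nu_n|_{Q_c}$. The resulting total cost is $\widetilde W:=\sum_c W_{2,Q_c}^2(P^{c}_{t_n}\nu_n|_{Q_c},\nu_n|_{Q_c})$, which is well defined because the masses match cell by cell. The scaling computation of Section~\ref{subsec:subdivision} gives $n\E\widetilde W=\E d_K^Q(m^2t_n)$. Two facts enter here. For Dirichlet masses, the cell mass is independent of the normalized within-cell weights, which are again $\Dir(1,\ldots,1)$, and its conditional mean is $k/n$. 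For equal masses, the cell mass is exactly $k/n$. A cell with $K=0$ contributes $0=d_0^Q$. This bookkeeping is routine and reduces the lemma to the estimate
\[
 n\,\bigl|\E W_{2,\Omega}^2(P_{t_n}\nu_n,\nu_n)-\E\widetilde W\bigr|\le Cn^{-1/120}\log n .
\]

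\textbf{Upper bound, $d_n^\Omega\le n\E\widetilde W+\text{error}$.} Let $\tilde q\,\mu$ be the cellwise smoothing $\sum_c P^c_{t_n}\nu_n|_{Q_c}$. By the triangle inequality in $W_2$,
\[
W_2(P_{t_n}\nu_n,\nu_n)\le W_2(P_{t_n}\nu_n,\tilde q\mu)+W_2(\tilde q\mu,\nu_n),
\]
and $W_2^2(\tilde q\mu,\nu_n)\le\widetilde W$. Squaring leaves the cross term $2\sqrt{\widetilde W}\,W_2(P_{t_n}\nu_n,\tilde q\mu)$. Since $n\E\widetilde W\lesssim\log n$, it suffices to show $n\E W_2^2(P_{t_n}\nu_n,\tilde q\mu)\le n^{-c}$ for a small power $c$. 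The two smoothed densities differ only within distance about $\sqrt{t_n}\,\log n$ of the cell boundaries. There, by the method of images, the difference is the reflected-versus-unreflected heat kernel applied to $\nu_n$. Its mean is zero for the uniform law on $\T$, and for $Q$ the reflection at the outer boundary is common to both. Its fluctuation has size about $(nt_n)^{-1/2}$ in density.

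We will bound $W_2^2$ of the difference by the negative Sobolev norm $\|\tilde q-P_{t_n}\nu_n/\mu\|_{\dot H^{-1}}^2$, which is legitimate because both densities are bounded below with high probability since $nt_n\gg\log n$. This norm is supported in a strip of area about $\sqrt{t_n}\log n$. The expected contribution is then roughly $n^{-1}(nt_n)^{-1}\cdot\sqrt{t_n}\cdot$ (polylog) times the squared $H^{-1}$ scale, which is a negative power of $n$. The exponent $5/6$ is what balances this strip error against the other errors.

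\textbf{Lower bound.} This is the main obstacle. A global optimal plan from $P_{t_n}\nu_n$ to $\nu_n$ may move mass across cell boundaries, and we must show that doing so gains only $o(1/n)$. Here we exploit the cancellation emphasized in Section~\ref{subsec:ideas}. Transport in the smoothing problem moves mass only distances of order $\sqrt{t_n}$ up to logarithms, by an $L^\infty$ displacement bound for the optimal map proved with high probability, as in~\cite{AG}. So only points in the strip of width $\sqrt{t_n}\log n$ around the cell boundaries interact across cells. Given the global plan $\pi$, we restrict it to pairs within a single cell. The misplaced mass consists of strip points transported across a boundary, and it is $O(\sqrt{t_n}\log n)$ in expectation. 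We then repair the plan by rerouting this mass within each cell: we transport the discrepancy between $\tilde q\mu$ and the restricted first marginal using the same $H^{-1}$ estimate as in the upper bound, and combine the pieces by the triangle inequality again. This yields $\widetilde W\le W_2^2(P_{t_n}\nu_n,\nu_n)+$ (cross terms). Bounding these cross terms by $n^{-1}\cdot n^{-1/120}\log n$ requires careful tracking of exponents, together with a bad event of probability $n^{-10}$ on which we use the trivial diameter bound.
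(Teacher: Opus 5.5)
Your reduction to the cell-confined problem is sound, and so is your upper bound. The paper swaps global for cellwise smoothing by coupling the two diffusions with the same Gaussian increment, giving $n\E W_2^2(P_t\nu_n,P_t^{\cB}\nu_n)\le Cmnt^{3/2}$. Your $H^{-1}$ route via the covariance identity also works, because $(P_t-P_t^{\cB})\nu_n$ is linear in $\nu_n$ with mean zero.

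The gap is in your lower bound, which is the real content of the lemma. Knowing that the mass crossing cell boundaries is $O(\sqrt{t_n}\log n)$ does not control the repair cost. After you restrict the global plan to within-cell pairs, each cell carries an outgoing source density and an incoming atomic target of equal total mass. Nothing in your argument makes them balance locally along the boundary. Without such balance, a mass $M\approx\sqrt{t_n}\log n$ may have to travel distances comparable to the cell side. That costs about $nM\approx n^{7/12}\log n$ after multiplying by $n$. Even the $H^{-1}$ form of the crude bound is of order $nM^2\approx n^{1/6}\log^2 n$, and the cross term with $\sqrt{n\E D}\asymp\sqrt{\log n}$ only adds to it. So this cannot be fixed by tracking exponents more carefully.

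Nor does ``the same $H^{-1}$ estimate as in the upper bound'' apply. That estimate rests on the covariance identity for a quantity linear in $\nu_n$ with an explicit kernel. Outgoing minus incoming mass is instead a nonlinear functional of the optimal plan. Moreover, the incoming part is atomic, so its $H^{-1}$ norm is infinite in two dimensions unless you first smooth it; the paper does this at an auxiliary time $s=1/n$.

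The missing idea is the paper's control of the boundary flux through the optimal transport potential. Write the torus-optimal map from the cellwise-smoothed density $q\mu$ to $\nu_n$ as $\Id+\nabla\phi$ with $\int\phi=0$. Lemma~\ref{lem:potential-l2} bounds $V=\int\phi^2$ by $\|\nu_n-q\mu\|_{-2,\mu}^2+(\delta^2+L^2)D$. The cancellation estimate~\eqref{eq:localization-negative-two} gives $n\E\|\nu_n-P_t^{\cB}\nu_n\|_{-2,\mu}^2\le C\sqrt t$. Thus $n\E V$ is small even though $n\E D$ is of order $\log n$.

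The repair then uses a smooth cutoff $\xi=1-\eta$ of width $r=n^{-1/60}$. This is much larger than the displacement $L\le Ct^{1/4}$, which the paper obtains deterministically from cyclical monotonicity (Lemma~\ref{lem:local-cost-localization}), not from the high-probability $\sqrt{t_n}\log n$ bound you invoke without justification. The repair retains the fraction $\xi(y)$ of each target's allocation. It writes the nonlinear discrepancy as $q(\xi\circ T-\xi)=\diver_\mu(\phi\nabla\xi)-\phi\diver_\mu\nabla\xi+\cdots$, whose cellwise $H^{-1}$ norm is bounded by $C\{r^{-4}V+\cdots\}$ after integration by parts.

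A sharp restriction like yours corresponds to taking $r$ comparable to the displacement, where this bound is useless. Balancing the collar mass $\kappa\approx mr$ against $r^{-4}(\delta^2+\sqrt t)$ is what yields $\sqrt{mr}\log n\le Cn^{-1/120}\log n$ in Lemma~\ref{lem:cell-repair}.
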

The cases $m=2,3$ compare sample sizes; $m=1$ compares the square and torus.

\begin{figure}[!t]
 \centering
 \includegraphics[width=.98\textwidth]{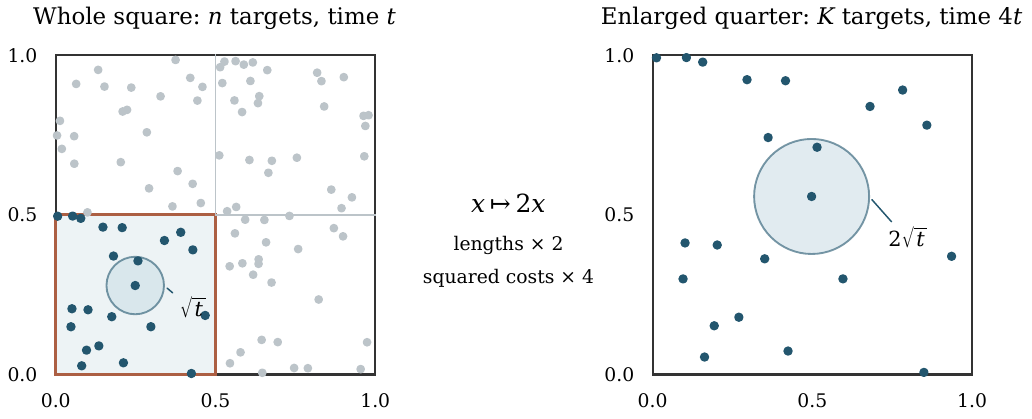}
 \caption{Rescaling a quarter to the unit square doubles distances and
 changes the heat time from $t$ to $4t$. Its sample size is
 $K\sim\Bin(n,1/4)$. Scaling back divides squared-distance costs by four.
 Lemma~\ref{g:lem:grid} compares $d_n^\Omega(t_n)$ with
 $\E d_K^Q(4t_n)$ to additive $o(1)$ accuracy.}
 \label{fig:subdivision}
\end{figure}

\paragraph{Proof sketch of Lemma~\ref{g:lem:grid}.}
The rescaling above identifies the expected cost of transport within cells. It remains to
control the two changes from the original problem. First, a coupling of
global heat smoothing and heat smoothing reflected within cells bounds
the change in expected transport cost by $o(1)/n$.
The main challenge is the second change: keeping the cellwise-smoothed
source distribution fixed, we must reassign transport that crosses cell
boundaries at an additional expected cost of $o(1)/n$.
We start from transport on the torus, whose cost is a lower bound for
both the square cost and the cost of transport confined to cells.
Write its optimal map as $x\mapsto x+\nabla\phi(x)$, with $\int\phi=0$.
The decisive estimate controls the optimal transport potential $\phi$:
\[
 n\E\|\phi\|_{L^2(\mu)}^2=o(1)\qquad(t=t_n).
\]
This estimate exploits the crucial cancellation: a local excess of mass
in $\nu_n$ is also present in its own smoothing on scales much larger than
$\sqrt t$. We modify the allocation within a narrow neighborhood of the
cell boundaries using a smooth cutoff.
Integration by parts bounds the resulting error through $\|\phi\|_2$.
The quantitative bound lets us shrink this neighborhood while keeping the
expected added cost $o(1)/n$. Appendix~\ref{app:flat-local} gives the full proof.

\subsection{Convergence of the constant term}
Subdivision changes both the sample size and the heat time. We use
the change-of-time formula to express every cell cost at the same
chosen time for its sample size, leaving a relation in the sample size
alone. Set
\[
 t_n=n^{-5/6},\qquad u_n=d_n^Q(t_n)\quad(n\ge1),\qquad u_0=0.
\]
Write $\chi=2$ for Dirichlet masses and $\chi=1$ for equal masses.
For $m=2,3$ and
$K\sim\Bin(n,m^{-2})$, the cell heat time $m^2t_n$ differs from $t_K$.
Combining the subdivision estimate~\eqref{g:eq:subdivision} with the
change-of-time formula~\eqref{f:eq:time-change} gives
\begin{equation}\label{g:eq:twosided}
 \left|u_n-\E u_K-\frac{\chi}{24\pi}\log m^2\right|
 \le Cn^{-1/120}\log n,\qquad m=2,3.
\end{equation}
The logarithmic correction comes from changing $m^2t_n$ to $t_K$:
at $K=n/m^2$, their ratio is $m^{1/3}$. Appendix~\ref{app:flat-counts}
checks the admissible times and controls the fluctuations of $K$.

To obtain a deterministic recursion for the averaged cost,
let $J\sim\Pois(N)$ and define $\bar u(N)=\E u_J$.
The cell count $K\mid J\sim\Bin(J,m^{-2})$ then has law
$K\sim\Pois(N/m^2)$ (Poisson thinning), so
$\E u_K=\bar u(N/m^2)$. Averaging~\eqref{g:eq:twosided} over $J$
therefore relates $\bar u(N)$ to $\bar u(N/m^2)$.
Subtracting the logarithmic term, with
$b(N)=\bar u(N)-(\chi/(24\pi))\log N$, gives
\begin{equation}\label{g:eq:two-scales}
 |b(4N)-b(N)|+|b(9N)-b(N)|
 \le CN^{-1/120}\log(eN).
\end{equation}
The rough bound $u_n\le C\log(en)$ makes $b$ continuous.
The fourfold comparison alone allows $b(N)=\sin(2\pi\log N/\log4)$.
The ninefold comparison eliminates such oscillations. Summability of
the errors then gives convergence.

\begin{lemma}[Two-scale convergence]\label{lem:two-scale}
Let $b:(0,\infty)\to\R$ be continuous. If, for $N\ge N_0\ge1$,
\[
 |b(4N)-b(N)|+|b(9N)-b(N)|
 \le CN^{-\alpha}\log(eN),\qquad\alpha>0,
\]
then some $c\in\R$ satisfies
$|b(N)-c|\le C'N^{-\alpha}\log(eN)$ for $N\ge N_0$.
\end{lemma}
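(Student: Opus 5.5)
The plan is to pass to the limit along the fourfold chain $N,4N,16N,\ldots$. The ninefold comparison then shows that this limit is invariant under a multiplicatively dense set of ratios, and continuity forces it to be constant. Throughout write $\varepsilon(N)=CN^{-\alpha}\log(eN)$ for the hypothesis bound.

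\emph{Step 1: limits along the fourfold chain.} For $N\ge N_0$ and $k\ge0$, $4^kN\ge N_0$, so the hypothesis applies at every point of the chain and
\[
 \varepsilon(4^kN)\le CN^{-\alpha}4^{-k\alpha}\{\log(eN)+k\log4\}.
\]
Since $\sum_k4^{-k\alpha}(1+k)<\infty$ and $\log(eN)\ge1$, this gives $\sum_{j\ge k}\varepsilon(4^jN)\le C_\alpha\,\varepsilon(4^kN)$. Hence $(b(4^kN))_k$ is Cauchy, and $L(N)=\lim_kb(4^kN)$ exists with
\[
 |b(4^kN)-L(N)|\le C_\alpha\,\varepsilon(4^kN)\le C_\alpha\,\varepsilon(N)\cdot(\text{const}),
\]
with a constant independent of $N\ge N_0$. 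In particular:
\begin{itemize}
\item the case $k=0$ gives $|b(N)-L(N)|\le C'N^{-\alpha}\log(eN)$, which is the claimed rate once $L$ is shown to be constant;
\item since $\sup_{N\ge N_0}\varepsilon(4^kN)\to0$, the continuous functions $b(4^k\,\cdot)$ converge to $L$ uniformly on $[N_0,\infty)$, so $L$ is continuous there.
\end{itemize}

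\emph{Step 2: invariance.} Clearly $L(4N)=L(N)$. Applying the ninefold hypothesis at the point $4^kN$ gives $|b(9\cdot4^kN)-b(4^kN)|\le\varepsilon(4^kN)\to0$. Letting $k\to\infty$ yields $L(9N)=L(N)$ for all $N\ge N_0$.

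Now take $N,N'\ge N_0$ with $N'/N=4^a9^{-c}$ for integers $a,c\ge0$. Then $L(N')=L(9^cN')=L(4^aN)=L(N)$. Every ratio $4^a9^{-c}$ with $a\in\mathbb Z$ and $c\ge 0$ can be handled the same way by swapping the roles of $N$ and $N'$ when $a<0$, so the same holds for all $a\in\mathbb Z$, $c\ge0$. Because $\log4/\log9=\log2/\log3$ is irrational, the set $\{a\log4-c\log9\}$ is dense in $\R$. Thus for fixed $N$ the points $N'\ge N_0$ with $L(N')=L(N)$ are dense in $[N_0,\infty)$.

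\emph{Step 3: conclusion.} A continuous function on $[N_0,\infty)$ that is constant on a dense subset is constant. Hence $L\equiv c$, and Step 1 gives $|b(N)-c|\le C'N^{-\alpha}\log(eN)$ for all $N\ge N_0$.

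The main point needing care is Step 1's uniformity: continuity of $L$ must come from uniform convergence on all of $[N_0,\infty)$. This in turn needs the tail bound $\sum_{j\ge k}\varepsilon(4^jN)$ to be controlled uniformly in $N\ge N_0$, including the $\log$ factor. This holds because $N^{-\alpha}\log(eN)$ is bounded on $[N_0,\infty)$ and the sum is geometric.

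The ninefold comparison is essential in Step 2, as the $\sin(2\pi\log N/\log4)$ example shows. Without it, $L$ would only be known to be $\log 4$-periodic in $\log N$.
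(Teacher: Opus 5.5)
Your proof is correct and follows the paper's argument: you sum the fourfold errors to get a uniform limit $L$ with $|b(N)-L(N)|\le C'N^{-\alpha}\log(eN)$, pass the ninefold comparison to the limit to get $L(9N)=L(N)$, and use irrationality of $\log4/\log9$ together with continuity to conclude that $L$ is constant. Your treatment of the restriction to $[N_0,\infty)$, using ratios $4^a9^{-c}$ with $c\ge0$, is a careful version of the paper's shorthand that $s\mapsto L(e^s)$ has periods $\log4$ and $\log9$.
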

\begin{proof}
Summing the fourfold errors gives a locally uniform limit
$L(N)=\lim_{k\to\infty}b(4^kN)$ and
\[
 |b(N)-L(N)|\le C\sum_{k\ge0}(4^kN)^{-\alpha}\log(e4^kN)
                \le C'N^{-\alpha}\log(eN).
\]
Thus $L$ is continuous and $L(4N)=L(N)$. Applying the ninefold comparison
at $4^kN$ and letting $k\to\infty$ gives $L(9N)=L(N)$.
The continuous function $s\mapsto L(e^s)$ therefore has periods
$\log4$ and $\log9$. Their ratio is irrational, since $4^p=9^q$ is
impossible for positive integers $p,q$. The subgroup they generate is
dense, so $L$ is constant.
\end{proof}

Applying the lemma to~\eqref{g:eq:two-scales} gives
\begin{equation}\label{g:eq:poissonrate}
 \bar u(N)=\frac{\chi}{24\pi}\log N+\kappa_\star
                    +O(N^{-1/120}\log(eN)).
\end{equation}

To return to fixed sample sizes, we use a monotonicity property:
for each fixed $t$, $d_n^Q(t)/n$ is nonincreasing in $n$.
This follows from joint convexity of squared transport cost by deleting
one sampled point and renormalizing the remaining masses. Standard Poisson
concentration bounds control deviations of the random sample size from
its mean. Together with the change-of-time formula, they transfer the
formula for $\bar u(N)$ to $u_n$.
Appendix~\ref{app:flat-counts} gives the details and proves
\begin{equation}\label{g:eq:canonical-smoothing}
 u_n=\frac{\chi}{24\pi}\log n+\kappa_\star
                    +O(n^{-1/120}\log(en)).
\end{equation}
The case $m=1$ of Lemma~\ref{g:lem:grid} gives the same formula for
$d_n^{\T}(t_n)$, with the same constant.

Finally, the change-of-time formula~\eqref{f:eq:time-change} gives,
for $0<t\le1/2$ with $nt\ge C\log(en)$,
\[
 d_n^\Omega(t)
 =u_n+\frac{\chi}{4\pi}\log(t/t_n)
       +O\!\left(n^{-1/120}\log(en)
          +\sqrt{\frac{(\log(en))^3}{nt}}+\sqrt t\right).
\]
Together with~\eqref{g:eq:canonical-smoothing}, this proves
Theorem~\ref{thm:smoothing-cost}.

\subsection{Surfaces and nonuniform densities}\label{subsec:surface-smoothing}
We extend the smoothing-cost evaluation to the surfaces and densities of
Section~\ref{subsec:surface-reduction}. Within a small cell, the metric
and density are nearly constant. Rescaling to the square therefore
identifies its smoothing cost; the main task is to keep the total error
$o(1)$ as the number of cells grows. Write
$d_n^{M,\mu}(t)=n\E W_{2,M}^2(P_t\nu_n,\nu_n)$.

\begin{theorem}[The smoothing cost on surfaces]\label{thm:surface-smoothing}
Let $(M,g)$ and $\mu=\rho\,dA$ satisfy the assumptions of
Section~\ref{subsec:surface-reduction}. Write $A=\int_MdA$ for the total
area. For Dirichlet target masses, $n\ge2$, $0<t\le1/2$, and
$nt\ge C\log(en)$,
\begin{equation}\label{eq:surface-smoothing}
 \begin{aligned}
 d_n^{M,\mu}(t)
 &=\frac A{2\pi}\log(nt)+A\kappa_\star
       +\frac1{2\pi}\int_M\log\rho\,dA\\
 &\quad+O_{M,g,\rho}\!\left(n^{-1/180}\log(en)
       +\sqrt{\frac{(\log(en))^3}{nt}}+\sqrt t\right),
 \end{aligned}
\end{equation}
where $C$ depends only on $M,g,\rho$ and $\kappa_\star$ is the constant
in Theorem~\ref{thm:smoothing-cost}. The error is $o(1)$ when
$t\to0$ and $nt/(\log n)^3\to\infty$.
For equal target masses, replace both occurrences of $1/(2\pi)$ by
$1/(4\pi)$ and use the equal-mass constant from that theorem.
\end{theorem}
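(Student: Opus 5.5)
The plan is to reduce Theorem~\ref{thm:surface-smoothing} to Theorem~\ref{thm:smoothing-cost} by subdividing $M$ into cells of a mesoscopic size $\ell=\ell(n)\to0$. Within each cell, the metric and density are nearly constant. As in Lemma~\ref{g:lem:grid}, we replace global smoothing and transport by cellwise reflected smoothing and transport, and then identify each cell problem with a rescaled square problem.

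\textbf{Step 1: cell decomposition.} We cover $M$ by finitely many coordinate charts in which $g$ is close to a constant metric. Near smooth boundary arcs we use boundary-straightening charts, and near convex corners we use charts mapping a corner neighbourhood to a sector. Inside each chart we choose cells that are, up to an affine change, squares of side $\ell$ in coordinates orthonormalizing $g$ at a base point. We also allow a bounded number of exceptional cells of larger cost near corners and chart seams; their total area is $O(\ell)$.

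\textbf{Step 2: localization.} We work at the convenient time $t=t_n=n^{-5/6}$ and return to general $t$ only at the end. We repeat the proof of Lemma~\ref{g:lem:grid} on the surface, with two parts.
\begin{itemize}
\item A coupling of global and cellwise heat smoothing bounds the change in expected cost.
\item The potential estimate $n\E\|\phi\|_{L^2(\mu)}^2=o(1)$ combined with a smooth cutoff in a boundary layer lets us reassign transport that crosses cell boundaries.
\end{itemize}
This part must now be done with $\ell^{-2}$ cells rather than $m^2\le9$. The cutoff error scales like the total boundary length $\ell^{-1}$ times the layer cost, so we need a quantitative rate, presumably of the form $(\ell^{-1}n^{-a})$ for some $a>0$.

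\textbf{Step 3: local comparison.} A cell with $K_c\sim\Bin(n,\mu(c))$ points has, after rescaling by $\ell$ and straightening the metric, a smoothing problem that differs from the square problem with $K_c$ points at heat time $t/\ell^2$ in two ways:
\begin{itemize}
\item the density is multiplied by $1+O(\ell)$;
\item the metric deviates by $O(\ell)$.
\end{itemize}
Both perturbations change the squared Wasserstein cost multiplicatively by $1+O(\ell)$, up to a density-ratio map correction. Multiplying by the cell mass and rescaling then gives the contribution
\[
\mu(c)\,d_{K_c}^Q(t/\ell^2)\,\ell_g(c)^2\approx \frac{|c|}{2\pi}\log(n\rho(c)\,t)+|c|\kappa_\star,
\]
by Theorem~\ref{thm:smoothing-cost}, since $\E K_c\approx n\rho(c)|c|$ and $\ell_g(c)^2\approx|c|$. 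Summing over cells produces the Riemann sum for
\[
\frac A{2\pi}\log(nt)+A\kappa_\star+\frac1{2\pi}\int_M\log\rho\,dA,
\]
with error $O(\ell)$. Exceptional cells are bounded crudely by $O(\log n)$ per unit area, using the rough bound $d_n\le C\log(en)$ (cf.\ Ambrosio--Glaudo), which contributes $O(\ell\log n)$.

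\textbf{Step 4: balancing and general $t$.} We balance the $O(\ell\log n)$ and $O(\ell)$ errors against the localization error from Step~2 and against the requirement $K_c t/\ell^2\gg\log K_c$ needed to apply Theorem~\ref{thm:smoothing-cost}. Choosing $\ell=n^{-\gamma}$ with small $\gamma$ should give the exponent $1/180$. To pass to general $t$ we invoke the change-of-time formula. On surfaces this comes from subtracting Proposition~\ref{prop:surface-reduction} at two times, together with~\eqref{eq:h-general}, which gives
\[
d_n^{M,\mu}(t)-d_n^{M,\mu}(t_n)=\frac{\chi_nA}{4\pi}\log(t/t_n)+\text{error}.
\]
This yields~\eqref{eq:surface-smoothing}. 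The equal-mass case is identical, with $\chi=1$.

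\textbf{Main obstacle.} The main difficulty is Step~2 with a growing number of cells: we must keep the boundary-layer reassignment cost $o(1)$ in total while the cells shrink. Near convex corners we also need the potential estimate under Neumann reflection. First, I would prove a uniform local version of the $\|\phi\|_2$ estimate in each cell, with an explicit power of $n$ and $\ell$, and then sum.
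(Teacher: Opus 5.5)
Your overall architecture matches the paper's: cells of size $\ell\to0$, confinement at $t_n=n^{-5/6}$ by a global/cellwise heat coupling plus the potential-and-cutoff repair, comparison of each cell with the square, a Riemann sum, and then change of time via Proposition~\ref{prop:surface-reduction}. However, Step~3 has a genuine gap. The metric and density enter a cell problem in three places: the cost, the sampling law, and the \emph{smoothing operator}. A map $\Phi_B:Q\to B$ pushing $dx$ to $\mu(\cdot\mid B)$ handles the sampling law exactly. The bound $W_{2,B}^2=(1+O(\ell))A_BW_{2,Q}^2$ handles the cost. But in these coordinates $P_t^B$ becomes $e^{-t\mathsf A}$ for a Neumann divergence-form operator whose form differs from that of $\mathsf B=A_B^{-1}(-\Delta_Q)$ by relative $O(\ell)$, and your plan never compares $e^{-t\mathsf A}\nu_j$ with $e^{-t\mathsf B}\nu_j$. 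This is not a multiplicative effect, and the obvious comparison fails. Coupling the two diffusions from each atom costs $\asymp\ell^2\tau$ per unit mass at the rescaled time $\tau=t/A_B$, so it only gives $j\,\E W_{2,Q}^2(e^{-t\mathsf A}\nu_j,e^{-t\mathsf B}\nu_j)\lesssim\ell^2 j\tau\asymp\ell^2nt$. This diverges whenever your repair works: the repair needs $\ell\gg r\gg(\delta^2+\sqrt t)^{1/4}$ with $\delta^2\gg\log n/(nt)$, which forces $\ell^2nt\gg(nt)^{1/2}$. The missing idea is a second cancellation across atoms. The difference $(e^{-t\mathsf A}-e^{-t\mathsf B})\nu_j$ is a centered fluctuation whose expected squared $\mathsf A$-negative norm is $\chi_jj^{-1}\|\mathsf A^{-1/2}(e^{-t\mathsf A}-e^{-t\mathsf B})\|_{\HS}^2$. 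A Duhamel argument (Lemma~\ref{s:lem:duhamel}) bounds this Hilbert--Schmidt norm by $C\ell^2A_B$ uniformly in $t\le A_B$. On the high-probability event that the flat smoothed density is at least $1/2$, Lemma~\ref{c:lem:energy} turns this into $j\,\E W_2^2\le C\ell^2A_B$. The triangle inequality with the rough bound $d_j^B\le CA_B\log(en)$ then gives the $O(\ell A_B\log n)$ cell comparison.

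Step~1 also does not deliver the $O(\ell)$ metric deviation that Step~3 uses. With a fixed atlas in which $g$ is only close to a constant, coordinate squares are, in the metric, parallelograms whose shape varies by $O(1)$ across a chart. Affine squares adapted to each cell, on the other hand, do not tile. Theorem~\ref{thm:smoothing-cost} says nothing about parallelograms or anisotropic metrics. The paper uses isothermal charts, in which every small coordinate square carries the metric $A_B(I+O(\ell))$, and then a Knothe--Rosenblatt map to make $\mu(\cdot\mid B)$ exactly uniform. Seam and corner cells may be compared crudely, but they must still be uniformly bi-Lipschitz to a reference square or triangle, because the repair needs uniform Poincar\'e, displacement and heat-diagonal bounds on every cell. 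Finally, the step you single out as the main obstacle needs no cellwise potential estimate. One global bound on $n\E\int\phi^2\,d\mu$ suffices: it comes from Lemma~\ref{lem:potential-l2} in a smooth ambient extension (with the global Neumann $H^2$ estimate at convex corners) and the cancellation identity for the cellwise semigroup. The number of cells enters the repair only through the collar mass $\kappa\asymp r/\ell$, and Lemma~\ref{s:lem:ambient} then restores intrinsic distance. Balancing $\sqrt{r/\ell}\log n$ against $\ell\log n$ at $r=n^{-1/60}$ gives $\ell=n^{-1/180}$. Also, the cell contribution is $A_B\,\E d^Q_{K_B}(t/A_B)$ with no extra factor $\mu(c)$, since $d_k$ already carries the factor $k$ and $\E[nS_B\mid K_B=k]=k$.
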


The constant from the square is multiplied by the area; the density
contributes through the integral of $\log\rho$. In what follows, we
explain how both terms arise from subdivision at $t=t_n=n^{-5/6}$.
Subtracting Proposition~\ref{prop:surface-reduction} at $t_n$ and $t$
then extends the formula to the full heat-time range above.
Appendix~\ref{app:surface-proof} gives the details.

\paragraph{Confining transport to small cells.}
Choose $\ell=n^{-1/180}$ and cells $B$ of diameter $O(\ell)$, area
$A_B=\int_BdA\asymp\ell^2$, and probability $p_B=\mu(B)$.
The cells shrink but remain larger than the heat length and contain many
points: $\sqrt t\ll\ell$ and $np_B\to\infty$.
Let $d_k^B(t)$ be $k$ times the expected smoothing cost for $k$ points
sampled from $\mu(\cdot\mid B)$, with the same mass law and reflection
at the cell boundary; set $d_0^B(t)=0$.
For $K_B\sim\Bin(n,p_B)$, the same mass calculation as in
Section~\ref{subsec:subdivision} gives $\E d_{K_B}^B(t)$ as the
cell's expected contribution. The subdivision estimate is
\[
 d_n^{M,\mu}(t)=\sum_B\E d_{K_B}^B(t)+O(\ell\log(en)).
\]
Appendix~\ref{app:surface-confinement} controls the total error from
confining transport and replacing global smoothing by cellwise smoothing.

\paragraph{Comparing each cell with the square.}
Except for cells of total area $O(\ell)$, each cell admits coordinates
from $Q$ that send uniform probability exactly to $\mu(\cdot\mid B)$.
Smoothness makes squared distances measured within $B$ agree with
$A_B|x-y|^2$ in these coordinates up to relative error $O(\ell)$.
Comparing both distance and heat smoothing in these coordinates gives
(Appendix~\ref{app:surface-cell-comparison})
\begin{equation}\label{eq:surface-cell-comparison}
 d_k^B(t)=A_Bd_k^Q(t/A_B)+O(\ell A_B\log n),
 \qquad np_B/2\le k\le2np_B.
\end{equation}
The factor $A_B$ makes the summed error $O(\ell\log n)$, since
$\sum_BA_B=A$. The remaining cells satisfy the rough bound
$d_k^B(t)\le CA_B\log(en)$, so their total contribution is
$O(\ell\log n)$ as well.

\paragraph{Extracting the density correction.}
Applying Theorem~\ref{thm:smoothing-cost} to
$A_B\E d_{K_B}^Q(t/A_B)$ and using concentration of $K_B$ near $np_B$
gives the cell contribution
\[
 A_B\left\{\frac\chi{4\pi}\log\!\left(nt\frac{p_B}{A_B}\right)
                   +\kappa_\star\right\},
\]
up to errors summable at the stated rate. Here $\chi=2$ for Dirichlet
masses and $\chi=1$ for equal masses, and $\kappa_\star$ is the
corresponding constant from Theorem~\ref{thm:smoothing-cost}.
Since $\sum_BA_B=A$ and $p_B/A_B$ is the average density in $B$,
\[
 \sum_BA_B\log(p_B/A_B)=\int_M\log\rho\,dA+O(\ell).
\]
This proves the formula at $t_n$ with error $O(n^{-1/180}\log(en))$;
the change-of-time comparison completes Theorem~\ref{thm:surface-smoothing}.
The factor $A_B$ explains why the density correction is integrated against area.
\section{Mean-cost formulas and distinct distributions}\label{sec:source-target}
We combine the reductions and smoothing-cost estimates of
Sections~\ref{sec:averaging}--\ref{sec:smoothing-evaluation} to derive the
two-dimensional mean-cost formulas for quadratic cost, first in the
same-distribution case (Section~\ref{subsec:same-distribution}). A further
reduction extends these formulas to distinct distributions. We formulate
it for general costs (Section~\ref{subsec:distinct-to-same}), specialize it
to prove Theorem~\ref{thm:source-target} for quadratic cost
(Section~\ref{subsec:torus}), and then show that distinct distributions
can change the leading order for unsquared distance cost
(Section~\ref{subsec:linear}).

\subsection{Mean-cost formulas for the same-distribution case}\label{subsec:same-distribution}
For two independent $n$-point samples $X_i,Y_j$ from a common law $\mu$
on $M$, write
\[
 C_n^M(\mu)=\E\min_{\sigma\in S_n}\sum_i d_M(X_i,Y_{\sigma(i)})^2,
 \qquad
 S_n^M(\mu)=n\E W_{2,M}^2\!\left(\mu,\frac1n\sum_j\delta_{Y_j}\right).
\]
The spectral finite part $h_{M,\mu}$ is defined in~\eqref{eq:h-general}.
Under uniform sampling we write $h_M,C_n^M,S_n^M$.

\begin{corollary}[Same-distribution mean-cost formulas]\label{cor:surface-mean}\label{cor:flat-mean}
Let $(M,g)$ be a compact connected 2-dimensional Riemannian surface
with smooth boundary, convex corners, or no boundary. Let $\mu=\rho\,dA$
be a probability measure with strictly positive density, with $g$ and
$\rho$ smooth up to the boundary. Write $A=\int_MdA$.
There are universal constants $\beta,\beta_{\mathrm{sd}}\in\R$, independent of
$(M,g)$ and $\rho$, such that
\begin{align*}
 C_n^M(\mu)={}&\frac A{2\pi}\log n+A\beta
       +2h_{M,\mu}+\frac1{2\pi}\int_M\log\rho\,dA
       +O_{M,g,\rho}(n^{-1/180}\log(en)),\\
 S_n^M(\mu)={}&\frac A{4\pi}\log n+A\beta_{\mathrm{sd}}
       +h_{M,\mu}+\frac1{4\pi}\int_M\log\rho\,dA
       +O_{M,g,\rho}(n^{-1/180}\log(en)).
\end{align*}
For uniform sampling on $Q$ or $\T$, $A=1$ and the density integral
vanishes; both remainders improve to $O(n^{-1/120}\log(en))$.
\end{corollary}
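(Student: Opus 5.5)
The corollary is assembled from three earlier results: the averaging identity (Theorem~\ref{thm:averaging}), the reduction on surfaces (Proposition~\ref{prop:surface-reduction}), and the smoothing-cost evaluation (Theorem~\ref{thm:surface-smoothing}), all taken at one common heat time. I will fix $t=t_n=n^{-5/6}$. Then $t_n\to0$ and $nt_n/(\log n)^3=n^{1/6}/(\log n)^3\to\infty$, so every error term involving $t$ is polynomially small. Indeed, $\sqrt{(\log n)^3/(nt_n)}=O(n^{-1/12}(\log n)^{3/2})$ and $\sqrt{t_n}=n^{-5/12}$, and both are dominated by $n^{-1/180}\log(en)$. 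On $Q$ and $\T$ they are also dominated by $n^{-1/120}\log(en)$.

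\emph{Bipartite case.} Condition on the targets $Y_1,\dots,Y_n$ and apply Theorem~\ref{thm:averaging} with costs $c_j(x)=d_M(x,Y_j)^2$. Averaging over the targets gives $C_n^M(\mu)=n\E W_{2,M}^2(\mu,\nu_n)$, where $\nu_n$ carries Dirichlet masses independent of the positions. Next apply Proposition~\ref{prop:surface-reduction} with $\chi_n=2n/(n+1)=2+O(1/n)$. This writes $C_n^M(\mu)$ as $d_n^{M,\mu}(t_n)+\chi_n\{2F_{M,\mu}(t_n)-F_{M,\mu}(2t_n)\}$ plus the stated error. Expansion~\eqref{eq:h-general} gives
\[
 \chi_n\{2F_{M,\mu}(t_n)-F_{M,\mu}(2t_n)\}=\frac{A}{2\pi}\log(2/t_n)+2h_{M,\mu}+O(\sqrt{t_n}+n^{-1}\log n).
\]
The $1/n$ discrepancy in $\chi_n$ only multiplies a $\log n$ term, so it is harmless. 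Theorem~\ref{thm:surface-smoothing} with Dirichlet masses gives $d_n^{M,\mu}(t_n)=\frac A{2\pi}\log(nt_n)+A\kappa_\star+\frac1{2\pi}\int_M\log\rho\,dA+\text{error}$. Adding the two pieces, the $\log t_n$ terms cancel and leave
\[
 \frac{A}{2\pi}\log n+A\Bigl(\kappa_\star+\frac{\log2}{2\pi}\Bigr)+2h_{M,\mu}+\frac1{2\pi}\int_M\log\rho\,dA .
\]
This gives the claim with $\beta=\kappa_\star+(\log 2)/(2\pi)$. Since $\kappa_\star$ comes from Theorem~\ref{thm:smoothing-cost}, $\beta$ is independent of $(M,g,\rho)$.

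\emph{Semidiscrete case.} Use equal masses, so $\chi_n=1$ and no averaging step is needed. The spectral term becomes $\frac A{4\pi}\log(2/t_n)+h_{M,\mu}$. The equal-mass version of Theorem~\ref{thm:surface-smoothing} has coefficient $1/(4\pi)$ and its own constant $\kappa_\star^{\mathrm{eq}}$. The same cancellation of $\log t_n$ then gives the second formula with $\beta_{\mathrm{sd}}=\kappa_\star^{\mathrm{eq}}+(\log2)/(4\pi)$.

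\emph{Uniform case on $Q$ or $\T$.} Here $A=1$ and $\int\log\rho\,dA=0$. The proposition has no $n^{-1/180}$ term, and \eqref{g:eq:smoothing-asymptotic} in Theorem~\ref{thm:smoothing-cost} has error $n^{-1/120}\log(en)$, so using these in place of the surface results gives the improved rate. I must also check that the constants agree with the general case. They do, because Theorem~\ref{thm:surface-smoothing} uses the same $\kappa_\star$ as Theorem~\ref{thm:smoothing-cost}.

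The argument is bookkeeping, so the only real obstacle is the error accounting. I need to verify that $t_n$ satisfies the admissibility condition $nt\ge C\log(en)$ for every $n\ge2$, or else absorb small $n$ into the constant using the rough bound $C_n^M(\mu)=O(\log(en))$. I also need the $O(\sqrt t)$ remainder in \eqref{eq:h-general} to hold uniformly, including in the presence of boundary and corners. Both facts are supplied by the cited results.
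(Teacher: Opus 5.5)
Your proposal is correct and takes essentially the same route as the paper. Both fix $t_n=n^{-5/6}$, substitute Theorem~\ref{thm:surface-smoothing} into Proposition~\ref{prop:surface-reduction}, cancel the heat-time logarithms via \eqref{eq:h-general}, identify the Dirichlet case with $C_n^M(\mu)$ by the averaging identity, set $\beta=\kappa_\star+(\log2)/(2\pi)$ and $\beta_{\mathrm{sd}}=\kappa_\star^{\mathrm{eq}}+(\log2)/(4\pi)$, and use Theorem~\ref{thm:smoothing-cost} for the sharper rate on $Q$ and $\T$, with the same bookkeeping for the $\chi_n$ correction and small $n$.
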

\begin{proof}
Substitute the formula for the smoothing term $d_n^{M,\mu}(t_n)$ from
Theorem~\ref{thm:surface-smoothing} into
Proposition~\ref{prop:surface-reduction}, with $t_n=n^{-5/6}$.
Using the spectral expansion~\eqref{eq:h-general}, the heat-time terms
cancel because
\[
 \log(2/t_n)+\log(nt_n)=\log n+\log2.
\]
Adding $(\log2)/(2\pi)$ and $(\log2)/(4\pi)$ to the corresponding
smoothing constants defines $\beta$ and $\beta_{\mathrm{sd}}$.
The averaging identity (Theorem~\ref{thm:averaging}) identifies the
Dirichlet case with $C_n^M(\mu)$; equal masses give $S_n^M(\mu)$.
Replacing the Dirichlet factor $2n/(n+1)$ by $2$ contributes only
$O(\log(en)/n)$ to the error. For $Q$ and $\T$, using
Theorem~\ref{thm:smoothing-cost} in the same calculation gives the
sharper remainder.
\end{proof}

Density enters globally through the weighted spectrum in $h_{M,\mu}$ and
locally through the integral of $\log\rho$. For uniform sampling on
unit-area surfaces, this integral vanishes, proving both assertions of
Conjecture~\ref{conj:physics}. Comparing two such fixed geometries gives
\begin{align*}
 C_n^M-C_n^N&=2(h_M-h_N)+O(n^{-1/180}\log(en)),\\
 S_n^M-S_n^N&=h_M-h_N+O(n^{-1/180}\log(en)).
\end{align*}
This proves the spectral comparison predicted in~\cite{BCCDS}, including
comparisons between domains with boundary and closed surfaces.
For the square and torus, Appendix~\ref{app:flat-spectrum} computes
$h_Q-h_{\T}=1/6+(\log2)/(2\pi)$, giving
$C_n^Q-C_n^{\T}=1/3+(\log2)/\pi+o(1)$.

\subsection{Distinct-to-same-distribution reduction}\label{subsec:distinct-to-same}
We now turn to the distinct-distribution case. We show that the excess
matching cost equals the expected cost of a same-distribution problem
with a new cost. This new cost need not be squared distance even when
the original cost is, so we state the reduction for general costs.
In two dimensions, Theorem~\ref{thm:distinct-to-same} replaces the new
cost by squared-distance cost in an induced metric, with an $o(1)$
error in expected total cost.
Corollary~\ref{cor:surface-mean} then gives the mean-cost formula.

\paragraph{An exact same-distribution representation.}
Let $\mu$ and $\nu$ be probability measures on compact spaces $M$ and $N$,
and let the cost function $c:M\times N\to\R$ be continuous.
Suppose a population optimal transport map $T$ is invertible and has continuous dual potentials
$a,b$ satisfying
\begin{equation}\label{eq:population-dual}
 a(x)+b(y)\le c(x,y),\qquad
 a(x)+b(Tx)=c(x,Tx),\qquad T_\#\mu=\nu.
\end{equation}
Here $T_\#\mu$ denotes the distribution of $T(X)$ when $X\sim\mu$.
To express the remaining cost using two points in the source space,
write a target as $Tz$ and define the nonnegative \emph{population dual gap}
\begin{equation}\label{eq:population-gap}
 k(x,z)=c(x,Tz)-a(x)-b(Tz).
\end{equation}
Thus $k(z,z)=0$.

For two independent samples $X_1,\ldots,X_n$ from $\mu$ and
$Y_1,\ldots,Y_n$ from $\nu$, set
$Z_j=T^{-1}Y_j$. The $Z_j$ form an independent sample from $\mu$, and
the following identity holds for every realization, in any dimension:
\begin{equation}\label{eq:population-decomposition}
 \min_\sigma\sum_i c(X_i,Y_{\sigma(i)})
 =\sum_i a(X_i)+\sum_j b(Y_j)
     +\min_\sigma\sum_i k(X_i,Z_{\sigma(i)}).
\end{equation}
The first two sums do not depend on the matching and have total
expectation $n\OT_c(\mu,\nu)$, where
$\OT_c(\mu,\nu)=\int c(x,Tx)\,d\mu(x)$ is the population cost.
Thus the excess mean cost equals the expected matching cost for $k$
between two samples from $\mu$.\footnote{Whenever $k$ satisfies the
assumptions of Theorem~\ref{thm:smoothing-reduction}, the
bipartite-to-semidiscrete and semidiscrete-to-smoothing reductions lead
to its smoothing cost, with the quantitative error stated in that theorem.}

\paragraph{Identifying the metric.}
In two dimensions, we replace $k$ by squared-distance cost in the metric
determined by its quadratic part, then apply Corollary~\ref{cor:surface-mean}.
When $k$ is smooth near the diagonal, define
\[
 g_T(z)=\tfrac12D_x^2k(z,z)
\]
provided this quadratic form is positive definite.\footnote{In the
terminology of Wong and Yang~\cite{WongYang},
$k$ is the $c$-divergence in source coordinates. Their associated metric
is $2g_T$; our $g_T$ is their metric for $k/2$.}
Since $k(\cdot,z)$ has its minimum at $z$, $D_xk(z,z)=0$, so its
Hessian defines a quadratic form independently of coordinates.
With this normalization,
$k(x,z)=d_{g_T}(x,z)^2+O(d_{g_T}(x,z)^3)$ near the diagonal.

Write $C_n^{(M,g_T)}(\mu)$ and $S_n^{(M,g_T)}(\mu)$ for the
bipartite and semidiscrete mean costs defined above, now
using the induced metric $g_T$.

\begin{theorem}[Distinct-to-same-distribution reduction]\label{thm:distinct-to-same}\label{thm:regular-source-target}
Let $M,N$ be compact connected 2-dimensional surfaces, let $\mu,\nu$ be
probability measures on them, and let $c:M\times N\to\R$ be continuous.
Suppose a smooth diffeomorphism $T:M\to N$, smooth up to the boundary,
and continuous dual potentials $a,b$ satisfy \eqref{eq:population-dual}.
Assume the gap $k$ in \eqref{eq:population-gap} is smooth near the
diagonal, vanishes only on the diagonal, and has positive-definite
quadratic part $g_T$. Suppose $(M,g_T)$ has smooth boundary, convex
corners, or no boundary, and $\mu$ has a strictly positive density
relative to induced area, with the metric and density smooth up to
the boundary.
Then two independent $n$-point samples $X_i\sim \mu$ and $Y_j\sim \nu$
satisfy
\[
 \E\min_\sigma\sum_i c(X_i,Y_{\sigma(i)})-n\OT_c(\mu,\nu)
 =C_n^{(M,g_T)}(\mu)+O(n^{-1/4}\log^{5/4}(en)).
\]
For $\nu_n=n^{-1}\sum_j\delta_{Y_j}$,
\[
 n\{\E\OT_c(\mu,\nu_n)-\OT_c(\mu,\nu)\}
 =S_n^{(M,g_T)}(\mu)+O(n^{-1/4}\log^{5/4}(en)).
\]
The constants hidden in the error terms depend only on the fixed data
$M,g_T,\mu,k$.
\end{theorem}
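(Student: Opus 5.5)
By the exact identity \eqref{eq:population-decomposition}, the excess mean cost equals $\E\min_\sigma\sum_ik(X_i,Z_{\sigma(i)})$ with $X_i,Z_j$ independent samples from $\mu$. The same substitution handles the semidiscrete cost: transport from $\mu$ to $\nu_n$ pulls back under $T$ to transport from $\mu$ to $n^{-1}\sum_j\delta_{Z_j}$ with cost $k$. So it suffices to show that replacing $k$ by $d^2:=d_{g_T}^2$ changes both expected costs by $O(n^{-1/4}\log^{5/4}(en))$.

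\textbf{Step 1: comparison of costs.} Because $k$ is smooth near the diagonal, vanishes only there, and has quadratic part $g_T$, compactness gives:
\begin{itemize}
\item on $\{d(x,z)\le\delta\}$, $|k(x,z)-d(x,z)^2|\le Cd(x,z)^3$;
\item everywhere, $c^{-1}d^2\le k\le c\,d^2$.
\end{itemize}
The second bound uses positivity of $k$ off the diagonal for $d\ge\delta$, together with the quadratic behavior near it.

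\textbf{Step 2: transfer of optimal matchings.} Let $\sigma$ be optimal for $d^2$. Then
\[
\min_\sigma\sum_ik(X_i,Z_{\sigma(i)})\le\sum_i d_i^2+C\sum_id_i^3,\qquad d_i=d(X_i,Z_{\sigma(i)}),
\]
and symmetrically for an optimal matching $\sigma'$ for $k$. It remains to bound $\E\sum_id_i^3$ for both optimal matchings. For the $k$-optimal matching, the two-sided comparison shows that its pairs also have $\sum d_i^2\le C\log n$ in expectation; the localization below applies to it as well, through the $c$-cyclical monotonicity of $k$.

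\textbf{Step 3: the main obstacle, bounding $\E\sum_id_i^3$.} Expected total squared length is $O(\log n)$, but the cubic sum must be $o(1)$. The plan is to split at a threshold $r$:
\[
\sum_id_i^3\le r\sum_id_i^2+\sum_i d_i^3\mathbf 1_{d_i>r}.
\]
This requires a high-probability bound on the maximal displacement in optimal bipartite matchings on surfaces, of the form $\max_id_i\le C\sqrt{\log n/n}$ up to powers of $\log n$, outside an event of polynomially small probability. I would obtain this from a standard local mass-balance argument: every ball of radius $\gg\sqrt{\log n/n}$ contains comparable numbers of both samples, and cyclical monotonicity prevents long edges. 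Given this bound, $\E\sum_id_i^3\lesssim\log n\cdot\sqrt{\log n/n}$ up to log factors. A cruder localization, with $r=n^{-1/4}$ and a Chebyshev-type tail estimate, gives the stated rate $n^{-1/4}\log^{5/4}$. If a sharp displacement bound is hard to reach, I would settle for that weaker rate, which is all the statement asks for. The semidiscrete case is identical, using Laguerre cells in place of matched pairs.

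\textbf{Step 4: conclusion.} The resulting $d^2$-costs are exactly $C_n^{(M,g_T)}(\mu)$ and $S_n^{(M,g_T)}(\mu)$, which proves the theorem. Corollary~\ref{cor:surface-mean} then evaluates these costs on $(M,g_T)$.
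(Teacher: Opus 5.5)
Your Steps 1, 2 and 4 match the paper's reduction. The identity \eqref{eq:population-decomposition}, the global bounds $a_0d^2\le k\le b_0d^2$ and $|k-d^2|\le Kd^3$, and testing each optimum in the other objective leave only the expected cubic sum $\E\sum_i d_i^3$ along the two optimal matchings.

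The gap is in Step 3, which carries all the quantitative content. The cyclical-monotonicity argument you describe (balls along a geodesic containing points with short matches, as in Lemma~\ref{lem:local-cost-localization}) does not yield $\max_i d_i\lesssim\sqrt{\log n/n}$. It only shows that an edge of length $r$ forces the total cost $M=\sum_i d_i^2$ to be at least $cnr^4$. Hence $\max_i d_i\le C(M/n)^{1/4}$, plus $C\sqrt{\log n/n}$, the scale below which empirical ball masses are not controlled. A near-sharp $L^\infty$ bound would be a genuine regularity theorem for the optimal matching, not a consequence of local mass balance. Your fallback with the fixed threshold $r=n^{-1/4}$ does not close either. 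Since $M\asymp\log n$, the available bound $(M/n)^{1/4}\asymp(\log n/n)^{1/4}$ exceeds $r$, so nothing excludes edges longer than $r$. A Chebyshev count of such edges from $\E M=O(\log n)$ says nothing about the size of $\sum_i d_i^3\mathbf 1_{d_i>r}$, on which the only bound is again $d_i\le C(M/n)^{1/4}$.

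The fix has two parts. First, drop the threshold and use $\sum_i d_i^3\le(\max_i d_i)\sum_i d_i^2\le Cn^{-1/4}M^{5/4}$ for both optimal matchings. Second, add a moment bound beyond the mean, such as $\E M^2\le C\log^2(en)$, so that H\"older gives $\E M^{5/4}\le C\log^{5/4}(en)$. This produces exactly the rate $n^{-1/4}\log^{5/4}(en)$. Your proposal records only $\E M=O(\log n)$, which cannot control $\E M^{5/4}$.

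The paper also avoids discrete sources altogether. It applies the averaging identity (Theorem~\ref{thm:averaging}) to both cost families $k(\cdot,Z_j)$ and $d^2(\cdot,Z_j)$. This turns both expected matching costs into $n$ times expected weighted semidiscrete costs from the continuous source $\mu$. There $\mu(B(x,r))\ge cr^2$ holds for every realization, so Proposition~\ref{prop:local-cost-deterministic} gives $|\OT_k(\mu,\eta)-W_{2,g}^2(\mu,\eta)|\le CQ^{5/4}$ pathwise. The bound $\E Q^2\le Cn^{-2}\log^2(en)$ then follows from heat smoothing at $t=1/n$ and a fourth-moment expansion (Lemma~\ref{lem:cost-comparison}). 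Your direct bipartite route can be repaired as above, but it needs an extra ingredient: the high-probability event that every ball of radius at least $C\sqrt{\log n/n}$ contains $\gtrsim nr^2$ source points, plus a bound on the complement. The averaging identity makes that step unnecessary.
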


\begin{proof}
The dual terms in~\eqref{eq:population-decomposition} satisfy
$\E[\sum_i a(X_i)+\sum_j b(Y_j)]=n\OT_c(\mu,\nu)$.
Thus the excess matching cost equals the expected matching cost for
$k$ between two samples from $\mu$.
Lemma~\ref{lem:cost-comparison} in Appendix~\ref{app:cost-comparison}
bounds the change in this mean cost when $k$ is replaced by $d_{g_T}^2$
by $O(n^{-1/4}\log^{5/4}(en))$, proving the formula for bipartite matching.
For semidiscrete transport, the corresponding identity after taking
expectations is
\[
 n\{\E\OT_c(\mu,\nu_n)-\OT_c(\mu,\nu)\}
 =n\E\OT_k\!\left(\mu,\frac1n\sum_j\delta_{T^{-1}Y_j}\right).
\]
The points $T^{-1}Y_j$ are independent with law $\mu$, so the equal-mass
part of the same lemma gives the formula for semidiscrete transport.
\end{proof}

Theorem~\ref{thm:distinct-to-same} lets us apply the same-distribution
formula of Corollary~\ref{cor:surface-mean} in the induced metric.
Write $dA_T$ for the area measure of $g_T$,
$A_T=\int_MdA_T$, and $\mu=\rho_T\,dA_T$. Let $h_{g_T,\mu}$ be the
spectral finite part in~\eqref{eq:h-general} for this metric and density.

\begin{corollary}[Mean-cost formulas for general costs]\label{cor:general-cost-mean}
Under the assumptions of Theorem~\ref{thm:distinct-to-same},
\[
 \begin{split}
 \E\min_\sigma\sum_i c(X_i,Y_{\sigma(i)})-n\OT_c(\mu,\nu)
       ={}&\frac{A_T}{2\pi}\log n+A_T\beta
       +2h_{g_T,\mu}+\frac1{2\pi}\int_M\log\rho_T\,dA_T\\
       &+O(n^{-1/180}\log(en)).
 \end{split}
\]
For transport from $\mu$ to $\nu_n=n^{-1}\sum_j\delta_{Y_j}$,
$n\{\E\OT_c(\mu,\nu_n)-\OT_c(\mu,\nu)\}$ satisfies the same formula with
$\beta$ replaced by $\beta_{\mathrm{sd}}$ and with the
logarithmic term, spectral correction, and density correction divided by two.
The constants $\beta,\beta_{\mathrm{sd}}$ are those of
Corollary~\ref{cor:surface-mean}; the error constant depends only on
$M,g_T,\mu,k$.
\end{corollary}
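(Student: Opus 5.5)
The plan is to combine Theorem~\ref{thm:distinct-to-same} with Corollary~\ref{cor:surface-mean}, applied on the surface $(M,g_T)$ with sampling law $\mu$. First, Theorem~\ref{thm:distinct-to-same} identifies the excess bipartite cost with $C_n^{(M,g_T)}(\mu)$. The error in that identification is $O(n^{-1/4}\log^{5/4}(en))$, which is absorbed into $O(n^{-1/180}\log(en))$ because $1/4>1/180$. The excess semidiscrete cost is likewise identified with $S_n^{(M,g_T)}(\mu)$, with the same error.

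Second, I check that $(M,g_T)$ and $\mu$ meet the hypotheses of Corollary~\ref{cor:surface-mean}. Most of this is directly assumed in Theorem~\ref{thm:distinct-to-same}:
\begin{itemize}
\item $g_T$ is a positive-definite metric;
\item the surface is compact and connected, with smooth boundary, convex corners, or no boundary;
\item $\mu$ has a strictly positive density relative to induced area;
\item the metric and density are smooth up to the boundary.
\end{itemize}
The one point needing an argument is the smoothness of $g_T=\tfrac12 D_x^2k(z,z)$. It follows from the smoothness of $k$ near the diagonal: since $T$ is a smooth diffeomorphism, the diagonal of $M\times M$ is compact and contained in the region where $k$ is smooth. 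The density is $\rho_T=d\mu/dA_T$, and its smoothness and positivity are exactly what is assumed.

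Third, I apply the formulas of Corollary~\ref{cor:surface-mean} with $A=A_T$, $\rho=\rho_T$, $h_{M,\mu}=h_{g_T,\mu}$ and area measure $dA_T$. This gives
\[
C_n^{(M,g_T)}(\mu)=\frac{A_T}{2\pi}\log n+A_T\beta+2h_{g_T,\mu}+\frac1{2\pi}\int_M\log\rho_T\,dA_T+O(n^{-1/180}\log(en)).
\]
The semidiscrete analogue uses $\beta_{\mathrm{sd}}$, with the logarithmic, spectral and density terms halved. Since $\beta$ and $\beta_{\mathrm{sd}}$ are universal, they are the same constants as in Corollary~\ref{cor:surface-mean}.

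The error constants from Corollary~\ref{cor:surface-mean} depend on $(M,g_T,\rho_T)$, and $\rho_T$ is determined by $\mu$ and $g_T$. Those from Theorem~\ref{thm:distinct-to-same} depend on $M,g_T,\mu,k$. So the combined error constant depends only on $M,g_T,\mu,k$, as claimed.

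I expect no serious obstacle. The only step needing care is confirming that the regularity passed to Corollary~\ref{cor:surface-mean}, namely that of $g_T$ and $\rho_T$ up to the boundary, follows from the stated assumptions.
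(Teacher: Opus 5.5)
Your proposal is correct and matches the paper's proof, which applies Corollary~\ref{cor:surface-mean} on $(M,g_T)$ and absorbs the reduction error $O(n^{-1/4}\log^{5/4}(en))$ from Theorem~\ref{thm:distinct-to-same} into $O(n^{-1/180}\log(en))$. Your separate argument for the smoothness of $g_T$ is unnecessary: smoothness of the metric and density up to the boundary is already a hypothesis of Theorem~\ref{thm:distinct-to-same}.
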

\begin{proof}
Apply Corollary~\ref{cor:surface-mean} on $(M,g_T)$; its error dominates
the reduction error $O(n^{-1/4}\log^{5/4}(en))$.
\end{proof}

\begin{example}[Power costs for translated distributions]\label{ex:power-translation}
Let $\mu$ have a smooth positive density on a compact convex domain
$\Omega\subset\R^2$ with smooth boundary. Fix a nonzero vector $\xi$
and let $\nu$ be the law of $X+\xi$ when $X\sim\mu$.
For two independent $n$-point samples $X_i\sim\mu$ and $Y_j\sim\nu$,
we study the minimum total cost
\[
 M_n^{(r)}(\mu,\nu)=\min_\sigma\sum_i|X_i-Y_{\sigma(i)}|^r,
 \qquad r>1.
\]
Every population coupling has mean displacement $\xi$, so Jensen's
inequality shows that $T(x)=x+\xi$ is optimal, with population cost
$|\xi|^r$. Writing $h(u)=|u|^r$, the population dual gap and induced metric are
\[
 k(x,z)=h(\xi+z-x)-h(\xi)-Dh(\xi)\cdot(z-x),\qquad
 g_T=\tfrac12D^2h(\xi).
\]
Since $\xi\ne0$, the gap is smooth near $x=z$ and $g_T$ is positive
definite; strict convexity of $h$ gives $k(x,z)>0$ for $x\ne z$.
The induced area is
$A_T=\tfrac r2\sqrt{r-1}\,|\xi|^{r-2}|\Omega|$, so
Corollary~\ref{cor:general-cost-mean} determines
$\E M_n^{(r)}(\mu,\nu)-n|\xi|^r$ up to $o(1)$.
When $\xi=0$, the distributions coincide, and
Corollary~\ref{cor:general-cost-mean} does not apply for $r\ne2$ because
$k(x,z)=|x-z|^r$ lacks the required smooth positive-definite quadratic part.
\end{example}

\subsection{Specialization to quadratic cost}\label{subsec:torus}
We obtain Theorem~\ref{thm:source-target} by specializing
Corollary~\ref{cor:general-cost-mean} to squared Euclidean distance on
convex domains and squared geodesic distance on flat tori.
Write $\mu=p\,dx$, $\nu=q\,dx$, and $T=\nabla\Phi$, with convex
potential $\Phi$. On a convex domain, and near $x=z$ on a torus after
choosing nearby lifts, the population dual gap satisfies
\[
 k(x,z)=2\{\Phi(x)-\Phi(z)-\nabla\Phi(z)\cdot(x-z)\}.
\]
Thus the induced metric is $g_T=H=D^2\Phi=DT$.
On a convex domain, we assume that $T$ is a smooth diffeomorphism up
to the boundary and $\lambda I\preceq H\preceq\Lambda I$, as in
Theorem~\ref{thm:source-target}. For quadratic cost on a flat torus
$\Omega=\R^2/\Gamma$, smooth positive densities ensure this
regularity~\cite{CE};
here $\Phi(x)=|x|^2/2+u(x)$ with $u$ periodic.
Appendix~\ref{app:flat-source-target} verifies the global quadratic
bounds, the boundary assumptions, and the comparison with $d_H^2$.

\paragraph{Area, density, and spectrum.}
It remains to express the quantities in the mean-cost formula using $H$.
In both settings, change of variables gives
$q(Tx)\det H(x)=p(x)$. The area measure, area, and density are
\[
 dA_H=\sqrt{\det H}\,dx,\qquad A_H=\int_{\Omega}dA_H,\qquad
 \rho_H=\frac{p}{\sqrt{\det H}}.
\]
Thus $A_H$ measures area using the quadratic part of the population dual gap,
while $\rho_H$ measures sampling probability per unit of that area.
When $p=q$, the population optimal transport map is the identity, so $H=I$, $A_H=|\Omega|$, and
$\rho_H=p$.

The weighted Laplacian from Section~\ref{subsec:surface-reduction} becomes
\[
 \cL_Hf=-\rho_H^{-1}\diver_H(\rho_H\nabla_Hf)
         =-p^{-1}\diver(pH^{-1}\nabla f).
\]
On a domain, it has the Neumann condition
$(pH^{-1}\nabla f)\cdot n=0$, where $n$ is the Euclidean outward
normal; on a torus, it has periodic boundary conditions.
Write $h_H=h_{(\Omega,H),\mu}$ for its spectral finite part, defined
by \eqref{eq:h-general}.
Applying Corollary~\ref{cor:general-cost-mean} with these inputs proves
Theorem~\ref{thm:source-target}. Although expressed in source coordinates,
the formula is unchanged when the two distributions are reversed
(Appendix~\ref{app:flat-source-target}).

The area formula also proves Corollary~\ref{cor:effective-area}:
distinct distributions strictly reduce the logarithmic coefficient,
since $A_H<|\Omega|$ unless $\mu=\nu$.

\begin{proof}[Proof of Corollary~\ref{cor:effective-area}]
Since $T$ maps $\Omega$ onto itself, $\int_\Omega\det H\,dx=|\Omega|$.
Cauchy--Schwarz gives $A_H\le|\Omega|$, and equality forces $\det H\equiv1$.
In that case $T$ also transports uniform probability to itself. Its
quadratic dual potentials certify optimality for these uniform
marginals as well, so uniqueness and the zero-cost identity coupling
force $T=\operatorname{Id}$. Hence $\mu=\nu$. The converse is immediate.
\end{proof}

\begin{example}[A varying logarithmic coefficient]\label{ex:source-target-cosine}
To quantify this decrease, let $\mu$ be uniform on $Q$ or $\T$ and
define $\nu=T_\#\mu$ by the map below, with $|\varepsilon|<1$:
\[
 T(x)=\left(x_1+\frac{\varepsilon}{2\pi}\sin(2\pi x_1),x_2\right),
 \qquad
 H(x)=\diag\bigl(1+\varepsilon\cos(2\pi x_1),1\bigr).
\]
The map $T$ is optimal for quadratic cost. The target density $q$ satisfies
$q(Tx)=[1+\varepsilon\cos(2\pi x_1)]^{-1}$, and
\[
 W_2^2(\mu,\nu)=\frac{\varepsilon^2}{8\pi^2},\qquad
 A_H=\int_0^1\sqrt{1+\varepsilon\cos(2\pi s)}\,ds
     =1-\frac{\varepsilon^2}{16}+O(\varepsilon^4).
\]
Thus a small change in the target distribution decreases the logarithmic
coefficient, while the universal constant $\beta$ remains the same.
\end{example}

\subsection{Unsquared distance cost: a change in leading order}\label{subsec:linear}
Theorem~\ref{thm:distinct-to-same} permits general costs but requires
the population dual gap $k$ to have a positive-definite quadratic part.
For unsquared distance cost $c(x,y)=|x-y|$, this condition can fail:
in the example below, population mass moves to the right and $k$
vanishes along that direction. The issue goes beyond the scope of our
reduction: distinct distributions can change the leading order of the
excess matching cost, rather than only its coefficient.

We construct two distinct smooth positive densities on the unit square for which
the excess matching cost is $O(\log n)$, much smaller
than the classical $\sqrt{n\log n}$ order for two independent uniform
samples~\cite{AKT}.\footnote{For
general distinct densities in two dimensions, Treleaven, Pavone, and
Frazzoli~\cite[Lemma~5.9]{TPF} give an $O(\sqrt{n\log n})$ upper bound
on the excess matching cost with high probability.}

\begin{proposition}[Unsquared distance cost for ordered product distributions]\label{prop:linear-product}
Let $\mu=p_1(x_1)\,dx_1dx_2$ and $\nu=q_1(x_1)\,dx_1dx_2$ on $[0,1]^2$,
where $0<\lambda\le p_1,q_1\le\Lambda<\infty$. Write $F,G$ for their
first-coordinate distribution functions and suppose that, for some
$\gamma>0$,
\begin{equation}\label{eq:ordered-marginals}
 F(x)-G(x)\ge\gamma x(1-x)\qquad(0\le x\le1).
\end{equation}
For two independent $n$-point samples $X_i\sim \mu$ and $Y_j\sim \nu$, set
$M_n^{(1)}=\min_\sigma\sum_i|X_i-Y_{\sigma(i)}|$. Then
\[
 0\le\E M_n^{(1)}-nW_1(\mu,\nu)\le C\log(en),\qquad
 W_1(\mu,\nu)=\int_0^1(F-G)(x)\,dx,
\]
where $C$ depends only on $\lambda,\Lambda,\gamma$.
\end{proposition}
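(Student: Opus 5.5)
The lower bound is the easy half; the upper bound needs an explicit matching, and I have not found one that reaches $O(\log n)$. The step I could not close is the routing of count imbalances between cells, identified at the end.

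\emph{Lower bound.} Use the $1$-Lipschitz potential $x\mapsto x_1$. For every pair, $|X_i-Y_{\sigma(i)}|\ge Y_{\sigma(i),1}-X_{i,1}$. Hence for every $\sigma$,
\[
 \sum_i|X_i-Y_{\sigma(i)}|\ge\sum_jY_{j,1}-\sum_iX_{i,1},
\]
and this lower bound does not depend on $\sigma$. Its expectation is $n\{\E_\nu x_1-\E_\mu x_1\}=n\int_0^1(F-G)\,dx$. This gives $\E M_n^{(1)}\ge nW_1(\mu,\nu)$. The same potential, together with the monotone coupling of the first coordinates, shows that $W_1(\mu,\nu)$ equals this integral, because \eqref{eq:ordered-marginals} makes $F\ge G$ everywhere.

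\emph{Upper bound: pointwise reduction.} I will build one matching $\sigma$ whose cost exceeds $\sum_j Y_{j,1}-\sum_i X_{i,1}$ by $O(\log n)$ in expectation. Write a displacement as $u=(u_1,u_2)$. For $u_1>0$,
\[
 |u|\le u_1+\frac{u_2^2}{2u_1},
\]
and $|u|\le|u_1|+|u_2|$ always. So the excess of a pair is at most $u_2^2/(2u_1)$ when it moves rightward by $u_1$, and at most $2|u_1|+|u_2|$ otherwise. Condition \eqref{eq:ordered-marginals} says that the quantile transport in $x_1$ pushes a point at distance $s$ from the vertical sides $\{x_1=0\}\cup\{x_1=1\}$ to the right by an amount $\gtrsim\gamma s$. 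So there is room to move right by $a\approx\gamma s$ while allowing a vertical displacement $b$, at excess cost $b^2/a$.

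\emph{Upper bound: the matching.} Choose a multiscale partition adapted to the distance $s$ to the vertical sides. Use dyadic layers $s\sim2^{-k}$, and split each layer into horizontal strips of height $h_k$. Within each strip, match sources and targets monotonically in $x_1$. On a strip with $N$ points of each type, this one-dimensional matching has horizontal cost $N\int|F_N-G_N|$. This equals the signed difference $N\int(F_N-G_N)$ except where $G_N>F_N$, so the excess is $2N\int(G_N-F_N)_+$. The empirical fluctuation is of order $N^{-1/2}$, while the margin from \eqref{eq:ordered-marginals} is $\gamma s$. Hence the expected horizontal excess is $N\int e^{-c\gamma^2s^2N}N^{-1/2}\,ds=O(\gamma^{-1})$ per strip. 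The vertical excess is $\sum h_k^2/(\gamma s)$ over the pairs. I will tune $h_k$ scale by scale so that the strip count at scale $k$ times $O(1)$, plus the vertical term, contributes $O(1)$ per dyadic scale. There are $O(\log n)$ relevant scales: below $s\sim n^{-1}$ the contribution is trivially $O(1)$.

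\emph{Main obstacle.} Source and target counts in each strip differ by order $\sqrt N$. Routing these unmatched points between strips naively costs about $\sqrt n$, which is the uniform AKT order. My first attempt is to absorb the imbalances by transport directed rightward as well, so that it is cheap. The rightward drift $F-G\ge\gamma x(1-x)$ means target mass systematically exceeds source mass to the right within each horizontal band. I would therefore pass surplus sources to the right along $x_1$ within a band, paying only the $u_2^2/u_1$ excess, and combine strips hierarchically, in the spirit of the dyadic AKT scheme, so that each level of the hierarchy costs $O(1)$. I do not yet see how to show that the leftover imbalances never force leftward or long vertical moves.
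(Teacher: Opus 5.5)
Your lower bound is correct and is exactly the paper's argument. The upper bound, however, is the substance of the proposition, and it is not proved: you give no matching with excess $O(\log n)$. The count-imbalance obstacle you identify is created by partitioning by spatial position. If your dyadic layers in $s$ also confine transport in $x_1$, the problem is worse: the population map moves a point at distance $s$ from a vertical side by an amount of order $s$, so mass crosses layer boundaries and the counts differ already in expectation, not only by $\sqrt N$ fluctuations.

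The missing idea is to block by vertical \emph{rank}. Sort each sample by $x_2$, split both into the same $\lceil\sqrt n\rceil$ consecutive rank blocks of sizes $m_b\asymp\sqrt n$, and within corresponding blocks pair points by $x_1$-rank. The counts agree by construction, so nothing needs routing, and no hierarchy or rightward passing of surpluses is needed. Paired points have vertical ranks differing by at most $C\sqrt n$, so uniform order-statistic moments give $\E v^2\le C/n$, the same as strips of height $n^{-1/2}$ would give. The product structure supplies the needed decoupling: the horizontal coordinates listed in vertical-rank order are independent samples from $p_1$ and $q_1$, independent of the vertical order statistics. Hence, conditional on all horizontal data, the pairing is fixed and still $\E[v^2\mid\text{horizontal data}]\le C/n$ and $\E[|v|\mid\text{horizontal data}]\le C/\sqrt n$. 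This is what lets you divide by the random horizontal gap.

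Your per-strip horizontal estimate is also too crude, and it is what pushed you to a multiscale design. At distance $s$ from a vertical side, $F_N-G_N$ fluctuates by $\sqrt{s/N}$, not $N^{-1/2}$. So $G_N>F_N$ there with probability about $e^{-c\gamma^2sN}$ rather than $e^{-c\gamma^2s^2N}$, and the backward excess of a strip with $N$ points is $O(1/N)$ rather than $O(\gamma^{-1})$.

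In order-statistic form, $G^{-1}(u)-F^{-1}(u)\ge\kappa\min(u,1-u)$ with $\kappa=\gamma/(2\Lambda^3)$. Consider the $k$th pair $d_k=B_{(k)}-A_{(k)}$ in a block of size $m$, with $r_k=\min(k,m+1-k)$. Chernoff bounds on uniform order statistics give both $\PP\{d_k<c_0r_k/m\}\le Ce^{-c_1r_k}$ and $\E d_k^2\le C(r_k/m)^2$. Then:
\begin{itemize}
\item backward movement costs $O(1/m)$ per block;
\item the linear vertical bound on the bad events costs $O(n^{-1/2})$ per block;
\item on the good events, the excess $v_k^2/(2d_k)$ has expectation at most $Cm/(nr_k)$.
\end{itemize}
The harmonic sum over $k$ gives $Cm\log(em)/n$ per block. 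Summing over $\asymp\sqrt n$ blocks of size $\asymp\sqrt n$ yields $C\log(en)$ with a single block size, with no dyadic layers needed.
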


For example, $p_1=1$ and $q_1(x)=1+a(2x-1)$, with fixed $0<a<1$,
give smooth positive densities on the same square and $W_1(\mu,\nu)=a/6$.
The bound keeps $a$ fixed; its constant can grow as $a$ decreases to zero.
The proof is given in Appendix~\ref{app:linear-proof}.

This contrast suggests a wider range of leading orders as the sampling
distributions vary. Determining which orders occur under unsquared distance
cost and how they depend on the two distributions is an open problem
that we leave for future work.

\section*{Acknowledgments}
The author thanks Yunbei Xu for discussions and comments on the feasibility
and value of several ideas during the summer of 2026, and for encouragement
to continue pursuing this work.
\section*{Statement of AI Use}
The author formulated the research questions, developed the main conceptual
approach, and wrote the main text. ChatGPT was used under the author's
detailed direction to explore extensions, develop and refine proof arguments, and
provide editorial assistance. The appendix proofs were completed with
substantial assistance from ChatGPT and were reorganized, revised, and checked by the
author. The author takes responsibility for all claims, proofs, and
exposition.

\appendix
\section{The averaging identity}
The bijection below completes the proof of the averaging identity in
Section~\ref{sec:averaging}. We then apply that identity to the
Barthe--Bordenave conjecture for quadratic cost.

\subsection{The bijection behind the Dirichlet law}\label{app:averaging-proof}
Fix $n\ge2$, an integer $k\ge0$, and $N=n-1+k$ sources. Let $c_{ij}$
be the cost of assigning source $i$ to target $j$. Assume the cost matrix
is \emph{generic}: no alternating sum of costs around a simple cycle in
the bipartite source--target graph is zero. For independent sources,
this holds almost surely under the atomless-difference assumption of
Section~\ref{sec:averaging}: condition on all but one source in each cycle.

For a base subset $S$ of $n-1$ sources, let $D_j(S)$ be the minimum cost
of matching $S$ to all targets except $j$. Each source $i\notin S$
chooses a target minimizing $c_{ij}+D_j(S)$, as in~\eqref{eq:insertion}.
Let $K_j(S)$ count these sources choosing target $j$. The choices are
made separately, so several sources may choose the same target.

\begin{lemma}[Every count vector occurs once]\label{lem:counts}
For a generic cost matrix $c_{ij}$, the map
$S\mapsto(K_1(S),\ldots,K_n(S))$ is a bijection from the
$(n-1)$-element base subsets to the nonnegative integer vectors with sum
$k$. All these target choices are unique.
\end{lemma}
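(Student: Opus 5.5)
The plan is to use the counting identity $\binom{N}{n-1}=\binom{n+k-1}{n-1}$, with $N=n-1+k$: both sides count the same number of objects, so it suffices to prove that $S\mapsto K(S)$ is injective, together with the uniqueness of the target choices. The tool is linear-programming duality for a capacitated transportation problem, and genericity is what makes optima unique.

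\emph{Step 1: uniqueness of choices.} I would add a dummy source $0$ with $c_{0j}=0$ for all $j$. Then $D_j(S)$ is the minimum cost of perfectly matching $S\cup\{0\}$ to the $n$ targets with $0$ matched to $j$. Hence the differences $D_j-D_l$ are differences of costs of two matchings. A tie $c_{ij}+D_j=c_{il}+D_l$ would then say that an alternating sum of costs around a cycle vanishes, once the two optimal matchings and the edges $(i,j),(i,l)$ are combined and their common part is cancelled. This contradicts genericity.

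\emph{Step 2: an optimality certificate.} Fix $S$ and put $r=K(S)$. Consider the transportation problem in which every source of $\{0,1,\dots,N\}$ has supply $1$, target $j$ has capacity $r_j+1$, and source $0$ costs $0$. The candidate solution matches $S\cup\{0\}$ as in an optimal matching realising some $D_j$, and sends each added source $i\notin S$ to its chosen target.

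I would show this candidate is optimal by exhibiting duals. For the targets, take $v_j=-D_j$ up to a constant. For the sources, take $u_i=\min_j\{c_{ij}+D_j\}$. The added sources are then tight at their choices by definition. For the base sources, dual feasibility should follow from the standard fact that $-D$ is a potential vector for the matching of $S\cup\{0\}$, obtained from shortest paths in the residual graph. By genericity, this optimal solution is the unique optimum of the capacitated problem, so the full assignment of all sources is determined by $r$ alone.

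\emph{Step 3: recovering $S$.} This is the step I expect to be the main obstacle. Two base subsets with the same $K$ produce the same assignment, but $S$ is not obviously readable from that assignment: in each target's group of $r_j+1$ sources, one source is ``base'' or dummy, and this is not yet singled out.

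My first attempt would be to identify the base source at target $j$ as the one with the largest reduced cost $c_{ij}-v_j$ within its group. I would check this against the inequality $c_{ij}+D_j\le c_{il}+D_l$ satisfied by added sources, compared with the exchange inequalities satisfied by base sources via the $D$'s.

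If that fails, I would fall back on induction on $k$. Removing a distinguished added source and taking the smallest $j$-group gives the relation $K(S)\mapsto K(S)-e_j$ between problems with $k$ and $k-1$ added sources. Since the case $k=0$ is trivial, injectivity would then propagate. Finally, injectivity together with the equal cardinalities gives the bijection.
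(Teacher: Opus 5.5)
Your Step~3 is the actual content of the lemma, and neither route you propose closes it. After Steps~1--2 you only know that $r=K(S)$ determines an optimal assignment in which each target's group contains one base-or-dummy source. Which source that is has not been determined.

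Step~2 itself needs more than the stated genericity. Cycles through the zero-cost dummy are target-to-target alternating paths, which genericity does not control. For instance, $c_{s1}=c_{s2}$ for a base source gives $D_1=D_2$ and two optimal positions for the dummy. The dummy must also sit at $\arg\min_jD_j$.

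The within-group criterion is false already for $n=2$. There $S=\{s\}$ gives $D_1=c_{s2}$ and $D_2=c_{s1}$. So an added source $i$ chooses target~1 iff $c_{i1}-c_{i2}<c_{s1}-c_{s2}$: the base is singled out by a cross-target rank. Take $c_{s1}=1$, $c_{s2}=0$, $c_{i1}=10$, $c_{i2}=5$. Then $K(S)=(0,1)$ and the group at target~2 is $\{s,i\}$. But $c_{i2}-v_2=6>1=c_{s2}-v_2$, so your rule picks $i$. Here $\{i\}$ has count vector $(1,0)$, so $s$ is the true base. Taking $c_{i2}=-5$ defeats the opposite rule.

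The induction fails too. To push a collision $K(S)=K(S')$ with $S\ne S'$ down to $k-1$, you must delete a source outside $S\cup S'$ that makes the same choice under $D(S)$ and under $D(S')$. When $k=1$, $S\cup S'$ already contains all $N$ sources. In general $D(S)\ne D(S')$, so the choices need not agree.

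The missing idea is to prove surjectivity instead, using capacities that remove the degeneracy; equal cardinalities make surjectivity enough. The paper gives target $j$ mass $r_j+a$ with $a=(n-1)/n$ and no dummy. This is exactly your problem with the dummy's unit spread as $1/n$ over every target.

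A support component with $b$ targets has integer source mass but target mass in $\mathbb Z+b(n-1)/n$, so $n\mid b$. Hence the support is a spanning tree and, by genericity, coincides with the equality graph. Counting its $N+n-1$ edges, and using that each residual demand $r_j-l_j+a$ is at least $a$, forces exactly $n-1$ split sources. Each is split between two targets, and target $j$ receives exactly $r_j$ unsplit sources.

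The split sources form $S$. Rooting the tree at $j$ gives $D_j(S)=\sum_{i\in S}u_i-\sum_{l\ne j}q_l$. Each unsplit source then uniquely chooses its tree edge, so $K(S)=r$. In this fractional problem the base is read off as the set of split sources, which is precisely the identification your Step~3 lacks.

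Your Step~1 is a valid direct proof that the choices are unique. You need to add that optimality of both matchings and genericity exclude alternating cycles in their symmetric difference.
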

\begin{proof}
Fix a count vector $r=(r_1,\ldots,r_n)$ with sum $k$. We construct its
base as the sources whose mass is split in an auxiliary transport
problem. Give each source mass one and target $j$ mass $r_j+a$, where
$a=(n-1)/n$. The total target mass is $k+na=N$.
Choose an optimal plan and dual variables $u_i,q_j$, normalized by
$q_n=0$, so that
$c_{ij}+q_j\ge u_i$, with equality on every edge carrying positive mass.

The \emph{support graph} consists of edges carrying positive mass; the
\emph{equality graph} consists of edges satisfying $c_{ij}+q_j=u_i$.
A cycle in the equality graph would contradict genericity. The support
graph is connected: any component with $b$ targets has integer source
mass and target mass equal to an integer plus $ba$, so $n$ divides $b$
and hence $b=n$. Thus the support is a spanning tree. The equality graph
contains this tree and has no cycles, so the two graphs coincide and
have $N+n-1$ edges.

Let $s$ count the sources split between targets, and let $l_j$ count
the unsplit sources sent to $j$. Since each source uses at least one
edge and each split source at least two, $s\le n-1$.
After the unsplit sources are removed, target $j$ still demands $r_j-l_j+a$,
a nonnegative number with fractional part $a$, hence at least $a$.
The split sources supply these demands, so $s\ge na=n-1$. Equality
forces exactly $n-1$ sources to split, each between two targets, and
$l_j=r_j$ for every $j$.

Let $S$ be the split sources. To identify their costs $D_j(S)$, remove
the unsplit sources and replace each split source by an edge joining
its two targets. Root the resulting tree at target $j$ and assign each
edge's source to the endpoint farther from the root. This matches $S$
to every target except $j$ exactly once. All chosen edges attain dual
equality, so this matching attains the dual lower bound
\[
 D_j(S)=\sum_{i\in S}u_i-\sum_{l\ne j}q_l.
\]
Hence $q_j=D_j(S)-D_n(S)$ and
$c_{ij}+D_j(S)=(c_{ij}+q_j)+D_n(S)$.
Each unsplit source has a unique equality edge, so its target also
uniquely minimizes $c_{ij}+D_j(S)$. Consequently, $K_j(S)=l_j=r_j$.

We have constructed a base for every count vector. Different count
vectors give different bases, and both sets have
$\binom{n-1+k}{n-1}$ elements. Thus every base occurs exactly once,
with unique choices for all sources outside it.
\end{proof}

\subsection{Application to the Barthe--Bordenave conjecture}\label{app:highdim}
The bounds for general densities in Barthe and Bordenave~\cite{BB}
involve matching constants with and without a boundary relaxation.
They conjectured that these constants coincide. For quadratic cost,
our averaging identity (Theorem~\ref{thm:averaging}) transfers this
question to the semidiscrete boundary comparison of Huesmann, Goldman,
and Trevisan~\cite{GHT}.
\Needspace{10\baselineskip}
\begin{corollary}[Quadratic matching for general densities in higher dimensions]\label{cor:highdim}
For every $d\ge5$ there is $\beta_d\in(0,\infty)$ such that, for any compactly supported probability density $f$ on $\R^d$ and two independent $n$-point samples with that density,
\[
 n^{2/d-1}\min_{\sigma\in S_n}\sum_i|X_i-Y_{\sigma(i)}|^2
 \longrightarrow\beta_d\int_{\R^d}f^{1-2/d}\,dx
\]
almost surely and in $L^q$ for every finite $q\ge1$.
\end{corollary}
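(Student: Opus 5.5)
The plan is to combine Barthe--Bordenave's two-sided bounds with the averaging identity, which turns the bipartite constant into a semidiscrete constant with random Dirichlet masses. I will then use the boundary comparison of~\cite[Theorem~4.3]{GHT} to show that the free-boundary and Dirichlet-boundary semidiscrete constants coincide.

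First, I recall the structure of~\cite{BB}. For $d\ge5$ and quadratic cost, they define a bipartite constant $\beta_d$ as the limit of $n^{2/d-1}\E M_n$ for uniform samples on the unit cube. They also define a relaxed constant $\beta_d'\le\beta_d$ for the problem in which mass may be sent to the cube's boundary. They then prove
\[
 \beta_d'\int f^{1-2/d}\le\liminf\quad\text{and}\quad\limsup\le\beta_d\int f^{1-2/d}
\]
for general compactly supported densities, with almost sure and $L^q$ convergence following from concentration (bounded support makes the cost Lipschitz in each point). It therefore suffices to show $\beta_d'=\beta_d$, and I will argue about expectations of the uniform-cube problem only.

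Second, I apply Theorem~\ref{thm:averaging} conditionally on the targets $Y_j$ in the cube. This gives
\[
 \E M_n=n\E W_2^2\Bigl(\mu_Q,\sum_jW_j\delta_{Y_j}\Bigr),
\]
and the same identity holds for the boundary-relaxed matching. The relaxed cost has the form $\min_\sigma\sum_i c_{\sigma(i)}(X_i)$ with extra "boundary targets". More precisely, I would write the relaxation as a matching in which each source either takes a target or goes to the boundary at cost $\dist(x,\partial Q)^2$. If the relaxation in~\cite{BB} relaxes only one side, I would apply averaging over the unrelaxed side, treating the relaxed side's boundary option as part of the cost functions $c_j$; Theorem~\ref{thm:averaging} allows arbitrary integrable costs. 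Both bipartite constants are thereby expressed as semidiscrete constants with Dirichlet weights, one with the Neumann-type (closed) and one with the boundary-relaxed semidiscrete problem.

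Third, I invoke~\cite[Theorem~4.3]{GHT}. It compares semidiscrete transport from the Lebesgue measure on a cube to a random point measure with and without the boundary relaxation, and shows that the difference is $o(n^{1-2/d})$ after scaling. The main obstacle is checking that this comparison applies to Dirichlet weights rather than equal weights or Poisson points. I would first try to verify that their proof uses only local mass fluctuations with subgaussian tails. The Dirichlet weights $W_j=E_j/\sum E_k$ with i.i.d.\ exponentials $E_j$ meet this requirement: conditioning on $\sum E_k\approx n$ reduces them to independent exponential marks, a marked point process to which their argument extends. If that fails, I would sandwich the Dirichlet cost between equal-weight costs by a further subadditivity argument. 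Granting this step, the two semidiscrete limits agree, so $\beta_d'=\beta_d$, and the Barthe--Bordenave bounds yield the stated limit $\beta_d\int f^{1-2/d}$.
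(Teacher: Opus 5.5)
Your overall strategy is the paper's: the averaging identity, then \cite[Theorem~4.3]{GHT} to equate the relaxed and unrelaxed Dirichlet-weighted semidiscrete limits, then the bounds of \cite{BB}. But the step you flag as the main obstacle is left open. In your fixed-$n$ unit-cube setup it cannot simply be closed by citing \cite{GHT}. Their Theorem~4.3 concerns restrictions to $Q_R=(0,R)^d$ of a stationary random measure on $\R^d$ satisfying their Assumption~4.1. Your $n$ uniform points with Dirichlet weights are not such an object, and checking that their argument ``extends'' would amount to reproving it. Nor is conditioning on $\sum_kE_k\approx n$ what makes exponential marks work.

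Instead, take the targets to be a unit-intensity Poisson process on $\R^d$ carrying i.i.d.\ mean-one exponential masses. This random measure $\nu$ is stationary. Conditional on $N=n$ points in $Q_R$, one has $\nu|_{Q_R}=S\sum_iW_i\delta_{Y_i}$, where $W\sim\Dir(1,\ldots,1)$ is independent of $S$ and of the sites, and $\E[S\mid N=n]=n$. Homogeneity of $\OT_c$ then gives, with $V=R^d$ and no approximation,
\[
 \E\!\left[\OT_c\!\left(\nu|_{Q_R},\frac SV\,dx\right)\,\middle|\,N=n\right]
 =n\,\E\!\left[\OT_c\!\left(\frac{dx}V,\sum_iW_i\delta_{Y_i}\right)\,\middle|\,N=n\right].
\]
Assumption~4.1 is then checked directly. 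For $Z=\int h\,d\nu-\int h\,dx$, the Poisson exponential formula gives $\log\E e^{\theta Z}=\int_Q\theta^2h^2/(1-\theta h)\,dx$, hence Bernstein tails with concentration exponent $\alpha=0$. Your subadditivity fallback is not developed and is not needed.

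Two further ingredients are missing.

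First, after Poissonizing, the averaging identity equates the \cite{GHT} limits with matching problems in which both samples share one size $N\sim\Pois(V)$. This is neither your fixed-$n$ problem nor the Poissonization with independent sizes $N_1,N_2\sim\Pois(V)$ that \cite{BB} use to define $\beta_2(d)$ and $\beta_2'(d)$. So a bridge is required. Adding or deleting one point changes either functional by at most $\operatorname{diam}(Q_R)^2=dR^2$. Hence the per-volume discrepancy is at most $dR^2\,\E|N_1-N_2|/V\le d\sqrt2\,R^{2-d/2}$, which vanishes because $d\ge5$.

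Second, the Barthe--Bordenave boundary functional charges $r_R(x)^2=\dist(x,\partial Q_R)^2$ for unmatched points of \emph{both} samples. Your one-sided ``each source either takes a target or goes to the boundary'' is therefore not the right object. For equal sample sizes, the two-sided partial matching equals the complete assignment cost for $b_R(x,y)^2=\min\{|x-y|^2,r_R(x)^2+r_R(y)^2\}$. This is a legitimate choice of the $c_j$ in Theorem~\ref{thm:averaging}, and it is the symmetric boundary cost to which \cite[Theorem~4.3]{GHT} applies. That is why the common-size Poisson model is the natural intermediate step before the bridge to \cite{BB}.
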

\begin{proof}
We first work on expanding cubes $Q_R=(0,R)^d$ of side length $R$
and volume $V=R^d$, with $R\to\infty$. Draw a common Poisson sample
size $N\sim\Pois(V)$ and, conditional on $N=n$, two independent
uniform $n$-point samples $X,Y$ in $Q_R$.
The averaging identity relates their expected matching costs, with and
without the boundary relaxation, to semidiscrete costs. We prove that
the two limiting costs agree, then identify their common value with
the constants in Barthe and Bordenave's fixed-size bounds.

\paragraph{Applying the averaging identity.}
Write $r_R(x)=\dist(x,\partial Q_R)$.
For finite point multisets $X,Y\subset Q_R$, let $M_R(X,Y)$ be the
minimum quadratic cost of a matching of cardinality
$\min\{|X|,|Y|\}$. Let $B_R(X,Y)$ be the minimum cost of a partial
matching, charging $r_R(x)^2$ for every unmatched point.
To apply the averaging identity to $B_R$, express it as an assignment
cost: when $|X|=|Y|$, it equals the complete assignment cost for
\[
 b_R(x,y)^2=\min\{|x-y|^2,r_R(x)^2+r_R(y)^2\}.
\]
Indeed, remove every pair using the boundary alternative from a complete
assignment; conversely, complete a partial matching by pairing the
unmatched points. Neither operation increases the corresponding cost.
This is the symmetric boundary transport cost used in~\cite{GHT}; the
coefficient in Barthe--Bordenave's boundary functional equals one for
quadratic cost.

Realize the target sample $Y$ as the points of a unit-intensity
Poisson process on $\R^d$ that fall in $Q_R$. To obtain the Dirichlet
masses required by Theorem~\ref{thm:averaging}, give each point an
independent exponential mass of mean one, and write $\nu$ for this
random measure. Conditional on $N=n>0$,
$\nu|_{Q_R}=\sum_{i=1}^nE_i\delta_{Y_i}$. Put $S=\sum_iE_i$.
The proportions $W_i=E_i/S$ are Dirichlet independently of $S$ and
the positions, and $\E[S\mid N=n]=n$.
For finite measures of equal mass, use the same transport objective
$\OT_c(\eta,\zeta)=\inf_{\pi\in\Pi(\eta,\zeta)}\int c\,d\pi$,
without normalizing their mass.
For either symmetric cost $c=|x-y|^2$ or $b_R^2$, homogeneity and the
averaging identity give
\[
 \begin{aligned}
 \E\!\left[\OT_c\left(\nu|_{Q_R},\frac SVdx\right)\,\middle|\,N=n\right]
 &=n\E\!\left[\OT_c\left(\frac{dx}V,\sum_iW_i\delta_{Y_i}\right)\,\middle|\,N=n\right]\\
 &=\E\!\left[\min_{\sigma\in S_n}\sum_i c(X_i,Y_{\sigma(i)})\,\middle|\,N=n\right],
 \end{aligned}
\]
Here Lebesgue measure is restricted to $Q_R$, and all costs are zero
when $n=0$. Averaging over $N\sim\Pois(V)$ gives
the same equality for the matching model introduced above.

\paragraph{Equality of the semidiscrete limits.}
We check the concentration assumption of~\cite[Assumption~4.1]{GHT}
for $\nu$. For a bounded nonnegative function $h$ supported in a cube
$Q$, put $Z=\int h\,d\nu-\int h\,dx$ and $a=\|h\|_\infty$.
For $|\theta|a\le1/2$, the Poisson exponential formula gives
\[
 \log\E e^{\theta Z}
 =\int_Q\frac{\theta^2h(x)^2}{1-\theta h(x)}\,dx
 \le2\theta^2a^2|Q|.
\]
Thus
\[
 \PP(|Z|>u)\le2\exp\!\left[-c\min\left\{\frac{u^2}{a^2|Q|},\frac ua\right\}\right].
\]
The zero function is immediate. Integration of the tail gives
$\|Z\|_q\le C_qa(|Q|^{1/2}+1)\le C_{d,q}a\operatorname{diam}(Q)^{d/2}$
when $\operatorname{diam}(Q)\ge1$. The measure is stationary with intensity one, so all the assumptions hold with the concentration exponent $\alpha=0$.

The boundary comparison~\cite[Theorem~4.3]{GHT} now gives equal finite limits, for $d\ge3$,
\begin{equation}\label{eq:marked-highdim}
 \lim_{R\to\infty}\frac1V\E\OT_{|x-y|^2}
 \left(\nu|_{Q_R},\frac{\nu(Q_R)}Vdx\right)
 =\lim_{R\to\infty}\frac1V\E\OT_{b_R^2}
 \left(\nu|_{Q_R},\frac{\nu(Q_R)}Vdx\right)=:\kappa_d.
\end{equation}
The averaging identity transfers these equal limits to $M_R$ and $B_R$
for two conditionally independent uniform samples with a common Poisson
sample size, in every $d\ge3$.

\paragraph{Identifying the Barthe--Bordenave constants.}
To obtain the constants in their fixed-size bounds, Barthe and Bordenave
study two samples whose numbers of points are independent Poisson
variables with mean $n$. For quadratic cost in $d\ge5$, this change
preserves the limiting constants of the problem with exactly $n$ points
in each sample~\cite[Section~2.1.1]{BB}.
To identify $\kappa_d$ with their constants, we therefore compare common
and independent Poisson sample sizes.
Adding or deleting one point changes either $M_R$ or $B_R$ by at most
$dR^2$. For $B_R$, leave a new point or the partner of a deleted point
unmatched. For $M_R$, either add a pair using an unmatched point or
replace a deleted matched point by an unmatched point from the same
sample. Apply these constructions in both directions. Take independent
$N_1,N_2\sim\Pois(V)$ and independent uniform sequences $(X_i),(Y_i)$
in $Q_R$, independent of the counts. For $F_R=M_R$ or $B_R$,
\[
 \frac1V\left|\E F_R(X_{1:N_1},Y_{1:N_1})
                 -\E F_R(X_{1:N_1},Y_{1:N_2})\right|
 \le\frac{dR^2}V\E|N_1-N_2|
 \le d\sqrt2\,R^{2-d/2}.
\]
This tends to zero for $d\ge5$. Rescaling to the unit cube therefore identifies the quadratic matching constant and the boundary constant $\beta_2(d),\beta'_2(d)$ of \cite[Theorem~4 and Lemma~11]{BB}:
\[
 \beta_2(d)=\beta'_2(d)=\kappa_d=:\beta_d>0.
\]
Positivity follows from their lower bound for matching under uniform sampling.

With these constants equal, the upper and lower almost-sure bounds in
\cite[Theorem~2]{BB} give the limit in Corollary~\ref{cor:highdim} for
every compactly supported density $f$. The bounded-support concentration
estimate in \cite[Proposition~4]{BB} gives convergence in every finite
$L^q$ as well.
\end{proof}
\section{The semidiscrete-to-smoothing reduction}\label{app:smoothing-proof}
The reduction rests on a deterministic comparison of transport costs
when the source distribution changes. We prove this comparison in
Appendix~\ref{app:source-comparison}, bound its error for random samples in
Appendix~\ref{app:concentration-cost-bounds}, and extend it to surfaces with
boundaries and corners in Appendix~\ref{app:boundary-corner-reduction}.
Appendix~\ref{app:spectral-term} evaluates the spectral term.

\subsection{Comparing costs under a change of source distribution}\label{app:source-comparison}
The main step, Lemma~\ref{s:lem:split}, compares optimal costs by moving
source points while keeping their targets fixed. The leading change
depends only on the source and target distributions. Constructing the
corresponding plans also requires a small adjustment to the source
density; we first recall the standard transport bound used for this adjustment.

\paragraph{Transport from a positive density}
We use the following form of Peyre's transport bound
\cite[Theorem~2.1 and Lemma~2.2]{Peyre}, expressed in a negative Sobolev norm.
Let $(M,g)$ be a compact connected smooth manifold without boundary,
of any dimension, or a surface satisfying the geometric assumptions of
Section~\ref{subsec:surface-reduction}. Let $\mu=\rho\,d\mathrm{vol}_g$,
with $\rho$ smooth and strictly positive.
Let $(\lambda_j,\psi_j)_{j\ge1}$ be the positive eigenpairs of
$-\rho^{-1}\diver_g(\rho\nabla_g)$, orthonormal in $L^2(\mu)$,
with the Neumann condition at the boundary.
For a signed measure $\zeta$ of total mass zero, set
\[
 \|\zeta\|_{-1,\mu}^2
 =\sup_{\substack{\mu(u)=0\\\int|\nabla u|^2\,d\mu\le1}}|\zeta(u)|^2
 =\sum_{j\ge1}\frac{|\zeta(\psi_j)|^2}{\lambda_j}.
\]
The supremum runs over smooth functions $u$ satisfying the displayed
constraints, with no boundary condition imposed on $u$.
The norm may be infinite.
For measures $\eta,\zeta$ of common mass $m>0$, we write
$\OT(\eta,\zeta)=mW_2^2(\eta/m,\zeta/m)$ for their squared-distance
transport cost, and set $\OT(0,0)=0$.

\begin{lemma}[Transport bound with one positive density]\label{c:lem:energy}
Let $(M,g)$ and $\mu$ be as specified at the start of
Appendix~\ref{app:source-comparison}. Let $\eta$ and $q\mu$ be finite
measures of equal mass, with $q\ge b>0$.
Then
\[
 \OT(\eta,q\mu)\le4b^{-1}\|\eta-q\mu\|_{-1,\mu}^2.
\]
The bound also holds within a cell, using the unnormalized restriction
of $\mu$ and its Neumann eigenfunctions. No derivative of $q$ is required.
\end{lemma}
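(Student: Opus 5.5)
We will prove the bound by the Benamou--Brenier linear-interpolation argument, as in Peyre's paper. Write $\zeta=\eta-q\mu$, which has total mass zero. First we reduce to the case where $\eta$ has a smooth density: mollify $\eta$ by heat smoothing $P_s$ for the weighted Laplacian, which preserves mass. Then $\|P_s\eta-q\mu\|_{-1,\mu}$ is controlled by $\|\zeta\|_{-1,\mu}$ plus a term $\|(P_s-I)(q\mu)\|_{-1,\mu}$ that vanishes as $s\downarrow0$. At the same time $\OT(P_s\eta,\eta)\to0$, and lower semicontinuity of $\OT$ under weak convergence lets us pass to the limit. We may also assume $\|\zeta\|_{-1,\mu}<\infty$, since otherwise there is nothing to prove.

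Next we construct a velocity field. Let $\phi$ solve the weak Neumann problem $\int\nabla\phi\cdot\nabla u\,d\mu=\zeta(u)$ for all smooth $u$, with $\mu(\phi)=0$. In the eigenbasis this reads $\phi=\sum_j\zeta(\psi_j)\psi_j/\lambda_j$, so that $\int|\nabla\phi|^2\,d\mu=\|\zeta\|_{-1,\mu}^2$. This is exactly the dual characterization in the definition of the norm. The identity says that the flux $\rho\nabla\phi$ (in Riemannian form, the vector measure $\nabla\phi\,\mu$) carries $q\mu$ to $\eta$. Concretely, along the linear interpolation $\eta_\tau=(1-\tau)q\mu+\tau\eta$ we have $\partial_\tau\eta_\tau=\zeta=-\diver(\nabla\phi\,\mu)$ weakly, with zero normal flux. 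This holds on a closed manifold, on a surface with corners, and within a cell with Neumann eigenfunctions, because only the weak identity is used.

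We then apply the Benamou--Brenier formula in its mass-$m$ form. For the continuity equation with momentum $\nabla\phi\,\mu$,
\[
\OT(q\mu,\eta)\le\int_0^1\!\!\int\frac{|\nabla\phi|^2\mu^2}{\eta_\tau}\,d\tau .
\]
Here the integrand is understood as $\int|\nabla\phi|^2(d\mu/d\eta_\tau)\,d\mu$. Since $\eta\ge0$, we have $\eta_\tau\ge(1-\tau)q\mu\ge(1-\tau)b\mu$. This gives the pointwise bound
\[
\int\frac{|\nabla\phi|^2\,d\mu}{(1-\tau)b}.
\]
The time integral $\int_0^1(1-\tau)^{-1}\,d\tau$ diverges, so this naive bound fails. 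The main obstacle is precisely this singularity at $\tau=1$.

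We fix it with Peyre's trick of a nonlinear time reparametrization, or equivalently by splitting at $\tau=1/2$ and using convexity. Let $\theta:[0,1]\to[0,1]$ be increasing, and transport along $\eta_{\theta(\tau)}$ with momentum $\theta'(\tau)\nabla\phi\,\mu$. The cost bound becomes
\[
\int_0^1\frac{\theta'(\tau)^2}{(1-\theta(\tau))b}\,d\tau\;\|\zeta\|_{-1,\mu}^2 .
\]
Choose $1-\theta(\tau)=(1-\tau)^2$. Then $\theta'=2(1-\tau)$, and the integrand equals $4(1-\tau)^2/((1-\tau)^2b)=4/b$. This yields exactly the stated constant $4b^{-1}$. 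Nothing in the argument uses derivatives of $q$; only the lower bound $q\ge b$ enters. The cell version is identical, using the unnormalized restriction of $\mu$ and its Neumann eigenfunctions.
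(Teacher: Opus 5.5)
Your proposal is correct and is essentially the paper's proof. Both solve the Neumann problem for $\eta-q\mu$, interpolate linearly with the reparametrization $\theta(\tau)=1-(1-\tau)^2$ and flux $\theta'\nabla\phi\,\mu$, and use the density floor $(1-\theta)b\mu$ in the Benamou--Brenier action to obtain exactly $4b^{-1}$. One small correction concerns your preliminary smoothing of $\eta$ alone: the claim $\|(P_s-I)(q\mu)\|_{-1,\mu}\to0$ needs $q\mu$ to have finite centered $\|\cdot\|_{-1,\mu}$-energy (e.g.\ $q\in L^2(\mu)$). That step is unnecessary, however, since the action is monotone in the density and your weak formulation already handles measure endpoints; alternatively, smooth both endpoints, using $P_sq\ge b$.
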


\begin{proof}
For smooth densities, solve
$-\diver(\rho\nabla v)=\eta-q\mu$, with zero boundary flux at every physical or cell boundary.
Interpolate from $q\mu$ to $\eta$ by
$\eta_s=(1-\theta(s))q\mu+\theta(s)\eta$, where
$\theta(s)=1-(1-s)^2$, and use flux $\theta'(s)\rho\nabla v$.
The density along this path is at least $(1-\theta(s))b\rho$.
The Benamou--Brenier action is therefore at most
\[
 b^{-1}\int_0^1\frac{\theta'(s)^2}{1-\theta(s)}\,ds
       \int|\nabla v|^2\,d\mu
 =4b^{-1}\|\eta-q\mu\|_{-1,\mu}^2.
\]
Approximation gives the assertion for finite-energy measure endpoints;
when the norm is infinite the inequality is immediate.
\end{proof}

\paragraph{Changing the source distribution}
Assume now that $\mu$ is a probability measure. All derivatives use $g$,
and put
\[
 \diver_\mu V=\rho^{-1}\diver_g(\rho V),\qquad
 \cL=-\diver_\mu\nabla_g.
\]
We compare the costs of transport from $\mu$ and from $(1+\cL f)\mu$
to a fixed target distribution. On a closed manifold, let $k$ satisfy the cost
assumptions of Section~\ref{subsec:general-smoothing},
including~\eqref{eq:smoothing-cost-assumptions}.
Then $a d_g^2\le k\le b d_g^2$ for some $a,b>0$.
For a surface with boundary, we use only $k=d_g^2$.

\begin{lemma}[Comparison under a change of source distribution]\label{s:lem:split}
For the geometry, probability measure $\mu$, and cost $k$ specified in
Appendix~\ref{app:source-comparison}, let $f\in C^2(M)$, with $\mu(f)=0$ and
$\gamma=\|\nabla^2f\|_{\mathrm{op},\infty}$ sufficiently small.
When a boundary is present, assume $\partial_\nu f=0$ on each smooth face.
For any probability measure $\nu$ on $M$, put
$q=1+\cL f$, $E_f=\int|\nabla f|^2\dd \mu$,
$L=\OT_k(\mu,\nu)$, and $D=\OT_k(q\mu,\nu)$. Then $q$ is a positive
probability density relative to $\mu$ and
\[
|L-D-2\nu(f)+E_f|\le C\gamma(L+E_f).
\]
\end{lemma}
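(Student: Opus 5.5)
Positivity of $q$ comes first. Since $\cL f=-\Delta_gf-\langle\nabla\log\rho,\nabla f\rangle_g$ and $\mu(f)=0$ gives $\|\nabla f\|_\infty\le C\gamma$ (Poincaré-type bound on the compact $M$), we get $|q-1|\le C\gamma$. The Neumann condition gives $\mu(\cL f)=0$, so $q$ is a positive probability density for small $\gamma$.

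The main construction is the map $S(x)=\exp_x(\nabla f(x))$, or simply $x+\nabla f(x)$ in flat charts. It is a near-identity $C^1$ perturbation with $\|DS-I\|\le C\gamma$, and the Neumann condition keeps $S$ mapping $M$ to itself at faces and corners, up to a correction I would handle by reflection. The change-of-variables formula gives
\[
S_\#\mu=\bigl(1+\cL f+O(\gamma|\nabla f|+|\nabla f|^2\cdot\text{curv})\bigr)\mu .
\]
So $S_\#\mu$ agrees with $q\mu$ up to a density error $r$ with $\|r\|_\infty\le C\gamma$. More usefully, $r$ has $H^{-1}$ norm $\le C\gamma\|\nabla f\|_{L^2(\mu)}$, because it is a divergence of a quadratic expression in $\nabla f$: $r=\diver_\mu(O(\gamma)\nabla f)$. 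Lemma~\ref{c:lem:energy} then gives $\OT(S_\#\mu,q\mu)\le C\gamma^2E_f$.

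The comparison runs in both directions. For a plan $\pi$ from $\mu$ to $\nu$, push it forward by $(x,y)\mapsto(S(x),y)$ to get a plan from $S_\#\mu$ to $\nu$. Expanding $k(S(x),y)$ using $D_x^2k(y,y)=2g_y$ and smoothness near the diagonal, together with boundedness and Lipschitz control away from it, gives
\[
k(Sx,y)-k(x,y)=D_xk(x,y)\cdot\nabla f(x)+|\nabla f(x)|^2+O\bigl(\gamma\,(k(x,y)+|\nabla f(x)|^2)\bigr).
\]
The key identity is $D_xk(x,y)\approx-2\,\exp_x^{-1}(y)$ to second order. Then $\int D_xk\cdot\nabla f\,d\pi\approx-2\int\langle\exp_x^{-1}y,\nabla f(x)\rangle\,d\pi$, and a Taylor expansion of $f$ along the geodesic gives $f(y)-f(x)=\langle\nabla f(x),\exp_x^{-1}y\rangle+O(\gamma d^2)$. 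This uses the Hessian bound, which is exactly where $\gamma$ enters multiplied by $L$. Integrating, $\int f\,d\nu-\int f\,d\mu=\nu(f)$, so the cross term equals $-2\nu(f)+O(\gamma\int d^2d\pi)$.

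I have to be careful with the sign. Moving the source toward $S$ increases density where $\cL f>0$, and the stated formula is $D\approx L-2\nu(f)+E_f$. Adding the $|\nabla f|^2$ term gives $\int k\,d(S\times\Id)_\#\pi=L-2\nu(f)+E_f+O(\gamma(L+E_f))$ for the optimal $\pi$. Combining this with the transfer from $S_\#\mu$ to $q\mu$ yields the upper bound $D\le L-2\nu(f)+E_f+C\gamma(L+E_f)$. The cost of the small transport is handled by a triangle-type inequality for $\OT_k$: $k\le bd^2$ and a gluing argument give an additive error $C\sqrt{\gamma^2E_f\cdot L}+C\gamma^2E_f\le C\gamma(L+E_f)$.

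For the reverse inequality, I would start from the optimal plan for $D$. I first move $q\mu$ to $S_\#\mu$ at cost $C\gamma^2E_f$, glued as before. I then pull the plan back by $S^{-1}$ and use the same expansion with $-\nabla f$, obtaining $L\le D+2\nu(f)-E_f+C\gamma(D+E_f)$. Finally, $D\le CL+CE_f$ follows from the upper bound, so the right-hand side becomes $C\gamma(L+E_f)$.

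I expect the main obstacle to be the gluing and triangle step for the non-metric cost $k$, together with the boundary and corner geometry. For $k$ I would use the quasi-triangle inequality $k(x,y)\le(1+\epsilon)k(x',y)+C\epsilon^{-1}d(x,x')^2$, optimized at $\epsilon=\gamma$; this only needs local quadratic behaviour near the diagonal and the Lipschitz bound. On surfaces with boundary the cost is $d_g^2$, and I would replace $S$ by the time-one flow of $\nabla f$, which is tangent to the boundary by the Neumann condition.
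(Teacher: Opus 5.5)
Your architecture matches the paper's, and on a closed manifold it goes through. The first-order change in cost, $2f(x)-2f(y)+|\nabla f(x)|^2$, splits into a source term and a target term, so it is plan-independent, and the bijection of couplings gives both inequalities. The mismatch between the pushed density and $q$ is a weak divergence of a field of size $C\gamma|\nabla f|$, which Lemma~\ref{c:lem:energy} and your quasi-triangle inequality at $\epsilon=\gamma$ absorb into $C\gamma(L+E_f)$.

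The paper instead uses the flow $S_s$ of $-\nabla f$, which carries $\widehat q\mu$ to $\mu$. Pushing $\mu$ forward by $\exp(\nabla f)$, as you do, lets the leading terms integrate exactly against $\mu$, so you avoid the estimate $\int f\widehat q\,d\mu=E_f+O(\gamma E_f)$.

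Two small corrections. First, since $f$ is only $C^2$, the divergence form of $S_\#\mu-q\mu$ holds only weakly and must be derived by change of variables. Second, $\|\nabla f\|_\infty\le C\gamma$ does not come from $\mu(f)=0$ or a Poincar\'e inequality. It comes from $\nabla f$ vanishing at a maximum of $f$, followed by integrating the Hessian along paths; at a boundary maximum or a corner, the vanishing needs the Neumann condition.

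The genuine gap is the boundary case, which the statement includes with $k=d_g^2$ the \emph{intrinsic} distance. Switching to the flow of $\nabla f$ keeps points in $M$. However, your cost expansion still Taylor-expands $k(\cdot,y)$ to second order with $\tfrac12D_x^2k=g+O(d)$ near the diagonal, and it uses $\exp_x^{-1}(y)$ to describe the minimizing path. For $d_M^2$ both fail near a non-convex boundary arc, such as the inner circle of an annulus, because short minimizing paths run along the boundary. Concretely, take $y=(1,0)$ and $x=(r\cos\theta,r\sin\theta)$ in its shadow, with $\theta>0$ small and $r\cos\theta<1$. Then $d_M(x,y)=\sqrt{r^2-1}+\theta-\arccos(1/r)$, whose radial second derivative $1/(r^2\sqrt{r^2-1})$ blows up at the boundary. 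The paper assumes smoothness of $k$ near the diagonal only on closed manifolds.

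The missing idea is to compare path energies rather than differentiate the distance. For a path $\alpha$ in $M$ from $y$ to $z$, with $E(\cdot)$ the path energy, put $\beta(s)=S_s(\alpha(s))$. The flow preserves $M$ by the Neumann condition, and $\beta'=dS_s(\alpha'-\nabla f(\alpha))$, so
\[ E(\beta)=E(\alpha)+2f(y)-2f(z)+|\nabla f(z)|^2+O\bigl(\gamma\{E(\alpha)+|\nabla f(z)|^2\}\bigr). \]
The map $\alpha\mapsto\beta$ is a bijection from paths joining $y$ to $z$ onto paths joining $y$ to $S_1z$. Taking infima therefore gives the two-sided pointwise comparison of $d_M^2$ with no regularity of the distance at all.

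The same device also removes the cut-locus discussion on closed manifolds. What remains is to track $k-d_g^2$ along the flow, using $|D_z(k-d_g^2)|\le Cd_g^2$ near the diagonal and the Lipschitz bound away from it.
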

\begin{proof}
We move each source point $z$ to $S_1(z)$, where $S_s$ is the flow
generated by $-\nabla f$. We compute the change in cost, take infima
over the corresponding transport plans, and then correct the difference
between the source density produced by the flow and $q$.

Put $v=\nabla f$ and define the flow
$S_s$ by $\partial_sS_s=-v\circ S_s$, $S_0=\Id$, for $0\le s\le1$.
When there is a boundary, the Neumann condition makes this flow
preserve every face. At a maximum of $f$ its gradient vanishes,
including at a boundary maximum; integration along paths gives
$\|v\|_\infty\le C\gamma$. The flow satisfies
\[
 e^{-2\gamma s}|w|^2\le |dS_sw|^2\le e^{2\gamma s}|w|^2.
\]
Define $J_s$ by $S_{s\#}(J_s\mu)=\mu$.
For this flow, $\partial_sJ_s=-\diver_\mu(J_sv)$.
Since $\|\diver_\mu v\|_\infty\le C\gamma$, we have
$J_s=1+O(\gamma)$ and
\begin{equation}\label{s:eq:flux}
 \widehat q:=J_1=1+\cL f+\diver_\mu R,
 \qquad R=-v\int_0^1(J_s-1)\,ds,
 \qquad |R|\le C\gamma|v|.
\end{equation}
The flux $R$ has zero normal component at any boundary.
Both $q$ and $\widehat q$ have mass one and equal $1+O(\gamma)$.
Thus $S_1$ transports $\widehat q\mu$ exactly to $\mu$, while the
source distribution we need is $q\mu$. Their density difference is
$\widehat q-q=\diver_\mu R$.

We next compute how $S_1$ changes the transport cost.
To compare squared distances without restrictions on cut loci, take a
path $\alpha$ from $y$ to $z$ and put $\beta(s)=S_s\alpha(s)$.
Write $E(\alpha)=\int_0^1|\alpha'(s)|^2\,ds$ for its path energy.
Since $\beta'=dS_s(\alpha'-v(\alpha))$, the flow distortion and Hessian
bound give
\[
 E(\beta)=E(\alpha)+2f(y)-2f(z)+|v(z)|^2
       +O(\gamma\{E(\alpha)+|v(z)|^2\}).
\]
The cross term integrates to $2f(y)-2f(z)$, and replacing
$|v(\alpha(s))|^2$ by $|v(z)|^2$ has the displayed error.
The path correspondence is bijective, so taking energy infima gives
\[
 d_g(S_1z,y)^2-d_g(z,y)^2
 =2f(y)-2f(z)+|v(z)|^2
       +O(\gamma\{d_g(z,y)^2+|v(z)|^2\}).
\]

For general $k$, put $r(z,y)=k(z,y)-d_g(z,y)^2$.
Its vanishing quadratic part gives $|D_zr(z,y)|\le C d_g(z,y)^2$
near the diagonal. Away from it the same bound holds in the Lipschitz
sense, since $d_g(z,y)$ is bounded below. Integrating along the flow gives
\[
 |r(S_1z,y)-r(z,y)|
 \le C|v(z)|\{d_g(z,y)^2+|v(z)|^2\}
 \le C\gamma\{d_g(z,y)^2+|v(z)|^2\}.
\]
Thus the same comparison holds for $k$. We estimate only the change
in its cubic remainder; both transport problems retain the original cost $k$.

The map $(z,y)\mapsto(S_1z,y)$ bijects
couplings of $(\widehat q\mu,\nu)$ and $(\mu,\nu)$.
The leading term $2f(y)-2f(z)+|v(z)|^2$ separates into a function of
the target $y$ and a function of the source $z$. Its integral is
therefore determined by $\nu$ and $\widehat q\mu$, independently of
the coupling. Equation~\eqref{s:eq:flux} and integration by parts give
\[
 \int f\widehat q\,d\mu=E_f+O(\gamma E_f),\qquad
 \int|\nabla f|^2\widehat q\,d\mu=E_f+O(\gamma E_f).
\]
Integrating the pointwise comparison and taking both infima therefore
gives, with $\widehat D=\OT_k(\widehat q\mu,\nu)$,
\[
 L=\widehat D+2\nu(f)-E_f+O(\gamma(\widehat D+E_f)).
\]

Finally, correct the density discrepancy between $\widehat q\mu$ and
$q\mu$. The cost assumptions give
\[
 |k(z,y)-k(z',y)|
 \le C d_g(z,z')\{d_g(z,y)+d_g(z,z')\}.
\]
Gluing a coupling of source measures $\eta,\eta'$ to a transport plan
from $\eta$ to $\nu$, applying Cauchy--Schwarz, and reversing the roles
of $\eta,\eta'$ yields
\[
 |\OT_k(\eta',\nu)-\OT_k(\eta,\nu)|
 \le C\{w\sqrt{\OT_k(\eta,\nu)}+w^2\},
 \qquad w=W_{2,g}(\eta,\eta').
\]
Apply this bound to $\eta=q\mu$ and $\eta'=\widehat q\mu$.
Lemma~\ref{c:lem:energy} and~\eqref{s:eq:flux} give
\[
 w\le C\|(\widehat q-q)\mu\|_{-1,\mu}
   \le C\|R\|_{L^2(\mu)}\le C\gamma\sqrt{E_f}.
\]
The factor $\gamma$ in this bound on $w$ makes the correction small
enough to preserve the preceding cost comparison:
\[
 |\widehat D-D|\le C\gamma\sqrt{E_fD}+C\gamma^2E_f
                 \le C\gamma(D+E_f).
\]
Quadratic comparability, the triangle inequality for $W_{2,g}$,
and Lemma~\ref{c:lem:energy} give $D\le C(L+E_f)$.
Substitution proves the claimed estimate.
\end{proof}

For quadratic cost, choosing $\nu=q\mu$ gives $D=0$ and
$(q\mu)(f)=E_f$. Absorbing $C\gamma L$ in the source comparison recovers
the classical approximation~\cite{AST,AG}:
\[
 W_{2,g}^2(\mu,q\mu)=E_f+O(\gamma E_f).
\]

\subsection{Concentration and cost bounds}\label{app:concentration-cost-bounds}
To apply the source comparison, we need a small Hessian with high
probability and a rough bound on the cost multiplying it. For the Hessian,
we adapt the heat-kernel and concentration argument of Ambrosio and
Glaudo~\cite[Theorem~3.3]{AG} to general dimension and to Dirichlet as
well as equal masses. We first record the concentration
estimate used for both mass laws.

\begin{lemma}[Concentration for equal and Dirichlet masses]\label{lem:weighted-concentration}
Let $\nu_n=\sum_iw_i\delta_{Y_i}$, where the $Y_i$ are independent with
law $\mu$ and the weights are equal or Dirichlet, independently of the sites.
Suppose a family of mean-zero functions $h_x$ satisfies
$\|h_x\|_\infty\le B$ and $\mu(h_x^2)\le V$.
For $\delta>0$, suppose $N_\delta$ members of this family approximate
every member within $\delta/2$ in supremum norm. Then
\[
 \PP\left\{\sup_x|\nu_n(h_x)|>\delta\right\}
 \le C N_\delta\exp\left\{-\frac{cn\delta^2}{V+B\delta}\right\}.
\]
\end{lemma}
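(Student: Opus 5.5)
The plan is to prove a Bernstein bound for each fixed $x$ and then take a union bound over the $\delta/2$-net.

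\textbf{Reduction to the net.} Let $h_1,\ldots,h_{N_\delta}$ be the approximating members. Since the weights are nonnegative and sum to one, $|\nu_n(h_x)-\nu_n(h_k)|\le\|h_x-h_k\|_\infty\le\delta/2$ whenever $h_k$ approximates $h_x$. Hence
\[
 \Bigl\{\sup_x|\nu_n(h_x)|>\delta\Bigr\}\subset\bigcup_k\bigl\{|\nu_n(h_k)|>\delta/2\bigr\},
\]
so it suffices to show $\PP\{|\nu_n(h)|>\delta/2\}\le C\exp\{-cn\delta^2/(V+B\delta)\}$ for a single mean-zero $h$ with $\|h\|_\infty\le B$ and $\mu(h^2)\le V$.

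\textbf{Equal masses.} Here $\nu_n(h)=n^{-1}\sum_ih(Y_i)$ is an average of i.i.d.\ mean-zero variables bounded by $B$ with variance at most $V$. Bernstein's inequality gives the bound directly, with $c$ absorbing the factor $1/4$.

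\textbf{Dirichlet masses.} Represent $w_i=E_i/S$, where $E_1,\ldots,E_n$ are i.i.d.\ standard exponentials independent of the sites and $S=\sum_iE_i$. Then $\nu_n(h)=S^{-1}\sum_iE_ih(Y_i)$. On the event $\{S\ge n/2\}$,
\[
 |\nu_n(h)|>\delta/2 \quad\text{implies}\quad \Bigl|\sum_iE_ih(Y_i)\Bigr|>n\delta/4.
\]
The complementary event has probability at most $e^{-cn}$ by a Chernoff bound for sums of exponentials. This term is absorbed into $\exp\{-cn\delta^2/(V+B\delta)\}$ when $\delta\lesssim B$. When $\delta>2B$, the probability is zero, because $|\nu_n(h)|\le B$.

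The variables $\xi_i=E_ih(Y_i)$ are i.i.d.\ and mean zero, by independence of $E_i$ and $Y_i$ together with $\mu(h)=0$. They are not bounded, but they satisfy the Bernstein moment condition:
\[
 \E|\xi_i|^p=p!\,\mu(|h|^p)\le p!\,V B^{p-2}.
\]
The moment form of Bernstein's inequality therefore gives
\[
 \PP\Bigl\{\Bigl|\sum_i\xi_i\Bigr|>n\delta/4\Bigr\}\le2\exp\Bigl\{-\frac{cn\delta^2}{V+B\delta}\Bigr\}.
\]

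\textbf{Main obstacle.} The only nonroutine step is the Dirichlet case. Normalizing by $S$ couples all the weights, and the exponential factors make the summands unbounded. The exponential representation handles the coupling, and the factorial moment bound replaces boundedness. Both are standard, so I expect no real difficulty.
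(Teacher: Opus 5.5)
Your proof is correct and follows the paper's argument essentially step for step: reduction to the $\delta/2$-net using $\sum_iw_i=1$, Bernstein for equal masses, and for Dirichlet masses the exponential representation with the factorial-moment condition $\E|E_ih(Y_i)|^p\le p!\,B^{p-2}V$. The one difference is how you remove the normalization $S=\sum_iE_i$. The paper uses that $Z=S/n$ is independent of $\nu_n(h)$ (Gamma--Dirichlet independence) and that $\PP(Z\ge1/2)\ge1/8$, which gives $\PP(|\nu_n(h)|>\delta)\le8\,\PP(|Z\nu_n(h)|>\delta/2)$ with no case split. You instead intersect with $\{S\ge n/2\}$ and absorb the Chernoff term $e^{-cn}$ using $\delta^2/(V+B\delta)\le\delta/B\le2$ for $\delta\le2B$; this is equally valid and does not need that independence.
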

\begin{proof}
For equal weights, apply Bernstein's inequality at the selected members
and take a union bound. For Dirichlet weights write $w_i=E_i/\sum_jE_j$
with independent mean-one exponentials. For a fixed member $h$,
\[
 \E|E_i h(Y_i)|^p\le p!B^{p-2}V\qquad(p\ge2),
\]
so Bernstein gives the same bound for $n^{-1}\sum_iE_i h(Y_i)$.
Write
\[
 n^{-1}\sum_iE_i h(Y_i)=Z\,\nu_n(h),\qquad Z=n^{-1}\sum_iE_i.
\]
Since $Z$ is independent of $\nu_n(h)$ and $\PP(Z\ge1/2)\ge1/8$,
the tail of $\nu_n(h)$ is at most eight times that of the left-hand
side at half the threshold.
Finally, $\sum_iw_i=1$ transfers the supremum-norm approximation to each
weighted sum. A union bound proves the assertion.
\end{proof}

The expected spectral term uses the second-moment identity
\begin{equation}\label{f:eq:cov}
 n\E\nu_n(h)^2=\chi_n\mu(h^2)
 \qquad\text{when }h\text{ is deterministic and }\mu(h)=0.
\end{equation}

\subsubsection{Cost and concentration bounds in general dimension}\label{app:general-heat}
Use the assumptions of Theorem~\ref{thm:smoothing-reduction}; in
particular, the manifold has no boundary. Constants may depend on the
fixed geometry, density, and cost. The square is treated in
Appendix~\ref{app:flat-heat}.

\paragraph{Rough cost bound.}
The cost assumptions and compactness give $a d_g^2\le k\le b d_g^2$
for fixed $a,b>0$. Coupling each atom with its heat smoothing costs at
most $Cs$. The triangle inequality for $W_{2,g}$ and the
transport bound of Lemma~\ref{c:lem:energy} therefore give
\[
 n\E\OT_k(\mu,\nu_n)\le Cns+C\chi_nF(2s).
\]
The compact heat-kernel bound gives $F(s)\le C$ for $d=1$,
$F(s)\le C\log(e/s)$ for $d=2$, and $F(s)\le Cs^{1-d/2}$ for $d\ge3$.
Set $s=n^{-2/d}$. Then $n\E\OT_k(\mu,\nu_n)\le CR_d(n)$ and, for $t\ge s$,
$n\E E_{f_t}=\chi_nF(2t)\le CR_d(n)$, as required.

\paragraph{Hessian concentration.}
We show that $\|\nabla_g^2f_t\|_\infty\le\delta$ except on an event
of probability $O(n^{-12})$, where
$\delta=K\sqrt{\log n/(nt^{d/2})}$ and $K$ is a sufficiently large
constant. After choosing $K$, increase the constant in the heat-time
condition of Theorem~\ref{thm:smoothing-reduction} so that $\delta$ is
small enough for the source comparison, and in particular $\delta\le1$.
Let $p_s(x,y)$ be the heat kernel relative to $\mu$ and put
$G_t(x,y)=\int_t^\infty[p_s(x,y)-1]\,ds$.
Then $f_t(x)=\nu_n(G_t(x,\cdot))$ and
$\int G_t(x,y)\,d\mu(y)=0$. The derivative bounds for a smooth compact
heat kernel~\cite{BGV}, integrated over $s\ge t$, imply
\[
 \|\nabla_x^2G_t\|_\infty\le Ct^{-d/2},\qquad
 \sup_x\int|\nabla_x^2G_t(x,y)|^2\,d\mu(y)\le Ct^{-d/2},\qquad
 \|\nabla_x^3G_t\|_\infty\le Ct^{-(d+1)/2}.
\]
For the middle estimate, integrate the $L^2$ bound
$\|\nabla_x^2p_s(x,\cdot)\|_2\le Cs^{-1-d/4}$;
the large-time contribution decays exponentially. Apply
Lemma~\ref{lem:weighted-concentration} in finitely many coordinate
charts to the components of $\nabla_x^2G_t$.
A net of mesh comparable to $\delta t^{(d+1)/2}$ controls the supremum,
so
\[
 \PP(\|\nabla_g^2f_t\|_\infty>\delta)
 \le C\delta^{-d}t^{-d(d+1)/2}
               \exp(-cnt^{d/2}\delta^2),\qquad 0<\delta\le1.
\]
For the chosen $\delta$, we have $t\ge n^{-2/d}$ and $\delta\ge n^{-1}$,
so the prefactor is at most $Cn^{2d+1}$. Taking $K$ large enough gives
the asserted probability bound. The heat-kernel bound also gives
$\|G_t\|_\infty\le Ct^{-d/2}\le Cn$, hence
$\|f_t\|_\infty\le Cn$ and
$E_{f_t}=\int f_t(q_t-1)\,d\mu\le2\|f_t\|_\infty$.
Both transport costs are bounded on the compact domain, so
$n|\OT_k(\mu,\nu_n)-\OT_k(P_t\nu_n,\nu_n)-2\nu_n(f_t)+E_{f_t}|
\le Cn^2$.
Thus the event on which the Hessian bound fails contributes
$O(n^{-10})$ to the expected error.

\subsubsection{The square and torus}\label{app:flat-heat}\label{app:heat}
On the square and torus, positions are independent and uniform, with
equal masses or Dirichlet masses independent of the positions. Write $\mu=\mu_\Omega$,
$P_t\nu_n=q_t\mu$ and $f_t=(-\Delta_\Omega)^{-1}(q_t-1)$.

\paragraph{Heat estimates.}
Let $p_t$ be the reflecting or periodic heat kernel. On the square,
even reflection across the sides gives a smooth periodic extension,
as in~\cite[Appendix~B]{AG}.
Thus the bounds on $D_x^2G_t$ and $D_x^3G_t$ in
Appendix~\ref{app:general-heat} apply with $d=2$.
The Gaussian kernel also gives
\[
 \|p_t\|_\infty+\sup_x\int p_t(x,y)^2\,dy\le C/t,
 \qquad\|\nabla_xp_t\|_\infty\le Ct^{-3/2}.
\]
Apply Lemma~\ref{lem:weighted-concentration} to $p_t(x,\cdot)-1$
and the components of $D_x^2G_t(x,\cdot)$. Their bounds satisfy
$B,V\le C/t$. A grid of mesh $c\delta t^{3/2}$ has at most
$C\delta^{-2}t^{-3}$ points and gives
\[
 \PP(\|q_t-1\|_\infty+\|D^2f_t\|_\infty>\delta)
 \le C\delta^{-2}t^{-3}e^{-cnt\delta^2},\qquad 0<\delta\le1.
\]

\paragraph{Cost bounds.}
The rough bound in Appendix~\ref{app:general-heat}, with $d=2$, gives
\begin{equation}\label{f:eq:rough}
 n\E W_{2,\Omega}^2(\mu,\nu_n)\le C\log(en),\qquad
 d_n^\Omega(t)\le4n\E W_{2,\Omega}^2(\mu,\nu_n).
\end{equation}
The second inequality follows from the triangle inequality and heat
contraction, obtained by coupling endpoints with the same reflected or
periodic Gaussian increment.
Together with the source comparison in Lemma~\ref{s:lem:split}, these
bounds prove the square and torus cases of
Proposition~\ref{prop:surface-reduction}, without the corner error term.

\subsubsection{A lower bound for the smoothing cost}\label{app:smoothing-small-time}
Take uniform sampling on $\Omega\in\{Q,\T\}$ with either equal or
Dirichlet masses. The triangle-inequality argument at the start of
Section~\ref{sec:smoothing-reduction} requires
$n\E W_{2,\Omega}^2(P_t\nu_n,\nu_n)=o(1/\log n)$
to make its cross term $o(1)$.
We show that this condition forces $t=o(1/n)$ by proving
\[
 n\E W_{2,\Omega}^2(P_t\nu_n,\nu_n)\ge c\min\{nt,1\},
 \qquad 0<t\le1/2.
\]
Indeed, transport to $S=\{Y_1,\ldots,Y_n\}$ costs at least
$\int d_\Omega(x,S)^2\,dP_t\nu_n(x)$.
Let $X$ be drawn from the heat kernel started at $Y_1$.
Since the masses are independent of the positions and $\E w_i=1/n$,
the expectation of this integral is $\E d_\Omega(X,S)^2$.
Take $r^2=\varepsilon\min\{t,1/n\}$. The heat-kernel bound $p_t\le C/t$
and independence of $X$ from $Y_2,\ldots,Y_n$ give
$\PP(d_\Omega(X,S)\le r)\le Cr^2/t+Cnr^2\le1/2$
for a fixed small $\varepsilon>0$. Thus $\E d_\Omega(X,S)^2\ge r^2/2$.
The required smallness condition therefore forces
$nt=o(1/\log n)$, and in particular $t=o(1/n)$.

\subsection{Reduction on surfaces with boundaries and corners}\label{app:boundary-corner-reduction}
To prove Proposition~\ref{prop:surface-reduction} on surfaces with boundary,
we use the same source comparison. The additional obstacle is that
the Poisson potential may have unbounded Hessian at a corner.
We make it constant near each vertex and show that this modification
changes the expected costs by a vanishing amount. The heat and Poisson
estimates below provide the bounds needed for this modification.

\subsubsection{Heat and Poisson estimates}
\paragraph{Reflected heat smoothing.}\label{app:reflected-heat}
These heat bounds also serve the subdivision argument, so we state
them for a fixed surface and for small cells.

Let $B$ be a fixed compact connected surface whose boundary is empty,
smooth, or has convex corners, with scale $\ell=1$, or a domain of
diameter comparable to $\ell$ which, after rescaling, is uniformly
bi-Lipschitz to a reference square or triangle.
Assume the metric and density are uniformly comparable to Euclidean
metric and area in the rescaled charts. Let $P_t^B$ be the Neumann
semigroup for $\int_B|\nabla u|^2\,d\mu$, and let $p_t^B$ be its kernel
relative to the \emph{unnormalized} measure $\mu|_B$.
Thus $p_B=\mu(B)\asymp\ell^2$ and the invariant density is $1/p_B$.
Write $d_B$ for intrinsic distance in $B$.

\begin{lemma}[Neumann heat-kernel bounds]\label{lem:reflected-heat}\label{s:lem:cellheat}
For the domains $B$ and reference measures specified in
Appendix~\ref{app:reflected-heat}, there are $C,c>0$ and $\alpha\in(0,1)$,
uniform over the domains, such that for $0<t\le\ell^2$,
\begin{align}
 0\le p_t^B(x,y)&\le Ct^{-1}\exp\{-d_B(x,y)^2/(Ct)\},\label{s:eq:cellgaussian}\\
 |p_t^B(x,y)-p_t^B(x',y)|
   &\le Ct^{-1}\min\{1,(d_B(x,x')/\sqrt t)^\alpha\},\label{s:eq:cellholder}\\
 \int_Bd_B(x,y)^2p_t^B(x,y)\,d\mu(y)&\le Ct.\label{s:eq:cellmovement}
\end{align}
The first nonconstant eigenvalue is at least $c\ell^{-2}$. For eigenfunctions normalized in $L^2(\mu|_B)$,
\begin{equation}\label{s:eq:celldiag}
 \sup_{B,x\in\overline B}\sum_{j\ge1}
 \frac{e^{-2s\lambda_{B,j}}}{\lambda_{B,j}}\psi_{B,j}(x)^2
 \le C\log(e\ell^2/s),\qquad 0<s\le\ell^2.
\end{equation}
\end{lemma}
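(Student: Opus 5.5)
The plan is to reduce all four assertions to standard Neumann heat-kernel theory on uniformly "nice" domains, applied after rescaling to unit size. Rescale $B$ by $\ell^{-1}$. Then $t$ becomes $\tau=t/\ell^2\le1$, and the measure becomes $\ell^{-2}\mu|_B$, which is comparable to Lebesgue measure on a reference square, triangle, or fixed surface. The kernel relative to $\mu|_B$ scales as $p_t^B=\ell^{-2}\tilde p_\tau$, and since $t^{-1}=\ell^{-2}\tau^{-1}$ every claimed bound is scale invariant. It therefore suffices to prove the estimates for $\tau\le1$ on the rescaled domains, with constants depending only on the bi-Lipschitz and ellipticity constants.

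The rescaled domains are uniformly bi-Lipschitz images of a square or triangle, or of a fixed compact surface with smooth or convex-cornered boundary. Hence they are uniform (inner) domains, so they satisfy the volume-doubling property and a scale-invariant Neumann Poincaré inequality on intrinsic balls, uniformly over the family.
\begin{enumerate}
\item By the Grigor'yan–Saloff-Coste theorem, these two properties yield two-sided Gaussian bounds for the Neumann kernel of the divergence-form operator $-\rho^{-1}\diver(\rho\nabla)$ for $\tau\le1$. They also yield a parabolic Harnack inequality and hence Hölder continuity with some exponent $\alpha$. Undoing the scaling gives \eqref{s:eq:cellgaussian} and \eqref{s:eq:cellholder}, where the $\min\{1,\cdot\}$ form follows from the trivial bound $2Ct^{-1}$ when $d_B(x,x')\ge\sqrt t$.
\item The movement bound \eqref{s:eq:cellmovement} follows by integrating the Gaussian bound against $d_B^2$, using area growth $\mu(B(x,r))\le Cr^2$.
\item The spectral gap $\lambda_{B,1}\ge c\ell^{-2}$ is the Neumann Poincaré inequality on the whole domain at scale $\ell$, again uniform by bi-Lipschitz invariance of Dirichlet forms.
\end{enumerate}
The point needing the most care is uniformity of the Poincaré constant near convex corners and for cells cut by the physical boundary. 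I would handle this through bi-Lipschitz transfer from the reference square or triangle, where the Poincaré inequality on intrinsic balls holds because these domains are convex.

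For the diagonal sum \eqref{s:eq:celldiag}, write
\[
 \sum_{j\ge1}\frac{e^{-2s\lambda_{B,j}}}{\lambda_{B,j}}\psi_{B,j}(x)^2=\int_{s}^{\infty}\bigl(p^B_{2r}(x,x)-p_B^{-1}\bigr)\,dr\cdot 2,
\]
which uses $1/\lambda=\int_0^\infty e^{-r\lambda}\,dr$ after the substitution $r\mapsto 2r$; the constant eigenfunction contributes $1/p_B$ and is subtracted. Split the integral at $r=\ell^2$.
\begin{itemize}
\item For $s\le r\le\ell^2$, the on-diagonal bound $p_{2r}^B(x,x)\le C/r$ gives a contribution of $C\log(\ell^2/s)$.
\item For $r\ge\ell^2$, the spectral gap gives $|p^B_{2r}(x,x)-p_B^{-1}|\le e^{-c(r-\ell^2)/\ell^2}\,p^B_{\ell^2}(x,x)\le C\ell^{-2}e^{-c(r-\ell^2)/\ell^2}$. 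This follows from the semigroup property and Cauchy–Schwarz on the mean-zero part. Integrating over $r$ contributes $O(1)$.
\end{itemize}
Adding the two contributions gives $C\log(e\ell^2/s)$ uniformly in $x\in\overline B$, which proves \eqref{s:eq:celldiag}.

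I expect the main obstacle to be verifying the hypotheses of the Gaussian/Harnack theorem uniformly, specifically doubling and Poincaré on intrinsic balls centred at corners of small cells. The remaining steps are routine once those bounds hold.
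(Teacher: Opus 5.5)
Your proposal is correct and follows essentially the same route as the paper. Both rescale to unit size, transfer uniform doubling and Poincar\'e inequalities from reference squares, triangles, or corner sectors, and use the parabolic Harnack principle for the H\"older bound. Both also take the spectral gap from the global Poincar\'e inequality, integrate the Gaussian tail for the movement bound, and integrate the nonconstant heat diagonal split at time $\ell^2$ for the logarithmic bound. The one difference is where the Gaussian bound comes from: the paper uses Murugan's reflected-diffusion theorem on uniform domains (or Daners for the upper bound), while you derive it directly from doubling and Poincar\'e via Grigor'yan--Saloff-Coste, which is equally valid and uses only the hypotheses the paper already needs for the H\"older estimate.
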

\begin{proof}
For a closed surface use the smooth heat-kernel estimates~\cite{BGV}.
Otherwise use finitely many interior, face, and corner charts. The
rescaled domains are uniform: points can be joined by short curves
staying quantitatively away from the boundary except near their
endpoints. The metric and measure bounds make the Neumann form
uniformly elliptic. Extending its coefficients and density preserves
these bounds, giving ambient Gaussian estimates.
The reflected-domain theorem~\cite[Theorem~2.8]{Murugan} then gives a
conservative diffusion and \eqref{s:eq:cellgaussian}, since intrinsic
and coordinate distances are comparable. The upper bound also follows
from the Neumann form~\cite{Daners}.

H\"older regularity follows separately. Volume doubling and local
Poincar\'e inequalities survive the coordinate changes. The parabolic
Harnack principle~\cite{SaloffHarnack} and oscillation reduction give
a common exponent $\alpha$. Apply these to $p_u^B(\cdot,y)$ for
$u\in[t/2,t]$, using \eqref{s:eq:cellgaussian}, to obtain
\eqref{s:eq:cellholder}, including at the boundary.

The rescaled Poincar\'e inequality
\[
 \int_B|u-\mu(u\mid B)|^2\,d\mu
 \le C\ell^2\int_B|\nabla u|^2\,d\mu
\]
gives the spectral gap. Integrating the Gaussian tail in annuli gives
\eqref{s:eq:cellmovement}, including for starting points on a cell face.
For \eqref{s:eq:celldiag}, integrate the nonconstant heat diagonal:
it is at most $C/u$ for $2s\le u\le\ell^2$, and the spectral gap
bounds the remaining integral by $C$.
\end{proof}

\paragraph{Poisson estimates near the physical boundary.}\label{app:boundary}
We need gradient bounds on the whole surface and Hessian bounds away
from the vertices. At a corner of angle $\omega$, a Neumann harmonic
function can contain $r^{\pi/\omega}\cos(\pi\theta/\omega)$; its gradient
is bounded when $\omega<\pi$, whereas its Hessian can be unbounded when
$\omega>\pi/2$. We also bound the second derivatives of the Poisson
solution for use in surface subdivision. Here $H^j(M)$, for $j=1,2$,
denotes the Sobolev space of functions whose weak derivatives through
order $j$ are square-integrable in local coordinates.

Let $(M,g)$ be a fixed compact connected surface satisfying the geometric
assumptions of Section~\ref{subsec:surface-reduction}. Its boundary may be empty;
otherwise it consists of smooth closed curves and finitely many smooth
arcs meeting at angles in $(0,\pi)$. The metric
and the strictly positive density $\rho$ are smooth up to each corner
in ambient coordinates. Put $\mu=\rho\,dA$, let $\mathcal V$ be the finite
set of vertices, and let
\[
 \cL=-\rho^{-1}\diver(\rho\nabla),\qquad P_t=e^{-t\cL},
\]
with the Neumann boundary condition. Write $p_t(x,y)$ for its kernel
relative to $\mu$ and
\[
 G_t(x,y)=\int_t^\infty\{p_s(x,y)-1\}\,ds.
\]
If $\mathcal V$ is empty, conditions on the distance to $\mathcal V$
are omitted.

\begin{lemma}[Poisson estimates with boundary]\label{b:lem:heat}
There are constants $C,R_0>0$, depending on the fixed surface and
density, such that, for $0<t\le1/2$,
\begin{align*}
 \sup_{x,y}|G_t(x,y)|&\le C\log(e/t),&
 \sup_{x,y}|\nabla_xG_t(x,y)|&\le Ct^{-1/2},\\
 \sup_x\int_M|\nabla_xG_t(x,y)|^2\,d\mu(y)
     &\le C\log(e/t).
\end{align*}
For $0<R\le R_0$ and $d(x,\mathcal V)\ge R$,
\begin{align*}
 \sup_y|\nabla_x^2G_t(x,y)|
     &\le C\{t^{-1}+R^{-2}\log(e/R)\},\\
 \|\nabla_x^2G_t(x,\cdot)\|_2
     &\le C\{t^{-1/2}+R^{-2}\}.
\end{align*}
The kernels in these estimates have H\"older seminorms in $x$ bounded
by a fixed polynomial in $t^{-1}$ and $R^{-1}$ on their respective
regions. The constants and the H\"older exponent are independent of
$t$ and $R$.

For mean-zero $h\in L^2(\mu)$, the Neumann solution $u=\cL^{-1}h$ satisfies
\[
 \|u\|_{H^2(M)}\le C\|h\|_{L^2(\mu)}.
\]
It has an extension to a fixed smooth ambient surface with the same
$H^2$ bound.
\end{lemma}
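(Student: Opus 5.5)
Throughout, I work with the spectral representation of the heat kernel,
\[
 p_s(x,y)=1+\sum_{j\ge1}e^{-s\lambda_j}\psi_j(x)\psi_j(y),
 \qquad
 G_t(x,y)=\sum_{j\ge1}\frac{e^{-t\lambda_j}}{\lambda_j}\psi_j(x)\psi_j(y),
\]
together with the Gaussian and H\"older bounds of Lemma~\ref{lem:reflected-heat} applied with $B=M$ and $\ell=1$. For small $s$, these bounds give $|p_s-1|\le Cs^{-1}e^{-d^2/(Cs)}$. For $s\ge1/2$, the spectral gap gives exponential decay of $p_s-1$ together with all its derivatives.

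\textbf{The bounds on $G_t$.} I split $G_t=\int_t^{1/2}(p_s-1)\,ds+\int_{1/2}^\infty(p_s-1)\,ds$. The tail integral is smooth and bounded, with all derivatives bounded. For the first integral, the Gaussian bound integrated in $s$ gives $|G_t|\le C\log(e/t)$. For the gradient, I use the gradient bound $|\nabla_xp_s|\le Cs^{-3/2}e^{-d^2/(Cs)}$; integrating $s^{-3/2}$ over $s\ge t$ gives $Ct^{-1/2}$. Near a smooth face this bound comes from reflection in boundary-normal coordinates. Near a convex corner I would use Caccioppoli/Harnack estimates in the parabolic cylinder, plus the fact that Neumann solutions at angles $<\pi$ have bounded gradient, i.e. local $W^{1,\infty}$ regularity from the corner expansion $r^{\pi/\omega}$ with $\pi/\omega>1$.

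For the $L^2$ gradient bound, I use the identity
\[
 \int|\nabla_xG_t(x,y)|^2\,d\mu(y)=\sum_j\frac{e^{-2t\lambda_j}}{\lambda_j^2}|\nabla\psi_j(x)|^2 .
\]
A cleaner route is to write it as $\int_t^\infty\!\int_t^\infty\langle\nabla_xp_s,\nabla_xp_u\rangle\,ds\,du$ and reduce to $\int_{2t}^\infty\|\nabla_x p_{s/2}(x,\cdot)\|_2^2$-type terms. Using $\|\nabla_xp_s(x,\cdot)\|_2^2\le Cs^{-2}$ and Cauchy--Schwarz in the time integrals then gives $C\log(e/t)$. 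Concretely, I would write $\nabla_xG_t(x,\cdot)=\int_t^\infty\nabla_xp_s(x,\cdot)\,ds$ and bound the square of this $L^2(\mu)$ norm. The integral $\int_t^1 s^{-1}\,ds$ appears only after a dyadic decomposition: the contributions of scales $s\in[2^k t,2^{k+1}t]$ are nearly orthogonal, because of the spectral factor $e^{-s\lambda}$.

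\textbf{The Hessian bounds away from vertices.} For $d(x,\mathcal V)\ge R$, the near-diagonal part, with $s\le R^2$, behaves as on a smooth-boundary surface. There the Hessian heat bound $|\nabla^2_xp_s|\le Cs^{-2}e^{-d^2/(Cs)}$ holds, so the integral over $s\ge t$ contributes $Ct^{-1}$ pointwise and $Ct^{-1/2}$ in $L^2$. For $s\ge R^2$, I would use interior/face Schauder estimates on a ball of radius $\sim R$ around $x$ that avoids the vertices. These control $\nabla_x^2$ by $R^{-2}$ times the oscillation of $\int_{R^2}^\infty(p_s-1)\,ds$ on the ball. That oscillation is $O(\log(e/R))$ pointwise and $O(1)$ in $L^2(dy)$, which gives the $R^{-2}\log(e/R)$ and $R^{-2}$ terms. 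The H\"older seminorm statements follow from the same local Schauder estimates and \eqref{s:eq:cellholder}, with polynomial losses.

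\textbf{The $H^2$ estimate.} This is the standard Grisvard result: on a curvilinear polygon whose angles are all less than $\pi$, the Neumann problem for a uniformly elliptic divergence-form operator with smooth coefficients is $H^2$-regular. The corner singular exponents $\pi/\omega>1$ avoid the critical range, and an energy estimate controls $\|u\|_{H^1}$ by $\|h\|_2$ through the spectral gap. I would extend $u$ by Stein/Calder\'on extension in a fixed collar. I expect the main obstacle to be the uniform gradient bound $t^{-1/2}$ at the corners. I would derive it from rescaling: the rescaled heat kernel on a sector of angle $<\pi$ has a bounded gradient, by the same $H^2$ and $W^{1,\infty}$ corner regularity applied to the parabolic problem, and this rescaled bound is then combined with the Gaussian bounds.
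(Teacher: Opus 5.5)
Your outline is correct and follows the paper's proof in its main lines: Gaussian bounds and spectral gap from Lemma~\ref{lem:reflected-heat}, corner regularity with exponent $\pi/\omega>1$, the spectral identity for $\int|\nabla_xG_t|^2\,d\mu$, local estimates away from $\mathcal V$ split at $s\sim R^2$, and Grisvard--Dauge $H^2$ regularity with a Stein extension. Two local steps are packaged differently. For the gradient kernel, you derive a Gaussian bound from a parabolic gradient estimate at corners and integrate it to get $\|\nabla_xp_s(x,\cdot)\|_2\le Cs^{-1}$. The paper instead applies only the scale-invariant elliptic estimate \eqref{b:eq:localgradient} to $u=P_sh$ with $a\sim\sqrt s$, using $\|P_s\|_{2\to\infty}\le Cs^{-1/2}$ and $\|\cL P_s\|_{2\to\infty}\le Cs^{-3/2}$. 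It then gets the $L^2$ kernel bound by duality and the pointwise bound from the semigroup identity. That route avoids parabolic corner regularity, which in yours requires first controlling $\partial_tu$ locally in $L^\infty$. It gives up only the Gaussian factor, which is never used. For $s\ge R^2$ you use elliptic Schauder on $G_{R^2}(\cdot,y)$, whereas the paper uses parabolic estimates on radius-$c_0R$ neighborhoods integrated over $s\in[c_1R^2,1]$. Your version works if you keep the right-hand side $\cL_xG_{R^2}=p_{R^2}-1$. For the $L^2(dy)$ bound, test against $h$ with $\|h\|_2\le1$ and use $\sup_{x'}\|G_{R^2}(x',\cdot)\|_2^2=\sup_{x'}\int_{2R^2}^\infty(\sigma-2R^2)\{p_\sigma(x',x')-1\}\,d\sigma\le C$.

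Three remarks need correcting.
\begin{itemize}
\item Cauchy--Schwarz in the time integrals loses a logarithm, since $C(su)^{-1}$ integrates to $\log^2(e/t)$. The near-orthogonality comes from the exact identity $\langle\nabla_xp_s(x,\cdot),\nabla_xp_u(x,\cdot)\rangle_{L^2(\mu)}=\sum_je^{-(s+u)\lambda_j}|\nabla\psi_j(x)|^2\le C(s+u)^{-2}$ for $s+u\le1$. Its integral over $s,u\ge t$ is $\int_{2t}^\infty(\sigma-2t)\sum_je^{-\sigma\lambda_j}|\nabla\psi_j(x)|^2\,d\sigma\le C\log(e/t)$, exactly as in the paper.
\item The tail $\int_{1/2}^\infty(p_s-1)\,ds$ is not smooth at corners of angle in $(\pi/2,\pi)$, where its Hessian can grow like $r^{\pi/\omega-2}$. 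Only its first derivatives are bounded globally, which is all you use there.
\item $|p_s-1|\le Cs^{-1}e^{-d^2/(Cs)}$ fails for $d$ of order one because of the constant; $|p_s-1|\le Cs^{-1}$ suffices.
\end{itemize}
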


\begin{proof}
We first obtain a gradient estimate from elliptic regularity at a
corner. The Hessian estimate is then needed only in ordinary interior
or smooth-boundary neighborhoods.

If there are vertices, choose $p>2$ sufficiently close to $2$ that
\[
 2-2/p<\min_{v\in\mathcal V}\pi/\omega_v,
\]
where $\omega_v$ is the angle at $v$; otherwise choose any fixed $p>2$.
The Neumann corner estimate gives
local $W^{2,p}$ regularity. Its threshold follows from the homogeneous
wedge modes $1$ and $r^{k\pi/\omega_v}\cos(k\pi\theta/\omega_v)$:
below this threshold its second derivatives belong to $L^p$.
Integer modes that are polynomials can have better regularity. The variable-coefficient estimate on curvilinear
domains follows by flattening the faces, freezing the principal
coefficients, and absorbing their variation on small neighborhoods;
see~\cite[Theorem~3.2]{DaugeMixed}.
The 2-dimensional estimate is also obtained from the edge estimate
there by taking a product with an interval and considering functions
independent of the additional coordinate. The angle is measured after
normalizing the frozen principal form, so it is the angle of $g$.

Rescaling this estimate and applying the Sobolev embedding
$W^{2,p}\subset C^{1,1-2/p}$ in dimension two gives
\begin{equation}\label{b:eq:localgradient}
 |\nabla u(x)|
 \le C\{a^{-1}\|u\|_{L^\infty(U_{2a})}
                    +a\|\cL u\|_{L^\infty(U_{2a})}\}.
\end{equation}
Here $U_{2a}$ is an interior, face, or corner neighborhood of radius
comparable to $a$, and $u$ satisfies the Neumann condition on its
physical boundary. A finite collection of charts makes the estimate
uniform for small $a$. The corresponding scaled H\"older estimate for
$\nabla u$ follows from the same embedding. Lower-order terms have
bounded coefficients and are absorbed in the displayed estimate.

Lemma~\ref{lem:reflected-heat}, applied to the fixed surface with
$\ell=1$, gives Gaussian heat bounds and a positive spectral gap.
In particular,
\[
 \|P_s\|_{L^2(\mu)\to L^\infty}\le Cs^{-1/2},
 \qquad
 \|\cL P_s\|_{L^2(\mu)\to L^\infty}
 \le \|P_{s/2}\|_{2\to\infty}\,
       \|\cL P_{s/2}\|_{2\to2}\le Cs^{-3/2}
\]
for $s\le1$. Apply \eqref{b:eq:localgradient} to $u=P_sh$ with
$a$ comparable to $\sqrt s$. We obtain
\[
 \|\nabla P_s\|_{2\to\infty}\le Cs^{-1}.
\]
By duality at each $x$, and then by the semigroup identity,
\begin{equation}\label{b:eq:gradientkernel}
 \|\nabla_xp_s(x,\cdot)\|_2\le Cs^{-1},
 \qquad
 |\nabla_xp_s(x,y)|\le Cs^{-3/2}.
\end{equation}
The spectral gap supplies exponential decay for the differentiated
kernels when $s\ge1$. Integration gives the asserted bounds on
$G_t$ and its gradient.

To obtain the logarithmic bound on
$\int|\nabla_xG_t(x,y)|^2\,d\mu(y)$, combine
\eqref{b:eq:gradientkernel} with the spectral identity for the
orthonormal eigenfunctions $(\psi_j)$:
\[
 \sum_j e^{-s\lambda_j}|\nabla\psi_j(x)|^2
   =\|\nabla_xp_{s/2}(x,\cdot)\|_2^2\le Cs^{-2}
 \qquad(s\le1).
\]
Tonelli's theorem and exponential decay at large $s$ therefore give
\begin{align*}
 \int|\nabla_xG_t(x,y)|^2\,d\mu(y)
 &=\sum_j\frac{e^{-2t\lambda_j}}{\lambda_j^2}
                 |\nabla\psi_j(x)|^2\\
 &=\int_{2t}^\infty(s-2t)
           \sum_j e^{-s\lambda_j}|\nabla\psi_j(x)|^2\,ds
 \le C\log(e/t).
\end{align*}

Now suppose $d(x,\mathcal V)\ge R$. A neighborhood of $x$ of radius
$c_0R$ meets at most one smooth boundary face. Interior and Neumann
boundary parabolic estimates~\cite{Lieberman}, rescaled to radius
$\sqrt s$, imply, for $s\le c_1R^2$,
\[
 |\nabla_x^2p_s(x,y)|\le Cs^{-2},\qquad
 \|\nabla_x^2p_s(x,\cdot)\|_2\le Cs^{-3/2}.
\]
For the second inequality apply the local estimate to $P_sh$ and use
$\|P_uh\|_\infty\le Cs^{-1/2}\|h\|_2$ for $u$ comparable to $s$.
For $c_1R^2\le s\le1$, use a neighborhood of radius $c_0R$ instead.
The same estimates give
\[
 |\nabla_x^2p_s(x,y)|\le CR^{-2}s^{-1},\qquad
 \|\nabla_x^2p_s(x,\cdot)\|_2\le CR^{-2}s^{-1/2}.
\]
For $s\ge1$ replace the time factors by exponential decay, with the
same factor $CR^{-2}$. Integrating over these three ranges proves the
asserted Hessian bounds.
The scaled elliptic H\"older estimate used above and the local
parabolic H\"older estimates give the asserted polynomial seminorm
bounds. Only finitely many coordinate charts are involved.

The Hilbert-space corner estimate gives the global $H^2$ bound for
the Neumann resolvent; the spectral gap removes the constant mode.
A bounded $H^2$ extension
operator exists in each Lipschitz corner chart~\cite{Stein};
a partition of unity gives an extension to a fixed ambient surface.
\end{proof}

\subsubsection{Proof of the surface reduction}\label{app:surface-reduction}
Use the surface, density, and mass laws of
Section~\ref{subsec:surface-reduction}. The following cutoff preserves
the Neumann condition when we make the potential constant near each vertex.

\begin{lemma}\label{s:lem:cornercut}
For each vertex $v$ and every sufficiently small $R>0$, there is a smooth
function $\xi_{v,R}$ equal to zero near $v$ and one outside a neighborhood
of radius $CR$, with
\[
 \partial_\nu\xi_{v,R}=0,\qquad
 |\nabla\xi_{v,R}|\le C/R,\qquad
 |\nabla^2\xi_{v,R}|\le C/R^2.
\]
These neighborhoods are disjoint for small $R$ and have total area $O(R^2)$.
\end{lemma}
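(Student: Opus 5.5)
Near each vertex $v$ I will work in a corner chart, build $\xi_{v,R}$ there as a function of a scaled radial variable and the angle, and then correct its normal derivative on the two faces. Afterwards I glue the local pieces and check the bounds by scaling.

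\emph{Model case.} First treat an exact Euclidean wedge $\{(r,\theta):0<\theta<\omega\}$ with $\omega\in(0,\pi)$ and the Euclidean metric. Let $\chi\in C^\infty(\R)$ equal $0$ on $(-\infty,1]$ and $1$ on $[2,\infty)$, and set $\xi(x)=\chi(|x-v|/R)$. A radial function has vanishing normal derivative on any straight face through the vertex, since the outward normal there is orthogonal to the radial direction. The derivative bounds $|\nabla\xi|\le C/R$ and $|\nabla^2\xi|\le C/R^2$ follow from the chain rule, and the transition region $\{R\le r\le2R\}$ has area $O(R^2)$.

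\emph{Curved faces and variable metric.} Next I use a smooth chart in which the vertex is the origin and the two boundary arcs are the rays $\theta=0$ and $\theta=\omega$. I would obtain it by first straightening each face separately near $v$, for instance with a geodesic-normal-type or flow-box construction along each arc, and then blending. In this chart the metric $g$ is smooth, but it is not orthogonal to the faces in a radial sense. The Neumann condition $g(\nabla\xi,\nu)=0$ on a face $\{\theta=0\}$ says that the conormal derivative $g^{ij}\partial_j\xi\,\nu_i$ vanishes. I will therefore choose, near each face, coordinates $(s,\tau)$ in which $\tau$ is the $g$-distance to that face and $s$ is constant along $g$-geodesics normal to it (Fermi coordinates). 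Any function of $s$ alone then satisfies $\partial_\nu\xi=0$. The cutoff is glued as follows.
\begin{enumerate}
\item Near face $1$ and within angle $\omega/3$ of it, set $\xi=\chi(s_1/R)$, where $s_1$ is the Fermi arclength coordinate from $v$ along face $1$.
\item Near face $2$, set $\xi=\chi(s_2/R)$ in the same way.
\item In the middle sector, set $\xi=\chi(r/R)$, with $r$ the chart radius.
\item Blend these with a partition of unity in the angle variable $\theta$, which is homogeneous of degree zero.
\end{enumerate}

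\emph{Checking the blend.} The angular partition functions $\eta(\theta)$ have gradient $O(1/r)$ and Hessian $O(1/r^2)$. These derivatives only matter where $\chi$ is nonconstant, namely $r\asymp R$, so they cost $O(1/R)$ and $O(1/R^2)$, as required. The boundary condition also survives the blend. Near face $1$ the partition function is identically $1$ in a sector around the face, so only $\chi(s_1/R)$ is present there. Moreover $s_1$, $s_2$ and $r$ are mutually comparable near $v$ with uniformly bounded $C^2$ distortion, so each piece vanishes near $v$ and equals $1$ for $r\ge CR$.

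The main obstacle is verifying that the Fermi coordinate $s_1$ is smooth, with $C^2$ bounds uniform in a fixed neighborhood of $v$, and comparable to $r$ down to the vertex. This holds because face $1$ extends smoothly past $v$ in the ambient surface, so its normal exponential map is a diffeomorphism near $v$. Because of this extension the bounds do not degenerate as $R\downarrow0$, and smoothness of $\xi$ away from $v$ is then automatic. Finally, for $R$ small the neighborhoods of distinct vertices are disjoint, since vertices are finitely many and separated; this gives total area $O(R^2)$.
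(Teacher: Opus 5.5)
Your construction is correct, but it takes a different route from the paper's.

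\textbf{The paper's route.} The paper builds one $R$-independent function
$h=d_{\mathrm{ext}}(v,\cdot)^2+\sum_i\eta_i d_i\,a_i(s_i)$.
On a curved face, the radial function $d^2$ fails the Neumann condition only by $a_i=\partial_{\nu_i}d^2=O(s^2)$, with $a_i'=O(s)$. The correction $\eta_i d_i a_i(s_i)$ (signed distance times this defect, localized by a degree-zero angular cutoff) is therefore $O(r^3)$. It cancels the normal derivative exactly and keeps $|\nabla h|\le Cr$ and $|\nabla^2h|\le C$, because the factor $r^3$ absorbs the $r^{-k}$ derivatives of $\eta_i$. The paper then sets $\xi_{v,R}=\vartheta(h/R^2)$. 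This inherits the Neumann condition by composition and gives the scaling bounds for all $R$ at once.

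\textbf{Your route.} You correct nothing. Near face $i$ only $\chi(s_i/R)$ is present, and its Neumann condition is exact because the Fermi arclength satisfies $ds_i(\nu)=0$ on its own face. The price is the angular partition. Its $O(r^{-k})$ derivatives are harmless because every $\chi$ vanishes for $r\le R/C$, all of them equal one for $r\ge CR$, and $\sum_j\eta_j=1$. Your route avoids any Taylor estimate of $\partial_\nu d^2$ near a curved face. The paper's gives a single level-set family with uniformity in $R$ built in.

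\textbf{Two points to state explicitly.}
\begin{enumerate}
\item The comparability $s_i\asymp r$ holds only on the support of $\eta_i$, not ``mutually near $v$''. If $\omega>\pi/2$, then $s_1\approx r\cos\omega<0$ on face $2$. You need $\operatorname{supp}\eta_i$ to make $g_v$-angle less than $\pi/2$ with face $i$. This follows from your choice ``within angle $\omega/3$'' only if the chart is normalized so that $g(v)=I$; otherwise simply take the sectors thin.
\item Your vague ``straighten each face and blend'' step can be replaced by a concrete chart. Take the signed distances $(d_1,d_2)$ to the two extended faces. Transversality (angle in $(0,\pi)$) makes this pair a local diffeomorphism sending the faces to the two coordinate half-axes. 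A linear map then makes $g(v)=I$ and the opening angle $\omega$.
\end{enumerate}
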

\begin{proof}
Extend the metric smoothly across the vertex and put
$u(x)=d_{\mathrm{ext}}(v,x)^2$ in a fixed normal neighborhood.
On each of the two boundary faces, parametrized by arclength $s$ from
$v$, the function $a_i(s)=\partial_{\nu_i}u$ satisfies
$a_i(s)=O(s^2)$ and $a_i'(s)=O(s)$.
Let $d_i$ be inward signed distance to the extended face and $s_i$ its
nearest-point arclength. Choose angular cutoffs $\eta_i$, equal to one
near the respective face and supported in disjoint thin sectors.
Their derivatives of order $k$ are $O(r^{-k})$, where
$r=d_{\mathrm{ext}}(v,x)$. Then
\[
 h=u+\sum_i\eta_i d_i a_i(s_i)
\]
has zero normal derivative on both faces. It satisfies
$h=u+O(r^3)$, $|\nabla h|\le Cr$, and $|\nabla^2h|\le C$.
For a fixed smooth function $\vartheta$ equal to zero near zero and one
above a fixed positive threshold, take
$\xi_{v,R}=\vartheta(h/R^2)$.
It is constant near the vertex; the derivative bounds and area estimate follow.
\end{proof}

\begin{proof}[Proof of Proposition~\ref{prop:surface-reduction}]
Write $P_t\nu_n=q_t\mu$ and $f_t=\cL^{-1}(q_t-1)$.
For the square and torus with uniform sampling, the proof of
Theorem~\ref{thm:smoothing-reduction} applies using
Appendix~\ref{app:flat-heat}. We prove the general surface estimate.
For a small radius $R>0$ and each vertex $v$, put
$e_{v,R}=(1-\xi_{v,R})\{f_t(v)-f_t\}$, and define
\[
 e_R=\sum_v e_{v,R},\qquad
 f_R=f_t+e_R-\mu(e_R),\qquad q_R=1+\cL f_R.
\]
When there are no vertices, set $e_R=0$. The function $f_R$ is constant
near each vertex and has zero normal derivative on every face.
Choose $R=r_0n^{-1/100}$ and
$\delta=K\sqrt{(t^{-1}+R^{-4})\log n/n}$,
with $r_0>0$ sufficiently small and $K$ sufficiently large.
Increasing the constant in $nt\ge C\log n$ makes $\delta$ small enough
for the source comparison. Write $E_f=\int|\nabla f|^2\,d\mu$.
We bound the changes in $2\nu_n(f_t)-E_{f_t}$ and in the smoothing cost,
and verify the Hessian bound needed to apply Lemma~\ref{s:lem:split}.

\emph{Change in $2\nu_n(f_t)-E_{f_t}$.} Lemma~\ref{b:lem:heat} gives
$\sup_x\|\nabla_xG_t(x,\cdot)\|_2^2\le C\log(e/t)$.
The covariance identity~\eqref{f:eq:cov} and integration along paths
from the vertex yield
\[
 n\E|\nabla f_t(x)|^2\le C\log(en),\qquad
 n\E|f_t(x)-f_t(v)|^2\le Cr^2\log(en)
 \quad(d_M(x,v)\le Cr).
\]
Such paths have length at most $Cr$ in a fixed corner chart.
The cutoff bounds and the area of their supports now give
\[
 n\E\int|\nabla e_R|^2\,d\mu\le CR^2\log(en),\qquad
 n\E|E_{f_R}-E_{f_t}|\le CR^2\log(en).
\]
For the signed term, use covariance directly. Its kernel is supported
near the vertices, so
\[
 n\E\nu_n(e_R)
 =\chi_n\sum_v\int(1-\xi_{v,R}(x))
       \{G_t(v,x)-G_t(x,x)\}\,d\mu(x)
 =O(R^2\log(en)).
\]
The last bound uses $\|G_t\|_\infty\le C\log(e/t)$ from
Lemma~\ref{b:lem:heat}. Moreover $\E \mu(e_R)=0$.
The covariance calculation in Theorem~\ref{thm:smoothing-reduction}
identifies $n\E[2\nu_n(f_t)-E_{f_t}]$ with the spectral term in
\eqref{eq:surface-decomposition}. Replacing $f_t$ by $f_R$ therefore changes that
term by $O(R^2\log(en))$.

\emph{The Hessian bound.}
Lemma~\ref{b:lem:heat} and the cutoff derivative bounds give the following
supremum and squared $L^2$ bounds for the kernel defining $\nabla^2f_R$:
\[
 B\le C\{t^{-1}+R^{-2}\log(e/R)+R^{-1}t^{-1/2}\},\qquad
 V\le C\{t^{-1}+R^{-4}+R^{-2}\log(e/t)\}.
\]
For the cutoff terms, integrate the gradient kernel along a path of
length $O(R)$ from the vertex. Since $t\ge1/n$ and $R=r_0n^{-1/100}$,
both $B$ and $V$ are at most $C(t^{-1}+R^{-4})$.
The same bound holds for the density kernel $p_t(x,\cdot)-1$.
Polynomial H\"older bounds give spatial nets of polynomial size in $n$.
Lemma~\ref{lem:weighted-concentration}, with $K$ large enough, therefore gives
\[
 \|q_t-1\|_\infty+\|\nabla^2f_R\|_\infty\le C\delta
\]
outside an event of probability $O(n^{-12})$.
In particular, both $q_R$ and $q_t$ are at
least $1/2$ when these bounds hold. We use $q_R\mu$ only on this good event.

\emph{Change in the smoothing cost.}
On the good event, Lemma~\ref{c:lem:energy} and
$q_R-q_t=\cL e_R$ give
\[
 W_2^2(q_R\mu,P_t\nu_n)\le C\int|\nabla e_R|^2\,d\mu.
\]
The rough cost bound at time $1/n$ and
$W_{2,M}^2(P_t\nu_n,\mu)\le4E_{f_t}$ from Lemma~\ref{c:lem:energy} give
\[
 n\E\bigl[W_{2,M}^2(\mu,\nu_n)+W_{2,M}^2(P_t\nu_n,\nu_n)\bigr]
 \le C\log(en).
\]
The triangle inequality and Cauchy--Schwarz therefore bound the
expectation of
$n|W_{2,M}^2(q_R\mu,\nu_n)-W_{2,M}^2(P_t\nu_n,\nu_n)|$
over the good event by $CR\log(en)$. Applying
Lemma~\ref{s:lem:split} to $f_R$ contributes $O(\delta\log(en))$.
On the exceptional event, the kernel and cutoff bounds give
$\|f_R\|_\infty\le C\log(en)$ and $\|\nabla f_R\|_\infty\le Ct^{-1/2}$.
Since $t\ge1/n$ and the transport costs are bounded,
\[
 n\bigl|W_{2,M}^2(\mu,\nu_n)-W_{2,M}^2(P_t\nu_n,\nu_n)
                   -2\nu_n(f_R)+E_{f_R}\bigr|\le Cn^2.
\]
This contributes $O(n^{-10})$ in expectation. Combining the bounds
proves~\eqref{eq:surface-decomposition} with error
$O((\delta+R)\log(en)+n^{-10})$. Since
$\delta+R\le C\sqrt{\log n/(nt)}+Cn^{-1/100}$,
the stated estimate follows.
\end{proof}

\subsection{The spectral term}\label{app:spectral-term}
\subsubsection{The spectral expansion on surfaces}\label{app:surface-trace}
We prove the expansion in \eqref{eq:h-general}, which separates the
logarithmic and constant parts of the spectral term.
Use the surface and sampling distribution of
Section~\ref{subsec:surface-reduction}.
The \emph{heat trace} of $P_t=e^{-t\cL}$ is
$1+\sum_{j\ge1}e^{-t\lambda_j}$, where the constant eigenfunction
contributes the term $1$. We show that
\[
 \sum_{j\ge1}e^{-t\lambda_j}=\frac A{4\pi t}+O(t^{-1/2}).
\]
The ordinary Neumann Laplacian on $(M,g)$ has heat trace
$A/(4\pi t)+O(t^{-1/2})$ by~\cite[Theorem~1.2]{NRS}; without boundary,
the usual heat expansion~\cite{BGV} gives the stronger remainder $O(1)$.
Conjugation by $\sqrt\rho$ turns $\cL$
into this Laplacian plus a bounded potential and a bounded Robin
boundary term. The boundary coefficient can have either sign.
For normalized $u$, the trace inequality
\[
 \|u\|_{L^2(\partial M)}^2
       \le C\|u\|_{H^1(M)}\|u\|_{L^2(M)}+C\|u\|_{L^2(M)}^2
\]
and the min--max principle show that its eigenvalues differ from the
ordinary Neumann eigenvalues $\lambda_j^0$ by at most
$C(\sqrt{\lambda_j^0}+1)$. Since $\lambda_j^0\asymp j$, the difference
of the two heat traces is bounded by
\[
 Ct\sum_{j\ge1}(\sqrt j+1)e^{-ctj}=O(t^{-1/2}).
\]
Removing the constant mode changes the trace by one.
Integrating over $[t,\infty)$ gives
$F_{M,\mu}(t)=A(4\pi)^{-1}\log(1/t)+h_{M,\mu}+O(\sqrt t)$.
Combining the expansions at $t$ and $2t$ proves \eqref{eq:h-general}.

\subsubsection{The square--torus difference}\label{app:flat-spectrum}
We compute $h_Q-h_{\T}$ for the mean-cost comparison in
Section~\ref{sec:source-target}.
The positive eigenvalues are $\pi^2(k^2+l^2)$ on $Q$, with
$k,l\ge0$, and $4\pi^2(k^2+l^2)$ on $\T$, with $k,l\in\mathbb Z$,
in both cases excluding $(k,l)=(0,0)$. Separating the terms on the
coordinate axes from those with $kl\ne0$ gives
\[
 F_Q(t)=F_{\T}(t/4)+\sum_{j\ge1}\frac{e^{-\pi^2j^2t}}{\pi^2j^2}.
\]
The series equals $1/6+O(\sqrt t)$. Applying \eqref{eq:h-general}
with $A=1$ therefore yields
\[
 h_Q-h_{\T}=\frac16+\frac{\log2}{2\pi}.
\]
\section{Subdivision of the smoothing cost}\label{app:flat-proof}\label{app:local-cost}
We prove the subdivision estimate in Lemma~\ref{g:lem:grid} and the
surface formula in Theorem~\ref{thm:surface-smoothing}. The main issue is
the cost of reassigning mass near cell boundaries so that transport stays
within each cell. We bound this added cost through the optimal transport
potential. Cancellation between the sample and its own smoothing makes
the potential small in $L^2$, allowing an expected added cost of $o(1)/n$.
Section~\ref{app:transport-bounds} first bounds the longest transport
distance, so that allocations far from the boundaries can be kept.
Section~\ref{app:localization} contains the cancellation and repair
argument. We apply it to the square and torus in
Section~\ref{app:flat-local}, complete the sample-size comparison in
Section~\ref{app:flat-counts}, and treat surfaces in
Section~\ref{app:surface-proof}.

\subsection{Bounding the longest transport distance}\label{app:transport-bounds}
We first bound the longest distance travelled in an optimal plan.
An allocation whose target is farther than this bound from the cell
boundary stays inside the cell. The following lemma extends the displacement
estimate of Ambrosio, Glaudo, and Trevisan~\cite{Map} to costs comparable
with squared distance, using a lower bound on the source mass in balls.

\begin{lemma}\label{lem:local-cost-localization}
Let $(M,d_0)$ be a compact connected geodesic space of diameter $D>0$.
Suppose that a probability measure $\mu$ satisfies
\[
 \mu(B(x,r))\ge c_\mu r^s
 \qquad(x\in M,\ 0<r\le D)
\]
for constants $c_\mu,s>0$. Let $k$ be continuous and satisfy
$a_0d_0^2\le k\le b_0d_0^2$ with $a_0,b_0>0$.
For any $k$-optimal coupling $\pi$ from $\mu$ to a probability measure,
\begin{equation}\label{eq:local-cost-localization}
 \sup_{(x,z)\in\operatorname{supp}\pi}d_0(x,z)
 \le C\left(\int k\,d\pi\right)^{1/(s+2)},
\end{equation}
where $C$ depends only on $a_0,b_0,c_\mu,s$.
\end{lemma}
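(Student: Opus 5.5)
Fix a pair $(x_0,z_0)\in\operatorname{supp}\pi$ and write $L=d_0(x_0,z_0)$. The plan is to use $c$-cyclical monotonicity of the optimal coupling $\pi$: if $L$ is large, then every source point near $x_0$ must also be sent far away, and this accumulated mass makes $\int k\,d\pi$ large. The comparison constant $b_0/a_0$ may be large, so I will use cyclical monotonicity with the specific point $(x_0,z_0)$ rather than an ordinary triangle inequality. For any $(x,z)\in\operatorname{supp}\pi$, monotonicity of the pair $\{(x_0,z_0),(x,z)\}$ gives
\[
 k(x_0,z_0)+k(x,z)\le k(x_0,z)+k(x,z_0).
\]
Hence
\[
 a_0L^2+k(x,z)\le b_0\,d_0(x_0,z)^2+b_0\,d_0(x,z_0)^2.
\]
Now take $x\in B(x_0,r)$ with $r=\epsilon L$ and $\epsilon$ small depending on $a_0,b_0$.

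The key step is to show that $d_0(x,z)\ge cL$ for every $z$ with $(x,z)\in\operatorname{supp}\pi$ and $x\in B(x_0,\epsilon L)$. Suppose instead $d_0(x,z)\le\eta L$. By the triangle inequality, $d_0(x_0,z)\le(\epsilon+\eta)L$ and $d_0(x,z_0)\le(1+\epsilon)L$. The inequality above then reads
\[
 a_0L^2\le b_0(\epsilon+\eta)^2L^2+b_0(1+\epsilon)^2L^2.
\]
This is not a contradiction when $b_0\ge a_0$. Pairwise monotonicity with the \emph{same} cost on both sides is too weak unless $b_0\,d_0(x,z_0)^2$ can be compared with $k(x_0,z_0)$ more sharply.

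I would therefore use continuity of $k$: for $x$ close to $x_0$, the difference $k(x,z_0)-k(x_0,z_0)$ is small. Any quantitative version needs a modulus of continuity that scales like $d_0(x,x_0)\cdot L$. Lipschitz-type regularity of the kind used in the proof of Lemma~\ref{s:lem:split}, namely $|k(x,y)-k(x',y)|\le Cd_0(x,x')\{d_0(x,y)+d_0(x,x')\}$, would give
\[
 k(x,z_0)\le k(x_0,z_0)+C\epsilon L^2.
\]
Then monotonicity yields
\[
 k(x,z)\le b_0(\epsilon+\eta)^2L^2+C\epsilon L^2 ,
\]
which is the wrong direction for a lower bound.

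So I would instead reverse the roles and aim to show that the \emph{partner} $z$ of $x$ cannot be close to $x$. Suppose $k(x,z)$ is small. Then monotonicity gives
\[
 k(x_0,z_0)\le k(x_0,z)+k(x,z_0)-k(x,z).
\]
I would combine this with the Lipschitz modulus above. This step, making the two-point swap argument work with a general cost comparable to $d_0^2$, is the main obstacle. If only continuity of $k$ is available, I would fall back on the geodesic structure. Take a point $m$ on a geodesic from $x_0$ to $z_0$ and use a three-point cycle through mass near $m$, exploiting that the lower mass bound holds at every scale.

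Once a lower bound $d_0(x,z)\ge cL$ holds for all of $B(x_0,\epsilon L)$, the conclusion is immediate. We get
\[
 \int k\,d\pi\ge a_0c^2L^2\,\mu(B(x_0,\epsilon L))\ge a_0c^2c_\mu\epsilon^sL^{s+2},
\]
valid when $\epsilon L\le D$, which holds since $L\le D$. This gives \eqref{eq:local-cost-localization}, with the constant depending only on $a_0,b_0,c_\mu,s$ (and the cost's regularity constant if the Lipschitz route is used).
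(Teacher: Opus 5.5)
There is a genuine gap. You never carry out the step you flag as the main obstacle, and the intermediate claim it is meant to establish is false, even for $k=d_0^2$. That claim says every support pair $(x,z)$ with $x\in B(x_0,\epsilon L)$ has $d_0(x,z)\ge cL$.

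Here is a counterexample. Take $\mu$ uniform on $[0,1]^2$ and write points as $(u,v)$. Let $x_0=(1/2,1/2)$ and $y_1=x_0+(L,0)$ with $L\le1/2$, and let the target be $\tfrac12\delta_{x_0}+\tfrac12\delta_{y_1}$. The Laguerre cells with weights differing by $L^2$ are $\{u\le1/2\}$ and $\{u\ge1/2\}$, so the optimal plan sends the left half to $x_0$ and the right half to $y_1$. Then $(x_0,y_1)\in\operatorname{supp}\pi$ at distance $L$, yet each point $(1/2-\delta,1/2)$ moves only distance $\delta$. So the energy cannot be bounded below by a pointwise displacement bound on a ball around $x_0$.

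Two further problems remain. The Lipschitz modulus you borrow from Lemma~\ref{s:lem:split} is not a hypothesis here: only continuity and $a_0d_0^2\le k\le b_0d_0^2$ are assumed, and $C$ may not depend on such a modulus. Your fallback through a single point on the geodesic also fails in general. Swapping $(x_0,z_0)$ with a short pair near the midpoint replaces $k(x_0,z_0)\ge a_0L^2$ by two edges of length about $L/2$, costing about $b_0L^2/2$. This contradicts monotonicity only when $b_0<2a_0$.

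The missing idea, which is how the paper proceeds, has three parts: a cycle whose length grows with $b_0/a_0$, the mass bound applied along the geodesic rather than near $x_0$, and an argument by contradiction from small energy. Fix an integer $m>8b_0/a_0$ and put $\epsilon=1/(10m)$. Place $s_1,\ldots,s_{m-1}$ on a geodesic from $z_0$ to $x_0$ at distances $iL/m$ from $z_0$; the balls $B(s_i,\epsilon L)$ are disjoint and miss $x_0$.

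Suppose $\int k\,d\pi<a_0c_\mu\epsilon^{s+2}L^{s+2}$. Then each ball contains a support pair $(x_i,z_i)$ with $d_0(x_i,z_i)<\epsilon L$. Otherwise the mass of that ball alone would contribute energy at least $a_0\epsilon^2L^2\,c_\mu(\epsilon L)^s$.

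Now apply cyclical monotonicity to the reassignment $x_1\to z_0$, $x_i\to z_{i-1}$ $(2\le i\le m-1)$, $x_0\to z_{m-1}$. Its edges have length at most $L/m+3\epsilon L$, so
\[
 a_0L^2\le k(x_0,z_0)+\sum_{i=1}^{m-1}k(x_i,z_i)
 \le b_0m\Bigl(\frac Lm+3\epsilon L\Bigr)^2=\frac{169\,b_0}{100\,m}L^2<a_0L^2,
\]
a contradiction. Splitting the long edge into $m$ steps of length $L/m$ cuts the total squared length to $L^2/m$, which beats the ratio $b_0/a_0$. The argument uses only continuity and the two-sided bounds, so $C$ depends only on $a_0,b_0,c_\mu,s$.
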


\begin{proof}
Write $E=\int k\,d\pi$ and fix a support pair $(x_0,z_0)$ with
$r=d_0(x_0,z_0)>0$. Choose an integer $m>8b_0/a_0$ and set
$\epsilon=1/(10m)$. On a minimizing geodesic from $z_0$ to $x_0$,
place $m-1$ interior points $s_i$ at distances $ir/m$ from $z_0$.
The balls $B_i=B(s_i,\epsilon r)$ are disjoint and exclude $x_0$.

Suppose
\begin{equation}\label{eq:local-cost-small-energy}
 E<a_0c_\mu\epsilon^{s+2}r^{s+2}.
\end{equation}
Each $B_i$ then contains a point $x_i$ in a support pair $(x_i,z_i)$ with
$d_0(x_i,z_i)<\epsilon r$. Otherwise the whole source mass in $B_i$
would travel at least $\epsilon r$, giving
\[
 E\ge a_0\epsilon^2r^2 \mu(B_i)
   \ge a_0c_\mu\epsilon^{s+2}r^{s+2}.
\]
Optimality implies that cyclically reassigning the targets among finitely
many support pairs cannot reduce their total cost. This property is
called \emph{cyclical monotonicity}~\cite[Chapter~5]{Villani}.
Apply it to the $m$ chosen pairs, assigning
$x_1$ to $z_0$, $x_i$ to $z_{i-1}$ for $2\le i\le m-1$, and
$x_0$ to $z_{m-1}$:
\[
 k(x_0,z_0)+\sum_{i=1}^{m-1}k(x_i,z_i)
 \le k(x_1,z_0)+\sum_{i=2}^{m-1}k(x_i,z_{i-1})
       +k(x_0,z_{m-1}).
\]
Every reassigned edge has length at most $r/m+3\epsilon r$.
The left side is at least $a_0r^2$, whereas the right side is at most
\[
 b_0m(r/m+3\epsilon r)^2
 =\frac{169b_0}{100m}r^2<a_0r^2.
\]
This contradiction rules out \eqref{eq:local-cost-small-energy} and proves
\eqref{eq:local-cost-localization}. The cyclic inequality remains valid
if some chosen pairs have the same target.
\end{proof}

On a surface as in Section~\ref{subsec:surface-reduction} with smooth positive source
density, the ball condition holds with $s=2$. The displacement bound also
controls the effect of changing the cost by $O(d_0^3)$: along either
optimal plan, the cubic error is bounded by a constant times the longest
distance times the squared-distance cost. We use this comparison to handle boundaries in
Section~\ref{app:surface-confinement} and distinct distributions in
Appendix~\ref{app:cost-comparison}.

\begin{proposition}[Cubic perturbations of quadratic cost]\label{prop:local-cost-deterministic}
Let $(M,d_0)$ and $\mu$ satisfy the assumptions of
Lemma~\ref{lem:local-cost-localization} with $s=2$. Suppose $k$ is continuous,
$a_0d_0^2\le k\le b_0d_0^2$, and $|k-d_0^2|\le Kd_0^3$.
Then every probability measure $\eta$ satisfies
\begin{equation}\label{eq:local-cost-deterministic}
 |\OT_k(\mu,\eta)-W_{2,d_0}^2(\mu,\eta)|
 \le C\bigl(W_{2,d_0}^2(\mu,\eta)\bigr)^{5/4}.
\end{equation}
The constant depends only on $a_0,b_0,K,c_\mu$. It is uniform when $\mu$
is replaced by $q\mu$ with $1/2\le q\le2$ and $\mu(q)=1$.
\end{proposition}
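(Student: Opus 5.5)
Compare the two costs along each other's optimal plans, and control the cubic error by the longest transport distance from Lemma~\ref{lem:local-cost-localization}. Write $W=W_{2,d_0}^2(\mu,\eta)$ and $L=\OT_k(\mu,\eta)$.

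\emph{Step 1 (displacement bounds).} Let $\pi_2$ be a $d_0^2$-optimal coupling from $\mu$ to $\eta$, and let $\pi_k$ be a $k$-optimal one. Squared distance satisfies the hypotheses of Lemma~\ref{lem:local-cost-localization} with $a_0=b_0=1$, and $k$ satisfies them with the given constants. With $s=2$ the lemma gives
\[
 \sup_{\operatorname{supp}\pi_2}d_0\le CW^{1/4},\qquad
 \sup_{\operatorname{supp}\pi_k}d_0\le CL^{1/4}.
\]
Quadratic comparability gives $a_0W\le L\le b_0W$, because both $\OT_k$ and $W$ are infima over the same couplings and $a_0d_0^2\le k\le b_0d_0^2$ pointwise. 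Hence both displacement bounds are $O(W^{1/4})$.

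\emph{Step 2 (two one-sided comparisons).} For the upper bound, use $\pi_2$ as a competitor for $k$:
\[
 L\le\int k\,d\pi_2\le W+K\int d_0^3\,d\pi_2\le W+K\Bigl(\sup_{\operatorname{supp}\pi_2}d_0\Bigr)W\le W+CW^{5/4}.
\]
For the lower bound, use $\pi_k$ as a competitor for $d_0^2$:
\[
 W\le\int d_0^2\,d\pi_k\le L+K\int d_0^3\,d\pi_k\le L+K\Bigl(\sup_{\operatorname{supp}\pi_k}d_0\Bigr)\int d_0^2\,d\pi_k.
\]
Bound $\int d_0^2\,d\pi_k\le L/a_0\le (b_0/a_0)W$, and use the displacement bound $CW^{1/4}$ from Step~1. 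This gives $W\le L+CW^{5/4}$, and combining the two inequalities proves~\eqref{eq:local-cost-deterministic}. If $W$ is not small, the estimate holds trivially: $W\le D^2$ is bounded, and $|L-W|\le (b_0+1)W\le CW^{5/4}\cdot W^{-1/4}$ with $W^{-1/4}$ bounded below away from small $W$. So no smallness assumption is needed, only constants depending on $D$, which is already absorbed in $c_\mu$ through $c_\mu D^s\le1$.

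\emph{Step 3 (uniformity).} Replacing $\mu$ by $q\mu$ with $1/2\le q\le2$ changes the ball lower bound only by a factor $1/2$. Lemma~\ref{lem:local-cost-localization} then applies with $c_\mu/2$, and the constants above depend only on $a_0,b_0,K,c_\mu$.

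The only delicate point is that the lemma must be applied to the $k$-optimal plan with the cost $k$ itself. That is why the lemma was stated for general costs comparable to $d_0^2$ rather than for squared distance alone. Everything else is the routine exchange of competitors above.
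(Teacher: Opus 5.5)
Your proposal is correct and matches the paper's proof: you apply Lemma~\ref{lem:local-cost-localization} to both the $d_0^2$-optimal and the $k$-optimal plans to bound $\int d_0^3$ by $CW^{5/4}$, and then test each optimal plan in the other objective. Two small remarks: the separate large-$W$ case is unnecessary, since the lemma carries no smallness hypothesis; and in that aside $W^{-1/4}$ should be bounded above, not below.
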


\begin{proof}
Put $Q=W_{2,d_0}^2(\mu,\eta)$ and $E=\OT_k(\mu,\eta)$.
The quadratic bounds give $a_0Q\le E\le b_0Q$.
Let $\pi_Q$ and $\pi_k$ be optimal plans for the two costs.
Lemma~\ref{lem:local-cost-localization} gives
\[
 \int d_0^3\,d\pi_Q\le C Q^{1/4}\int d_0^2\,d\pi_Q
                         =CQ^{5/4},
 \qquad
 \int d_0^3\,d\pi_k\le C E^{1/4}a_0^{-1}E
                         \le CQ^{5/4}.
\]
Testing each optimum in the other objective and using
$|k-d_0^2|\le Kd_0^3$ proves the claim.
\end{proof}

\subsection{Cancellation and repair near cell boundaries}\label{app:localization}
We now bound the cost of reassigning mass near cell boundaries.
Integration by parts will express the error in terms of
$\int\phi^2\,d\mu$, where $\phi$ is the optimal transport potential.
This is useful because cancellation between the sample and its own
smoothing gives a much smaller bound for $\int\phi^2\,d\mu$ than for
the transport cost, which involves $\int|\nabla\phi|^2\,d\mu$.
We first bound the potential in terms of the difference between the source
and target distributions, then use their shared sampling fluctuations to
bound that difference. Here $\phi$ generates the optimal allocation; it is distinct
from the Poisson potential $f_t$ in Section~\ref{sec:smoothing-reduction}.

Let $\mu=\rho\,dA$ on a fixed surface $M$ as in
Section~\ref{subsec:surface-reduction}. Write $\cL=-\rho^{-1}\diver(\rho\nabla)$,
with the Neumann condition when there is a boundary. Use a smooth closed
extension of $M$ to define the ambient distance $\bar d$; when $M$ is
closed, use its own distance. All constants below may depend on this
fixed geometry and density. For a signed measure $\zeta$ of total mass
zero, define the following norm through its Poisson potential:
\[
 \|\zeta\|_{-2,\mu}^2
 =\|\cL^{-1}\zeta\|_{L^2(\mu)}^2
 =\sum_{j\ge1}\frac{|\zeta(\psi_j)|^2}{\lambda_j^2},
\]
where $(\lambda_j,\psi_j)$ are the eigenpairs of $\cL$ with
$\lambda_j>0$ and $(\psi_j)_{j\ge1}$ orthonormal in $L^2(\mu)$.

\begin{lemma}[An $L^2$ bound for the optimal potential]\label{lem:potential-l2}
Suppose the Neumann inverse satisfies
$\|\cL^{-1}h\|_{H^2(M)}\le C\|h\|_{L^2(\mu)}$ for $\mu(h)=0$, and its
solutions have bounded $H^2$ extensions to the ambient surface.
Let $\mu(q)=1$, $\|q-1\|_\infty\le\delta\le1/2$, and let $\nu$ be a
finite atomic probability measure. Write its optimal map from $q\mu$ for
cost $\bar d^2$ as $T(x)=\exp_x(\nabla\phi(x))$, with $\mu(\phi)=0$, and put
\[
 D=\int q|\nabla\phi|^2\,d\mu,\qquad
 L=\|\nabla\phi\|_\infty,\qquad V=\int\phi^2\,d\mu.
\]
If $L$ is smaller than a fixed fraction of the ambient injectivity radius, then
\[
 V\le C\{\|\nu-q\mu\|_{-2,\mu}^2+(\delta^2+L^2)D\}.
\]
\end{lemma}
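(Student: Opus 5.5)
The plan is a duality argument. Test $\phi$ against its own Poisson potential, rewrite $\int\phi^2\,d\mu$ as the pairing of $\nu-q\mu$ with that potential, and control the error using the transport map. We may assume $V>0$. Put $u=\cL^{-1}\phi$; this is legitimate because $\mu(\phi)=0$. By hypothesis $\|u\|_{H^2(M)}\le C\|\phi\|_{L^2(\mu)}=C\sqrt V$, and $u$ extends to the ambient surface with the same $H^2$ bound. In two dimensions $H^2\subset C^0$, so $\nu(u)$ makes sense for the atomic measure $\nu$. Integrating by parts with the Neumann condition on $u$ gives
\[
 V=\int\phi\,\cL u\,d\mu=\int\nabla u\cdot\nabla\phi\,d\mu .
\]
The transport identity $T_\#(q\mu)=\nu$ gives $(\nu-q\mu)(u)=\int\{u(Tx)-u(x)\}q\,d\mu$. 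Writing $\int\nabla u\cdot\nabla\phi\,d\mu=\int\nabla u\cdot\nabla\phi\,q\,d\mu-\int(q-1)\nabla u\cdot\nabla\phi\,d\mu$, we obtain
\[
 V=(\nu-q\mu)(u)-\int(q-1)\nabla u\cdot\nabla\phi\,d\mu-\int\mathcal R(x)\,q\,d\mu ,
\]
where $\mathcal R(x)=u(Tx)-u(x)-\nabla u(x)\cdot\nabla\phi(x)$ is the second-order Taylor remainder along the ambient geodesic $\gamma_x(s)=\exp_x(s\nabla\phi(x))$.

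We bound the three terms separately. First, by the spectral formula (extended to atomic $\zeta$ by approximation, using continuity of $u$),
\[
 |(\nu-q\mu)(u)|\le\|\nu-q\mu\|_{-2,\mu}\,\|\phi\|_{L^2(\mu)}=\|\nu-q\mu\|_{-2,\mu}\sqrt V .
\]
Second, Cauchy--Schwarz with $q\ge1/2$ gives
\[
 \Bigl|\int(q-1)\nabla u\cdot\nabla\phi\,d\mu\Bigr|\le C\delta\|\nabla u\|_2\sqrt D\le C\delta\sqrt{VD}.
\]
For the third term,
\[
 |\mathcal R(x)|\le\int_0^1(1-s)\,|\nabla^2u(\gamma_x(s))|\,|\nabla\phi(x)|^2\,ds+(\text{curvature terms of order }|\nabla u|\,|\nabla\phi|^2).
\]
The curvature terms are harmless, since $L$ is small compared with the injectivity radius. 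Using $|\nabla\phi|\le L$ on one factor and Cauchy--Schwarz in $x$, we get
\[
 \int|\mathcal R|\,q\,d\mu\le L\sqrt D\int_0^1(1-s)\Bigl(\int|\nabla^2u(\gamma_x(s))|^2q(x)\,d\mu(x)\Bigr)^{1/2}ds+CL\sqrt D\,\|\nabla u\|_2 .
\]

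The main obstacle is the inner integral, because $\nabla^2u$ is only in $L^2$ and we must evaluate it along the displacement interpolation $\mu_s=(\gamma_\cdot(s))_\#(q\mu)$. The interpolation ends at an atomic measure, so its density cannot be bounded uniformly in $s$. I would prove the density bound
\[
 \frac{d\mu_s}{dA}\le C(1-s)^{-2}\quad\text{(in dimension two)},
\]
where the constant absorbs $\|q\|_\infty$ and the ambient curvature. This follows from McCann's Jacobian inequality: $s\mapsto\det(d\gamma_\cdot(s))^{1/2}$ is concave along optimal interpolations, up to curvature distortion factors, which are close to $1$ when $L$ is small. This Jacobian estimate is the step I expect to be delicate on a curved ambient surface, and it is where the smallness of $L$ relative to the injectivity radius is used. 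Given the bound,
\[
 \Bigl(\int|\nabla^2u\circ\gamma_\cdot(s)|^2q\,d\mu\Bigr)^{1/2}\le C(1-s)^{-1}\|u\|_{H^2}.
\]
The weight $(1-s)$ cancels this blow-up, so the $s$-integral is at most $C\|u\|_{H^2}\le C\sqrt V$. Hence
\[
 \int|\mathcal R|\,q\,d\mu\le CL\sqrt{DV}.
\]
Collecting the three bounds gives $V\le\sqrt V\{\|\nu-q\mu\|_{-2,\mu}+C(\delta+L)\sqrt D\}$. Dividing by $\sqrt V$ and squaring yields $V\le C\{\|\nu-q\mu\|_{-2,\mu}^2+(\delta^2+L^2)D\}$.
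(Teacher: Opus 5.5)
Your proposal is correct and is essentially the paper's proof. The shared steps are: test against $u=\cL^{-1}\phi$ (extended to the ambient surface), expand $(\nu-q\mu)(u)$ by a geodesic Taylor expansion along $x\mapsto\exp_x(s\nabla\phi(x))$, cancel the $(1-s)$ weight of the remainder against the $(1-s)^{-1}$ from the interpolation-density bound $C(1-s)^{-2}$, and split $q=1+(q-1)$. The only difference is how the density bound is justified: instead of McCann's Jacobian concavity, the paper uses the atomic target directly (on each cell assigned to an atom $y$ the interpolation is $\exp_y((1-s)\log_y x)$, with Jacobian $(1-s)^2$ times a bounded factor) and proves by cyclical monotonicity that images of different cells are disjoint. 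That injectivity is needed to turn your Jacobian bound into a density bound, and your appeal to displacement-interpolation theory leaves it implicit.
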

\begin{proof}
Set $h=\cL^{-1}\phi$ and extend it with its $H^2$ bound. Expanding
$(\nu-q\mu)(h)$ along the transport paths will recover
$V=\int\phi^2\,d\mu$. To control the remainder, put $v=\nabla\phi$ and
$F_u(x)=\exp_x(uv(x))$, $0\le u<1$: this moves each point a fraction
$u$ of the way to its assigned target.
On a region assigned to a target $y$,
$F_u(x)=\exp_y((1-u)\log_yx)$. Its Jacobian is
$(1-u)^2$ times a bounded ratio of smooth volume densities.
The images of distinct pieces are disjoint. Indeed, if edges of lengths
$a,b$ meet at time $u\in(0,1)$, cyclical monotonicity gives
\[
 a^2+b^2\le\{ua+(1-u)b\}^2+\{ub+(1-u)a\}^2
 =a^2+b^2-2u(1-u)(a-b)^2.
\]
Thus $a=b$, and the exchanged paths are minimizing geodesics without a
corner at the meeting point. Uniqueness for the geodesic equation makes
the original endpoints equal. Consequently $F_{u\#}(q\mu)$ has density
at most $C(1-u)^{-2}$ relative to ambient area.

Geodesic Taylor expansion and Cauchy--Schwarz give
\[
 (\nu-q\mu)(h)=\int q\nabla\phi\cdot\nabla h\,d\mu+R(h),\qquad
 |R(h)|\le CL\sqrt D\,\|h\|_{H^2(M)}.
\]
The factor $1-u$ in Taylor's remainder cancels the factor $(1-u)^{-1}$
from the square root of the interpolation density bound. Approximation
in $H^2$ justifies the identity at the atomic targets, since it gives
uniform convergence in dimension two. The weak Neumann equation gives
$\int\nabla\phi\cdot\nabla h\,d\mu=V$; it imposes no boundary condition
on $\phi$. Separating $q=1+(q-1)$ therefore yields
\[
 V\le\sqrt V\{\|\nu-q\mu\|_{-2,\mu}+C(\delta+L)\sqrt D\},
\]
which proves the estimate.
\end{proof}

\paragraph{Cancellation between the sample and its smoothing.}
We apply the potential bound with $q\mu=S_t\nu_n$, so the signed
difference to estimate is $\nu_n-S_t\nu_n$. Let
$\nu_n=\sum_iW_i\delta_{Y_i}$, where the sites are independent with law
$\mu$ and the independent mass vector is equal or Dirichlet. Let
$S_t=P_t^{\cB}$ be the direct sum of weighted Neumann heat operators in a
partition $\cB$. The covariance identity~\eqref{f:eq:cov} gives
$\E\nu_n=\mu$ and $n\E\nu_n(h)^2=\chi_n\|h\|_2^2$ for
$\mu(h)=0$, with $\chi_n\le2$.
For every deterministic $h\in L^2(\mu)$, self-adjointness of $S_t$ gives
\begin{equation}\label{eq:smoothing-discrepancy}
 (\nu_n-S_t\nu_n)(h)=\nu_n((I-S_t)h).
\end{equation}
Since $S_t$ preserves $\mu$, the function $(I-S_t)h$ has zero $\mu$-mean.
The covariance formula therefore gives the exact identity
\begin{equation}\label{eq:localization-covariance}
 n\E\bigl[((\nu_n-S_t\nu_n)(h))^2\bigr]
   =\chi_n\|(I-S_t)h\|_2^2.
\end{equation}
The variance therefore depends on $h-S_th$: a function that changes
little under heat smoothing has nearly the same integral against the
sample and its smoothing. This is the cancellation we need.
For $h\in H^1(M)$, the direct-sum form agrees with the global energy, so
$\|(I-S_t)h\|_2^2\le\min\{\|h\|_2^2,t\int|\nabla h|^2\,d\mu\}$.
Applying \eqref{eq:localization-covariance} to the global eigenfunctions
with positive eigenvalues and summing gives
\begin{equation}\label{eq:localization-negative-two}
 n\E\|\nu_n-S_t\nu_n\|_{-2,\mu}^2
 =\chi_n\sum_j\frac{\|(I-S_t)\psi_j\|_2^2}{\lambda_j^2}
 \le C\sum_j\frac{\min(1,t\lambda_j)}{\lambda_j^2}
 \le C\sqrt t.
\end{equation}
The heat trace gives $\lambda_j\asymp j$; the last bound follows from
$\min(1,t\lambda_j)\le\sqrt{t\lambda_j}$. Lemma~\ref{lem:potential-l2}
transfers this discrepancy estimate to the random optimal potential.
The expected squared $L^2$ norm of the potential can be $o(1/n)$
even when the expected smoothing cost is of order $(\log n)/n$.

\paragraph{Confining transport to cells.}
We keep allocations away from cell boundaries and reassign the remaining
mass within each cell. The potential estimate controls the error caused
by cutting off the original allocation. To turn this control into a
feasible transport plan, we smooth the remaining atomic mass at an
auxiliary time $s=1/n$ and apply the transport bound of
Lemma~\ref{c:lem:energy}. The original smoothing time remains $t$.

\begin{lemma}[Cost of restricting transport to cells]\label{lem:cell-repair}
Use the partition $\cB$ and smoothing $S_t$ defined above, with
intrinsic cell distances. Put $q\mu=S_t\nu_n$ and define
\[
 D=\OT_{\bar d^2}(q\mu,\nu_n),\qquad
 Z=n\sum_{B\in\cB}\OT_B(q\mu,\nu_n).
\]
Here $\OT_B$ transports the restrictions to $B$ using squared intrinsic
distance as cost, so $Z-nD$ is the added cost of restricting transport to cells,
multiplied by $n$.

Assume the cell diameters and Poincar\'e
constants are bounded uniformly. The heat kernels are taken
relative to $\mu|_B$, without normalizing its mass. For their positive
eigenpairs $(\lambda_{B,j},\psi_{B,j})$, normalized in $L^2(\mu|_B)$,
assume, at $s=1/n$, that
\[
 \int_Bd_B(x,y)^2p_s^B(x,y)\,d\mu(y)\le Cs,\qquad
 \sum_{j\ge1}\frac{e^{-2s\lambda_{B,j}}}{\lambda_{B,j}}
          \psi_{B,j}(x)^2\le C\log(en)
\]
uniformly over $B$ and $x\in B$.

Let $\eta$ be a smooth cutoff in the ambient
surface, equal to one within distance $r$ of every cell boundary,
including the physical boundary, with
\[
 0\le\eta\le1,\qquad \mu(\operatorname{supp}\eta)\le\kappa,\qquad
 |\nabla\eta|\le C/r,\qquad |\nabla^2\eta|\le C/r^2,
 \qquad 0<r\le1.
\]

Let $E=\{\|q-1\|_\infty\le\delta\}$, with
$0<\delta\le1/2$. On $E$, let $T=\exp(\nabla\phi)$ be an optimal
ambient map with $\mu(\phi)=0$, and use $L,V$ from
Lemma~\ref{lem:potential-l2}. Suppose
\[
 n\E D\le C\log(en),\qquad n\E[V\1_E]\le C\delta^2\log(en),
 \qquad L^2\le C\sqrt t,\quad L<r/2\quad\hbox{on }E.
\]
Set
\[
 \epsilon=\kappa+r^{-4}(\delta^2+\sqrt t).
\]
If $\epsilon\le1$, then
\[
 0\le\E(Z-nD)\le C\sqrt\epsilon\log(en)+Cn\PP(E^c).
\]
\end{lemma}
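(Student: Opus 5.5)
The lower bound $Z\ge nD$ is immediate. The sum of cellwise optimal plans is a coupling of $q\mu$ and $\nu_n$ on the ambient surface. Intrinsic cell distance dominates $\bar d$, so each cell cost is at least its ambient cost. Off $E$, every cost is at most $C$ times the squared diameter, which contributes $Cn\PP(E^c)$. The real task is therefore an upper bound for $Z$ on $E$. We obtain it by constructing, in each cell, a feasible plan whose cost exceeds the restriction of the ambient optimum by a controllable amount.

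\emph{The construction on $E$.} First we keep the far-from-boundary part of the optimal map. Define the partial plan $\pi_0=(\Id,T)_\#((1-\eta)q\mu)$. Where $\eta<1$, the point $x$ lies at distance at least $r$ from every cell boundary, and $|T(x)-x|\le L<r/2$. Hence the geodesic to $T(x)$ stays in the same cell, intrinsic and ambient distances agree, and the cost of $\pi_0$ is at most $D$. Next we check mass balance. Since $S_t$ is the direct sum of cellwise heat operators, $q\mu(B)=\nu_n(B)$ for each cell. Since $\pi_0$ does not cross cells, the leftover source $\eta q\mu|_B$ and leftover target $\nu_B':=\nu_n|_B-T_\#((1-\eta)q\mu)|_B$ have equal mass. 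Third, we transport this leftover within $B$. We route it through the auxiliary smoothing $P^B_s\nu'_B$ at $s=1/n$. By the movement bound, the step from $P^B_s\nu'_B$ to $\nu'_B$ costs at most $Cs$ per unit mass, giving $O(1)$ total after multiplying by $n$ (one may alternatively route through $P^B_s\nu_n|_B$ to avoid the mass-only bound). For the step from $\eta q\mu|_B$ to the smoothed leftover we use Lemma~\ref{c:lem:energy}. To secure the positive-density hypothesis, we mix the leftover with a fixed fraction of the kept mass $(1-\eta)q\mu$, which keeps the source density at least $1/4$. This reduces the step to bounding a $\|\cdot\|_{-1}$ norm on $B$.

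\emph{The key estimate: the $-1$ norm of the leftover.} Because $\nu_n=T_\#(q\mu)$, for a test function $u$ on $B$ we have
\[
 (\nu'_B-\eta q\mu)(u)=\int_B\eta q\,\{u(Tx)-u(x)\}\,d\mu.
\]
Taylor expansion gives $\int\eta q\nabla\phi\cdot\nabla u\,d\mu$ plus a remainder bounded by $L\sqrt{D}\,\|u\|_{H^2}$. We then integrate by parts to move the derivative off $\phi$. This produces $\int\phi\,\diver_\mu(\eta q\nabla u)\,d\mu$, which is bounded by
\[
 \sqrt V\,\bigl(r^{-1}\|\nabla u\|_2+\|\nabla^2u\|_2+\delta\text{-terms}\bigr).
\]
The point is that $V$, not $D$, appears here, and by the cancellation estimate $V$ is tiny. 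Since the leftover is $s$-smoothed, only test functions $P_s^Bu$ matter. Their Hessians are controlled by the cutoff scale $r$ together with the diagonal spectral bound, which supplies the factor $\log(en)$. Squaring then gives a contribution of order $r^{-4}(V+L^2D)$. With $n\E[V\1_E]\le C\delta^2\log(en)$ and $L^2\le C\sqrt t$, this is at most $C\,r^{-4}(\delta^2+\sqrt t)\log(en)$.

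\emph{Combining the pieces.} The strip itself carries mass at most $\kappa$, and the leftover transport contributes of order $\kappa\log(en)$. Finally, Cauchy--Schwarz between the main cost $nD=O(\log(en))$ and the correction of relative size $\epsilon=\kappa+r^{-4}(\delta^2+\sqrt t)$ turns these into the stated bound $C\sqrt\epsilon\log(en)$. The main obstacle is the $-1$ norm estimate of the leftover, namely executing the integration by parts so that only $\|\phi\|_2$ and the cutoff derivatives appear, while keeping the positive-density requirement of Lemma~\ref{c:lem:energy}. If the mixing device fails, I would instead bound the leftover discrepancy directly in $\|\cdot\|_{-2}$ and interpolate with the $s$-smoothing.
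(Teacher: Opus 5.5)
Your proposal breaks down at the step you identify as the key estimate, for two reasons.

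First, because you cut off at the source, the leftover target in a cell is $\nu'_B=(T_\#(\eta q\mu))|_B$. This contains images of strip sources lying in neighbouring cells. So the identity $(\nu'_B-\eta q\mu|_B)(u)=\int_B\eta q\{u(Tx)-u(x)\}\,d\mu$ fails precisely for the cross-boundary pairs the lemma must reassign. Summed over cells, the pairing is $\int\eta q\{\tilde u(Tx)-\tilde u(x)\}\,d\mu$, where $\tilde u$ is the piecewise test function. It jumps across cell boundaries, and Taylor expansion gives nothing there.

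Second, even without crossings, your bound puts second derivatives on the test function: the remainder $L\sqrt D\|u\|_{H^2}$ and the term $\sqrt V\|\nabla^2u\|_2$. The cellwise $-1$ norm controls only $\|\nabla u\|_2$. Smoothing at $s=1/n$ gives only $\|\nabla^2P_s^Bu\|_2\lesssim s^{-1/2}\|\nabla u\|_2=\sqrt n\,\|\nabla u\|_2$. The scale $r$ is a property of $\eta$, not of $u$, and the diagonal bound concerns the sampling covariance, so neither helps. Thus $n$ times your squared norm is of order $n^2(V+L^2D)$, whose expectation is of order $n\delta^2\log(en)$. Integrating by parts with $\eta q$ inside also produces $\nabla q$, which $\delta$ does not control.

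Relatedly, your argument never uses the cancellation identity $n\E[((\nu_n-S_t\nu_n)(h))^2]=\chi_n\|(I-S_t)h\|_2^2$. That identity is where $\kappa$ and the diagonal spectral hypothesis actually enter $\epsilon$.

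The missing idea is to cut off at the target. With $\xi=1-\eta$, return the fraction $\xi(y)$ of each target $y$ along its allocation. Then $\xi(y)>0$ puts $y$ farther than $r$ from every cell boundary. With $L<r/2$, the retained paths stay in their cells and every crossing allocation is discarded. The leftover target is the cellwise measure $\eta\nu_n$. You then compare $\sigma_s=S_s(\eta\nu_n)+(\xi\circ T)q\mu$ with the full measure $q\mu$, whose density is at least $1/2$ on $E$. Lemma~\ref{c:lem:energy} needs positivity of only this endpoint, so no mixing is required. Now
\[
 \sigma_s-q\mu=\{S_s(\eta\nu_n)-\eta q\mu\}+q(\xi\circ T-\xi)\mu .
\]
The first term involves no transport map. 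After centering in each cell it splits into two pieces:
\begin{itemize}
\item $S_s[\eta(\nu_n-S_t\nu_n)]$, whose expected squared norm is at most $C\kappa\log(en)/n$ by covariance, self-adjointness and the diagonal bound;
\item $(S_s-I)(\eta q)\mu$, which contributes $C\kappa/n$.
\end{itemize}
The second term is a Taylor expansion of the globally smooth cutoff,
\[
 q(\xi\circ T-\xi)=\diver_\mu(\phi\nabla\xi)-\phi\,\diver_\mu\nabla\xi
 +(q-1)\nabla\xi\cdot\nabla\phi+qR,\qquad |R|\le Cr^{-2}|\nabla\phi|^2,
\]
and its flux vanishes near cell boundaries. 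All derivatives fall on $\xi$, so test functions need only $\|\nabla u\|_2$ and the cell Poincar\'e inequality. The squared norm is then at most $C\{r^{-4}V+\delta^2r^{-2}D+r^{-4}L^2D\}$. Insert these bounds into $\sqrt{Z/n}\le\sqrt D+C\sqrt{s\nu_n(\eta)}+C\|\sigma_s-q\mu\|_{-1,\cB}$ and apply Cauchy--Schwarz to obtain the stated estimate.
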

\begin{proof}
\emph{Retaining interior allocations and smoothing the remaining mass.}
By symmetry, we construct cellwise transport from $\nu_n$ back to $q\mu$.
Write $\xi=1-\eta$ and $\omega=\xi\circ T$. At each target $y$, return the
fraction $\xi(y)$ along its original allocation. Whenever $\xi(y)>0$,
$y$ is more than $r$ from the boundary; since $L<r/2$, these paths stay
in its cell. The retained allocations leave the atomic mass $\eta\nu_n$.
Before and after smoothing this mass, the intermediate measures are
\[
 \sigma_0=\eta\nu_n+\omega q\mu,\qquad
 \sigma_s=S_s(\eta\nu_n)+\omega q\mu.
\]
Both have mass $\nu_n(B)=\int_Bq\,d\mu$ in every cell.
The first two transports take $\nu_n$ to $\sigma_0$ along retained
allocations and then to $\sigma_s$ by heat smoothing.
Lemma~\ref{c:lem:energy} bounds the final transport from $\sigma_s$ to
$q\mu$ on $E$:
\begin{equation}\label{g:eq:repair-action}
 \sqrt{Z/n}\le\sqrt D+C\sqrt{s\nu_n(\eta)}
                       +C\|\sigma_s-q\mu\|_{-1,\cB}.
\end{equation}
For a signed measure $\zeta$ with zero mass in every cell, the norm is
\[
 \|\zeta\|_{-1,\cB}^2
 =\sum_{B\in\cB}\sum_{j\ge1}
   \frac{|\int_B\psi_{B,j}\,d\zeta|^2}{\lambda_{B,j}}.
\]

\emph{Bounding the remaining discrepancy.}
Center a signed measure in each cell by setting
\[
 (\Z_{\cB}\zeta)|_B=\zeta|_B-\frac{\zeta(B)}{\mu(B)}\mu|_B.
\]
Decompose
\[
 \sigma_s-q\mu=\{S_s(\eta\nu_n)-\eta q\mu\}+q(\omega-\xi)\mu.
\]
The cell masses of the two terms are opposite, so centering them separately
does not change the sum. The first centered term splits into the sampling
discrepancy $U$ and the additional smoothing error $W$:
\[
 U=\Z_{\cB}S_s[\eta(\nu_n-q\mu)],\qquad
 W=(S_s-I)(\eta q)\mu.
\]
The bounds for $U$ and $W$ use the small boundary mass $\kappa$;
the remaining term $q(\omega-\xi)$ requires the potential estimate.
For positive cell eigenpairs normalized in $L^2(\mu|_B)$, covariance and
self-adjointness give
\begin{align*}
 n\E\|U\|_{-1,\cB}^2
 &=\chi_n\sum_{B,j}\frac{e^{-2s\lambda_{B,j}}}{\lambda_{B,j}}
       \|(I-S_t)(\eta\psi_{B,j})\|_2^2\\
 &\le C\sum_B\int_B\eta^2\sum_{j\ge1}
       \frac{e^{-2s\lambda_{B,j}}}{\lambda_{B,j}}
                    \psi_{B,j}(x)^2\,d\mu(x)
 \le C\kappa\log(en).
\end{align*}
Also $\|(S_s-I)h\|_{-1,\cB}^2\le s\|h\|_2^2$, so
$n\E[\|W\|_{-1,\cB}^2\1_E]\le C\kappa$.

For the remaining term, put $v=\nabla\phi$ and
$\diver_\mu=\rho^{-1}\diver(\rho\,\cdot)$.
Taylor expansion gives
\[
 q(\omega-\xi)=\diver_\mu(\phi\nabla\xi)
 -\phi\diver_\mu\nabla\xi+(q-1)\nabla\xi\cdot v+qR,
 \qquad |R|\le Cr^{-2}|v|^2.
\]
The flux vanishes near every cell boundary. Integration by parts,
the cutoff bounds, and the cell Poincar\'e inequality therefore yield
\[
 \|\Z_{\cB}[q(\omega-\xi)\mu]\|_{-1,\cB}^2
 \le C\{r^{-4}V+\delta^2r^{-2}D+r^{-4}L^2D\}.
\]
\emph{Combining the costs.}
The preceding bounds and $r\le1$ give
\[
 n\E[\|\sigma_s-q\mu\|_{-1,\cB}^2\1_E]
 \le C\{\kappa+r^{-4}(\delta^2+\sqrt t)\}\log(en).
\]
Since $ns\E\nu_n(\eta)\le\kappa$, squaring
\eqref{g:eq:repair-action} and applying Cauchy--Schwarz gives
$\E[(Z-nD)\1_E]\le C(\sqrt\epsilon+\epsilon)\log(en)$.
Every cellwise coupling is feasible for the ambient cost, so $Z\ge nD$.
The diameter bounds give $Z\le Cn$ on $E^c$, completing the proof.
\end{proof}

\subsection{Subdivision on the square and torus}\label{app:flat-local}
In this subsection and the next, positions are independent with law $\mu=\mu_\Omega$
on $\Omega\in\{Q,\T\}$, with equal masses or a Dirichlet mass vector
independent of the positions, as in Section~\ref{subsec:flat-heat}.
Recall that $t=n^{-5/6}$, $m\in\{1,2,3\}$, and each cell has side
$\ell=1/m$. Let $\cB$ be the grid and let
$P_t^{\cB}=\bigoplus_{B\in\cB}P_t^B$ smooth within each cell with
reflection at its boundary. Define $q\mu=P_t^{\cB}\nu_n$ and
$Z_{n,m}=n\sum_{B\in\cB}\OT_B(q\mu,\nu_n)$, the sum of the cell transport costs multiplied by $n$, using intrinsic
squared-distance cost.
Reflection preserves each cell's mass:
\[
 \int_Bq\,d\mu=\nu_n(B)\qquad(B\in\cB).
\]
We prove the following stronger estimates, which together imply
Lemma~\ref{g:lem:grid}:
\begin{align}
 \E\bigl|nW_{2,\Omega}^2(P_t\nu_n,\nu_n)-Z_{n,m}\bigr|&\le Cn^{-1/120}\log n,
 \qquad \Omega\in\{Q,\T\},\label{g:eq:grid}\\
 \E Z_{n,m}&=\E d_K^Q(m^2t),
 \qquad K\sim\Bin(n,m^{-2}).\label{g:eq:localexact}
\end{align}
The second identity follows from the sampling law and rescaling.
For the first estimate, put
$\mathcal R_{n,m}=Z_{n,m}-nW_{2,\T}^2(q\mu,\nu_n)\ge0$.
Every cellwise allocation is feasible on the torus; the unrestricted square
cost lies between these two costs. We retain interior edges and repair the
remaining mass in its own cells.

\begin{proof}[Proof of Lemma~\ref{g:lem:grid}]
It suffices to consider large $n$: the finitely many remaining integers
$n\ge2$ are covered by increasing $C$, since both costs are bounded by $2n$.
\emph{Sampling law and rescaling.}
For independent sites with any law $\mu$, let $K_B$ and $S_B$ be their
count and total mass in a fixed cell $B$. Conditional on $K_B=k>0$,
the sites are independent with law $\mu(\cdot\mid B)$; the normalized
masses have the original equal or Dirichlet law, independently of the
sites and $S_B$. In both cases,
\begin{equation}\label{eq:mass-law}
 \E[nS_B\mid K_B=k]=k.
\end{equation}
For Dirichlet masses, write $w_i=E_i/\sum_jE_j$ with independent
rate-one exponential variables. Within a cell, the normalized
exponentials are independent of their sum and of the sites, giving
this restriction law. Symmetry gives
$\E[S_B\mid K_B=j]=j/n$, proving \eqref{eq:mass-law}; for equal
masses, $S_B=j/n$ deterministically.
The local mass law and squared-distance rescaling prove
\eqref{g:eq:localexact}: a nonempty cell with $j$ targets contributes, in
conditional expectation,
$n\ell^2\E[S_B\mid j]d_j(m^2t)/j=\ell^2d_j(m^2t)$.
The rough bound~\eqref{f:eq:rough} therefore gives
$\E Z_{n,m}\le C\log n$.

\paragraph{The potential estimate.}
Let $E=\{\|q-1\|_\infty\le\delta\}$, with
$nt\delta^2\gg\log n$ and $\sqrt t\le\delta^2$ as chosen below.
For $t\le\ell^2$, each reflecting cell kernel $p_t^B(x,\cdot)$,
extended by zero outside $B$, has mean one under the full sampling
law. Rescaling gives supremum norm and variance bounded by $C/t$
for the centered kernels, and $O(\delta^{-2}t^{-3})$ net points over
all cells. Lemma~\ref{lem:weighted-concentration} therefore gives
$\PP(E^c)\le C\delta^{-2}t^{-3}e^{-cnt\delta^2}$.

Regard $q$ as a density on $\T$. Let $T=\Id+\nabla\phi$ be its optimal
periodic map to $\nu_n$, with $\int\phi=0$. Put
$D=\int q|\nabla\phi|^2$, $V=\int\phi^2$, and
$L=\|\nabla\phi\|_\infty$. The heat coupling gives $D\le4t$, and restriction
to cells gives $n\E D\le\E Z_{n,m}\le C\log n$.
On $E$, the source density is at least $1/2$, so
Lemma~\ref{lem:local-cost-localization}, with $s=2$ and cost
$d_{\T}^2$, gives $L^4\le CD$.

The torus inverse Laplacian has the $H^2$ bound required in
Lemma~\ref{lem:potential-l2}, and $L\le Ct^{1/4}$ is eventually below
its injectivity radius. That lemma and the discrepancy estimate
\eqref{eq:localization-negative-two} give
\[
 n\E[V\1_E]\le C\{\sqrt t+(\delta^2+\sqrt t)\log n\}
                  \le C\delta^2\log n.
\]
This bound controls the cutoff derivatives in the repair.
Choose $-\phi$ as a Kantorovich potential for $d_{\T}^2/2$.
Its $c$-transform representation gives a uniform Lipschitz bound;
normalizing $\int\phi=0$ therefore gives $\|\phi\|_\infty\le C$.
Thus $n\E[V\1_{E^c}]\le Cn\PP(E^c)$ is negligible, and the same
potential estimate holds without the indicator.

\paragraph{Repair near the grid.}
Take a smooth periodic cutoff equal to one within distance $r$ of the
grid and zero beyond distance $2r$, with first and second derivatives
bounded by $C/r$ and $C/r^2$. Its support has area $\kappa\le Cmr$.
The cell Poincar\'e constants are uniformly bounded, and the cosine
series gives an integrated positive heat diagonal at time $s$ bounded
by $C\log(e\ell^2/s)$. The reflected Gaussian coupling has mean squared
displacement at most $4s$. These verify the analytic inputs of
Lemma~\ref{lem:cell-repair}.

We choose $r$ larger than the transport distances while keeping both the
boundary mass $mr$ and the cutoff error $r^{-4}(\delta^2+\sqrt t)$ small.
Set $r=n^{-1/60}$, $\delta=n^{-1/24}$, and $s=1/n$. Then
$t=r^{50}$ and $\delta^2=r^5$, so
\[
 \kappa+r^{-4}(\delta^2+\sqrt t)
 \le C(mr+r+r^{21})\le Cmr.
\]
Also $nt\delta^2=n^{1/12}$, and $L<r/2<\ell/16$ for all large $n$.
Lemma~\ref{lem:cell-repair} therefore gives
\begin{equation}\label{g:eq:gap}
 0\le\E\mathcal R_{n,m}\le C\sqrt{mr}\log n
                         \le Cn^{-1/120}\log n.
\end{equation}
The exceptional event contributes negligibly, since both costs are
bounded by $2n$.

\paragraph{Replacing global smoothing by cellwise smoothing.}
Couple global and cellwise smoothing by the same Gaussian increment.
The folded or periodic endpoints agree unless the increment crosses a
cell boundary. At distance $b$ from that boundary their expected squared
separation is at most $Ct e^{-cb^2/t}$. Integrating over uniform initial
positions gives
\[
 \Delta=n\E W_{2,\Omega}^2(P_t^\Omega\nu_n,q\mu)
                \le Cmnt^{3/2}=Cm n^{-1/4}.
\]
For $\Omega=Q$ use reflection globally and within the cells; for $\T$ use
periodic reduction globally. The triangle inequality and
$nW_{2,\Omega}^2(q\mu,\nu_n)\le Z_{n,m}$ give
\[
 \begin{aligned}
 n\E\bigl|W_{2,\Omega}^2(P_t\nu_n,\nu_n)
          -W_{2,\Omega}^2(q\mu,\nu_n)\bigr|
 &\le\Delta+2\sqrt{\Delta\,\E Z_{n,m}}\\
 &\le C\sqrt{\Delta\log n}.
 \end{aligned}
\]
Here $\Delta\log n\to0$, so the cross term is $o(1)$. Finally,
\[
 W_{2,\T}^2(q\mu,\nu_n)
 \le W_{2,Q}^2(q\mu,\nu_n)
 \le Z_{n,m}/n.
\]
This sandwich and \eqref{g:eq:gap} imply \eqref{g:eq:grid} for both metrics.
\end{proof}

\subsection{Sample-size comparisons}\label{app:flat-counts}
We derive the sample-size recurrence~\eqref{g:eq:twosided} and transfer
the Poisson-size asymptotics~\eqref{g:eq:poissonrate} back to fixed sample
sizes. Set $t_n=n^{-5/6}$, $u_n=d_n^Q(t_n)$ for $n\ge1$, and $u_0=0$,
as in Section~\ref{sec:flat}. Write $\chi=2$ for Dirichlet masses and
$\chi=1$ for equal masses.

\paragraph{Rescaled heat times.}
Fix $m\in\{2,3\}$, put $k=n/m^2$ and $\tau=m^2t_n$, and let
$K\sim\Bin(n,m^{-2})$. On $k/2\le K\le2k$, the times $\tau$ and $t_K$
both tend to zero and satisfy $K\tau\asymp Kt_K\asymp n^{1/6}$.
The change-of-time formula~\eqref{f:eq:time-change} therefore gives
\[
 d_K^Q(\tau)-u_K
 =\frac{\chi_K}{4\pi}
    \left\{\log m^2+\frac56\log(K/n)\right\}
       +O(n^{-1/24}\log n).
\]
On this event, replacing $\chi_K$ by $\chi$ changes the expression by
$O(n^{-1}\log n)$. Binomial tails and the rough
bound~\eqref{f:eq:rough} control the complementary event, including
$K=0$. Taylor expansion of the logarithm on the central event gives
\[
 \E[\1_{\{K>0\}}\log K]=\log k+O(k^{-1}).
\]
Combining these estimates with Lemma~\ref{g:lem:grid} proves
\eqref{g:eq:twosided}.

\paragraph{Averaging over the sample size.}
The thinning calculation in Section~\ref{sec:flat} gives
\eqref{g:eq:two-scales} by averaging \eqref{g:eq:twosided} over
$J\sim\Pois(m^2N)$. On $m^2N/2\le J\le2m^2N$, the expected error
is $O(N^{-1/120}\log(eN))$; Poisson tails and $u_j\le C\log(ej)$
control the complement.

\paragraph{Monotonicity at a fixed heat time.}
For a fixed $t>0$, define
$F_t(\eta)=W_{2,Q}^2(P_t\eta,\eta)$ for probability measures $\eta$.
Linearity of $P_t$ and joint convexity of squared transport cost make
$F_t$ convex. Deleting target $i$ from $\nu_{n+1}$ and renormalizing
gives $\nu_n^{(-i)}$ and the exact mixture
\[
 \nu_{n+1}=\frac1n\sum_{i=1}^{n+1}(1-w_i)\nu_n^{(-i)}.
\]
For Dirichlet masses, the normalized remaining masses are independent
of $w_i$ and have the $n$-target Dirichlet law. The remaining positions
also have the required independent uniform law. For equal masses the
coefficients are deterministic. Taking expectations in the convexity
inequality consequently gives
\begin{equation}\label{g:eq:smoothing-monotonicity}
 \frac{d_{n+1}^Q(t)}{n+1}\le\frac{d_n^Q(t)}n.
\end{equation}
We use this inequality at fixed $t$ and then apply the change-of-time
formula to account for the varying times $t_n$.

\paragraph{Returning to a fixed sample size.}
Write $\bar d(N,t)=\E d_{\Pois(N)}^Q(t)$.
Fix a large integer $n$, hold $t=t_n$ fixed, and put
$N_\pm=n\pm n^{2/3}$. On $J\ge n$, monotonicity gives
$d_J^Q(t_n)\le Ju_n/n$; on $1\le J\le n$ it gives the reverse
inequality. Poisson tails and the rough cost bound yield
\[
 \frac n{N_+}
   \{\bar d(N_+,t_n)-O(e^{-cn^{1/3}}\log n)\}
 \le u_n\le
 \frac{n\bar d(N_-,t_n)}{N_--O(ne^{-cn^{1/3}})}.
\]
The zero count contributes zero.

For $|J-n|\le2n^{2/3}$, both $Jt_n$ and $Jt_J$ have order $n^{1/6}$,
so the change-of-time formula gives
\[
 d_J^Q(t_n)-u_J
 =\frac{5\chi}{24\pi}\log(J/n)+O(n^{-1/24}\log n).
\]
The logarithm is $O(n^{-1/3})$ on this event. Poisson tails and
\eqref{f:eq:rough} control the complement for either mean $N_\pm$,
so
\[
 \bar d(N_\pm,t_n)=\bar u(N_\pm)
                   +O(n^{-1/3}+n^{-1/24}\log n).
\]
Insert this into the preceding sandwich and use
\eqref{g:eq:poissonrate}. The factors $n/N_\pm=1+O(n^{-1/3})$
and the change from $\log N_\pm$ to $\log n$ contribute
$O(n^{-1/3}\log n)$. We obtain
\[
 u_n=\frac{\chi}{24\pi}\log n+\kappa_\star
 +O(n^{-1/120}\log(en)),
\]
which proves~\eqref{g:eq:canonical-smoothing}.

\subsection{Subdivision on surfaces}\label{app:surface-proof}
We prove Theorem~\ref{thm:surface-smoothing} under the assumptions of
Section~\ref{subsec:surface-reduction}. First take $t=t_n=n^{-5/6}$:
we construct suitable cells, confine transport to them, and compare their
smoothing costs with the square. The final change-of-time comparison
extends the formula to the stated range of $t$.
Constants may depend on the fixed metric and positive density, but are
uniform over the cells and their sizes.

\subsubsection{Cells and local estimates}\label{app:cells}
The repair estimate requires uniform geometry in every cell; comparison
with the square also requires nearly Euclidean coordinates in most cells.
We call a neighborhood of the boundaries a \emph{collar}, and the
additional cells filling collars along chart and physical boundaries
\emph{seam cells}.
\begin{lemma}[A nearly square partition]\label{s:lem:cells}
For all small $\ell>0$, a surface satisfying the assumptions of Section~\ref{subsec:surface-reduction} has a partition $\cB_\ell$ into $O(\ell^{-2})$ connected cells. Each has diameter at most $C\ell$, area $A_B\asymp\ell^2$, and probability $p_B=\mu(B)\asymp\ell^2$. After rescaling by $\ell$, each coordinate domain is uniformly bi-Lipschitz to a square or a reference triangle. The number of boundary pieces, their Lipschitz chart constants, and the bi-Lipschitz constants are bounded uniformly. In particular, with $d_B$ the intrinsic cell distance,
\[
 d_M(x,y)\le d_B(x,y)\le C d_M(x,y)\quad(x,y\in\overline B),
 \qquad
 \mu\bigl(\{y\in B:d_B(x,y)<r\}\bigr)\asymp r^2
 \quad(0<r\le\ell).
\]
The cell boundaries have total length $O(\ell^{-1})$. Let $\Sigma_\ell$ include
all cell boundaries, including the physical boundary of the surface.
For a surface with boundary, take distances to $\Sigma_\ell$ in a fixed
smooth extension of the metric and denote this distance by $\bar d$;
for a closed surface put $\bar d=d_M$. Then
\begin{equation}\label{s:eq:collar}
 \mu\{x:\bar d(x,\Sigma_\ell)<r\}\le Cr/\ell\qquad(0<r\le\ell).
\end{equation}
There is a smooth function $\eta_r$ equal to one at distance at most $r$ from $\Sigma_\ell$ and zero at distance at least $2r$, with
$|\nabla\eta_r|\le C/r$ and $|\nabla^2\eta_r|\le C/r^2$.

Except for cells of total area $O(\ell)$, every cell is \emph{good}: there is a map $\Phi_B:Q\to B$ such that
$\Phi_{B\#}dx=\mu(\cdot\mid B)$ and the pulled-back metric satisfies
\begin{equation}\label{s:eq:chart}
 (1-C\ell)A_B I\preceq \Phi_B^*g\preceq(1+C\ell)A_B I.
\end{equation}
\end{lemma}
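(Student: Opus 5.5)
The partition is built in three layers: a fixed atlas, a fine grid inside each chart, and seam cells that fill the leftover collars.

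\emph{Atlas.} Fix a finite atlas of the surface with three kinds of charts. Interior charts are open sets mapped diffeomorphically onto planar domains. Face charts flatten a smooth boundary arc to a half-plane. Corner charts map a neighborhood of each vertex onto a planar sector of the same (metric) angle $\omega_v\in(0,\pi)$. Choose a partition of the surface into finitely many closed pieces. Each piece lies inside one chart and is a curvilinear polygon with piecewise smooth sides meeting transversally. Corner pieces are small sectors; their image is a triangle after straightening the two faces.

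\emph{Regular and seam cells.} Inside each piece, lay down the coordinate grid of side $\ell$. Keep every grid square at coordinate distance at least $C_0\ell$ from the piece boundary; these are the regular cells. The leftover region near each piece boundary is a collar of width about $\ell$ around a piecewise smooth curve. Cut it transversally every $\ell$ in arclength into curvilinear quadrilaterals. Near vertices of the piece decomposition, including physical corners, use a bounded number of triangles or quadrilaterals.

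\emph{Checking the uniform properties.} After rescaling by $\ell$, each seam cell is a curvilinear quadrilateral or triangle. Its sides are $C^2$-close to straight segments, with curvature $O(\ell)$ after rescaling, and its angles are bounded away from $0$ and $\pi$. This transversality is where the corner angles in $(0,\pi)$ enter. Such a cell is uniformly bi-Lipschitz to the square or a reference triangle by an explicit piecewise-smooth straightening. The following properties then follow from bi-Lipschitz invariance and smoothness of $g$ and $\rho$:
\begin{itemize}
\item diameter $\le C\ell$ and $A_B\asymp p_B\asymp\ell^2$;
\item $d_M\le d_B\le Cd_M$, since each rescaled cell is a uniform John domain;
\item the ball mass bound $\mu\{y\in B:d_B(x,y)<r\}\asymp r^2$.
\end{itemize}
There are $O(\ell^{-2})$ cells in total, and the boundary length is $O(\ell^{-1})$: grid lines contribute $O(\ell^{-1})$ in each chart, and the seams are fixed curves.

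\emph{Collar bound and cutoff.} The collar estimate \eqref{s:eq:collar} follows from this length bound. The $r$-neighborhood of the cell boundaries inside each cell has area $O(r\ell)$, there are $O(\ell^{-2})$ cells, and for $r\le\ell$ this gives total area $O(r/\ell)$. For the cutoff $\eta_r$, mollify the indicator of the $1.5r$-neighborhood of $\Sigma_\ell$ at scale $r/4$, using the ambient extension. Mollification gives derivative bounds $C/r$ and $C/r^2$ uniformly, because the metric is smooth and fixed.

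\emph{Good cells and the chart $\Phi_B$.} Call the regular interior grid squares good; the remaining cells are the seam cells and those near the physical boundary. Good cells cover everything outside collars of width $O(\ell)$ around a fixed finite length of curves, so the bad cells have total area $O(\ell)$. For a good cell $B$ with coordinate square $S$ of side $\ell$, first map $Q$ affinely onto $S$. Then correct it by a Knothe (triangular) rearrangement sending uniform measure to the normalized pushforward of $\mu(\cdot\mid B)$ in coordinates. That density is smooth with oscillation $O(\ell)$ and derivatives $O(1)$ on $S$. Hence the rearrangement is $\Id+O(\ell)$ in $C^1$ after rescaling. Composing with the chart, whose metric is $g(x_B)+O(\ell)$, gives $\Phi_B^*g=c\,G+O(\ell)$, where $G=g(x_B)$ is a constant positive matrix.

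\emph{Main obstacle: isotropy in \eqref{s:eq:chart}.} The bound requires a multiple of the identity, not an arbitrary constant matrix $G$. I would therefore choose each chart so that $g$ is conformally Euclidean at the cell centers. On a surface, isothermal coordinates exist locally, smoothly up to a smooth boundary arc after flattening the arc. Using isothermal charts on the interior and face pieces, $g=e^{2\varphi}(dx^2)$ and $G=e^{2\varphi(x_B)}I$. The area normalization then gives $c\,e^{2\varphi(x_B)}=A_B(1+O(\ell))$, which is \eqref{s:eq:chart}. This makes the mass preservation and the isotropy compatible at first order.

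Corner pieces cause no difficulty here, since their cells are all among the bad cells anyway.
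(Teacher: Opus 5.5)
Your overall plan is the paper's: a fixed decomposition subordinate to isothermal charts, lattice squares of side $\ell$ kept at distance $\gtrsim\ell$ from its sides, seam cells in $O(\ell)$ collars, and a Knothe--Rosenblatt correction for $\Phi_B$. As written, however, the last assertion fails.

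\textbf{Corner pieces.} Your corner pieces are fixed sectors chosen before $\ell$. You lay the grid inside them and then dismiss them because ``their cells are all among the bad cells anyway.'' Those cells cover a region whose area is bounded below independently of $\ell$, so the bad cells no longer have total area $O(\ell)$. This contradicts your earlier claim that bad cells lie in $O(\ell)$ collars. The $O(\ell)$ bound is essential later, where cells without probability coordinates may contribute only $O(\ell\log n)$.

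The fix is what the paper does. Use isothermal charts on a smooth extension of $(M,g)$ around every point, including the vertices, and put the physical arcs and corner vertices into the fixed decomposition as sides and vertices. A conformal chart preserves the corner angle, so a corner piece is simply a curvilinear sector in isothermal coordinates. Its grid squares at distance at least $C_0\ell$ from the sides are then good. Only the $O(\ell)$ sector at the vertex and the collars remain bad. The special sector and half-plane charts become unnecessary.

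\textbf{Seam cells.} A second, smaller gap concerns these cells. The inner side of your leftover collar is the staircase boundary of the retained squares, not a smooth curve. So the cells obtained by cutting along normals every $\ell$ in arclength are not curvilinear quadrilaterals with nearly straight sides.

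Where the piece boundary is nearly parallel to a grid axis, a normal cut is nearly parallel to the short steps of the staircase. A cut that meets such a step can create a wedge whose angle equals the small angle $\theta$ between the cut and the step. Along a curved side, $\theta$ takes arbitrarily small values. At the tip of such a wedge, both the ball-mass bound $\mu\{y\in B:d_B(x,y)<r\}\asymp r^2$ and uniform bi-Lipschitz equivalence to a square fail.

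You need the merging step the paper includes. Absorb partial lattice squares into adjacent strip elements, and group elements narrower than a fixed multiple of $\ell$. Then each seam cell has boundedly many sides and bounded aspect ratio, and only then do your bi-Lipschitz, diameter, and ball-mass claims hold uniformly.
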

\begin{proof}
We adapt the domain decompositions of Ambrosio, Goldman, and
Trevisan~\cite{Domain}, keeping track of the boundary collars and the
uniform geometry of the cells. Choose a smooth triangulation subordinate to finitely many isothermal
charts, using charts on a smooth extension when $M$ has boundary.
Include its physical boundary arcs and corner vertices in the
triangulation. The finitely many sides have bounded curvature, finite
total length, and angles bounded away from zero.

Retain lattice squares of side length $\ell$ at distance at least
$20\ell$ from these boundaries. The omitted collar has width $C\ell$
and area $O(\ell)$. Away from vertices, use arclength and normal
coordinates, whose Jacobians and inverses are uniformly bounded. Divide
the sides into intervals of lengths between $\ell$ and $2\ell$, and the
collar into strips of comparable width, lying inside $M$ at a physical
boundary.

Absorb partial lattice squares into adjacent strip elements. Group pieces
narrower than a fixed multiple of $\ell$ before triangulating, so the
rescaled elements have bounded aspect ratios and numbers of sides. Each
element meets boundedly many lattice squares; the width $20\ell$ allows
these adjustments within an $O(\ell)$ collar. At every original vertex,
including physical corners, subdivide the fixed-angle sector into such
strips and triangles. The finitely many angle bounds give uniform
constants.

Each completed cell consequently has area comparable to $\ell^2$ and is uniformly bi-Lipschitz to a reference square or triangle after rescaling. The metric and positive density are uniformly bounded in the fixed charts. This proves the distance and ball-volume comparisons, including for seam cells.

Each cell has perimeter $O(\ell)$. Summing the collar bound
$C(\ell r+r^2)$ over $O(\ell^{-2})$ cells proves \eqref{s:eq:collar}.
Construct $\eta_r$ by mollifying a Lipschitz cutoff of
$\bar d(\cdot,\Sigma_\ell)/r$ in ambient charts at scale $cr$, with
transition margins strictly between $r$ and $2r$. A fixed smooth
partition of unity combines these cutoffs and preserves their values
zero and one. Differentiation gives the bounds, including at corners.

It remains to choose coordinates that send $\mu(\cdot\mid B)$ to the
uniform law on $Q$ for each good cell $B$. Rescale its
coordinate square to $Q$. The metric is $A_B(I+O(\ell))$: in isothermal
coordinates its scalar coefficient has relative variation $O(\ell)$ and
integral $A_B$. The density $p$ of $\mu(\cdot\mid B)$ with respect to
$dx$ satisfies $\|p-1\|_{C^1(Q)}=O(\ell)$. We use the triangular
change of variables of the Knothe--Rosenblatt
rearrangement~\cite[Chapter~1]{Villani}:
\[
 p_1(s)=\int_0^1p(s,t)\,dt,\qquad
 T(s,t)=\left(\int_0^sp_1(u)\,du,
             \frac{\int_0^tp(s,u)\,du}{p_1(s)}\right).
\]
Then $\det DT=p$, $DT=I+O(\ell)$, and $T$ maps each face to itself,
so $T_\#(p\,dx)=dx$. Composing the coordinate map with $T^{-1}$ gives
$\Phi_B$ and preserves the metric estimate, proving \eqref{s:eq:chart}.
\end{proof}

\paragraph{Verifying the local heat estimates.}
Take $P_t^B$ and $p_t^B$ relative to the unnormalized measure $\mu|_B$.
Lemma~\ref{s:lem:cells}, together with smoothness and positivity of
the metric and density, verifies the assumptions of
Lemma~\ref{lem:reflected-heat} uniformly, including on seam cells.
The invariant density is $1/p_B$.

We need concentration and a rough local cost bound. Write $P_t^{\cB}\nu_n=q_t\mu$. For $t\le\ell^2$, Lemma~\ref{lem:weighted-concentration} and \eqref{s:eq:cellholder} give
\begin{equation}\label{s:eq:localconc}
 \PP(\|q_t-1\|_\infty>\delta)
 \le C t^{-1}(\delta t)^{-2/\alpha}\exp(-cnt\delta^2),
 \qquad 0<\delta\le1,
\end{equation}
whenever the net scale below is smaller than $c\ell$.
Indeed, at a fixed point the kernel minus its expectation has size at most $C/t$ and variance at most $C/t$; the latter also follows from
$\int_Bp_t^B(x,y)^2\,d\mu(y)=p_{2t}^B(x,x)$.
Its mean against the full sampling law is one, even though the semigroup has a separate constant mode in every cell. A net of mesh
$c\sqrt t(\delta t)^{1/\alpha}$ makes the spatial oscillation of a probability-weighted kernel sum at most $\delta/2$. The union of cellwise nets has at most
$Ct^{-1}(\delta t)^{-2/\alpha}$ points.

For the cost bound, let $\nu_j$ have $j$ independent targets with law $\mu(\cdot\mid B)$
and either equal or Dirichlet masses. Write $d_j^B(t)$ for $j$ times the
expected squared transport cost between $\nu_j$ and $P_t^B\nu_j$.
Smoothing at time $A_B/j$, the energy bound of Lemma~\ref{c:lem:energy},
and the integrated heat diagonal give
\[
 j\E W_{2,B}^2(\mu(\cdot\mid B),\nu_j)\le CA_B\log(ej).
\]
The same energy estimate for the smoothed endpoint gives
$j\E W_{2,B}^2(\mu(\cdot\mid B),P_t^B\nu_j)
\le CA_B\log(e+A_B/t)$, with the logarithm replaced by a constant when $t\ge A_B$.
Comparing the two endpoints through $\mu(\cdot\mid B)$ proves, for $t\ge1/n$ and $j\le n$,
\begin{equation}\label{s:eq:localrough}
 d_j^B(t)\le CA_B\log(en).
\end{equation}

\subsubsection{Confining transport to cells}\label{app:surface-confinement}
We compare the smoothing cost on $M$ with the sum of the cell costs by
changing global smoothing to cellwise smoothing and restricting transport
to each cell. Put $\cB=\cB_\ell$, write $P_t^{\cB}\nu_n=q\mu$, and set
\[
 D_n(t)=nW_{2,M}^2(P_t\nu_n,\nu_n),\qquad
 Z_n=n\sum_{B\in\cB}\OT_B(q\mu,\nu_n).
\]
Here $\OT_B$ uses squared intrinsic distance within $B$ as cost, and cellwise heat
smoothing preserves the mass in each cell. The conditional mass
law \eqref{eq:mass-law} gives
\begin{equation}\label{s:eq:localexpect}
 \E Z_n=\sum_B\E d_{K_B}^B(t),\qquad K_B\sim\Bin(n,p_B).
\end{equation}
Indeed, with $S_B=\nu_n(B)$ and conditional on $K_B=j>0$, the factor
$nS_B/j$ has expectation one and is independent of the normalized local
problem.

Keep the flat repair scales
\[
r=n^{-1/60},\qquad t=r^{50},\qquad s=r^{60},\qquad\delta^2=r^5.
\]
The cell size $\ell$ is the only new choice. The errors from metric
distortion and from cells near chart boundaries are $O(\ell\log n)$,
whereas the repair costs $O(\sqrt{r/\ell}\log n)$. Balancing them gives
\[
 \ell=\sqrt{r/\ell}=r^{1/3}=n^{-1/180}.
\]
In particular, the repair width $r$ is much smaller than a cell.

\begin{lemma}\label{s:lem:local}
For the partition $\cB_\ell$ of Lemma~\ref{s:lem:cells}, with
$\ell=n^{-1/180}$ and $t=n^{-5/6}$,
\[
\E|D_n(t)-Z_n|\le C_{M,\rho}n^{-1/180}\log(en).
\]
\end{lemma}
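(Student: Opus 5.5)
We will follow the flat proof of Lemma~\ref{g:lem:grid} and split $D_n(t)-Z_n$ into two changes. The first replaces global Neumann smoothing $P_t$ by cellwise smoothing $P_t^{\cB}$ with the same targets. The second confines transport from $q\mu=P_t^{\cB}\nu_n$ to $\nu_n$ within cells. The second change is handled by Lemma~\ref{lem:cell-repair} and is the main work. We fix the scales $r=n^{-1/60}$, $t=r^{50}$, $s=r^{60}\le1/n$, $\delta^2=r^5$ and $\ell=r^{1/3}$. We take the cutoff $\eta=\eta_r$ from Lemma~\ref{s:lem:cells}, so that $\kappa\le Cr/\ell$ by \eqref{s:eq:collar}.

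\emph{Step 1: global versus cellwise smoothing.} Couple the reflected diffusions started at each atom so that they agree until the path leaves the cell. By the Gaussian bound \eqref{s:eq:cellgaussian}, an atom at ambient distance $b$ from $\Sigma_\ell$ gives expected squared separation at most $Ct\,e^{-cb^2/t}$. Integrating with \eqref{s:eq:collar} yields
\[
\Delta:=n\E W_{2,M}^2(P_t\nu_n,q\mu)\le Cnt\sqrt t/\ell=Cn^{-1/4}\ell^{-1}.
\]
From the rough bounds, $\E D_n\le C\log n$ and $\E Z_n=\sum_B\E d^B_{K_B}(t)\le C\log n$, using \eqref{s:eq:localrough} and $\sum_BA_B=A$. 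The triangle inequality and Cauchy--Schwarz then give $n\E|W_{2,M}^2(P_t\nu_n,\nu_n)-W_{2,M}^2(q\mu,\nu_n)|\le C\sqrt{\Delta\log n}$, which is far below $n^{-1/180}$.

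\emph{Step 2: confinement.} Since $d_B\ge d_M\ge\bar d$, we have the lower bound $n\OT_{\bar d^2}(q\mu,\nu_n)\le nW^2_{2,M}(q\mu,\nu_n)\le Z_n$, with $\bar d$ the ambient distance on the extension. It therefore suffices to show $\E(Z_n-nD)\le Cn^{-1/180}\log n$ for $D=\OT_{\bar d^2}(q\mu,\nu_n)$, and also that the ambient and intrinsic global costs differ by $o(1)$ in expectation. For the latter, the two costs agree for displacements much shorter than the distance to $\partial M$. Near the boundary, convexity of the corners together with Proposition~\ref{prop:local-cost-deterministic} gives an $O(W^{5/2})$ error; this is an expected cost of $o(1)$, since the cost is at most of order $\log n/n$.

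The hypotheses of Lemma~\ref{lem:cell-repair} are checked as follows. The heat inputs come from Lemma~\ref{lem:reflected-heat}, namely \eqref{s:eq:cellmovement} and \eqref{s:eq:celldiag} at $s=1/n$. The event $E=\{\|q-1\|_\infty\le\delta\}$ has $n\PP(E^c)$ negligible by \eqref{s:eq:localconc}, since $nt\delta^2=n^{1/12}$. On $E$, Lemma~\ref{lem:local-cost-localization} with $s=2$ gives $L^4\le CD$, and $D\le Ct$ by the heat coupling, so $L^2\le C\sqrt t$ and $L<r/2$. The bound $n\E[V\1_E]\le C\delta^2\log n$ follows from Lemma~\ref{lem:potential-l2}, using the $H^2$ estimate of Lemma~\ref{b:lem:heat}, together with the cancellation estimate \eqref{eq:localization-negative-two}. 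One point needs care: $S_t=P_t^{\cB}$ is the cellwise operator, yet the cancellation bound uses the global eigenfunctions through $\|(I-S_t)h\|_2^2\le t\int|\nabla h|^2$ for $h\in H^1$. This inequality still holds for the direct sum, so \eqref{eq:localization-negative-two} applies unchanged.

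With these inputs, $\epsilon=\kappa+r^{-4}(\delta^2+\sqrt t)\le C(r/\ell+r)$, and therefore
\[
\E(Z_n-nD)\le C\sqrt{r/\ell}\,\log n=Cn^{-1/180}\log n .
\]

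\emph{Main obstacle.} The hard part is applying Lemma~\ref{lem:potential-l2} on a surface with corners while the cells are shrinking. That lemma needs the $H^2$ bound and extension for the fixed global Neumann operator; these are supplied by Lemma~\ref{b:lem:heat}, which is independent of $\ell$. The remaining difficulty is to keep the repair-step constants uniform over seam and corner cells. This uses the uniform Poincar\'e, bi-Lipschitz and ball-volume bounds of Lemma~\ref{s:lem:cells}, and the fact that $\eta_r$ covers the physical boundary, so retained allocations never approach it. Once these are in place, the triangle inequality combines Steps 1 and 2 and gives the lemma.
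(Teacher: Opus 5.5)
Your proposal is correct and follows essentially the same route as the paper. It uses the coupling of global and cellwise smoothing (Lemma~\ref{s:lem:smoothingcoupling}) and then the repair Lemma~\ref{lem:cell-repair} at the same scales $r=n^{-1/60}$, $t=r^{50}$, $s=1/n$, $\delta^2=r^5$, $\ell=r^{1/3}$, with the potential bound of Lemma~\ref{lem:potential-l2} combined with \eqref{eq:localization-negative-two} for the cellwise direct-sum semigroup. Two small remarks: your sandwich $nD\le nW_{2,M}^2(q\mu,\nu_n)\le Z_n$ already makes the separate ambient-to-intrinsic comparison unnecessary, and when you invoke Lemma~\ref{lem:local-cost-localization} to get $L^4\le CD$ you should work in the geodesic distance $d_M$ and use $c\,d_M\le\bar d\le d_M$ from Lemma~\ref{s:lem:ambient}, as the paper does.
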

The following coupling controls the change in smoothing: the two
processes agree until they reach a cell boundary.

\Needspace{5\baselineskip}
\begin{lemma}\label{s:lem:smoothingcoupling}
For the partition $\cB_\ell$ of Lemma~\ref{s:lem:cells}, let
$P_t^{\cB}$ be the cellwise Neumann semigroup. For either equal or
Dirichlet masses and $0<t\le c\ell^2$,
\begin{equation}\label{s:eq:smoothingcoupling}
 n\E W_{2,M}^2(P_t\nu_n,P_t^{\cB}\nu_n)
 \le Cnt^{3/2}/\ell.
\end{equation}
\end{lemma}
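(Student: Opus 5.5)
The estimate follows from a synchronous coupling of the global and cellwise diffusions, exactly as in the flat case of Lemma~\ref{g:lem:grid}, with the collar bound \eqref{s:eq:collar} replacing the explicit grid geometry. Both $P_t\nu_n$ and $P_t^{\cB}\nu_n$ are mixtures over the atoms, $\sum_i w_i P_t\delta_{Y_i}$ and $\sum_i w_i P_t^{B(Y_i)}\delta_{Y_i}$. By joint convexity of $W_2^2$,
\[
 W_{2,M}^2(P_t\nu_n,P_t^{\cB}\nu_n)\le\sum_iw_i\,W_{2,M}^2(P_t\delta_{Y_i},P_t^{B(Y_i)}\delta_{Y_i}).
\]
The masses are independent of the sites and $\E w_i=1/n$. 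Taking expectations, $n$ times the left side is bounded by
\[
 \int_M W_{2,M}^2(P_t\delta_y,P_t^{B(y)}\delta_y)\,d\mu(y).
\]
This holds for both equal and Dirichlet masses. So it suffices to bound the one-point quantity $\Theta(y)=W_{2,M}^2(P_t\delta_y,P_t^{B(y)}\delta_y)$ and to integrate it.

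For a point $y$ at distance $b=\bar d(y,\Sigma_\ell)$ from the cell boundaries, I will couple the two diffusions started at $y$ and run them together until the first exit time $\tau$ from the ball of radius $b$ about $y$. Inside that ball the global reflected diffusion (the generator of $P_t$, reflecting at the physical boundary) and the cellwise Neumann diffusion have the same generator. After $\tau$ I let the two run independently. If $\tau>t$ the endpoints coincide. Otherwise, both endpoints lie within $b$ plus their subsequent displacements of $y$. The Gaussian movement bound \eqref{s:eq:cellmovement} for the cell semigroup, together with its global analogue from Lemma~\ref{lem:reflected-heat} with $\ell=1$, shows that the conditional expected squared separation is at most $C(b^2+t)$. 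The exit probability satisfies $\PP(\tau\le t)\le Ce^{-cb^2/t}$, by the standard exit estimate for uniformly elliptic diffusions, valid also near a Neumann face through the reflected-domain construction of Lemma~\ref{lem:reflected-heat}. Combining these with Cauchy--Schwarz gives
\[
 \Theta(y)\le C(b^2+t)e^{-cb^2/t}\le Cte^{-c'b^2/t}.
\]

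Integrating, I split the range of $b$ into dyadic shells $b\in[2^k\sqrt t,2^{k+1}\sqrt t)$. By \eqref{s:eq:collar}, each shell has $\mu$-mass at most $C2^k\sqrt t/\ell$, valid because $2^{k+1}\sqrt t\le\ell$ up to the last shell, using $t\le c\ell^2$. Beyond scale $\ell$ the Gaussian factor $e^{-c\ell^2/t}$ makes the contribution negligible, since the total mass is one. Summing over the shells gives
\[
 \int\Theta\,d\mu\le C\,t\cdot\frac{\sqrt t}{\ell}\sum_k2^ke^{-c4^k}\le C\frac{t^{3/2}}{\ell},
\]
and multiplying by $n$ yields \eqref{s:eq:smoothingcoupling}.

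I expect the main obstacle to be making the coupling rigorous near corners and seam cells. There the cell boundary passes near the physical boundary, and "same generator until exit" must be justified for reflected processes. I would handle this by working with the Dirichlet--Neumann splitting at the kernel level rather than with stochastic processes. Let $\tilde p_t$ be the heat kernel killed on $\Sigma_\ell\setminus\partial M$, still reflecting on $\partial M$. Then $\tilde p_t\le p_t$ and $\tilde p_t\le p_t^B$. The defect masses $1-\int\tilde p_t(y,\cdot)$ are bounded by $Ce^{-cb^2/t}$, via the Gaussian bound \eqref{s:eq:cellgaussian} and a maximum principle. I would then couple the common part $\tilde p_t(y,\cdot)$ identically and the defect parts arbitrarily. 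This reproduces the same estimate using only Lemma~\ref{lem:reflected-heat}.
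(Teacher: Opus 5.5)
Your proposal is correct and is essentially the paper's proof: couple the global and cellwise diffusions to coincide until the first time they can differ, bound that probability by a Gaussian exit estimate, bound the remaining separation from the common point by $O(t)$, mix over the atoms using $\E\nu_n=\mu$, and integrate against the collar bound \eqref{s:eq:collar}. The paper stops at the first hit of the whole cell boundary, including its physical part, so, as with your ball of radius $b$, neither process has reflected before the stopping time and your killed-kernel fallback for corners is not needed; one small slip is that the convexity step gives $\E W_{2,M}^2(P_t\nu_n,P_t^{\cB}\nu_n)\le\int_M\Theta\,d\mu$, not $n$ times the left side, and this is the inequality your final multiplication by $n$ actually uses.
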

\begin{proof}
Start both diffusions at $x\in B$. Their generators agree inside $B$,
so couple them to coincide until their first hit $\tau$ of $\partial B$.
When $M$ has boundary, use its Neumann diffusion globally and stop at
the whole cell boundary, including its physical part. After $\tau$,
continue the two corresponding processes from their common boundary point.

Let $b=\bar d(x,\partial B)$. Before this hit, neither process has reached
a physical boundary, so both are the smooth ambient diffusion. In fixed
coordinate charts its martingale part has bounded quadratic variation
and its drift is bounded. The exponential maximal inequality, with the
process stopped on leaving a fixed chart, gives
\[
 \PP_x(\tau\le t)\le
 \PP_x\!\left(\sup_{u\le t}\bar d(Z_u,x)\ge b\right)
 \le C\exp(-cb^2/t).
\]
For $b$ of order $t$ or smaller, enlarge $C$ and use the bound one; a finite atlas makes the constants uniform elsewhere. Conditional on the hit, the global heat displacement bound and \eqref{s:eq:cellmovement}, applied from the common boundary point, bound each expected squared remaining displacement by $C(t-\tau)$. For a surface with boundary, the global bound follows from the same reflected heat-kernel estimate used in Lemma~\ref{s:lem:cellheat}, on the fixed domain $M$. The strong Markov property and the triangle inequality consequently give a coupling of the endpoint kernels with
\[
 \E_x d_M(Z_t,Z_t^B)^2\le Ct\,\PP_x(\tau\le t)
 \le Ct\exp(-cb^2/t).
\]

Mix these couplings with the masses of $\nu_n$. Since $\E\nu_n=\mu$,
the expected cost of this coupling, multiplied by $n$, is at most
$Cnt\int_M\exp\{-c\,\bar d(x,\Sigma_\ell)^2/t\}\,d\mu(x)$.
The collar estimate \eqref{s:eq:collar}, integrated in distance, bounds this integral by $C\sqrt t/\ell$; distances greater than $\ell$ have an exponentially smaller contribution. This proves \eqref{s:eq:smoothingcoupling}.
\end{proof}

To control the restriction on transport, we use the potential and repair
estimates in a smooth extension of $M$. The next lemma bounds the error
when we return to intrinsic distance.

\begin{lemma}[Intrinsic and ambient distances]\label{s:lem:ambient}
Extend $g$ smoothly to a closed surface containing $M$, and write $\bar d$
for the distance in this extension and $d$ for the intrinsic distance in
$M$. If $M$ is closed, take $\bar d=d$. Suppose any corners of $M$ have
interior angle less than $\pi$. There are constants $c,C>0$ such that
\[
 c d(x,y)\le\bar d(x,y)\le d(x,y),\qquad
 0\le d(x,y)^2-\bar d(x,y)^2\le C\bar d(x,y)^3.
\]
For $1/2\le q\le2$, $\mu(q)=1$, and a probability measure $\nu$, set
$\bar D=\OT_{\bar d^2}(q\mu,\nu)$. Then
\[
 0\le W_{2,M}^2(q\mu,\nu)-\bar D\le C\bar D^{5/4}.
\]
The constants are uniform over $q$ and $\nu$.
\end{lemma}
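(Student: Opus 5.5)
The plan has two parts: a local, purely geometric comparison of $d$ with $\bar d$, followed by an application of Proposition~\ref{prop:local-cost-deterministic} with $k=d^2$ and $d_0=\bar d$.

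\emph{The distance comparisons.} The inequality $\bar d\le d$ is immediate, since every path inside $M$ is also a path in the extension. For the lower bound $cd\le\bar d$ we use compactness and the fact that $M$ is uniformly Lipschitz: its boundary consists of smooth arcs meeting at angles in $(0,\pi)$. In each fixed chart, points of $M$ can therefore be joined inside $M$ by curves of length at most $C\bar d(x,y)$. Points at $\bar d$-distance bounded below are handled by the bounded diameter of $M$.

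\emph{The cubic estimate.} This is the main obstacle. For $\bar d(x,y)$ small, we distinguish two cases.
\begin{itemize}
\item If the ambient minimizing geodesic from $x$ to $y$ stays in $M$, then $d=\bar d$.
\item Otherwise it leaves $M$. Near a smooth boundary point we use Fermi coordinates. The ambient geodesic then exits through a boundary arc of bounded curvature. By convexity at corners, and boundedness of curvature on the faces, it leaves $M$ only to depth $O(\bar d^2)$. We replace the excursion by a curve pushed inward (or along the boundary) by a normal displacement of size $O(\bar d^2)$ over an arc of length $O(\bar d)$.
\end{itemize}
The length change is of second order in the slope. A more robust route is to compare lengths in coordinates in which the face is straight: the metric distortion there is $O(\bar d)$ relative, so $d\le\bar d(1+C\bar d)$. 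Squaring gives $d^2-\bar d^2\le C\bar d^3$. Near a convex corner, the sector has angle less than $\pi$, so straight segments in corner coordinates (after flattening both faces) stay in $M$, and the same relative $O(\bar d)$ distortion bound applies. For $\bar d$ bounded below, the bound follows from $d\le C\bar d$ after enlarging $C$.

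\emph{Transport comparison.} Take $k=d^2$ and $d_0=\bar d$ in Proposition~\ref{prop:local-cost-deterministic}. The hypotheses hold:
\begin{itemize}
\item $\bar d^2\le k\le c^{-2}\bar d^2$ by the two-sided comparison;
\item $|k-\bar d^2|\le C\bar d^3$ by the cubic estimate;
\item the ball condition $\mu(B_{\bar d}(x,r))\ge c_\mu r^2$ holds for $\mu$ on the compact geodesic space $(M,\bar d|_M)$, by positivity of the density and the Lipschitz geometry, uniformly after replacing $\mu$ by $q\mu$ with $1/2\le q\le2$.
\end{itemize}
Lemma~\ref{lem:local-cost-localization} as used there needs a geodesic space. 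Because $M$ with $\bar d$ need not be geodesic, we run its cyclical-monotonicity argument with intrinsic geodesics and the comparability constants. Alternatively, we apply it with $d_0=d$ to obtain the displacement bound, then transfer it through $cd\le\bar d\le d$. The proposition then gives $|W_{2,M}^2(q\mu,\nu)-\bar D|\le C\bar D^{5/4}$. Nonnegativity of the difference follows from $d\ge\bar d$ pointwise. All constants depend only on the fixed geometry and the bounds on $q$, which gives the stated uniformity.
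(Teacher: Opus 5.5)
Your proposal is correct and follows essentially the paper's proof: compare $d$ and $\bar d$ chart by chart, then apply Proposition~\ref{prop:local-cost-deterministic}. Near smooth faces the paper instead retracts the ambient geodesic along boundary normals, gaining a factor $1+C\bar d^{2}$, but your flattening argument giving $1+C\bar d$ already suffices; your fallback choice $d_0=d$, $k=\bar d^{2}$ is exactly the paper's, and it avoids the non-geodesic issue you rightly flagged for $(M,\bar d)$, with quadratic comparability turning the error $C(W_{2,M}^2)^{5/4}$ into $C\bar D^{5/4}$.
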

\begin{proof}
Local boundary charts compare intrinsic and ambient distances; compactness
then gives the first assertion. To prove the cubic bound, let
$a=\bar d(x,y)$ be small and take an ambient minimizing geodesic.
Near a smooth boundary piece, its depth outside $M$ is at most $Ca^2$,
by the bounded Hessian of the signed distance to that piece. Retraction
along the boundary normals has differential norm at most $1+Ca^2$
on this part of the geodesic. Retraction therefore produces a path in
$M$ of length at most $a(1+Ca^2)$.

At a corner, flatten the two boundary arcs to their tangent rays and
freeze the metric at the vertex. The resulting sector is convex because
its angle is less than $\pi$. If either endpoint lies within $C_0a$ of
the vertex, both endpoints lie in a chart of radius $Ca$; the coordinate
change and the varying metric distort lengths by at most $1+Ca$.
The straight segment in the sector gives $d(x,y)\le a(1+Ca)$.
Choose $C_0$ large enough in terms of the finitely many corner angles.
In the remaining case the short geodesic can meet at most one boundary
arc, and the smooth-boundary argument applies. This proves the cubic
bound for small $a$; compactness extends it to all pairs.

Intrinsic balls have $q\mu$ mass at least a constant times the square of
their radius, uniformly for $1/2\le q\le2$. Apply
Proposition~\ref{prop:local-cost-deterministic} to $k=\bar d^2$ and the
intrinsic metric $d$. Quadratic comparability turns its error into
$C\bar D^{5/4}$, proving the cost estimate.
\end{proof}

\begin{proof}[Proof of Lemma~\ref{s:lem:local}]
We first apply the cell repair estimate to transport from $q\mu$ to
$\nu_n$ in the ambient metric.
The local concentration bound \eqref{s:eq:localconc} gives
$E=\{\|q-1\|_\infty\le\delta\}$ with $\PP(E^c)=O(n^{-N})$ for every fixed $N$, since $nt\delta^2=n^{1/12}$. Let $T(x)=\exp_x(\nabla\phi(x))$ be the
optimal map from $q\mu$ to $\nu_n$ for quadratic cost in the ambient metric, with
$\mu(\phi)=0$.
Write $v=\nabla\phi$, $D=\int q|v|^2\,d\mu$, $V=\int\phi^2\,d\mu$, and
$L=\|v\|_\infty$. The local heat coupling gives $D\le Ct$.
The rough cell estimate \eqref{s:eq:localrough}, restored with each cell's mass, gives $n\E D\le C\log n$.
Lemma~\ref{lem:local-cost-localization} and the distance comparison in
Lemma~\ref{s:lem:ambient} give $L^4\le CD$ on $E$. Thus
$L^2\le C\sqrt t$.

The transport distances are shorter than the fixed ambient injectivity radius.
Lemma~\ref{b:lem:heat} supplies the Neumann $H^2$ inverse and ambient
extension required by Lemma~\ref{lem:potential-l2}. That lemma and
\eqref{eq:localization-negative-two}, with $q\mu=P_t^{\cB}\nu_n$, give
\[
 n\E[V\1_E]\le C\{\sqrt t+(\delta^2+\sqrt t)\log n\}
                  \le C\delta^2\log n.
\]

The cutoff from Lemma~\ref{s:lem:cells} includes the physical boundary
and has collar mass $\kappa=Cr/\ell$. Lemma~\ref{s:lem:cellheat} supplies
the displacement, integrated heat diagonal, and Poincar\'e bounds used
in Lemma~\ref{lem:cell-repair}. Also $L<r/2$ for large $n$, and
\[
 \kappa+r^{-4}(\delta^2+\sqrt t)
 \le C(r/\ell+r+r^{21})\le Cr/\ell.
\]
The repair lemma consequently gives
$0\le\E(Z_n-nD)\le C\sqrt{r/\ell}\log n$.

We next restore intrinsic distance. Lemma~\ref{s:lem:ambient} gives, on $E$,
\[
 0\le W_{2,M}^2(q\mu,\nu_n)-D\le CD^{5/4}\le Ct^{1/4}D.
\]
Since $n\E D\le C\log n$, this contributes at most $Ct^{1/4}\log n$
to $n\E W_{2,M}^2(q\mu,\nu_n)$. The exceptional event contributes less than any
required inverse power of $n$ by the diameter bound.

Finally, restore global heat smoothing. Lemma~\ref{s:lem:smoothingcoupling} gives
\[
 n\E W_2^2(P_t\nu_n,P_t^{\cB}\nu_n)\le Cnt^{3/2}/\ell.
\]
The triangle inequality bounds the resulting change in $\E D_n(t)$ by
$C\sqrt{nt^{3/2}\ell^{-1}\log n}$, which is smaller than
$\sqrt{r/\ell}\log n$ at the stated scales.
\end{proof}

\subsubsection{Comparing and summing cell costs}\label{app:surface-cell-comparison}
For a good cell $B$, the coordinates $\Phi_B^{-1}$ from
Lemma~\ref{s:lem:cells} send $\mu(\cdot\mid B)$ to the uniform law on $Q$.
To prove \eqref{eq:surface-cell-comparison}, we
compare distance and heat smoothing in these coordinates.
Put $\Phi=\Phi_B$ and $g_B=\Phi_B^*g$. By \eqref{s:eq:chart}, the metric $g_B$ lies between
$(1-\epsilon)A_B I$ and $(1+\epsilon)A_B I$, with $\epsilon=C\ell$.
Thus, for any probability measures $\eta,\zeta$ on $Q$,
\begin{equation}\label{s:eq:coordinate-cost}
 (1-\epsilon)A_B W_{2,Q}^2(\eta,\zeta)
 \le W_{2,B}^2(\Phi_\#\eta,\Phi_\#\zeta)
 \le(1+\epsilon)A_B W_{2,Q}^2(\eta,\zeta).
\end{equation}
Let $e^{-t\mathsf A}$ be the cell's heat smoothing in these coordinates,
with Neumann quadratic form on $L^2(Q,dx)$
$\mathfrak a(h,h)=\int g_B^{-1}[\nabla h,\nabla h]\,dx$.
Compare it with $\mathsf B=A_B^{-1}(-\Delta_Q)$, whose form is
$\mathfrak b(h,h)=A_B^{-1}\int|\nabla h|^2dx$. Their forms differ by
relative error $O(\ell)$.

The negative norm below will bound transport between the two smoothings
through Lemma~\ref{c:lem:energy}. Set
$T_t=e^{-t\mathsf A}-e^{-t\mathsf B}$. For $\nu_j$ supported on $j$
independent uniform points in $Q$, with either mass law,
\[
 j\E\|T_t\nu_j\|_{-1,\mathsf A}^2
   =\chi_j\|\mathsf A^{-1/2}T_t\|_{\HS}^2,
 \qquad \|\zeta\|_{-1,\mathsf A}=\|\mathsf A^{-1/2}\zeta\|_2.
\]
Here $\|\cdot\|_{\HS}$ is the Hilbert--Schmidt norm on mean-zero functions.
The identity follows by covariance in an orthonormal basis; $T_t$
annihilates constants.

\begin{lemma}\label{s:lem:duhamel}
Let $\mathsf A,\mathsf B$ be nonnegative self-adjoint Neumann divergence-form
operators on $L^2(Q,dx)$ with common kernel consisting of constants.
Suppose $\mathsf B=a^{-1}(-\Delta_Q)$ for some $a>0$. Write $\mathfrak a,\mathfrak b$ for their closed quadratic forms, both with domain $H^1(Q)$, and assume
\[
 |\mathfrak a(h,h)-\mathfrak b(h,h)|\le\epsilon\,\mathfrak b(h,h)
 \qquad(h\in H^1(Q)),\qquad 0\le\epsilon\le\tfrac12.
\]
Then, for $0<t\le a$,
\begin{equation}\label{s:eq:HS}
 \|\mathsf A^{-1/2}(e^{-t\mathsf A}-e^{-t\mathsf B})\|_{\HS}^2
 \le C\epsilon^2a.
\end{equation}
The norm is on the mean-zero subspace. The constant is uniform over
uniformly elliptic coefficients satisfying the stated form comparison.
\end{lemma}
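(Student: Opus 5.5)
Rescale first: replacing $\mathsf A,\mathsf B$ by $a\mathsf A,a\mathsf B$ and $t$ by $t/a$ multiplies the left side of \eqref{s:eq:HS} by exactly $a$. So it suffices to take $a=1$, $\mathsf B=-\Delta_Q$ with Neumann conditions, and $0<t\le1$, and to show the Hilbert--Schmidt norm squared is $O(\epsilon^2)$. The form comparison gives $\mathfrak a\asymp\mathfrak b$ with constants in $[1/2,3/2]$. Hence $\mathsf A^{-1/2}\mathsf B^{1/2}$ and its inverse are bounded by a universal constant on mean-zero functions. We may therefore replace $\mathsf A^{-1/2}$ by $\mathsf B^{-1/2}$ on the left at the cost of a constant factor.

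Use Duhamel's formula:
\[
 e^{-t\mathsf A}-e^{-t\mathsf B}=-\int_0^te^{-(t-u)\mathsf A}(\mathsf A-\mathsf B)e^{-u\mathsf B}\,du .
\]
Write $\mathsf A-\mathsf B=\mathsf B^{1/2}K\mathsf B^{1/2}$ in the form sense, where $K$ is the bounded operator with $\langle Kf,g\rangle=(\mathfrak a-\mathfrak b)(\mathsf B^{-1/2}f,\mathsf B^{-1/2}g)$. The polarized form comparison gives $\|K\|\le C\epsilon$. Then
\[
 \mathsf B^{-1/2}(e^{-t\mathsf A}-e^{-t\mathsf B})
 =-\int_0^t\bigl[\mathsf B^{-1/2}e^{-(t-u)\mathsf A}\mathsf B^{1/2}\bigr]\,K\,\bigl[\mathsf B^{1/2}e^{-u\mathsf B}\bigr]\,du .
\]
The first bracket is uniformly bounded. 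Indeed, $\mathsf A^{1/2}e^{-s\mathsf A}\mathsf A^{-1/2}$ is a contraction, and we conjugate by the bounded operators $\mathsf A^{\pm1/2}\mathsf B^{\mp1/2}$. We then bound the Hilbert--Schmidt norm of the integral by $\int_0^t\|B_1\|\,\|K\|\,\|\mathsf B^{1/2}e^{-u\mathsf B}\|_{\HS}\,du$. With Neumann eigenvalues $\pi^2(k^2+l^2)$,
\[
 \|\mathsf B^{1/2}e^{-u\mathsf B}\|_{\HS}^2=\sum\lambda e^{-2u\lambda}\asymp u^{-2},
\]
so $\|\mathsf B^{1/2}e^{-u\mathsf B}\|_{\HS}\asymp u^{-1}$. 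The integral $\int_0^t u^{-1}\,du$ diverges logarithmically, so this crude estimate fails. This is the main obstacle.

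To fix it, I would instead estimate the Hilbert--Schmidt norm squared in the eigenbasis of $\mathsf B$, or split the integral symmetrically. For $u\le t/2$, put the smoothing on the left factor, using $e^{-(t-u)\mathsf A}$ with $t-u\ge t/2$. For $u\ge t/2$, put it on the right factor. Each half becomes a product of a bounded operator, $K$, and a Hilbert--Schmidt factor like $\mathsf B^{1/2}e^{-(t/2)\mathsf B}$. But the $u$-integral still gives $t\cdot t^{-1}=O(1)$, so each half is $O(\epsilon)$ in Hilbert--Schmidt norm. That is exactly what we need: squaring gives $C\epsilon^2$.

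Concretely, for $u\ge t/2$, bound $\|B_1K\,\mathsf B^{1/2}e^{-u\mathsf B}\|_{\HS}\le C\epsilon\,u^{-1}\le 2C\epsilon/t$, and integrate over an interval of length $t/2$ to get $C\epsilon$. For $u\le t/2$, write
\[
 \mathsf B^{-1/2}e^{-(t-u)\mathsf A}\mathsf B^{1/2}
 =\bigl[\mathsf B^{-1/2}e^{-(t-u)\mathsf A/2}\mathsf A^{1/2}\bigr]\bigl[\mathsf A^{-1/2}e^{-(t-u)\mathsf A/2}\mathsf B^{1/2}\bigr].
\]
Use that $\|\mathsf A^{1/2}e^{-s\mathsf A}\|_{\HS}\le Cs^{-1}$, since the eigenvalues of $\mathsf A$ are comparable to those of $\mathsf B$ by min--max. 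Also use that $\|e^{-u\mathsf B}\mathsf B^{1/2}\|$ is bounded after moving $\mathsf B^{1/2}$ onto $K$; the cleaner route is to note $\mathsf B^{1/2}e^{-u\mathsf B}=e^{-u\mathsf B}\mathsf B^{1/2}$ and absorb $\mathsf B^{1/2}$ into the $K$ factorization symmetrically, that is, $\mathsf A-\mathsf B=\mathsf A^{1/2}K'\mathsf B^{1/2}$ with $\|K'\|\le C\epsilon$. Then, for $u\le t/2$, the Hilbert--Schmidt factor is $\mathsf B^{-1/2}e^{-(t-u)\mathsf A}\mathsf A^{1/2}$, with norm $O(t^{-1})$, and the remaining factor $\mathsf B^{1/2}e^{-u\mathsf B}$ applied after $K'$ must be bounded. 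It is not bounded, so for this half I would instead use $\mathsf A-\mathsf B=\mathsf A^{1/2}K''\mathsf A^{1/2}\cdot$ (bounded), placing both square roots on the $\mathsf A$ side. This is the step whose bookkeeping I expect to require care.

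Uniformity holds because only ellipticity constants and $\epsilon$ enter.
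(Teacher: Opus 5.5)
Your overall route matches the paper's: Duhamel's formula, a bounded operator of norm $O(\epsilon)$ encoding the form difference, and a split of the time integral at $t/2$. Your $K'$ with $\mathsf A-\mathsf B=\mathsf A^{1/2}K'\mathsf B^{1/2}$ is exactly the paper's $T=\mathsf A^{-1/2}(\mathsf A-\mathsf B)\mathsf B^{-1/2}$, and your treatment of $u\ge t/2$ matches the paper's. The gap is the half $u\le t/2$. You leave it unresolved, and the remedy you propose does not work. Placing both square roots on the $\mathsf A$ side turns the right factor into $\mathsf A^{1/2}e^{-u\mathsf B}$. The form comparison gives $\|\mathsf A^{1/2}e^{-u\mathsf B}h\|\ge2^{-1/2}\|\mathsf B^{1/2}e^{-u\mathsf B}h\|$, so its operator norm is still of order $u^{-1/2}$. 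Refactoring does not produce a uniformly bounded right factor.

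The missing observation is that a bounded right factor is not needed: the singularity $u^{-1/2}$ is integrable, provided the Hilbert--Schmidt factor is estimated sharply. Your bound $O(t^{-1})$ for $\|\mathsf B^{-1/2}e^{-(t-u)\mathsf A}\mathsf A^{1/2}\|_{\HS}$ is too lossy. With it, the first half would be of size $t^{-1}\int_0^{t/2}u^{-1/2}\,du\asymp t^{-1/2}$, which is unbounded as $t\to0$. Since $\mathsf B^{-1/2}\mathsf A^{1/2}$ is bounded, that factor has Hilbert--Schmidt norm at most $C\|e^{-(t-u)\mathsf A}\|_{\HS}\le C(t-u)^{-1/2}$. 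This uses the min--max comparability of eigenvalues that you already invoked. Combining this with $\|K'\|\le C\epsilon$ and $\|\mathsf B^{1/2}e^{-u\mathsf B}\|\le Cu^{-1/2}$, the first half is at most
\[
 C\epsilon\int_0^{t/2}\frac{du}{\sqrt{u(t-u)}}\le C\epsilon,
\]
which is exactly how the paper treats $s\le t/2$. On the first half, the balance is therefore $t^{-1/2}\cdot t^{1/2}$, not $t\cdot t^{-1}$. The derivative carried by $\mathsf A-\mathsf B$ is split between a Hilbert--Schmidt factor $e^{-(t-u)\mathsf A}$ on the left and an integrable operator-norm factor on the right. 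With this estimate your argument closes.
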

\begin{proof}
After removing constants, the form assumption and polarization give a bounded operator
$T=\mathsf A^{-1/2}(\mathsf A-\mathsf B)\mathsf B^{-1/2}$ with
$\|T\|\le\epsilon(1-\epsilon)^{-1/2}$. Duhamel's identity, interpreted first in finite spectral
approximations and then in the form sense, gives
\[
 \mathsf A^{-1/2}(e^{-t\mathsf B}-e^{-t\mathsf A})
 =\int_0^t e^{-(t-s)\mathsf A}T\mathsf B^{1/2}e^{-s\mathsf B}\dd s.
\]
Form comparison and the square spectrum give
$\|e^{-u\mathsf A}\|_{\HS}\le C\sqrt{a/u}$ and
$\|\mathsf B^{1/2}e^{-u\mathsf B}\|_{\HS}\le C\sqrt a/u$ for $u\le a$.
Also $\|\mathsf B^{1/2}e^{-u\mathsf B}\|\le Cu^{-1/2}$.
For $s\le t/2$ use the first and third bounds; for $s\ge t/2$ use the
second. The integral of Hilbert--Schmidt norms is bounded by
\[
 C\epsilon\sqrt a\left(
 \int_0^{t/2}\frac{\dd s}{\sqrt{s(t-s)}}+
 \int_{t/2}^t\frac{\dd s}{s}\right)\le C\epsilon\sqrt a.
\]
Squaring proves the claim.
\end{proof}
\paragraph{Proof of the cell comparison.}
For $j$ between $np_B/2$ and $2np_B$, we have
$jt/A_B\asymp nt=n^{1/6}$. Flat heat concentration therefore makes the
flat smoothed density at least $1/2$ outside events of probability
$O(n^{-N})$ for every fixed $N$. The covariance identity and
Lemma~\ref{s:lem:duhamel} bound the expected energy discrepancy by
$C\ell^2A_B/j$. Restricted to the event that the flat density is at least
$1/2$, the expected squared transport distance satisfies the same bound
by Lemma~\ref{c:lem:energy}.
Together with \eqref{s:eq:localrough}, the triangle inequality then
bounds the change in $d_j^B(t)$ by
$C\ell A_B\sqrt{\log n}$. Comparing metrics through
\eqref{s:eq:coordinate-cost} adds $C\ell A_B\log n$, while the diameter
bound controls the exceptional event. Hence
\begin{equation}\label{s:eq:freeze}
 d_j^B(t)=A_B d_j^Q(t/A_B)+O(\ell A_B\log n)+O(n^{-N}A_B).
\end{equation}

\paragraph{Completing the smoothing-cost formula.}
Apply Theorem~\ref{thm:smoothing-cost} at $\tau=t/A_B$.
Its conditions hold uniformly for $np_B/2\le j\le2np_B$:
$j\tau\asymp n^{1/6}$, $j\asymp n\ell^2$, and
$\tau\asymp n^{-5/6}\ell^{-2}$. Write $\chi=2$ for Dirichlet masses
and $\chi=1$ for equal masses, and use the corresponding $\kappa_\star$.
We now justify the summation in Section~\ref{subsec:surface-smoothing}. For $K_B\sim\Bin(n,p_B)$, the
probability of leaving $[np_B/2,2np_B]$ is exponentially small.
Taylor expansion and binomial variance give
$\E[\log K_B;K_B>0]=\log(np_B)+O((np_B)^{-1})$;
\eqref{s:eq:localrough} controls the omitted tails.
The cells without probability coordinates have total area $O(\ell)$,
so their actual and candidate contributions are both $O(\ell\log n)$.

Using \eqref{s:eq:localexpect}, the errors from the cell comparison
\eqref{s:eq:freeze}, Theorem~\ref{thm:smoothing-cost}, and the random
cell counts sum to at most
\[
 C\left\{\ell\log n+(n\ell^2)^{-1/120}\log n
       +\sqrt{\frac{(\log(en))^3}{nt}}
       +\frac{\sqrt t}{\ell}+(n\ell^2)^{-1}\right\}
       +O(n^{-N}).
\]
This is $O(n^{-1/180}\log n)$: besides $\ell=n^{-1/180}$, the
relevant powers are $(n\ell^2)^{-1/120}=n^{-89/10800}$,
$(nt)^{-1/2}=n^{-1/12}$, and $\sqrt t/\ell=n^{-37/90}$.
The density Riemann sum in Section~\ref{subsec:surface-smoothing} has
error $O(\ell)$ because $\rho$ is smooth and positive. Together with
Lemma~\ref{s:lem:local}, these bounds prove~\eqref{eq:surface-smoothing}
at $t_n=n^{-5/6}$ with error $O(n^{-1/180}\log(en))$.

For general $t$ in the stated range, subtract
Proposition~\ref{prop:surface-reduction} at $t_n$ and $t$ and use
\eqref{eq:h-general}. The spectral finite part cancels, giving
\[
 \begin{aligned}
 d_n^{M,\mu}(t)-d_n^{M,\mu}(t_n)
 &=\frac{\chi A}{4\pi}\log(t/t_n)\\
 &\quad+O_{M,g,\rho}\!\left(n^{-1/180}\log(en)
       +\sqrt{\frac{(\log(en))^3}{nt}}+\sqrt t\right).
 \end{aligned}
\]
The errors at $t_n$ and the $O(\log(en)/n)$ change from replacing
$\chi_n$ by $\chi$ are absorbed into $O(n^{-1/180}\log(en))$.
This proves Theorem~\ref{thm:surface-smoothing}.
\section{The distinct-to-same-distribution reduction}
Appendix~\ref{app:cost-comparison} bounds the change in expected cost
when the population dual gap is replaced by squared-distance cost in its
induced metric, as required by Theorem~\ref{thm:distinct-to-same}.
Appendix~\ref{app:flat-source-target} verifies the assumptions for the
Euclidean convex domains and flat tori of Theorem~\ref{thm:source-target}.

\subsection{Replacing the population dual gap by squared-distance cost}\label{app:cost-comparison}
The following comparison supplies the error term in
Theorem~\ref{thm:distinct-to-same}. The cubic difference between the two
costs gains a factor of the transport distance over the quadratic cost.
Proposition~\ref{prop:local-cost-deterministic} quantifies this gain;
we use a moment bound for semidiscrete transport to show that the expected
total error tends to zero.

\begin{lemma}\label{lem:cost-comparison}
Let $(M,g)$ and $\mu=\rho\,dA_g$ satisfy the assumptions of
Section~\ref{subsec:surface-reduction}. Suppose a continuous cost
$k:M\times M\to\R$ satisfies, for some $a_0,b_0,K>0$,
\begin{equation}\label{eq:local-cost-assumptions}
 a_0d_g^2\le k\le b_0d_g^2,
 \qquad |k-d_g^2|\le Kd_g^3.
\end{equation}
For two independent $n$-point samples $X_i,Z_j$ with law $\mu$,
\begin{equation}\label{eq:local-cost-discrete}
 \left|\E\min_\sigma\sum_i k(X_i,Z_{\sigma(i)})
       -\E\min_\sigma\sum_i d_g(X_i,Z_{\sigma(i)})^2\right|
 \le Cn^{-1/4}\log^{5/4}(en).
\end{equation}
For $\eta_n=n^{-1}\sum_j\delta_{Z_j}$, the same bound holds for
$n|\E\OT_k(\mu,\eta_n)-\E W_{2,g}^2(\mu,\eta_n)|$.
The constant $C$ depends only on $M,g,\rho,a_0,b_0,K$.
\end{lemma}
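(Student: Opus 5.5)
The plan is to go through the averaging identity (Theorem~\ref{thm:averaging}), which turns both bipartite quantities into $n$ times weighted semidiscrete costs with Dirichlet masses. Once that is done, the bipartite and equal-mass cases are the same argument with different mass laws. Fix targets $Z_j$ and a mass vector $w$, and put $\eta=\sum_jw_j\delta_{Z_j}$. The averaging identity applies to any integrable family of costs $c_j(x)=k(x,Z_j)$, and also to $c_j(x)=d_g(x,Z_j)^2$. It gives
\[
 \E\min_\sigma\sum_ik(X_i,Z_{\sigma(i)})=n\E\OT_k(\mu,\eta_W),
\]
together with the same identity for $d_g^2$, where $W\sim\Dir(1,\ldots,1)$ is independent of the sites. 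The left side of \eqref{eq:local-cost-discrete} is therefore bounded by $n\E|\OT_k(\mu,\eta_W)-W_{2,g}^2(\mu,\eta_W)|$. In the semidiscrete case the same expression appears directly, with $w_i=1/n$.

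Next I apply Proposition~\ref{prop:local-cost-deterministic} pointwise in the realization. The hypotheses of that proposition hold here. The surface assumptions of Section~\ref{subsec:surface-reduction}, together with a smooth positive $\rho$, give the ball lower bound $\mu(B(x,r))\ge c_\mu r^2$ for $r$ up to the diameter; near corners this uses that the angles are positive. The cost assumptions \eqref{eq:local-cost-assumptions} are exactly the remaining hypotheses. For every atomic $\eta$ this yields
\[
 |\OT_k(\mu,\eta)-W_{2,g}^2(\mu,\eta)|\le C\bigl(W_{2,g}^2(\mu,\eta)\bigr)^{5/4}.
\]
It then suffices to show $n\E[(W_{2,g}^2(\mu,\nu_n))^{5/4}]\le Cn^{-1/4}\log^{5/4}(en)$ for either mass law. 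Equivalently, the $5/4$-th moment of $Q_n=nW_{2,g}^2(\mu,\nu_n)$ should be $O(\log^{5/4}(en))$.

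That moment bound is the step I expect to be the main obstacle. The rough bound of Appendix~\ref{app:general-heat} (the surface version in the proof of Proposition~\ref{prop:surface-reduction}) only controls the first moment, $\E Q_n\le C\log(en)$. To upgrade it, I would use the same decomposition and run it at the level of higher moments. Smooth at time $s=1/n$, and couple each atom with its heat smoothing, which costs at most $Cs$ deterministically. Then Lemma~\ref{c:lem:energy} bounds $W_2^2(\mu,P_s\nu_n)\le 4b^{-1}\|P_s\nu_n-\mu\|_{-1,\mu}^2$ on the event $\{q_s\ge1/2\}$. On that event,
\[
 Q_n\le C+Cn\|P_s\nu_n-\mu\|_{-1,\mu}^2 .
\]
The random variable $n\|P_s\nu_n-\mu\|_{-1,\mu}^2=n\sum_j e^{-2s\lambda_j}\nu_n(\psi_j)^2/\lambda_j$ is a weighted quadratic form in the centered empirical measure. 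Its mean is $\chi_nF(2s)\le C\log(en)$, by \eqref{f:eq:cov} and the integrated heat diagonal \eqref{s:eq:celldiag}. For the second moment, I would expand the fourth moments of $\nu_n(\psi_j)$ into Gaussian-like pairings plus a diagonal term. The diagonal term is controlled by $n^{-1}\sup_x G^{(2s)}(x,x)^2\lesssim n^{-1}(\log n)^2 \cdot ns$-type quantities. With $s=1/n$ these quantities stay $O(\log^2 n)$, giving $\E[(n\|\cdot\|_{-1}^2)^2]\le C\log^2(en)$. Jensen/H\"older then gives the $5/4$ moment bound $C\log^{5/4}(en)$. If the diagonal term is too large at $s=1/n$, the fallback is a slightly larger $s$, at the price of only a constant increase, since $ns$ remains bounded.

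The exceptional event $\{q_s<1/2\}$ also needs handling. At $s=1/n$ the concentration lemma (Lemma~\ref{lem:weighted-concentration}) is not strong enough, so I would instead bound $Q_n\le Cn$ there and show that the event has small enough probability. Failing that, I would replace the single smoothing by the dyadic (AKT-style) multiscale bound for $W_2^2(\mu,\nu_n)$ on cells. That bound is a sum of independent-ish squared cell-count fluctuations, whose $5/4$ moments are straightforward via Rosenthal's inequality. For Dirichlet masses I would first write $w_i=E_i/\sum E_j$ and reduce to i.i.d.\ exponential marks, as in Lemma~\ref{lem:weighted-concentration}. Combining the pieces gives the error $Cn\cdot n^{-5/4}\log^{5/4}(en)=Cn^{-1/4}\log^{5/4}(en)$ in both statements.
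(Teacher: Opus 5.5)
Your architecture is the paper's. You use the averaging identity, then Proposition~\ref{prop:local-cost-deterministic} pointwise, then $n\E Q^{5/4}\le n(\E Q^2)^{5/8}$ with $Q=W_{2,g}^2(\mu,\nu_n)$. That second moment is bounded by smoothing at time $1/n$ and a fourth-moment computation for the negative Sobolev norm.

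\emph{The gap: misuse of Lemma~\ref{c:lem:energy}.} You place the density lower bound on the smoothed sample $P_s\nu_n=q_s\mu$ and work on $\{\inf q_s\ge1/2\}$, planning to show the complement is rare. At $s=1/n$ it is not rare. The heat length $\sqrt s$ equals the typical spacing of the sites, so among order $n$ disjoint balls of radius $C/\sqrt n$ some are typically empty, and $q_s$ drops below $1/2$ there; in fact $\PP(\inf q_s<1/2)\to1$. The crude bound $Q_n\le Cn$ on that event then destroys the estimate. Enlarging $s$ while keeping $ns$ bounded, as your fallback proposes, does not change this.

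\emph{The fix.} The lemma needs only one of the two measures to have a density bounded below, and $\mu$ itself qualifies. Apply it with $q=1$ and $\eta=P_s\nu_n$ to get $W_{2,g}^2(\mu,P_s\nu_n)\le4\|P_s\nu_n-\mu\|_{-1,\mu}^2$ for every realization. Hence $Q\le Cs+C\|P_s\nu_n-\mu\|_{-1,\mu}^2$ deterministically. There is no exceptional event and no need for the dyadic fallback. Taking $s=K\log(en)/n$ with $K$ large would also make the event rare through Lemma~\ref{lem:weighted-concentration}, at an acceptable cost $ns=O(\log n)$, but this is unnecessary.

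\emph{The second moment.} Your pairing heuristic gives the right outcome but is loosely argued. The clean form, which is what the paper does, writes
\[
 \|P_s\nu_n-\mu\|_{-1,\mu}^2=\Bigl\|\sum_iw_iF_s(Z_i)\Bigr\|_{\ell^2}^2,
 \qquad F_s(z)=\bigl(e^{-s\lambda_j}\lambda_j^{-1/2}\psi_j(z)\bigr)_{j\ge1}.
\]
Conditional on the masses, these are independent centred vectors. By \eqref{s:eq:celldiag}, $\sup_z\|F_s(z)\|_{\ell^2}^2\le C\log(e/s)$, so
\[
 \E\Bigl[\Bigl\|\sum_iw_iF_s(Z_i)\Bigr\|_{\ell^2}^4\,\Big|\,w\Bigr]
 \le3\sup_z\|F_s(z)\|_{\ell^2}^4\Bigl(\sum_iw_i^2\Bigr)^2 .
\]
Combined with $\E(\sum_iw_i^2)^2\le Cn^{-2}$, this gives $\E Q^2\le Cs^2+Cn^{-2}\log^2(e/s)$ directly at $s=1/n$. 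The diagonal terms are smaller than the rest by a factor $1/n$, so no enlargement of $s$ is needed for them either.
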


\begin{proof}
Write $\eta_n^W=\sum_jW_j\delta_{Z_j}$, where $W$ is independent of
the target positions and either $W=(1/n,\ldots,1/n)$ or
$W\sim\Dir(1,\ldots,1)$, and put
$Q=W_{2,g}^2(\mu,\eta_n^W)$.
The positive source density gives $\mu(B(x,r))\ge cr^2$, so
Proposition~\ref{prop:local-cost-deterministic} yields
\[
 n\E\bigl|\OT_k(\mu,\eta_n^W)-W_{2,g}^2(\mu,\eta_n^W)\bigr|
 \le Cn\E Q^{5/4}.
\]
We will bound $\E Q^2$ by $Cn^{-2}\log^2(en)$, which also controls
$\E Q^{5/4}$ by H\"older's inequality. Only an upper bound is needed here,
so the elementary smoothing estimate at $t=1/n$ suffices.
Use $\cL,P_t$ from
Section~\ref{subsec:surface-reduction}. Lemma~\ref{c:lem:energy}, applied
with reference density $q=1$, bounds $W_{2,g}^2(\mu,P_t\eta_n^W)$ by
$4\|P_t\eta_n^W-\mu\|_{-1,\mu}^2$. Coupling each sampled point to its
heat-smoothed mass costs at most $Ct$ by~\eqref{s:eq:cellmovement}.
The triangle inequality therefore gives
\[
 Q\le Ct+C\|P_t\eta_n^W-\mu\|_{-1,\mu}^2.
\]
To express the second term as the squared norm of a sum, let
$(\lambda_j,\psi_j)$ be the positive eigenpairs
of $\cL$, with $(\psi_j)$ orthonormal in $L^2(\mu)$, and write
\[
 \begin{gathered}
 F_t(z)=\left(\frac{e^{-t\lambda_j}}{\sqrt{\lambda_j}}
                    \psi_j(z)\right)_{j\ge1}\in\ell^2,\\
 \|P_t\eta_n^W-\mu\|_{-1,\mu}^2
       =\left\|\sum_iW_iF_t(Z_i)\right\|_{\ell^2}^2.
 \end{gathered}
\]
The heat-kernel estimate~\eqref{s:eq:celldiag} gives
$\sup_z\|F_t(z)\|_{\ell^2}^2\le C\log(e/t)$ for $0<t\le1$.
Conditional on $W$, the vectors $F_t(Z_i)$ are independent and centered.
Expanding the fourth power gives
\[
 \E\!\left[\left\|\sum_iW_iF_t(Z_i)\right\|_{\ell^2}^4\,\middle|\,W\right]
 \le3\sup_z\|F_t(z)\|_{\ell^2}^4\left(\sum_iW_i^2\right)^2.
\]
For equal masses, $(\sum_iW_i^2)^2=n^{-2}$; the Dirichlet moment
formula gives $\E(\sum_iW_i^2)^2\le Cn^{-2}$ as well. Thus
$\E Q^2\le Ct^2+Cn^{-2}\log^2(e/t)$. Taking $t=1/n$ yields
\[
 n\E Q^{5/4}\le n(\E Q^2)^{5/8}
 \le Cn^{-1/4}\log^{5/4}(en).
\]
For Dirichlet masses, the averaging identity (Theorem~\ref{thm:averaging})
identifies the two expected matching costs with their weighted
semidiscrete mean costs, proving~\eqref{eq:local-cost-discrete}.
Equal masses give the stated semidiscrete bound.
\end{proof}

For Theorem~\ref{thm:distinct-to-same}, the expansion
$k=d_{g_T}^2+O(d_{g_T}^3)$ gives~\eqref{eq:local-cost-assumptions} near
the diagonal. Since $k>0$ off the diagonal, compactness extends these
bounds to all of $M\times M$.

\subsection{Verification for Euclidean convex domains and flat tori}\label{app:flat-source-target}
To apply Theorem~\ref{thm:distinct-to-same} in the two settings of
Theorem~\ref{thm:source-target}, we verify the global cost bounds
in~\eqref{eq:local-cost-assumptions} and the assumptions on the induced
metric at the boundary. Use $T=\nabla\Phi$ and
$\lambda I\preceq H=D^2\Phi\preceq\Lambda I$ from
Section~\ref{subsec:torus}. This is the Hessian metric studied in
optimal transport geometry by Kolesnikov~\cite{Kolesnikov}.
Write $\Phi^*$ for the convex conjugate of $\Phi$.

\paragraph{Convex domains.}
On the convex domain $\Omega$, the quadratic dual potentials
$a(x)=|x|^2-2\Phi(x)$ and $b(y)=|y|^2-2\Phi^*(y)$ give
\[
 k(x,z)=2\{\Phi(x)-\Phi(z)-\nabla\Phi(z)\cdot(x-z)\}.
\]
Integrating $H$ along the segment from $z$ to $x$ gives
$\lambda|x-z|^2\le k(x,z)\le\Lambda|x-z|^2$ and
$k(x,z)=(x-z)^\top H(z)(x-z)+O(|x-z|^3)$.

The squared intrinsic distance $d_H(x,z)^2$ is the minimum of the energy
$\int_0^1\dot\gamma(s)^\top H(\gamma(s))\dot\gamma(s)\,ds$
over paths $\gamma$ from $z$ to $x$ in $\Omega$.
Write $r=|x-z|$. The straight segment gives the upper bound
$(x-z)^\top H(z)(x-z)+O(r^3)$.
For the lower bound, take a minimizing path. Its energy is at most
$\Lambda r^2$, so ellipticity gives $\int_0^1|\dot\gamma|^2\,ds\le Cr^2$;
the path stays within distance $Cr$ of $z$.
Replacing $H(\gamma(s))$ by $H(z)$ therefore changes its energy by
$O(r^3)$. Cauchy--Schwarz bounds the resulting energy below by
$(x-z)^\top H(z)(x-z)$. Thus, uniformly up to the boundary,
\[
 d_H(x,z)^2=(x-z)^\top H(z)(x-z)+O(|x-z|^3)
           =k(x,z)+O(|x-z|^3).
\]
Ellipticity also gives
$\lambda|x-z|^2\le d_H(x,z)^2\le\Lambda|x-z|^2$, hence the
required global comparison of $k$ and $d_H^2$. At a corner, a linear
change of coordinates that makes $H$ at the vertex the identity preserves the
convexity of the tangent cone; the transformed cone still has angle
in $(0,\pi)$.
The metric therefore satisfies the surface assumptions, with the same
piecewise smooth boundary and convex corners.

\paragraph{Flat tori.}
On a torus, the additional issue is that each point has many lattice
representatives. We identify periodic dual potentials and minimize over
these representatives to obtain the population dual gap.
For smooth positive periodic densities, periodic transport
regularity~\cite[Theorem~1]{CE} gives uniform ellipticity of $D^2\Phi$.
Differentiating $q(\nabla\Phi)\det D^2\Phi=p$ and applying elliptic
regularity then gives smoothness of $T=\nabla\Phi$.
Let $\Omega=\R^2/\Gamma$, write $[x]$ for the class of $x$ modulo
$\Gamma$, and set
$d_0([x],[y])=\min_{m\in\Gamma}|x-y+m|$.
Write $\Phi(x)=|x|^2/2+u(x)$ with $u$ $\Gamma$-periodic, as in
Section~\ref{subsec:torus}. The translation laws
\[
 \begin{aligned}
 \Phi(x+m)&=\Phi(x)+m\cdot x+\tfrac12|m|^2,\\
 \Phi^*(y+m)&=\Phi^*(y)+m\cdot y+\tfrac12|m|^2
 \qquad(m\in\Gamma)
 \end{aligned}
\]
show that $v(y)=\Phi^*(y)-|y|^2/2$ is periodic.
Fenchel's inequality for every representative $y+m$, $m\in\Gamma$, gives
$-2u(x)-2v(y)\le d_0([x],[y])^2$, with equality when
$y=T(x)$. Thus $a=-2u$ and $b=-2v$ are dual potentials on the torus.

Set $D_\Phi(x,z)=\Phi(x)-\Phi(z)-\nabla\Phi(z)\cdot(x-z)$.
The exact population dual gap is
\[
 k([x],[z])=2\min_{m\in\Gamma}D_\Phi(x+m,z).
\]
The bounds on $H$ give
$\lambda d_0^2\le k\le\Lambda d_0^2$; the same bounds hold for
$d_H^2$. Near the diagonal, choose representatives with $|x-z|$ small.
The $m=0$ term is $O(|x-z|^2)$, while uniform convexity bounds all
terms with $m\ne0$ away from zero. The minimum is therefore attained
at $m=0$ there, and Taylor expansion gives
\[
 k([z+\delta],[z])
   =\delta^\top H(z)\delta+O(|\delta|^3)
   =d_H([z+\delta],[z])^2+O(|\delta|^3).
\]
Consequently $k$ is smooth near the diagonal, vanishes only on it,
and has the positive quadratic part $H$ required by
Theorem~\ref{thm:distinct-to-same}.

\paragraph{Reversing the distributions.}
The formula in Theorem~\ref{thm:source-target} is unchanged when $\mu$
and $\nu$ are exchanged. The reverse map $T^{-1}$ induces the metric
$H_{\mathrm{rev}}(y)=D(T^{-1})(y)$ and density
$\rho_{\mathrm{rev}}(y)=q(y)/\sqrt{\det H_{\mathrm{rev}}(y)}$.
Since $H_{\mathrm{rev}}(Tx)=H(x)^{-1}$,
\[
 DT(x)^{\mathsf T}H_{\mathrm{rev}}(Tx)DT(x)=H(x),\qquad
 \rho_{\mathrm{rev}}(Tx)
 =q(Tx)\sqrt{\det H(x)}=\rho_H(x).
\]
Thus $T$ is a measure-preserving isometry between the two induced
weighted surfaces. Their areas, spectra, and density integrals agree.
\section{Unsquared distance cost for ordered product distributions}
\label{app:linear-proof}
\begin{proof}[Proof of Proposition~\ref{prop:linear-product}]
We construct a matching with expected cost $nW_1(\mu,\nu)+O(\log n)$
by matching horizontal ranks within blocks of about $\sqrt n$ consecutive
vertical ranks. This choice bounds the expected squared vertical
displacement of each pair by $C/n$.
Positive horizontal displacements make the additional cost of vertical
mismatch quadratic; small or negative horizontal displacements require
a separate bound.
Write $X_i=(X_{i1},X_{i2})$ and $Y_i=(Y_{i1},Y_{i2})$.
All constants below depend only on $\lambda,\Lambda,\gamma$.

Every pair satisfies $|Y-X|\ge Y_1-X_1$. Since $F\ge G$, the
increasing horizontal quantile coupling attains equality while keeping
the vertical coordinate fixed. Hence
\[
 W_1(\mu,\nu)=\E Y_{11}-\E X_{11}
             =\int_0^1(F-G)(x)\,dx.
\]
The signed horizontal cost is independent of the assignment. Subtracting it gives
\begin{equation}\label{eq:linear-residual}
 \begin{aligned}
 R_n&:=M_n^{(1)}-\sum_jY_{j1}+\sum_iX_{i1}\\
    &=\min_\sigma\sum_i
      \bigl\{|Y_{\sigma(i)}-X_i|-(Y_{\sigma(i),1}-X_{i1})\bigr\}\ge0.
 \end{aligned}
\end{equation}
Its expectation is the excess matching cost,
$\E R_n=\E M_n^{(1)}-nW_1(\mu,\nu)$.
For horizontal and vertical displacements $d,v$,
\[
 0\le\sqrt{d^2+v^2}-d
 \le2(-d)_++\min\left\{|v|,\frac{v^2}{2|d|}\right\},
\]
where the second entry is omitted when $d=0$.
The two terms control backward movement and the additional cost of
vertical displacement, respectively.

\paragraph{Keeping vertical displacements small.}
For $n\ge16$, partition $\{1,\ldots,n\}$ into
$B=\lceil\sqrt n\rceil$ consecutive blocks with sizes $m_b$ differing
by at most one. In particular,
\begin{equation}\label{eq:linear-block-sizes}
 c\sqrt n\le m_b\le C\sqrt n,
 \qquad B\le C\sqrt n,\qquad \sum_bm_b=n.
\end{equation}
Sort each sample by height and divide it into these rank blocks.
Within corresponding blocks, pair points by horizontal rank.
Let $U_{(i)},V_{(i)}$ be the ordered vertical coordinates of the two
samples. The product structure makes them independent of the horizontal
coordinates indexed by vertical rank, which are independent samples
with densities $p_1$ and $q_1$.

For ranks $i,j$ in the same block, the uniform order-statistic moments give
\[
 \E|U_{(i)}-V_{(j)}|^2
 \le \frac{1}{2(n+2)}+\frac{(i-j)^2}{(n+1)^2}
 \le C/n.
\]
The choice $m_b\asymp\sqrt n$ keeps the squared difference in mean
heights at $O(1/n)$, the same bound as for the variances.
Let $\mathcal H$ denote all horizontal coordinates after vertical
sorting. Conditional on $\mathcal H$, the assigned vertical ranks are
fixed. Thus every assigned vertical displacement $v_k$ satisfies
\begin{equation}\label{eq:linear-vertical-moments}
 \E[v_k^2\mid\mathcal H]\le C/n,\qquad
 \E[|v_k|\mid\mathcal H]\le C/\sqrt n.
\end{equation}

\paragraph{Horizontal gaps.}
Consider a block of size $m$, with source horizontal coordinates $A_1,\ldots,A_m$ of density
$p_1$ and target horizontal coordinates $B_1,\ldots,B_m$ of density $q_1$. Write
$d_k=B_{(k)}-A_{(k)}$ for the signed horizontal displacement and
$r_k=\min(k,m+1-k)$ for the rank counted from the nearer endpoint.
The ordering assumption~\eqref{eq:ordered-marginals} makes the forward
gap typically at least $cr_k/m$. Combined with the vertical moment bound,
this gives an expected additional cost of at most $Cm/(nr_k)$ for such pairs; summing over
ranks produces the logarithm. We now bound the probability of a smaller
gap and the expected total backward movement. The first bound is
\begin{equation}\label{eq:linear-rank-gap}
 \PP\{d_k<c_0r_k/m\}\le Ce^{-c_1r_k}.
\end{equation}
For $0<s<1$, put $x=F^{-1}(s)$. Since $x\ge s/\Lambda$,
$1-x\ge(1-s)/\Lambda$, and $G$ is $\Lambda$-Lipschitz,
assumption~\eqref{eq:ordered-marginals} gives
\[
 G^{-1}(s)-F^{-1}(s)
 \ge\frac{F(x)-G(x)}{\Lambda}
 \ge\frac{\gamma}{\Lambda^3}s(1-s)
 \ge\kappa\min(s,1-s),\qquad
 \kappa=\frac{\gamma}{2\Lambda^3}.
\]
Both inverse distribution functions are $1/\lambda$-Lipschitz.

To pass to sample quantiles, let $Z_{(k)}$ be a uniform order statistic
and $s_k=k/(m+1)$. For each fixed $0<\eta\le1/4$, binomial Chernoff
bounds yield
\begin{equation}\label{eq:linear-order-tail}
 \PP\{|Z_{(k)}-s_k|>\eta\min(s_k,1-s_k)\}
 \le C_\eta e^{-c_\eta r_k}.
\end{equation}
Indeed, for $k\le(m+1)/2$, the two tails are
$\{\Bin(m,s_k(1-\eta))\ge k\}$ and
$\{\Bin(m,s_k(1+\eta))\le k-1\}$, so multiplicative Chernoff bounds
give $C_\eta e^{-c_\eta k}$.
Reflection covers the remaining ranks.

Represent $A_{(k)}$ and $B_{(k)}$ by applying $F^{-1}$ and $G^{-1}$
to independent uniform order statistics. Choose
$\eta\le\min(1/4,\kappa\lambda/4)$. Outside the two exceptional
events in \eqref{eq:linear-order-tail},
\[
 d_k\ge(\kappa-2\eta/\lambda)\frac{r_k}{m+1}
      \ge\frac{\kappa r_k}{4m}.
\]
This proves \eqref{eq:linear-rank-gap}.

To bound the penalty for backward movement, we also use that horizontal
displacements near the endpoint ranks are small.
For $k\le(m+1)/2$, the bounds $F^{-1}(s),G^{-1}(s)\le s/\lambda$
and
\[
 \E Z_{(k)}^2=\frac{k(k+1)}{(m+1)(m+2)}
\]
give $\E d_k^2\le C(k/m)^2$. Applying the same argument to
$1-A_{(k)}$ and $1-B_{(k)}$ covers the remaining ranks. Hence
$\E d_k^2\le C(r_k/m)^2$ for every $k$. Cauchy--Schwarz and
\eqref{eq:linear-rank-gap} now yield
\begin{equation}\label{eq:linear-backward}
 \E\sum_{k=1}^m(-d_k)_+
 \le \sum_{k=1}^m(\E d_k^2)^{1/2}\PP(d_k<0)^{1/2}
 \le \frac{C}{m}\sum_{k=1}^m r_ke^{-cr_k}
 \le C/m.
\end{equation}

\paragraph{Bounding the excess matching cost.}
In a block of size $m$, use the
quadratic bound on vertical cost when $d_k\ge c_0r_k/m$ and the
linear bound otherwise. Combining the conditional vertical moments
\eqref{eq:linear-vertical-moments} with
\eqref{eq:linear-backward} and \eqref{eq:linear-rank-gap} gives
\[
 \begin{split}
 \E\sum_{k=1}^m\big(\sqrt{d_k^2+v_k^2}-d_k\big)
 &\le C/m+\frac{C}{\sqrt n}\sum_k e^{-c_1r_k}
        +\frac{Cm}{n}\sum_k r_k^{-1}\\
 &\le C/m+C/\sqrt n+Cm\log(em)/n.
 \end{split}
\]
The first two terms, from backward movement and vertical cost at small
gaps, sum to $O(1)$ over all blocks. The third contains the harmonic sum
that gives the logarithm.
Summing over blocks and using \eqref{eq:linear-block-sizes} gives
\[
 \E R_n
 \le C\sum_bm_b^{-1}+CB/\sqrt n
       +\frac{C}{n}\sum_bm_b\log(em_b)
 \le C\log(en).
\]
Together with \eqref{eq:linear-residual}, this proves the proposition
for $n\ge16$. Enlarging $C$ covers the remaining values of $n$.
\end{proof}
\par
\end{document}